\newtheorem{theorem}{Theorem}
\newtheorem{lemma}[theorem]{Lemma}
\newtheorem{example}[theorem]{Example}
\newtheorem{proposition}[theorem]{Proposition}
\newtheorem{corollary}[theorem]{Corollary}
\newtheorem{conjecture}[theorem]{Conjecture}
\newtheorem{definition}[theorem]{Definition}
\newtheorem{remark}[theorem]{Remark}
\newtheorem*{corollary*}{Corollary}
\newcommand{\R}{\mathbb{R}}
\newcommand{\Z}{\mathbb{Z}}
\newcommand{\G}{\mathbf{G}}
\newcommand{\Ss}{\mathcal{S}}
\newcommand{\T}{\mathcal{T}}
\newcommand{\Pp}{\mathcal{P}}
\newcommand{\Pb}{\mathbb{P}}
\newcommand{\Gg}{\mathcal{G}}
\newcommand{\I}{\mathcal{I}}
\newcommand{\J}{\mathcal{J}}
\newcommand{\Qq}{\mathcal{Q}}
\newcommand{\la}{\langle}
\newcommand{\ra}{\rangle}
\newcommand{\hra}{\hookrightarrow}
\newcommand{\xra}{\xrightarrow}
\newcommand{\thra}{\twoheadrightarrow}
\newcommand{\sse}{\subseteq}
\newcommand{\ol}{\overline}
\newcommand{\oEN}{\EN^\circ}
\newcommand{\ER}{\mathbf{Er}}
\DeclareMathOperator{\cok}{coker}
\DeclareMathOperator{\id}{id}
\DeclareMathOperator{\Img}{Im}
\DeclareMathOperator{\img}{im}
\DeclareMathOperator{\Ker}{Ker}
\DeclareMathOperator{\Er}{Er}
\DeclareMathOperator{\supdim}{supdim}
\DeclareMathOperator{\EN}{EN}
\DeclareMathOperator{\Pru}{Pru}
\numberwithin{theorem}{section}
\renewcommand*\env@matrix[1][*\c@MaxMatrixCols c]{%
	\hskip -\arraycolsep
	\let\@ifnextchar\new@ifnextchar
	\array{#1}}
\title{Stabilizing decomposition of multiparameter persistence modules}
\date{}
\author{H\aa vard Bakke Bjerkevik\thanks{Department of Mathematics \& Statistics, SUNY Albany, USA\newline
Email: hbjerkevik@albany.edu}}
\begin{document}

\maketitle

\begin{abstract}
While decomposition of one-parameter persistence modules behaves nicely, as demonstrated by the algebraic stability theorem, decomposition of multiparameter modules is known to be unstable in a certain precise sense.
Until now, it has not been clear that there is any way to get around this and build a meaningful stability theory for multiparameter module decomposition.
We introduce new tools, in particular $\epsilon$-refinements and $\epsilon$-erosion neighborhoods, to start building such a theory.
We then define the $\epsilon$-pruning of a module, which is a new invariant acting like a ``refined barcode'' that shows great promise to extract features from a module by approximately decomposing it.

Our main theorem can be interpreted as a generalization of the algebraic stability theorem to multiparameter modules up to a factor of $2r$, where $r$ is the maximal pointwise dimension of one of the modules.
Furthermore, we show that the factor $2r$ is close to optimal.
Finally, we discuss the possibility of strengthening the stability theorem for modules that decompose into pointwise low-dimensional summands, and pose a conjecture phrased purely in terms of basic linear algebra and graph theory that seems to capture the difficulty of doing this.
We also show that this conjecture is relevant for other areas of multipersistence, like the computational complexity of approximating the interleaving distance, and recent applications of relative homological algebra to multipersistence.
\end{abstract}

\tableofcontents

\section{Introduction}

A fundamental reason for the interest in one-parameter persistence is the stability of decomposition of modules.
Given a one-parameter persistence module, one can decompose it uniquely into interval modules \cite{botnan2020decomposition,azumaya1950corrections} giving us an invariant called the \emph{barcode}, which is simply the multiset of intervals appearing in the decomposition.
This decomposition is stable in the sense that if the interleaving distance between two modules is small, then one can find a matching between their barcodes that matches similar intervals bijectively except possibly some short intervals \cite{chazal2009proximity,cohen2007stability}.
The nice behavior of decompositions is important for several reasons:
It is useful for computational efficiency, as one can work with small pieces instead of one large object; it makes modules easier to visualize, as a barcode is much easier to draw than a module itself (for instance using persistence diagrams \cite{chazal2012structure,cohen2007stability}); it helps to interpret data, since individual intervals may represent exactly the features of the data set that one wants to study \cite{chazal2013clustering, kanari2018topological}; and it can be used to remove noise from a module for instance by simply removing short bars \cite{edelsbrunner2002topological}.

With all these advantages, one might wonder why a similar theory of decompositions has not been developed for multiparameter modules.
Together with the added computational complexity of moving from one to several parameters \cite{bauer2021ripser,dey2022generalized}, the main reason seems to be that while one has unique decomposition of modules into indecomposables also in multiparameter persistence, this decomposition is wildly unstable.
Indeed, it has been shown that for any finitely presented $n$-parameter module $M$ and $\epsilon>0$, there is an indecomposable module $N$ within interleaving distance $\epsilon$ from $M$ \cite[Theorem A]{bauer2022generic}.
For this reason, a naive extension of the theory of stability of decompositions from the one-parameter setting is doomed to fail.

In this paper, we develop a theory for approximate decomposition of multiparameter modules and show how these stability issues can be dealt with.
As a bonus, out of this theoretical work pops a new invariant that we call the \emph{$\epsilon$-pruning} (\cref{def_pruning}) of a module, which has nice theoretical properties that allows us to consider it a stabilized multiparameter barcode.
The $\epsilon$-pruning of a module $M$ is essentially an approximation of $M$ that is particularly decomposable.
We envision the following pipeline, which closely mimics the standard pipeline in one-parameter persistent theory, where our focus is on the first two steps.
The main difference is that we add a pruning step before decomposing.
\vspace{.2cm}

\begin{tikzpicture}[scale=.7]
\begin{scope}[xshift=-6.5cm]
\node at (1.5,4){Module};
\draw[thick,<->] (0,3.5) to (0,0) to (3.5,0);
\fill[red, opacity=0.3] (.8,2) to (.5,3) to (3,3) to (3,2);
\fill[red, opacity=0.3] (1.5,1) to (1.5,1.5) to (1,1.5) to (.9,3) to (3,3) to (3,1);
\fill[red, opacity=0.3] (2.9,.5) to (2.3,.5) to (1.5,2.2) to (1.5,3) to (3,3) to (3,.2);
\fill[white] (2.4,2.4) rectangle (3,3);
\fill[red, opacity=0.3] (2.4,2.4) rectangle (3,3);
\fill[red, opacity=0.3] (2.4,2.4) rectangle (2.7,3);
\fill[red, opacity=0.3] (2.4,2.4) rectangle (3,2.7);
\draw[very thick,->] (3.6,1.5) to (5.6,1.5);
\end{scope}
\begin{scope}
\node at (1.5,4){Pruning};
\draw[thick,<->] (0,3.5) to (0,0) to (3.5,0);
\fill[red, opacity=0.3] (.8,2) to (.5,3) to (3,3) to (3,2);
\fill[red, opacity=0.3] (1.5,1) to (1.5,1.5) to (1,1.5) to (.9,3) to (3,3) to (3,1);
\fill[red, opacity=0.3] (2.9,.5) to (2.3,.5) to (1.5,2.2) to (1.5,3) to (3,3) to (3,.2);
\fill[white] (2.4,2.7) rectangle (3.1,3.1);
\fill[white] (2.7,2.4) rectangle (3.1,3.1);
\draw[very thick,->] (3.6,1.5) to (5.6,1.5);
\end{scope}
\begin{scope}[xshift = 6.5cm]
\node at (1.5,4){Decomposition of pruning};
\node at (1.1,1.5){$\bigoplus$};
\begin{scope}[yshift=.9cm, xshift=-.8cm, scale=.6]
\fill[red, opacity=0.3] (.8,2) to (.5,3) to (3,3) to (3,2);
\fill[white] (2.4,2.7) rectangle (3.1,3.1);
\fill[white] (2.7,2.4) rectangle (3.1,3.1);
\end{scope}
\begin{scope}[yshift=-.6cm, xshift=-.8cm, scale=.6]
\fill[red, opacity=0.3] (1.5,1) to (1.5,1.5) to (1,1.5) to (.9,3) to (3,3) to (3,1);
\fill[white] (2.4,2.7) rectangle (3.1,3.1);
\fill[white] (2.7,2.4) rectangle (3.1,3.1);
\end{scope}
\begin{scope}[xshift=.8cm, yshift=.4cm, scale=.6]
\fill[red, opacity=0.3] (2.9,.5) to (2.3,.5) to (1.5,2.2) to (1.5,3) to (3,3) to (3,.2);
\fill[white] (2.4,2.7) rectangle (3.1,3.1);
\fill[white] (2.7,2.4) rectangle (3.1,3.1);
\end{scope}
\draw[very thick,->] (3.6,1.5) to (5.6,1.5);
\end{scope}
\begin{scope}[xshift = 13cm]
\node at (1.5,4.3){Invariants/};
\node at (1.5,3.7){information};
\begin{scope}[xshift=.5cm, yshift=-.4]
\fill[red,opacity=.3] (.5,.5) circle (.4);
\fill[red,opacity=.3] (.5,1.4) circle (.3);
\fill[red,opacity=.3] (1.4,.9) circle (.35);
\end{scope}
\begin{scope}[yshift=2cm, xshift=1cm, scale=.6]
\draw[thick] (0,0) to (1.6,1);
\draw[thick] (0,0) to (1.4,1.5);
\draw[thick] (0,0) to (0.7,1.9);
\end{scope}
\end{scope}
\end{tikzpicture}
\vspace{.2cm}

The idea of looking at approximations of a module instead of just the module itself is similar in spirit to one of the fundamental ideas of persistence.
Just as persistent homology allows us to ignore homology classes that only show up at very limited choices of parameter values, approximating a module can let us split apart parts of a module that are only glued together in a non-persistent way.

If the original module is close to decomposing into many pieces, its $\epsilon$-pruning is guaranteed to decompose a lot (for an appropriate $\epsilon$), and we can hope to gain many of the mentioned advantages in the one-parameter case regarding computational complexity, visualization, etc.
For this reason, we believe that prunings are a very promising tool from a practical point of view.

To be able to express stability and fineness of decompositions, we first need to build up a new vocabulary, since $\epsilon$-matchings and the well-known bottleneck distance it gives rise to are not able to express a notion of approximate decomposition, and therefore do not give stability.
The first new term in our dictionary is the \emph{$\epsilon$-erosion neighborhood} $\EN_\epsilon(M)$ of a module $M$ (\cref{def_erosion_nbh}).
We show in Sections \ref{sec_erosion} and \ref{sec_red_distances} that up to a multiplicative constant, this is equivalent to the $\epsilon$-neighborhood of a module in the interleaving distance in an appropriate sense.
(\cref{thm_strong_equiv} is a way to make this statement precise.)
The erosion neighborhood of $M$ has the advantage that, unlike the interleaving neighborhood, it contains only subquotients of $M$ (\cref{def_subquotient}), which are modules that are smaller than $M$ in an certain algebraic sense.
These are much more limited than the modules that are only interleaving close to $M$, which means that the space one needs to search for approximations of $M$ shrinks dramatically.
We expect this to be useful from both a theoretical and an algorithmic point of view.

The main results of the paper concern the stability properties of the pipeline we outlined (or rather, of the composition of the first two steps).
Such stability results are essential for a theory of approximate decompositions, as without stability, we cannot know if the output we get describes the input in a reliable way.
The question we need to answer is: given two modules that are similar, are their decompositions similar?
Since any module is close to an indecomposable module, it is clear that they cannot be similar in the sense that there is a nice bijection between (large) summands.
Our solution is to introduce \emph{$\epsilon$-refinements} (\cref{def_refinement}) and $\epsilon$-prunings.
An $\epsilon$-refinement of a module $M$ is, roughly speaking, a module that is $\epsilon$-interleaved with $M$, and has a finer decomposition into indecomposables than $M$.
Such refinements allow us to further split apart summands in the decompositions we are considering, and only then try to match the indecomposables in one decomposition with those of the other.
The definition of an $\epsilon$-pruning is more opaque, but it turns out to play the role of a ``very refined'' version of $M$.
\begin{figure}
\centering
\begin{tikzpicture}[scale=.5]
\begin{scope}[xshift=0cm]
\draw[color=black,fill=black] (0,0) circle (.3);
\node at (-1.2,0){$M_3$};
\draw[color=black,fill=black] (0,2) circle (.3);
\node at (-1.2,2){$M_2$};
\draw[color=black,fill=black] (0,4) circle (.3);
\node at (-1.2,4){$M_1$};
\node at (0,6.5){$B(M)$};
\def\x{4}
\node at (\x,5.5){$B(\Pr_\epsilon(M))$};
\draw[thick, shorten <=.3cm, shorten >=.2cm] (0,0) to (\x,0);
\draw[color=black,fill=black] (\x,0) circle (.2);
\draw[thick, shorten <=.3cm, shorten >=.2cm] (0,0) to (\x,-1);
\draw[color=black,fill=black] (\x,-1) circle (.2);
\draw[thick, shorten <=.3cm, shorten >=.2cm] (0,2) to (\x,1);
\draw[color=black,fill=black] (\x,1) circle (.2);
\draw[thick, shorten <=.3cm, shorten >=.2cm] (0,2) to (\x,2);
\draw[color=black,fill=black] (\x,2) circle (.2);
\draw[thick, shorten <=.3cm, shorten >=.2cm] (0,2) to (\x,3);
\draw[color=black,fill=black] (\x,3) circle (.2);
\draw[thick, shorten <=.3cm, shorten >=.2cm] (0,4) to (\x,4);
\draw[color=black,fill=black] (\x,4) circle (.2);
\def\z{8}
\draw[color=black,fill=black] (\z,-1) circle (.3);
\node at (\z+1.2,-1){$N_4$};
\draw[color=black,fill=black] (\z,1) circle (.3);
\node at (\z+1.2,1){$N_3$};
\draw[color=black,fill=black] (\z,3) circle (.3);
\node at (\z+1.2,3){$N_2$};
\draw[color=black,fill=black] (\z,5) circle (.3);
\node at (\z+1.2,5){$N_1$};
\node at (\z,6.5){$B(N)$};
\draw[thick, shorten <=.2cm, shorten >=.3cm] (\x,0) to (\z,-1);
\draw[thick, shorten <=.2cm, shorten >=.3cm] (\x,2) to (\z,-1);
\draw[thick, shorten <=.2cm, shorten >=.3cm] (\x,1) to (\z,1);
\draw[thick, shorten <=.2cm, shorten >=.3cm] (\x,-1) to (\z,1);
\draw[thick, shorten <=.2cm, shorten >=.3cm] (\x,4) to (\z,3);
\draw[thick, shorten <=.2cm, shorten >=.3cm] (\x,3) to (\z,5);
\end{scope}
\end{tikzpicture}
\caption{Illustration of \cref{thm_main} (ii), which says that if $M$ and $N$ are $\epsilon$-interleaved, then the $\epsilon$-pruning of $M$ is a $2r\epsilon$-refinement of $N$, where $r$ is the maximum pointwise dimension of $M$.
If \cref{conj_main} holds, then the same statement holds if $r$ is the maximum pointwise dimension of the modules in $B(M)$.
(That is, the $M_i$ in the figure.)}
\label{fig_main_thm}
\end{figure}
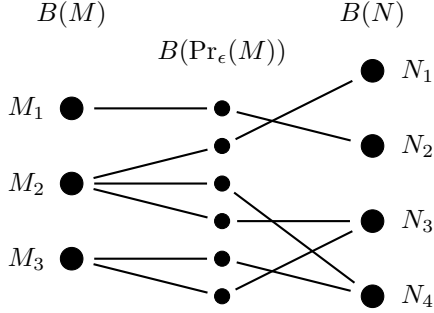
For a module $M$, let $\supdim M = \sup_{p\in \Pb} \dim M_p \in \{0, 1, \dots, \infty\}$.
\cref{cor_main} of the main theorem \cref{thm_main} says the following.
\begin{corollary*}
Let $\epsilon\geq 0$, and let $M$ and $N$ be $\epsilon$-interleaved pointwise finite dimensional modules with $r = \supdim M<\infty$.
Then $\Pru_\epsilon(M)$ is a $2r\epsilon$-refinement of both $M$ and $N$.
\end{corollary*}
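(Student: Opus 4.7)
The plan is to reduce the corollary to two direct applications of \cref{thm_main}, specifically part (ii) as illustrated in \cref{fig_main_thm}. First I would observe that the statement $\Pru_\epsilon(M)$ is a $2r\epsilon$-refinement of $N$ is exactly the content of \cref{thm_main}~(ii), so once the main theorem is established this half is immediate — no additional argument needed beyond invoking the hypothesis that $M$ and $N$ are $\epsilon$-interleaved with $\supdim M = r < \infty$.

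For the other half, namely that $\Pru_\epsilon(M)$ is a $2r\epsilon$-refinement of $M$ itself, the key trick is to apply the same theorem with $N$ replaced by $M$. To do this I would note that every module is $0$-interleaved with itself via the identity maps, and since $\epsilon$-interleaving is monotone in $\epsilon$ (any $\delta$-interleaving with $\delta\leq\epsilon$ gives rise to an $\epsilon$-interleaving by composing with appropriate structure maps), $M$ is $\epsilon$-interleaved with itself for every $\epsilon\geq 0$. Plugging this trivial $\epsilon$-interleaving into \cref{thm_main}~(ii) gives that $\Pru_\epsilon(M)$ is a $2r\epsilon$-refinement of $M$, with $r = \supdim M$ unchanged.

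The main obstacle here is not in the corollary itself, which is essentially a packaging of two instances of the main theorem, but rather in the underlying \cref{thm_main}. The only subtlety I would be careful about is verifying that the self $\epsilon$-interleaving of $M$ genuinely satisfies the hypotheses needed for \cref{thm_main}~(ii) — in particular that $r=\supdim M$ is the correct pointwise dimension bound used in both applications (which it is, since we are using $M$ as the first argument in both invocations). Having checked this, the two conclusions combine to give precisely the statement of the corollary.
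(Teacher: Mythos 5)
Your proposal is correct and matches the paper's argument exactly: the paper obtains the $N$-half directly from \cref{thm_main}~(ii) and the $M$-half by substituting $N=M$ into the same theorem. The only thing the paper leaves implicit, which you spell out, is that $M$ is $\epsilon$-interleaved with itself, which is indeed immediate from the identity interleaving and monotonicity of interleavings.
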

This means that all the modules in the $\epsilon$-interleaving neighborhood of $M$ have a common approximate decomposition within a distance of $2r\epsilon$, where we only allow splitting and shrinking indecomposable components, and not gluing them together.
Thus, the main theorem can be interpreted as saying that the barcodes of $M$ and $N$ are similar if $M$ and $N$ are similar.
But it also suggests that the $\epsilon$-pruning of a module $M$ is a effectively a \emph{stabilized decomposition} of $M$, since it is a refinement of a whole neighborhood of $M$, not just $M$ itself.
An interpretation of this is that the pruning captures all summands (``features'') appearing in modules close to $M$, and that it is ``very decomposable'', so it could be a powerful invariant of $M$.
Compare this to the naive decomposition of $M$, which (for instance in the case of indecomposable $M$) might contain almost no information about decompositions of modules close to $N$.

A more theoretical perspective on these contributions is that they are part of a larger project of developing a theory of ``approximate'' or ``fuzzy'' algebra.
Persistence modules are objects of a type that have long been studied in representation theory, and there exists a large body of theory regarding these objects.
What makes persistence different from classical representation theory is that we work on the assumption that the objects come from noisy data sets, which means that we can only assume that any module we are given is an approximation, and does not exactly represent the phenomena that we are studying.
Therefore, instead of studying the properties of modules, we should build a theory analyzing \emph{neighborhoods} of modules.
From this perspective, Krull-Schmidt type theorems from algebra (like \cref{thm_unique_dec}) are not quite what we are looking for in persistence theory, since they say that isomorphic modules have ``isomorphic'' decompositions, and isomorphism is a notion that is too strong for us.
In persistence theory, a more appropriate Krull-Schmidt type theorem would be that similar (say, $\epsilon$-interleaved) modules have similar (approximate) decompositions.
Our notion of $\epsilon$-refinements allows us to make this statement precise, and \cref{thm_main} is exactly a theorem of this type.
As such, we argue that it is the first Krull-Schmidt theorem for multiparameter modules that is truly in the spirit of persistence.

In \cref{sec_red_distances}, we tie our work to a framework of a lot of work in persistence, namely the theory of distances.
We show that $\epsilon$-erosion neighborhoods give rise to a nicely behaved distance, and compare our work with previous work on erosion.
Then, we discuss the difficulty of defining good multiparameter distances that depend on decompositions, and introduce prunings and the pruning distance (\cref{def_pruning_dist}).
We show that the pruning distance is indeed a distance, argue that it measures the difference between the decompositions close to one module with the decompositions close to another, and conjecture (\cref{conj_pruning}) that it has nice stability properties.
We also introduce a function $f_R$ measuring the difference between modules using refinements.
This is not a distance in general, but we show that it is a distance for interval decomposable modules, and argue that it is a nicely behaved improvement on the bottleneck distance in this setting.

In \cref{sec_matchings}, we discuss how the theorem might be strengthened in the case of modules decomposing into pointwise low-dimensional summands.
We formulate a conjecture (\cref{conj_main}) that we consider to be the ultimate goal for stability of multiparameter decompositions, and speculate that what needs to be overcome to prove the conjecture is nicely expressed as a conjecture with a more combinatorial flavor (\cref{conj_CI}).
This conjecture is also relevant for stability questions in recent work on applying relative homological algebra to multipersistence, which we show in \cref{subsec_benign}, and for the computational complexity of approximating the interleaving distance.
With its simplicity and relevance for decompositions, stability and computational complexity, we are of the opinion that \cref{conj_CI} should be considered an important open problem in the theory of multipersistence, and that much can be learned from a solution to it.

It is worth highlighting that we are working in a very general setting throughout the paper.
Except in \cref{sec_matchings}, where we are seeking out modules of a simple form on purpose, we only require that the modules are pointwise finite dimensional (though for the stability results to make sense, we need a finite global bound on the pointwise dimension of the modules), and much of our work on erosion neighborhoods puts no restrictions on the modules.
In contrast, a lot of work in multipersistence requires modules to be finitely presented.
As an example, this allows us to work in the setting of tame modules due to Ezra Miller \cite[Def. 2.9]{miller2020essential}, but also in much more general settings.
More surprisingly, except one-parameter persistence being a simpler special case, the number of parameters never plays a role in the paper; all we need is a poset with a well-behaved family of shift functions.
Considering how common it is for work in multipersistence to either be done in the special case of two parameters or contain results that get weaker as the number of parameters increases, this is a significant strength of the paper.

\subsection{Other work on (approximate) decompositions}

The idea of describing a module using smaller pieces is an idea that motivates a lot of work in multiparameter persistence.
In \cite{dey2022generalized}, it is shown how to decompose distinctly graded multiparameter modules in $O(n^{2\omega+1})$, where $\omega<2.373$ is the matrix multiplication constant.
There are many recent papers on applying relative homological algebra to multiparameter persistence \cite{asashiba2023approximation, blanchette2021homological, botnan2024signed, botnan2024bottleneck, chacholski2024koszul, oudot2024stability}, where the idea is to find a multiset of intervals (some may have negative multiplicity) describing a module.
A similar sounding, but different approach is taken in \cite{loiseaux2022efficient}, where interval decomposable modules are used to approximate general modules.

Our approach is different from the latter two in that we approximate by general modules, not by collections of intervals, which allows us to get a single approximate decomposition (the $\epsilon$-pruning for small $\epsilon$) from which we can recover the original module up to a small interleaving distance.
(There is a conjecture in \cite{loiseaux2022efficient} about recovering a module from approximations, but this requires considering a whole family of approximations, not just one.)
The $\epsilon$-pruning of a module is an innovation in that it is a single approximate decomposition that has provable stability properties while only destroying a small amount of information contained in the original module.

\subsection{Acknowledgments}
The author thanks Ulrich Bauer and Steve Oudot for pointing out and helping fix an issue concerning erosion neighborhoods, and anonymous referees for valuable feedback.
Thanks are also due to the Centre for Advanced Study (CAS) at the Norwegian Academy of Science for organizing a workshop on interactions between representation theory and topological data analysis in December 2022 which inspired this work, and for hosting the author for a period in 2023 as part of the program Representation Theory: Combinatorial Aspects and Applications.
The author was funded by the Deutsche Forschungsgemeinschaft (DFG - German Research Foundation) - Project-ID 195170736 - TRR109.

\section{Motivation}
\label{sec_motivation}

We will now explain some of the challenges of generalizing the one-parameter stability theory to multipersistence and motivate our approach to building such a theory of decomposition stability.

\subsection{The algebraic stability theorem}

Though we are motivated by the study of $d$-parameter persistence modules, we will work in a more general setting, as all we really need is a poset with a collection of well-behaved shift functors.

Let $\Pb$ be a poset, which we identify with its poset category.
This category has an object for every element of $\Pb$, and a single morphism from $p$ to $q$ if $p\leq q$, and no morphism otherwise.
(Those not interested in more general posets can assume $\Pb = \R^d$ and $\R^d_\epsilon(p) = p + (\epsilon,\dots, \epsilon)$, and jump to \cref{def_module}.)
Assume that $\Pb$ is equipped with a shift function $\Pb_\epsilon \colon \Pb\to \Pb$ for each $\epsilon\in \R$ such that
\begin{itemize}
	\item $\Pb_0$ is the identity,
	\item for $p\leq q\in \Pb$, $\Pb_\epsilon(p) \leq \Pb_\epsilon(q)$,
	\item for all $p \in \Pb$ and $\epsilon\geq 0$, $\Pb_\epsilon(p)\geq p$, and
	\item for all $\epsilon,\delta\in \R$, $\Pb_{\epsilon+\delta} = \Pb_\epsilon \circ \Pb_\delta$.
\end{itemize}
It follows from the first and fourth point that all the $\Pb_\epsilon$ are bijections.
In the rest of the paper, $\Pb$ will be assumed to have these properties.

Let $\mathbf{Vec}$ be the category of vector spaces over some fixed field $k$.
\begin{definition}
\label{def_module}
A \textbf{persistence module} is a functor $M\colon \Pb\to \mathbf{Vec}$.
A morphism $f\colon M\to N$ of persistence modules is a natural transformation.
\end{definition}

Throughout the paper, ``module'' will mean persistence module.
In the case $\Pb = \R^d$ and $\R^d_\epsilon(p) = p + (\epsilon,\dots, \epsilon)$, we refer to these as $d$-parameter modules, and we sometimes separate the case $d=1$ from the case of general $d$ by talking about one-parameter modules or multiparameter modules, respectively.
We use the notation $M_p$ and $M_{p\to q}$ for $M$ applied to an object $p$ and a morphism $p\to q$, respectively.
Concretely, a module $M$ is a set of vector spaces $M_p$ and linear transformations $M_{p\to q}$ for all $p\leq q\in \Pb$ such that $M_{p\to p}$ is the identity and $M_{q\to r}\circ M_{p\to q} = M_{p\to r}$ for all $p\leq q\leq r$.

A morphism $f\colon M\to N$ can be viewed as a set of linear transformations $\{f_p\}_{p\in \Pb}$ satisfying $f_q\circ M_{p\to q} = N_{p\to q}\circ f_p$ for all $p\leq q$.
If $M_p$ is finite-dimensional for all $p$, we say that $M$ is \emph{pointwise finite-dimensional}, or \emph{pfd} for short.
Though we are mostly interested in pfd modules, we will often work with general modules, and we specify that a module is pfd only when needed.

The following theorem says that pfd persistence modules decompose into indecomposables in an essentially unique way.
\begin{theorem}[{\cite[Theorem 1.1]{botnan2020decomposition}, \cite[Theorem 1 (ii)]{azumaya1950corrections}}]
\label{thm_unique_dec}
Let $M$ be a pfd module.
Then there is a set $\{M_i\}_{i\in \Lambda}$ of nonzero indecomposable modules with local endomorphism ring such that $M\cong \bigoplus_{i\in \Lambda} M_i$.
If $M\cong \bigoplus_{j\in \Gamma} M'_j$, and each $M'_j$ is nonzero and indecomposable, then there is a bijection $\sigma\colon \Lambda\to \Gamma$ such that $M_i\cong M'_{\sigma(i)}$ for all $i\in \Lambda$.
\end{theorem}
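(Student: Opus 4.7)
The plan is to split the statement into two halves: existence of a decomposition of $M$ into indecomposables with local endomorphism rings, and uniqueness of such a decomposition up to isomorphism and relabeling of the index set. The uniqueness half is handled directly by the cited theorem of Azumaya, which says that any two direct sum decompositions into summands with local endomorphism rings are related by a bijection of the index sets matching isomorphic summands. So essentially all the work goes into proving existence. Existence in turn splits naturally into (a) showing that every indecomposable pfd module has local endomorphism ring, and (b) showing that every pfd module actually decomposes into such indecomposable summands.

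For (a), the key tool is an infinite version of Fitting's lemma adapted to the pointwise finite dimensional setting. Given an endomorphism $f\colon M\to M$, define subfunctors $K$ and $I$ of $M$ by $K_p = \bigcup_n \ker(f_p^n)$ and $I_p = \bigcap_n \operatorname{im}(f_p^n)$. Because $M_p$ is finite dimensional, both chains stabilize at some integer $n_p$ depending on $p$, and at that index pointwise Fitting's lemma gives $M_p = K_p \oplus I_p$. One then checks that $K$ and $I$ are indeed persistence submodules (naturality of the kernel and image chains under the structure maps $M_{p\to q}$), and that the pointwise decompositions assemble into a direct sum decomposition $M = K \oplus I$ in the functor category. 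If $M$ is indecomposable, one of $K$ or $I$ must vanish, so $f$ is either locally nilpotent or an automorphism. A routine computation then shows that the sum of two locally nilpotent endomorphisms is locally nilpotent at each point (using finite dimensionality), so they form a two-sided ideal equal to the set of non-units; hence $\operatorname{End}(M)$ is local.

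For (b), the natural move is a Zorn-type argument on the poset of families of pairwise orthogonal nonzero idempotents in $\operatorname{End}(M)$ whose pointwise sum at every $p$ equals the identity on $M_p$. The main obstacle, and the principal technical content of the Botnan--Crawley-Boevey argument, is showing that one can always refine a non-indecomposable summand by splitting off a further idempotent, and that the pfd hypothesis gives enough control to prevent pathological transfinite behavior and to ensure that the limit of a chain of such families is again a valid family (one needs the pointwise sums to remain finite at each $p$, which is where finite-dimensionality of the stalks is essential). The idempotent-lifting step uses (a) in the following way: a summand $N$ of $M$ fails to be indecomposable exactly when $\operatorname{End}(N)$ is not local, and by (a) applied in reverse this yields a non-trivial pair of orthogonal idempotents that split $N$. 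Applying Zorn's lemma then produces a maximal family, whose corresponding summands are the desired indecomposables $M_i$, and Azumaya's theorem finally yields the bijection $\sigma$ with $M_i \cong M'_{\sigma(i)}$.
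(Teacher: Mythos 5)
The paper does not prove this theorem: it cites it as Theorem 1.1 of Botnan--Crawley-Boevey and Theorem 1(ii) of Azumaya, with no argument supplied in the text. So there is no ``paper's own proof'' against which to compare; what you have written is a reconstruction of what the cited references establish, and the appropriate check is whether that reconstruction is internally sound.

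Your overall architecture (Fitting lemma for locality of endomorphism rings, Zorn's lemma for existence of a decomposition, Azumaya for uniqueness) is the standard shape of such arguments, and the pointwise Fitting decomposition $M=K\oplus I$ and the naturality checks are handled correctly. However, there is a concrete error in the step deducing that $\operatorname{End}(M)$ is local. You assert that ``the sum of two locally nilpotent endomorphisms is locally nilpotent at each point.'' This is false as a pointwise statement: nilpotent plus nilpotent is not nilpotent in general; for instance $\begin{pmatrix}0&1\\0&0\end{pmatrix}+\begin{pmatrix}0&0\\1&0\end{pmatrix}$ is invertible. The correct route is indirect. One shows first that if $n$ is locally nilpotent then $1-n$ is a unit (its inverse is the geometric series $\sum_k n^k$, which makes sense pointwise because $n_p$ is nilpotent for each $p$). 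Using this and the Fitting dichotomy (every non-unit is locally nilpotent when $M$ is indecomposable), one deduces that if $f,g$ are non-units and $f+g=u$ were a unit, then $u^{-1}f$ and $u^{-1}g=1-u^{-1}f$ would both be locally nilpotent, yet $1-u^{-1}f$ is a unit, a contradiction. Closure of the non-units under left and right multiplication is similar. The conclusion you want is true, but not for the reason you give, and the stated step as written would not survive a careful referee.

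Two smaller remarks. First, the appeal to part (a) to ``split off a further idempotent'' from a decomposable summand $N$ is superfluous: $N$ is decomposable precisely when $\operatorname{End}(N)$ contains a nontrivial idempotent, which is immediate from the definitions and does not require knowing that indecomposables have local endomorphism rings. Part (a) is needed only to certify that the summands produced by the Zorn argument have local endomorphism rings, which is in turn what lets Azumaya's theorem apply. Second, your treatment of the Zorn step is rather thin: the delicate point is that a chain of successively finer complete families of orthogonal idempotents has a well-defined upper bound, and one must argue that the pointwise-stabilizing partitions of each $M_p$ (finitely many pieces because $\dim M_p<\infty$) assemble into a genuine family of idempotent endomorphisms of $M$ summing pointwise to the identity. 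You gesture at this, but it is precisely where the real work lies, and I am not confident the Botnan--Crawley-Boevey proof actually proceeds by this direct Zorn argument rather than by a pure-injectivity or algebraic-compactness reduction. Since the paper itself gives no proof, this last concern is about fidelity to the reference rather than correctness per se, but the Fitting-step error above is a genuine gap that needs fixing.
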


\begin{definition}
\label{def_barcode}
Let $M$ be a pfd module isomorphic to $\bigoplus_{i\in \Lambda} M_i$, where each $M_i$ is nonzero and indecomposable.
We define the \textbf{barcode} of $M$ to be the multiset
\[
B(M)\coloneqq \{M_i\}_{i\in \Lambda},
\]
where the $M_i$ are considered as isomorphism classes of modules.
\end{definition}
By \cref{thm_unique_dec}, the barcode is well-defined, and is an isomorphism invariant.

To study properties of persistence modules that are stable with respect to small perturbations, we need a way to express closeness between modules.
The most common way of doing this in persistence theory is through \emph{$\epsilon$-interleavings} (\cref{def_interleaving}), which can be viewed as approximate isomorphisms:
If two modules are $\epsilon$-interleaved for a small $\epsilon$, then they are close to being isomorphic.
\begin{definition}
\label{def_matching}
Let $M$ and $N$ be pfd modules.
A \textbf{matching} from $M$ to $N$ is a bijection $\sigma\colon \bar B(M)\to \bar B(N)$ where $\bar B(M)\sse B(M)$ and $\bar B(N)\sse B(N)$.
If
\begin{itemize}
\item for each $Q\in \bar B(M)$, $Q$ and $\sigma(Q)$ are $\epsilon$-interleaved, and
\item each $Q\in (B(M)\setminus \bar B(M))\cup (B(N)\setminus \bar B(N))$ is $\epsilon$-interleaved with the zero module,
\end{itemize}
then $\sigma$ is an \textbf{$\epsilon$-matching} from $M$ to $N$.
The \textbf{bottleneck distance} $d_B$ is defined by
\[
d_B(M,N) = \inf\{\epsilon\mid \exists \epsilon\text{-matching between } M \text{ and } N\}.
\]
\end{definition}
By symmetry, $\sigma$ is an $\epsilon$-matching from $M$ to $N$ if and only if $\sigma^{-1}$ is an $\epsilon$-matching from $N$ to $M$.
We often say that there is an $\epsilon$-matching between $M$ and $N$ if there is an $\epsilon$-matching from one to the other.

In the one-parameter setting; that is, for modules over the poset $\R$, the barcodes only contain interval modules \cite[Theorem 1.1]{crawley2015decomposition} (see \cref{def_interval}).
Furthermore, we have the following theorem saying that decomposition into indecomposables is stable.
\begin{theorem}[The algebraic stability theorem, {\cite[Theorem 4.4]{chazal2009proximity}}]
\label{thm_ast}
If $M$ and $N$ are pfd one-parameter $\epsilon$-interleaved modules, then there is an $\epsilon$-matching between them.
As a direct consequence, $d_I(M,N)\leq d_B(M,N)$.
\end{theorem}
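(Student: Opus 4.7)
The plan is to combine the decomposition of pfd one-parameter modules into interval modules with an \emph{induced matching} principle: any monomorphism (resp.\ epimorphism) of pfd one-parameter modules yields a canonical injection of barcodes in which matched intervals nest. Given that input, an $\epsilon$-interleaving is handled by factoring an interleaving morphism through its image and composing the two induced matchings.

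The first step I would take is to prove the induced matching theorem. The elementary computation behind it is that the space of morphisms between two interval modules on $\R$ is at most one-dimensional, and is nonzero precisely when the source interval sits inside the target interval in a specific way. For a monomorphism $\phi\colon M\hookrightarrow N$, I would process the interval summands of $M$ in order of death time, greedily assigning to each an unmatched summand of $N$ whose interval contains it. Existence of such a greedy assignment follows from a Hall-type argument based on monotonicity under monomorphisms of the birth-counting function $a\mapsto \dim \img(M_{a'\to a})$ for each $a'<a$, and dually for the death-counting functions under epimorphisms; one checks that these monotonicity properties force Hall's condition on the bipartite graph of admissible matches.

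Given an $\epsilon$-interleaving $(f,g)$ between $M$ and $N$, I would factor $f$ as $M\twoheadrightarrow \img f\hookrightarrow N'$, where $N'$ denotes the $\epsilon$-shift of $N$, apply the induced matching theorem to both factors, and compose the resulting injections. This yields a partial matching between $B(M)$ and $B(N)$ in which each matched pair of bars brackets a common bar of $B(\img f)$. Since $gf$ is the $2\epsilon$-shift on $M$ and $fg$ the $2\epsilon$-shift on $N$, a short nesting argument then pins the births and deaths of each matched pair to within $\epsilon$, so matched bars are $\epsilon$-interleaved; and a bar of $M$ (or $N$) left unmatched is killed by the relevant factor of $f$ (resp.\ $g$), which combined with the interleaving identities forces its length to be at most $2\epsilon$, so it is $\epsilon$-interleaved with the zero module.

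The main obstacle, and the conceptual heart of the proof, is the induced matching theorem; once it is in place the rest is bookkeeping. Its combinatorial core is the monotonicity of the birth- and death-counting functions under mono- and epimorphisms, which in turn reduces to the aforementioned nesting structure of nonzero morphisms between interval modules. A secondary technical issue is that $B(M)$ and $B(N)$ may be infinite, so the greedy construction must be argued to terminate or, alternatively, Hall's theorem must be invoked in its infinite form; restricting to bars intersecting a given compact window and taking a limit sidesteps this cleanly, at the cost of a verification that the matchings on larger and larger windows are compatible.
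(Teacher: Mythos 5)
The paper does not prove this theorem; it is stated with a citation to Chazal et al. (2009) and used as a known result, so there is no ``paper's proof'' to compare against line by line. That said, the paper does explicitly name the approach you are sketching: it says later that ``in one proof of the algebraic stability theorem, Bauer and Lesnick use this correspondence, constructing a subquotient that is essentially a common refinement of two modules to get a matching.'' Your proposal is precisely the Bauer--Lesnick induced-matching proof, not the original Chazal et al.\ argument (which went via an interpolating one-parameter family and a box lemma, with no canonical matching in sight). The two routes are genuinely different: Bauer--Lesnick produces an explicit, functorial matching from the factorization $M\twoheadrightarrow\img f\hookrightarrow N(\epsilon)$, which is exactly the flavor the present paper builds on via subquotients and refinements, whereas the interpolation proof is nonconstructive and does not localize well.

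Your outline is sound, but two points deserve tightening. First, the induced matching for a mono/epi is usually not produced by a Hall-type argument; the standard construction sorts bars by their endpoints and matches greedily, and the monotonicity of the counting function you mention (the rank-invariant-style quantity $\dim\img M_{a'\to a}$) is used directly to show this greedy assignment never fails, not to verify Hall's condition on an abstract bipartite graph. Invoking Hall is a detour that reintroduces exactly the infiniteness worry you then have to patch with a compact-window limit; the direct sorting argument handles pfd barcodes without it. Second, the step ``a short nesting argument pins births and deaths to within $\epsilon$'' is the crux and should not be waved off: from $\ker f\subseteq\ker M_{0\to 2\epsilon}$ and $\cok f$ being annihilated by $N_{0\to 2\epsilon}$, one gets that the matching between $B(M)$ and $B(N(\epsilon))$ moves births and deaths by at most $2\epsilon$ in one direction and $0$ in the other (mono preserves deaths, epi preserves births), and it is only after shifting $N(\epsilon)$ back by $\epsilon$ that the asymmetric $[0,2\epsilon]$ window becomes the symmetric $[-\epsilon,\epsilon]$ window needed for an $\epsilon$-matching. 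That bookkeeping is exactly where the factor of $2$ versus $1$ is hiding, so it is worth writing out.
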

It was noted in \cite[Theorem~3.4]{lesnick2015theory} that the opposite inequality also holds, so we have $d_I = d_B$ for one-parameter modules.

\subsection{How to build a decomposition stability theory}

We would like to build a theory in multiparameter persistence for decomposing modules in a way that is stable.
That is, for two modules that are close in the interleaving distance, we would like to decompose them into indecomposables and match indecomposables in one module with indecomposables in the other, obtaining a statement similar to \cref{thm_ast}.
Sadly, it is not hard to come up with examples showing that this is impossible to do in a naive way.
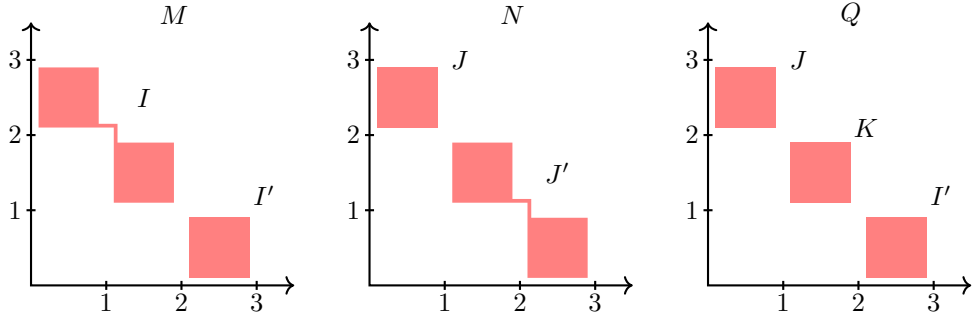
\begin{figure}
\centering
\begin{tikzpicture}[scale=1]
\node at (1.9, 3.6){$M$};
\draw[thick,<->] (0,3.5) to (0,0) to (3.5,0);
\draw[thick] (-.06,1) to (.06,1);
\node[left] at (0,1){$1$};
\draw[thick] (-.06,2) to (.06,2);
\node[left] at (0,2){$2$};
\draw[thick] (-.06,3) to (.06,3);
\node[left] at (0,3){$3$};
\draw[thick] (1,-.06) to (1,.06);
\node[below] at (1,0){$1$};
\draw[thick] (2,-.06) to (2,.06);
\node[below] at (2,0){$2$};
\draw[thick] (3,-.06) to (3,.06);
\node[below] at (3,0){$3$};
\node at (1.5,2.5){$I$};
\fill[red, opacity=0.5]
(.1,2.9) to (.1,2.1) to (1.1,2.1) to (1.1,1.1) to (1.9,1.1) to (1.9,1.9) to (1.15,1.9) to (1.15,2.15) to (.9,2.15) to (.9,2.9) to (.1,2.9);
\node at (3.1,1.2){$I'$};
\fill[red, opacity=0.5] (2.1,.1) rectangle (2.9,.9);
\begin{scope}[xshift=4.5cm]
\node at (1.9, 3.6){$N$};
\draw[thick,<->] (0,3.5) to (0,0) to (3.5,0);
\draw[thick] (-.06,1) to (.06,1);
\node[left] at (0,1){$1$};
\draw[thick] (-.06,2) to (.06,2);
\node[left] at (0,2){$2$};
\draw[thick] (-.06,3) to (.06,3);
\node[left] at (0,3){$3$};
\draw[thick] (1,-.06) to (1,.06);
\node[below] at (1,0){$1$};
\draw[thick] (2,-.06) to (2,.06);
\node[below] at (2,0){$2$};
\draw[thick] (3,-.06) to (3,.06);
\node[below] at (3,0){$3$};
\node at (1.2,3){$J$};
\fill[red, opacity=0.5] (.1,2.1) rectangle (.9,2.9);
\node at (2.5,1.5){$J'$};
\fill[red, opacity=0.5]
(1.1,1.9) to (1.1,1.1) to (2.1,1.1) to (2.1,.1) to (2.9,.1) to (2.9,.9) to (2.15,.9) to (2.15,1.15) to (1.9,1.15) to (1.9,1.9) to (1.1,1.9);
\end{scope}
\begin{scope}[xshift=9cm]
\node at (1.9, 3.6){$Q$};
\draw[thick,<->] (0,3.5) to (0,0) to (3.5,0);
\draw[thick] (-.06,1) to (.06,1);
\node[left] at (0,1){$1$};
\draw[thick] (-.06,2) to (.06,2);
\node[left] at (0,2){$2$};
\draw[thick] (-.06,3) to (.06,3);
\node[left] at (0,3){$3$};
\draw[thick] (1,-.06) to (1,.06);
\node[below] at (1,0){$1$};
\draw[thick] (2,-.06) to (2,.06);
\node[below] at (2,0){$2$};
\draw[thick] (3,-.06) to (3,.06);
\node[below] at (3,0){$3$};
\node at (1.2,3){$J$};
\fill[red, opacity=0.5] (.1,2.1) rectangle (.9,2.9);
\node at (2.1,2.1){$K$};
\fill[red, opacity=0.5] (1.1,1.1) rectangle (1.9,1.9);
\node at (3.1,1.2){$I'$};
\fill[red, opacity=0.5] (2.1,.1) rectangle (2.9,.9);
\end{scope}
\end{tikzpicture}
\caption{The module $M= I\oplus I'$ on the left, $N= J\oplus J'$ in the middle, and $Q = J\oplus K\oplus I'$ on the right.
$M$ and $N$ are $0.03$-interleaved and have a common $0.03$-refinement $Q$, but any bijection between $\{I, I'\}$ and $\{J, J'\}$ would match two components that are not $\delta$-interleaved for any $\delta<0.4$.
}
\label{fig_large_bottleneck}
\end{figure}
In \cref{fig_large_bottleneck}, we see modules $M$ and $N$ similar to those in \cite[Ex. 9.1]{botnan2018algebraic} that are $\epsilon$-interleaved for a small $\epsilon$, but $B(M)$ contains a module $I$  that is not $\delta$-interleaved with a module in $B(N)$ unless $\delta \gg \epsilon$.
In this example, however, we see that we ``almost'' have a matching between indecomposable summands of $M$ and $N$:
With a little surgery, we could remove the thin parts in $I$ and $J'$ so that $B(M)$ and $B(N)$ would contain three modules each, allowing a good matching (that is, an $\epsilon$-matching for a small $\epsilon$).
And there is in fact a known operation that would achieve this, namely \emph{erosion} (\cref{def_erosion}).
In this example, we have that $M$ and $N$ do not allow a good matching, but $\Er_\epsilon(M)$ and $\Er_\epsilon(N)$ do.

Unfortunately, eroding does not solve all our problems.
In \cref{fig_erosion_fail}, we again have modules $M$ and $N$ that are $\epsilon$-interleaved and do not allow a good matching, but would allow a good matching if we could make some small changes to the modules -- this time, $N$ splits into a direct sum of two indecomposables if we remove the upper right corner.
But now erosion does not get the job done.
For any $\delta<0.9$, $\Er_\delta(N)$ is indecomposable, while $B(\Er_\delta(M))$ has two modules coming from the two indecomposable summands of $M$.

The $0.1$-pruning of $M$, on the other hand, gives us what we want: it splits into two pieces while at the same time approximating both $M$ and $N$.
(We take a closer look at a similar example in \cref{ex_pruning}.)
Applying pruning to the example in \cref{fig_large_bottleneck}, we obtain a module decomposing into three summands, similar to $Q$.
Going by these examples, pruning seems to give good approximate decompositions of modules.
To formalize what a ``good approximate decomposition'' of a module should mean, we first introduce the \emph{$\epsilon$-erosion neighborhood} $\EN_\epsilon(N)$ (\cref{def_erosion_nbh}) of modules that ``lie between'' $N$ and its $\epsilon$-erosion in an appropriate sense.
Then we introduce $\epsilon$-refinements:
\begin{definition}
\label{def_refinement}
Let $\epsilon\geq 0$, and let $M$ be a pfd module.
An \textbf{$\epsilon$-refinement} of $M$ is a module isomorphic to a module $\bigoplus_{M'\in B(M)} N'$ such that $N'\in \EN_\epsilon(M')$ for each $M'\in B(M)$.
\end{definition}
Intuitively, $\epsilon$-refining a module allows us to split apart modules in $B(M)$ that are $\epsilon$-close to decomposing to obtain a finer barcode $B(M')$ (where $M'$ is an $\epsilon$-refinement of $M$), but we are not allowed to glue together summands.
(If we glue all the summands together into one piece, this will tell us nothing interesting about the decomposition of a module, which is why we do not permit gluing.)
Splitting apart $M_i$ corresponds to choosing an $N_i$ that decomposes into several summands.
Given two modules $M$ and $N$, we take the view that the appropriate notion of a stabilized $\epsilon$-matching in the multiparameter setting is a module $R$ that is simultaneously an $\epsilon$-refinement of $M$ and of $N$.
This view certainly works well in the toy examples of \cref{fig_large_bottleneck} and \cref{fig_erosion_fail}: in both cases there are common $\epsilon$-refinements for small $\epsilon$ ($Q$ and $\Pru_{0.1}(M)$, respectively).
By definition of refinement, these can be obtained by breaking apart the ``almost decomposable'' indecomposables in each barcode: E.g., in \cref{fig_large_bottleneck}, we associate $J\oplus K$ to $I$ (``breaking apart'' $I$ into two pieces) and $I'$ to $I'$ to obtain the $0.03$-refinement $Q$ of $M$.
It also generalizes parts of the one-parameter stability theory in a nice way:
In one proof of the algebraic stability theorem (\cref{thm_ast}), Bauer and Lesnick \cite{bauer2015induced} construct a subquotient that is essentially a common refinement of two modules to get a matching.
As we show in \cref{thm_eq_f_R_d_B} (with help from a result in \cite{bauer2015induced}), an $\epsilon$-matching between one-parameter modules gives rise to a common $\epsilon$-refinement and vice versa, and we also show this for \emph{upset decomposable} modules, a special class of multiparameter modules.
This justifies our view of common refinements as stabilized matchings:
In many nice cases, common refinements and matchings are equivalent, but common refinements avoid the unstable behavior of matchings in the examples in \cref{fig_large_bottleneck,fig_erosion_fail}.

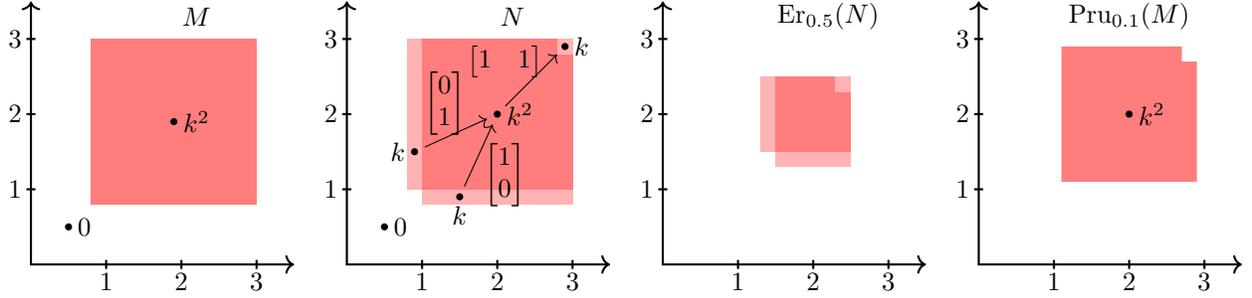
\begin{figure}
\centering
\begin{tikzpicture}[scale=1]
\draw[thick,<->] (0,3.5) to (0,0) to (3.5,0);
\draw[thick] (-.06,1) to (.06,1);
\node[left] at (0,1){$1$};
\draw[thick] (-.06,2) to (.06,2);
\node[left] at (0,2){$2$};
\draw[thick] (-.06,3) to (.06,3);
\node[left] at (0,3){$3$};
\draw[thick] (1,-.06) to (1,.06);
\node[below] at (1,0){$1$};
\draw[thick] (2,-.06) to (2,.06);
\node[below] at (2,0){$2$};
\draw[thick] (3,-.06) to (3,.06);
\node[below] at (3,0){$3$};
\node at (2.2,3.3){$M$};
\fill[red, opacity=0.3] (.8,.8) rectangle (3,3);
\fill[red, opacity=0.3] (.8,.8) rectangle (3,3);
\node[right] at (1.9,1.9){$k^2$};
\draw[color=black,fill=black] (1.9,1.9) circle (.04);
\node[right] at (.5,.5){$0$};
\draw[color=black,fill=black] (.5,.5) circle (.04);
\begin{scope}[xshift=4.2cm]
\draw[thick,<->] (0,3.5) to (0,0) to (3.5,0);
\draw[thick] (-.06,1) to (.06,1);
\node[left] at (0,1){$1$};
\draw[thick] (-.06,2) to (.06,2);
\node[left] at (0,2){$2$};
\draw[thick] (-.06,3) to (.06,3);
\node[left] at (0,3){$3$};
\draw[thick] (1,-.06) to (1,.06);
\node[below] at (1,0){$1$};
\draw[thick] (2,-.06) to (2,.06);
\node[below] at (2,0){$2$};
\draw[thick] (3,-.06) to (3,.06);
\node[below] at (3,0){$3$};
\node at (2.2,3.3){$N$};
\fill[red, opacity=0.3] (.8,1) rectangle (3,3);
\fill[red, opacity=0.3] (1,.8) rectangle (3,3);
\fill[white] (2.8,2.8) rectangle (3,3);
\fill[red, opacity=0.3] (2.8,2.8) rectangle (3,3);
\node[right] at (2,2){$k^2$};
\draw[color=black,fill=black] (2,2) circle (.04);
\node[right] at (.5,.5){$0$};
\draw[color=black,fill=black] (.5,.5) circle (.04);
\node[below] at (1.5,.9){$k$};
\draw[color=black,fill=black] (1.5,.9) circle (.04);
\node[left] at (.9,1.5){$k$};
\draw[color=black,fill=black] (.9,1.5) circle (.04);
\node[right] at (2.9,2.9){$k$};
\draw[color=black,fill=black] (2.9,2.9) circle (.04);
\draw[->, shorten <=.15cm, shorten >=.15cm] (.9,1.5) to (2,2);
\draw[->, shorten <=.15cm, shorten >=.15cm] (1.5,.9) to (2,2);
\draw[->, shorten <=.15cm, shorten >=.15cm] (2,2) to (2.9,2.9);
\node at (2.1,1.2){$\begin{bmatrix}
1\\
0
\end{bmatrix}$};
\node at (1.3,2.15){$\begin{bmatrix}
0\\
1
\end{bmatrix}$};
\node at (2.1,2.7){$\begin{bmatrix}
1&1
\end{bmatrix}$};
\end{scope}
\begin{scope}[xshift=8.4cm]
\draw[thick,<->] (0,3.5) to (0,0) to (3.5,0);
\draw[thick] (-.06,1) to (.06,1);
\node[left] at (0,1){$1$};
\draw[thick] (-.06,2) to (.06,2);
\node[left] at (0,2){$2$};
\draw[thick] (-.06,3) to (.06,3);
\node[left] at (0,3){$3$};
\draw[thick] (1,-.06) to (1,.06);
\node[below] at (1,0){$1$};
\draw[thick] (2,-.06) to (2,.06);
\node[below] at (2,0){$2$};
\draw[thick] (3,-.06) to (3,.06);
\node[below] at (3,0){$3$};
\node at (2.2,3.3){$\Er_{0.5}(N)$};
\fill[red, opacity=0.3] (1.3,1.5) rectangle (2.5,2.5);
\fill[red, opacity=0.3] (1.5,1.3) rectangle (2.5,2.5);
\fill[white] (2.3,2.3) rectangle (2.5,2.5);
\fill[red, opacity=0.3] (2.3,2.3) rectangle (2.5,2.5);
\end{scope}
\begin{scope}[xshift=12.6cm]
\draw[thick,<->] (0,3.5) to (0,0) to (3.5,0);
\draw[thick] (-.06,1) to (.06,1);
\node[left] at (0,1){$1$};
\draw[thick] (-.06,2) to (.06,2);
\node[left] at (0,2){$2$};
\draw[thick] (-.06,3) to (.06,3);
\node[left] at (0,3){$3$};
\draw[thick] (1,-.06) to (1,.06);
\node[below] at (1,0){$1$};
\draw[thick] (2,-.06) to (2,.06);
\node[below] at (2,0){$2$};
\draw[thick] (3,-.06) to (3,.06);
\node[below] at (3,0){$3$};
\node at (2,3.3){$\Pru_{0.1}(M)$};
\fill[red, opacity=0.3] (1.1,1.1) to (2.9,1.1) to (2.9,2.7) to (2.7,2.7) to (2.7,2.9) to (1.1,2.9);
\fill[red, opacity=0.3] (1.1,1.1) to (2.9,1.1) to (2.9,2.7) to (2.7,2.7) to (2.7,2.9) to (1.1,2.9);
\node[right] at (2,2){$k^2$};
\draw[color=black,fill=black] (2,2) circle (.04);
\end{scope}
\end{tikzpicture}
\caption{Left: The direct sum $M$ of two interval modules supported on $[0.8,3]^2$.
Middle left: an indecomposable module $N$.
Middle right: The $0.5$-erosion of $N$, which is also indecomposable.
Right: The $0.1$-pruning $\Pru_{0.1}(M)$ of $M$ decomposes into two summands, allowing a good matching with the components of $M$.
It turns out that $\Pru_{0.1}(M)$ is a $0.3$-refinement of both $M$ and $N$.}
\label{fig_erosion_fail}
\end{figure}

A natural goal is to use these refinements to obtain a matching theorem analogous to \cref{thm_ast}.
With our main result, \cref{thm_main}, we achieve a version of this.
Interpreting common $\delta$-refinements as approximate $\delta$-matchings between modules, the theorem says that for any $\epsilon$-interleaved modules $M$ and $N$, there is an approximate $2r\epsilon$-matching between them, where $r$ is the maximum pointwise dimension of $M$.
This is a close analogy of \cref{thm_ast}, but with an extra factor of $2r$.
Unfortunately, \cref{thm_counterex} shows that this is close to optimal in a certain sense, and therefore this factor cannot be removed.
The common refinement can be chosen independently of $N$, since we can let it be the $\epsilon$-pruning of $M$.
In other words, the relationship between refinements and prunings is that refinements formalize what an approximate decomposition of a module is, while prunings are a specific construction that turns out to give good refinements of modules (but they are not the only way to refine a module).

Distances provide a convenient language for discussing stability; for instance the algebraic stability theorem (\cref{thm_ast}) expresses stability of decomposition simply as $d_B\leq d_I$ for one-parameter modules.
For interval decomposable modules, refinements give rise to a distance $f_R$ (\cref{def_f_R}, \cref{thm_f_R_distance}).
$f_R$ is defined on all pfd modules, but is not a distance in general.
(We discuss the question of how to define a good ``decomposition distance'' for general modules in \cref{subsec_prunings}.)
In \cref{cor_f_R}, we rephrase \cref{thm_main} as $f_R\leq 2rd_I$.
It follows that $f_R\leq 2d_I$ for pointwise at most one-dimensional modules, while one can show that $d_B(N,Q)> 10d_I(N,Q)$ for the modules in \cref{fig_large_bottleneck}, and one can increase the constant $10$ arbitrarily by thinning the ``bridge'' in $J'$.
Thus, $d_B$ is completely unstable even in very simple examples, while $f_R$ allows positive stability results.

Our results arguably pretty much settle the stability question for modules that decompose into a few large components.
If the modules decompose (or are close to decomposing) into many pointwise low-dimensional summands, on the other hand, much remains to be understood about stability.
We conjecture in \cref{conj_main} that the $r$ appearing in the theorem can be replaced by a constant times the maximum pointwise dimension of the modules in $B(M)$, which might be significantly smaller than the maximum pointwise dimension of $M$.
We emphasize that \cref{conj_main} cannot be phrased in terms of the bottleneck distance due to its instability.
Before our definition of refinements, it is not clear how such a conjecture could have been expressed at all.

\section{The $\epsilon$-erosion neighborhood of a module}
\label{sec_erosion}

We now define and study the relationships between erosions and erosion neighborhoods, subquotients and interleavings.
\cref{fig_nhoods} gives a schematic summary of how these concepts and $\epsilon$-refinements are related.
The large pink region is the $\epsilon$-interleaving neighborhood of a module $M$ for some $\epsilon>0$.
The subquotients of $M$ can be thought of as modules that are ``smaller'' than $M$ (this intuition is formalized by the poset relation of \cref{lem_subq_poset}), and are illustrated as the region below the straight lines in \cref{fig_nhoods}.
The $\epsilon$-erosion neighborhood $\EN_\epsilon(M)$ (\cref{def_erosion_nbh}) only contains (isomorphism classes of) subquotients of $M$ of a specific form, and is contained in the $\epsilon$-interleaving neighborhood of $M$ by \cref{thm_erosion_containment_gives_int}.
The $\epsilon$-refinements of $M$ are even more specific: they are contained in $\EN_\epsilon(M)$ by \cref{lem_refins_are_ers} (ii).
As the figure illustrates, and this is proved in \cref{lem_int_subq_not_in_er_nhood}, there are sometimes subquotients of $M$ that are $\epsilon$-interleaved with $M$, but are not in $\EN_\epsilon(M)$, so $\EN_\epsilon(M)$ should not be thought of simply as the subquotients of $M$ that are $\epsilon$-interleaved with $M$.

Though $\EN_\epsilon(M)$ is much more limited than the $\epsilon$-interleaving neighborhood of $M$, we use results like \cref{lem_if_int_then_er} and \cref{thm_strong_equiv} to argue that it captures what happens in the vicinity of $M$ essentially just as well as what the interleaving neighborhood does.
A convenient aspect of $\EN_\epsilon(M)$ is that we can define a canonical set of modules (not isomorphisms classes!) $\oEN_\epsilon(M)$ (\cref{def_erosion_nbh}) such that every isomorphism class in $\EN_\epsilon(M)$ has at least one representative in $\oEN_\epsilon(M)$.
This makes the erosion neighborhood a lot more concrete and structured than the interleaving neighborhood.

The $\epsilon$-erosion $\Er_\epsilon(M)$ can be thought of as the ``minimal'' module in $\EN_\epsilon(M)$.
One needs to be careful, however; a somewhat more precise statement is that $\oEN_\epsilon(M)$ contains a family $\ER_\epsilon(M)$ of modules (\cref{def_ER}) canonically isomorphic to $\Er_\epsilon(M)$ that acts as a lower boundary of $\oEN_\epsilon(M)$.
That is, there is a relation $\preceq$ on subquotients of $M$ such that $Q\in \oEN_\epsilon(M)$ if and only if there is an $N\in \ER_\epsilon(M)$ with $N\preceq Q\preceq M$.
The precise version of this statement is given in \cref{lem_squeeze}.

\begin{figure}
\centering
\begin{tikzpicture}[scale=2]
\draw (-1,-2) to (0,0) to (1,-2);
\draw[thick] (-.5,-1) to [out=-40,in=180] (0,-1.6) to [out=0,in=-140] (.5,-1);
\fill[red, opacity=0.3] (-.5,-1) to [out=-40,in=180] (0,-1.6) to [out=0,in=-140] (.5,-1) to [out=40,in=-90] (1.2,0) to [out=90,in=-10] (0,1) to [out=170,in=90] (-1.5,-.5) to [out=-90,in=180] (-.7,-1.5) to [out=0,in=-60] (-.6,-1.2) to [out=120,in=-130] (-1.1,-.7) to [out=50,in=140] (-.5,-1);
\fill[red, opacity=0.3] (-.5,-1) to [out=-40,in=180] (0,-1.6) to [out=0,in=-140] (.5,-1) to (0,0);
\begin{scope}
\clip (-.5,-1) to [out=-40,in=180] (0,-1.6) to [out=0,in=-140] (.5,-1) to (0,0);
\fill[red, opacity=0.5] (0,0) to (-.2,-1.7) to (.2,-1.7);
\end{scope}
\draw[color=black,fill=black] (0,0) circle (.04);
\node[above] at (0,0){$M$};
\node at (0,-.9){$\EN_\epsilon(M)$};
\draw (.255,-1.4) to (1.3,-1.4);
\node[right] at (1.3,-1.4){$\ER_\epsilon(M)$};
\node at (-.85,-.3){$d_I(M,-)\leq \epsilon$};
\draw (0,-1.9) to (0,-1.4);
\node at (0,-2){$\epsilon$-refinements of $M$};
\end{tikzpicture}
\caption{An illustration of the relationships between the subquotients of $M$, the interleaving and erosion neighborhoods of $M$, and the $\epsilon$-refinements and $\epsilon$-erosion of $M$.
The figure is somewhat inconsistent for easier visualization:
$\EN_\epsilon(M)$ is a set of equivalence classes of modules up to isomorphism.
Since $\ER_\epsilon(M)$ is a set of isomorphic modules, it only represents one element of $\EN_\epsilon(M)$, contrary to how it is drawn.
}
\label{fig_nhoods}
\end{figure}

\subsection{Key definitions and properties}

Recall that a persistence module is a functor $M\colon \Pb\to \mathbf{Vec}$ and is described in terms of vector spaces $M_p$ and linear transformations $M_{p\to q}$.
A morphism $f\colon M\to N$ of persistence modules is a natural transformation and is made up of pointwise morphisms $f_p\colon M_p\to N_p$.

\begin{definition}
\label{def_shift}
For a module $M$ and $\epsilon\in \R$, let the \textbf{$\epsilon$-shift of $M$}, $M(\epsilon)$, be the module defined by $M(\epsilon)_p = M_{\Pb_\epsilon(p)}$ and $M(\epsilon)_{p\to q} = M_{\Pb_\epsilon(p)\to \Pb_\epsilon(q)}$.
For $\epsilon\leq \delta$, let $M_{\epsilon\to \delta}\colon M(\epsilon)\to M(\delta)$ be the morphism given by $(M_{\epsilon\to \delta})_p = M_{\Pb_\epsilon(p)\to \Pb_\delta(p)}$.
\end{definition}
For one-parameter modules, the notation $M_{\epsilon\to \delta}$ is ambiguous, as it can mean both a morphism of vector spaces from $M_\epsilon$ to $M_\delta$ and a morphism of modules from $M(\epsilon)$ to $M(\delta)$.
Except in \cref{lem_int_subq_not_in_er_nhood}, where we are working explicitly with one-parameter modules, $M_{\epsilon\to \delta}$ will always refer to the morphism of modules.

There is a category of persistence modules, namely the functor category $\mathbf{Vec}^{\Pb}$.
For any $\epsilon\geq 0$, $\epsilon$-shift is a functor on this category.
In particular, it also acts on morphisms:
$f(\epsilon)$ is given by $f(\epsilon)_p = f_{\Pb_\epsilon(p)}$ for all $p\in \Pb$.

\begin{definition}
\label{def_erosion}
Let $M$ be a module and let $\epsilon\geq 0$.
Let $\Img_\epsilon(M) \coloneqq \img(M_{-\epsilon\to 0})\subseteq M$ and $\Ker_\epsilon(M) \coloneqq \ker(M_{0\to \epsilon}) \subseteq M$.
For $\epsilon\geq 0$, we define the $\epsilon$-erosion of $M$ to be
\[
\Er_\epsilon(M) = \Img_\epsilon(M)/(\Ker_\epsilon(M)\cap \Img_\epsilon(M)).
\]
\end{definition}
Note that since $\Img_0(M) = M$ and $\Ker_0(M) = 0$, $\Er_0(M) = M/0 \cong M$.
For $\epsilon\leq\delta$ we have $\Img_\epsilon\supseteq \Img_\delta$ and $\Ker_\epsilon\sse \Ker_\delta$, so it makes sense to think of the $\epsilon$-erosion of $M$ as ``shrinking'' $M$ from the top and the bottom by more and more as $\epsilon$ increases.

For any modules $M$ and $N$ with submodules $M_1\sse M_2$ and $N_1\sse N_2$, respectively, and a morphism $f\colon M\to N$ with $f(M_1)\sse N_1$ and $f(M_2)\sse N_2$, there is a canonical morphism $\bar f\colon M_1/M_2 \to N_1/N_2$ induced by $f$.
Throughout the paper, ``induced morphism'' will refer to such a canonical morphism.
\begin{remark}
\label{rem_not_unique_er}
Our choice of the submodules $\Img_\epsilon(M)$ and $\Ker_\epsilon(M)\cap \Img_\epsilon(M)$ is somewhat arbitrary; for any submodule $K$ of $M$ with
\begin{equation}
\label{eq_er_set}
\Ker_\epsilon(M)\cap \Img_\epsilon(M) \subseteq K\subseteq \Ker_\epsilon(M),
\end{equation}
we have a canonical isomorphism $\Er_\epsilon(M) \xrightarrow{\sim} (\Img_\epsilon(M)+K)/K$ induced by the inclusion $\Img_\epsilon(M)\hookrightarrow \Img_\epsilon(M)+K$.
(We leave to the reader to check that this morphism is indeed both injective and surjective.)
Thus, there is a whole set of alternative representations of $\Er_\epsilon(M)$ as quotients of submodules of $M$ that are all equivalent up to isomorphism.
In particular, $(\Img_\epsilon(M)+\Ker_\epsilon(M))/\Ker_\epsilon(M)$ is a natural alternative to our definition.
\end{remark}
\begin{definition}
\label{def_ER}
For a module $M$ and $\epsilon\geq 0$, we define $\ER_\epsilon(M)$ as the set of modules of the form $(\Img_\epsilon(M)+K)/K$, where $K$ satisfies \cref{eq_er_set}.
\end{definition}
By \cref{rem_not_unique_er}, all the modules of $\ER_\epsilon(M)$ are isomorphic to $\Er_\epsilon(M)$ by a canonical isomorphism.

Other modules isomorphic to $\Er_\epsilon(M)$ are $M(-\epsilon)/\ker M_{-\epsilon\to \epsilon} = M(-\epsilon)/\Ker_{2\epsilon}(M(-\epsilon))$ or $\img M_{-\epsilon\to \epsilon} = \Img_{2\epsilon}(M(\epsilon))$, and there are canonical isomorphisms
\[
M(-\epsilon)/\Ker_{2\epsilon}(M(-\epsilon)) \to \Er_\epsilon(M) \to \Img_{2\epsilon}(M(\epsilon))
\]
induced by the shift morphisms $M(-\epsilon) \to M \to M(\epsilon)$.
We prefer our definition of $\Er_\epsilon(M)$ because it expresses $\Er_\epsilon(M)$ as a quotient of submodules of $M$, and not of $M(-\epsilon)$ or $M(\epsilon)$, though the isomorphism $\Er_\epsilon(M) \cong \img M_{-\epsilon\to \epsilon}$ in particular will come in handy in several proofs.

To our surprise, we were not able to find a reference for \cref{def_erosion}, though the definition seems to have been known in the persistence community for a while.
Erosions in a somewhat different sense were defined and discussed for functions from topological spaces to $\R^2$ in \cite{edelsbrunner2011stability}, and was first introduced for multiparameter modules in \cite{frosini2013stable}.
(The distance $d_T$ in the latter is effectively an erosion distance.)
In \cite{patel2018generalized} and \cite{puuska2020erosion} some of these ideas are developed further.
Though the context and generality in these papers vary, their notion of erosion distance, which we will discuss in \cref{sec_red_distances}, boils down to the same definition when specialized or adapted to our setting.
This definition does not use our notion of erosions on the nose; rather, it passes straight to the pointwise dimension of $\Er_\epsilon(M)$.
Since $\Er_\epsilon(M)\cong \img(M_{-\epsilon\to \epsilon})$, this is in fact a version of the \emph{rank invariant}, which associates to each $a\leq b\in \Pb$ the rank of $M_{a\to b}$, which is equal to $\dim(\img(M_{a\to b}))$.
For our purposes, it makes more sense to use \cref{def_erosion}, as $\Er_\epsilon(M)$ is a much more informative invariant of $M$ than the pointwise dimensions of $\Er_\epsilon(M)$.

Erosion is closely related to subquotients:
\begin{definition}
\label{def_subquotient}
Let $M$ be a module.
A module $N$ is a \textbf{subquotient} of $M$ if there is a module $M'$ with a monomorphism $f\colon M'\hookrightarrow M$ and an epimorphism $g\colon M' \twoheadrightarrow N$.
We write $N\leq M$.
\end{definition}
One can interpret $N\leq M$ as $N$ being smaller than $M$ in an algebraic sense, though this intuition can be misleading in the non-pfd setting.

Let $f$ and $g$ be as in the definition, and define $I = \img f$ and $K = f(\ker g)$.
We have $I/K\cong M'/\ker g \cong N$, so $N$ being a subquotient of $M$ is equivalent to the existence of submodules $K\subseteq I\subseteq M$ with $N\cong I/K$.
In particular, for any module $M$ and $\epsilon\geq 0$, $\Er_\epsilon(M)\leq M$.

The notation suggests that this is a poset relation between modules (up to isomorphism), and this is indeed the case for the class of pfd modules:
\begin{lemma}
\label{lem_subq_poset}
The subquotient relation $\leq$ is a partial order on the set of isomorphism classes of pfd modules.
\end{lemma}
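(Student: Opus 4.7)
The plan is to verify the three axioms of a partial order: reflexivity, transitivity, and antisymmetry. Reflexivity is trivial: for any $M$, take $M'=M$ with identity maps $\id\colon M\hra M$ and $\id\colon M\thra M$, giving $M\leq M$.

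For transitivity, I would first record the equivalent characterization noted just above the lemma: $N\leq M$ iff there exist submodules $K\sse I\sse M$ with $N\cong I/K$. Given $P\leq N\leq M$, write $N\cong I/K$ with $K\sse I\sse M$ and $P\cong I'/K'$ with $K'\sse I'\sse N$. Pulling $I'$ and $K'$ back along the quotient map $I\thra I/K\cong N$ produces submodules $\widetilde K'\sse \widetilde I'\sse I\sse M$ with $\widetilde K'\supseteq K$, $\widetilde I'/K\cong I'$ and $\widetilde K'/K\cong K'$. Then $\widetilde I'/\widetilde K'\cong I'/K'\cong P$, so $P\leq M$.

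Antisymmetry is the step that genuinely uses the pfd hypothesis, and I expect it to be the main obstacle (indeed, without pfd one should expect counterexamples via modules that are proper subquotients of themselves). The idea is to exploit that pointwise dimension is monotone under the subquotient relation: if $N\cong I/K$ with $K\sse I\sse M$, then $\dim N_p=\dim I_p-\dim K_p\leq \dim I_p\leq \dim M_p$ at every $p\in \Pb$. Applying this to $M\leq N$ and $N\leq M$ gives $\dim M_p=\dim N_p<\infty$ for all $p$. Now fix any representation $M\cong I/K$ witnessing $M\leq N$, with $K\sse I\sse N$. The dimension equality $\dim I_p-\dim K_p=\dim N_p$ together with $\dim I_p\leq \dim N_p$ forces $\dim K_p=0$ and $\dim I_p=\dim N_p$. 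Since everything is finite-dimensional, $K=0$ and $I_p=N_p$ for every $p$; as these equalities are compatible with the transition maps inherited from $N$, we conclude $I=N$ as submodules, so $M\cong I/K=N/0\cong N$.

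Finally, I would note that the relation is well-defined on isomorphism classes: replacing $M$ or $N$ by an isomorphic module transports the witnessing submodules along the isomorphism, so $\leq$ descends to isomorphism classes. This completes the verification that $\leq$ is a partial order on pfd modules up to isomorphism.
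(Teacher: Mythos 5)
Your proposal is correct and follows essentially the same route as the paper's proof: identity witnesses for reflexivity, comparison of pointwise dimensions for antisymmetry, and composing subquotient witnesses for transitivity. The only cosmetic difference is in transitivity, where the paper phrases the intermediate step as a pullback of a mono against an epi, while you carry out the equivalent concrete construction by taking preimages of submodules along the quotient map and invoking the third isomorphism theorem; in the abelian category of persistence modules these are the same construction, so nothing new is gained or lost.
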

\begin{proof}
Reflexivity is easy: just let $f$ and $g$ be the identity.

Next, we prove antisymmetry.
If $N\leq M$ and $M\ncong N$, either $f$ or $g$ is not an isomorphism.
Suppose $f$ is not an isomorphism.
Then there is a $p\in \Pb$ such that $f_p$ is a proper inclusion, which means that $\dim(N_p)<\dim(M_p)$, as the dimensions are finite.
A similar statement holds if $g$ is not an isomorphism.
If in addition $M\leq N$, then $\dim(N_p)\geq \dim(M_p)$, a contradiction.

Finally, we prove transitivity.
If $M\leq N \leq Q$, there are modules $M'$ and $Q'$ with morphisms
\[
M \twoheadleftarrow M' \hra N \twoheadleftarrow Q' \hra Q.
\]
Taking the pullback of the middle two morphisms gives us morphisms $M' \twoheadleftarrow N' \hra Q'$, since pullbacks preserve monomorphisms and epimorphisms.
This gives us
\[
M \twoheadleftarrow M' \twoheadleftarrow N' \hra Q' \hra Q.
\]
Composing morphisms, we get $M \twoheadleftarrow N' \hra Q$, so $M\leq Q$.
\end{proof}

\begin{lemma}[Erosion is well-behaved]
\label{lem_er_behaved}
Let $\epsilon,\delta\geq 0$.
\begin{itemize}
\item[(i)] If there is a monomorphism $M\hra N$, then there is a monomorphism $\Er_\epsilon(M)\hra \Er_\epsilon(N)$.
If there is an epimorphism $M\thra N$, then there is an epimorphism $\Er_\epsilon(M)\thra \Er_\epsilon(N)$.
Thus, $M\leq N$ implies $\Er_\epsilon(M)\leq \Er_\epsilon(N)$.
\item[(ii)] For any set $\{M_\ell\}_{\ell\in\Lambda}$ of modules, we have $\Er_\epsilon(\bigoplus_{\ell\in \Lambda} M_\ell) \cong \bigoplus_{\ell\in \Lambda} \Er_\epsilon(M_\ell)$.
\item[(iii)] For any module $M$, we have $\Er_\epsilon(\Er_\delta(M)) \cong \Er_{\epsilon+\delta}(M)$.
\end{itemize}
\end{lemma}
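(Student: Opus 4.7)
The plan is to work throughout with the alternative characterization $\Er_\epsilon(M)\cong\img M_{-\epsilon\to\epsilon}$ (viewed as a submodule of $M(\epsilon)$) noted just after \cref{def_erosion}, since expressing erosion as a single image rather than a quotient of a submodule makes morphism-level arguments much smoother. The meta-principle used everywhere is that $(-)(\alpha)$, $\img$, $\ker$, intersection and quotient are all functors on $\mathbf{Vec}^\Pb$ that commute with direct sums (being computed pointwise), and that naturality of a morphism $f\colon M\to N$ intertwines $f$ with the structure maps $M_{\alpha\to\beta}$.

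For part (i), start with a monomorphism $f\colon M\hra N$; shift-functoriality yields monomorphisms $f(\pm\epsilon)$, and naturality gives the square $f(\epsilon)\circ M_{-\epsilon\to\epsilon}=N_{-\epsilon\to\epsilon}\circ f(-\epsilon)$. Thus $f(\epsilon)$ carries $\img M_{-\epsilon\to\epsilon}$ into $\img N_{-\epsilon\to\epsilon}$, and its restriction is injective since $f(\epsilon)$ is. For an epimorphism $g\colon M\thra N$, the same commuting square together with surjectivity of $g(-\epsilon)$ forces $\img N_{-\epsilon\to\epsilon}=g(\epsilon)(\img M_{-\epsilon\to\epsilon})$, so the restriction of $g(\epsilon)$ is a surjection onto $\img N_{-\epsilon\to\epsilon}$. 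The subquotient statement then follows by unfolding \cref{def_subquotient}: $M\leq N$ gives a span $M\twoheadleftarrow M'\hra N$, and applying the two halves just proved yields $\Er_\epsilon(M)\twoheadleftarrow\Er_\epsilon(M')\hra\Er_\epsilon(N)$. Part (ii) is a formal consequence of the meta-principle: $\Img_\epsilon$ and $\Ker_\epsilon$ each commute with $\bigoplus_{\ell\in\Lambda}$, so the quotient defining $\Er_\epsilon$ splits as a direct sum.

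Part (iii) reduces to a direct unwinding. Let $N:=\img M_{-\delta\to\delta}\subseteq M(\delta)$, which represents $\Er_\delta(M)$. At any $p\in\Pb$,
\[
N(\pm\epsilon)_p=\img M_{\Pb_{-\delta\pm\epsilon}(p)\to\Pb_{\delta\pm\epsilon}(p)},
\]
and $(N_{-\epsilon\to\epsilon})_p$ is the restriction of $M_{\Pb_{\delta-\epsilon}(p)\to\Pb_{\delta+\epsilon}(p)}$. Composing transition maps gives
\[
(\img N_{-\epsilon\to\epsilon})_p=\img M_{\Pb_{-(\epsilon+\delta)}(p)\to\Pb_{\epsilon+\delta}(p)},
\]
which is exactly the $p$-component of $\Er_{\epsilon+\delta}(M)$ under the image characterization. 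These identifications are natural in $p$, so they assemble into the claimed module isomorphism. I expect the only slightly non-mechanical step in the entire proof to be verifying in (i) that the naturality square really forces the required inclusion (and equality in the epi case) of images; once that is cleanly recorded, everything else is bookkeeping.
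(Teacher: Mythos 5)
Your proof is correct and takes essentially the same approach as the paper: both rely on the characterization $\Er_\epsilon(M)\cong\img M_{-\epsilon\to\epsilon}$ for all three parts. The minor differences (you track the morphisms $f(\pm\epsilon)$ explicitly in (i) rather than passing WLOG to a submodule or a quotient, and you carry out the computation in (iii) pointwise rather than at the module level) are purely presentational.
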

\begin{proof}
(i)
In the former case, we can assume $M\sse N$, which gives
\[
\Er_\epsilon(M)\cong \img M_{-\epsilon\to \epsilon} \sse \img N_{-\epsilon\to \epsilon} \cong \Er_\epsilon(N).
\]
In the latter case, we can assume that $M = N/A$ for some $A\sse N$.
We get
\[
\Er_\epsilon(M) \cong \img(N/A)_{-\epsilon\to \epsilon} \cong \img N_{-\epsilon\to \epsilon}/(A(\epsilon)\cap \img N_{-\epsilon\to \epsilon}) \leq \Er_\epsilon(N), 
\]
since $\Er_\epsilon(N) \cong \img N_{-\epsilon\to \epsilon}$.

(ii)
We have
\[
\Er_\epsilon\left(\bigoplus_{\ell\in \Lambda} M_\ell\right)
\cong \img\left(\bigoplus_{\ell\in \Lambda} M_\ell\right)_{-\epsilon\to \epsilon}
\cong \bigoplus_{\ell\in \Lambda} \img (M_\ell)_{-\epsilon\to \epsilon}
\cong \bigoplus_{\ell\in \Lambda} \Er(M_\ell).
\]

(iii)
We have $\Er_\delta(M) \cong \img M_{-\delta \to \delta}$, so $\Er_\epsilon(\Er_\delta(M)) \cong \img(\img M_{-\delta \to \delta})_{-\epsilon\to\epsilon}$.
Now $(\img M_{-\delta \to \delta})_{-\epsilon\to\epsilon}$ is the morphism from $\img M_{-\delta-\epsilon \to \delta-\epsilon}\sse M(\delta-\epsilon)$ to $\img M_{-\delta+\epsilon \to \delta+\epsilon}\sse M(\delta+\epsilon)$ we get by restricting $M_{\delta-\epsilon\to \delta+\epsilon}$.
Thus,\belowdisplayskip=-12pt
\begin{align*}
\img(\img M_{-\delta \to \delta})_{-\epsilon\to\epsilon} &= M_{\delta-\epsilon\to \delta+\epsilon}(\img M_{-\delta-\epsilon \to \delta-\epsilon})\\
&= \img M_{-\delta-\epsilon \to \delta+\epsilon}\\
&\cong \Er_{\epsilon+ \delta}(M).
\end{align*}
\end{proof}

\subsection{The erosion neighborhood}

\begin{definition}
\label{def_erosion_nbh}
For a module $M$ and $\epsilon\in [0,\infty)$, define $\oEN_\epsilon(M)$ as the set of modules of the form $M_1/M_2$, where $M_2\sse M_1\sse M$, $\Img_\epsilon(M)\sse M_1$ and $M_2\sse \Ker_\epsilon(M)$.
We define the \textbf{$\epsilon$-erosion neighborhood} of a module $M$ as the collection $\EN_\epsilon(M)$ of modules $N$ isomorphic to a module in $\oEN_\epsilon(M)$.
\end{definition}
The following lemma shows that the $\epsilon$-erosion is a special case of $\epsilon$-refinements, which are special cases of modules in the $\epsilon$-erosion neighborhood.
\begin{lemma}
\label{lem_refins_are_ers}
Let $M$ be a direct sum of indecomposable modules, and let $\epsilon\geq 0$.
\begin{itemize}
\item[(i)] $\Er_\epsilon(M)$ is an $\epsilon$-refinement of $M$.
\item[(ii)] Suppose $R$ is an $\epsilon$-refinement of $M$.
Then $R\in \EN_\epsilon(M)$.
\end{itemize}
\end{lemma}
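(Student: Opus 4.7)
The plan is to exploit the compatibility of erosion and of the submodules $\Img_\epsilon$ and $\Ker_\epsilon$ with direct sums, and then to check each of the conditions in \cref{def_erosion_nbh} and \cref{def_refinement} more or less mechanically.

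For (i), write $M \cong \bigoplus_{i\in \Lambda} M_i$ with each $M_i$ indecomposable. By \cref{lem_er_behaved}(ii), $\Er_\epsilon(M) \cong \bigoplus_{i\in \Lambda} \Er_\epsilon(M_i)$, so by \cref{def_refinement} it suffices to show $\Er_\epsilon(M_i) \in \EN_\epsilon(M_i)$ for each $i$. But this is immediate from the definitions: taking $M_1 = \Img_\epsilon(M_i)$ and $M_2 = \Ker_\epsilon(M_i)\cap \Img_\epsilon(M_i)$, we have $M_2 \sse M_1 \sse M_i$, trivially $\Img_\epsilon(M_i)\sse M_1$, and $M_2 \sse \Ker_\epsilon(M_i)$, so that $M_1/M_2 = \Er_\epsilon(M_i)$ lies in $\oEN_\epsilon(M_i)$.

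For (ii), suppose $R$ is an $\epsilon$-refinement of $M$, so that $R \cong \bigoplus_{i\in \Lambda} N_i$ with $N_i \in \EN_\epsilon(M_i)$. Pick representatives $N_i \cong A_i/B_i$ as in \cref{def_erosion_nbh}, so $B_i \sse A_i \sse M_i$, $\Img_\epsilon(M_i)\sse A_i$ and $B_i\sse \Ker_\epsilon(M_i)$. Identifying $M$ with $\bigoplus_{i\in \Lambda} M_i$, set $M_1' = \bigoplus_{i\in \Lambda} A_i$ and $M_2' = \bigoplus_{i\in \Lambda} B_i$, both naturally submodules of $M$. Then $M_2' \sse M_1' \sse M$, and $M_1'/M_2' \cong \bigoplus_{i\in \Lambda} A_i/B_i \cong R$. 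It remains to verify the containments $\Img_\epsilon(M)\sse M_1'$ and $M_2'\sse \Ker_\epsilon(M)$. Since shift commutes with direct sums and images and kernels of direct sums of morphisms split, $\Img_\epsilon(M) = \bigoplus_{i\in \Lambda} \Img_\epsilon(M_i) \sse \bigoplus_{i\in \Lambda} A_i = M_1'$, and analogously $M_2' = \bigoplus_{i\in \Lambda} B_i \sse \bigoplus_{i\in \Lambda} \Ker_\epsilon(M_i) = \Ker_\epsilon(M)$.

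I do not anticipate any real obstacles here; both parts are essentially bookkeeping once one knows that $\Er_\epsilon$, $\Img_\epsilon$ and $\Ker_\epsilon$ respect the direct sum decomposition. The only point that requires a moment of care is the implicit identification of $\bigoplus_i A_i$ and $\bigoplus_i B_i$ as submodules of $M = \bigoplus_i M_i$ via the canonical inclusions, which is what lets us lift the pointwise (summand-wise) containments to honest submodule containments inside $M$.
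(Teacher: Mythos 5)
Your proof is correct and takes essentially the same approach as the paper. Part (i) is, as you note, an immediate consequence of \cref{lem_er_behaved}(ii), and in part (ii) the paper performs exactly your summand-wise construction (taking direct sums of the witnessing submodules $A_i$ and $B_i$ inside $M = \bigoplus_i M_i$), just stated more tersely.
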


\begin{proof}
(i)
This is a direct consequence of \cref{lem_er_behaved} (ii).

(ii)
Write $M = \bigoplus_{i\in \Gamma} M_i$ with each $M_i$ indecomposable.
Then $R\cong \bigoplus _{i\in \Gamma} M'_i/M''_i$, where $M'_i/M''_i \in \oEN_\epsilon(M_i)$.
We get $R\cong \bigoplus_{i\in \Gamma} M'_i/\bigoplus_{i\in \Gamma} M''_i \in \EN_\epsilon(M)$.
\end{proof}

We will now define the interleaving distance, which was introduced for one-parameter modules by Chazal et al. \cite{chazal2009proximity} and for multiparameter modules by Lesnick \cite{lesnick2015theory}.
Corollary 5.6 in the latter paper shows that the interleaving distance is in a certain sense the largest stable distance.
This is an argument for considering it the ``true'' distance between modules, and, together with the ubiquity of the interleaving distance in the persistence literature, is why we choose to study the properties of erosion neighborhoods in the context of interleavings in what follows.
\begin{definition}
\label{def_interleaving}
An $\epsilon$-interleaving between two modules $M$ and $N$ is a pair $\phi\colon M\to N(\epsilon)$ and $\psi\colon N\to M(\epsilon)$ of morphisms such that $\psi(\epsilon)\circ \phi = M_{0\to 2\epsilon}$ and $\phi(\epsilon)\circ \psi = N_{0\to 2\epsilon}$.
The interleaving distance $d_I$ is defined by
\[
d_I(M,N) = \inf\{\epsilon\mid \exists \epsilon\text{-interleaving between } M \text{ and } N\}.
\]
\end{definition}

In \cref{rem_not_unique_er} and \cref{def_ER}, we pointed out that there is a whole set $\ER_\epsilon(M)$ of modules canonically isomorphic to $\Er_\epsilon(M)$.
The following lemma, which was discovered and proved with the help of Ulrich Bauer and Steve Oudot, shows that $\oEN_\epsilon(M)$ is the set of subquotients of $M$ that are squeezed between $M$ and $\ER_\epsilon(M)$.
Thus, it makes sense to think of $\oEN_\epsilon(M)$ (and $\EN_\epsilon(M)$) as containing exactly the modules ``between'' $M$ and some version of $\Er_\epsilon(M)$.
\begin{lemma}
\label{lem_squeeze}
Define the relation $\preceq$ by $M_1/M_2 \preceq M'_1/M'_2$ whenever $M'_2\subseteq M_2\subseteq M_1\subseteq M'_1$.
For modules $M_2\sse M_1$, we have $M_1/M_2\in \oEN_\epsilon(M)$ if and only if there exists $N\in \ER_\epsilon(M)$ such that $N\preceq M_1/M_2 \preceq M$.
(Here we identify $M$ with $M/0$.)
\end{lemma}
\begin{proof}
Recall that by \cref{def_ER}, $N\in \ER_\epsilon(M)$ means that $N = (\Img_\epsilon(M)+K)/K$, where $\Ker_\epsilon(M)\cap \Img_\epsilon(M) \subseteq K\subseteq \Ker_\epsilon(M)$.

We first prove ``if''.
Assuming $N\preceq M_1/M_2 \preceq M$, we have $\Img_\epsilon(M)+K\sse M_1\sse M$ and $M_2\sse K \sse \Ker_\epsilon(M)$, and $M_1/M_2\in \oEN_\epsilon(M)$ follows.

For ``only if'', suppose $M_1/M_2\in \oEN_\epsilon(M)$, which means that $\Img_\epsilon(M)\sse M_1\sse M$ and $M_2\sse \Ker_\epsilon(M)$.
Let $K = (\Ker_\epsilon(M)\cap \Img_\epsilon(M))+M_2$.
Then $\Ker_\epsilon(M)\cap \Img_\epsilon(M) \subseteq K\subseteq \Ker_\epsilon(M)$, so $(\Img_\epsilon(M)+K)/K\in \ER_\epsilon(M)$.
Moreover, $(\Img_\epsilon(M)+K)/K = (\Img_\epsilon(M)+M_2)/K \preceq M_1/M_2 \preceq M$.
\end{proof}
With this lemma in mind, it is tempting to guess that $N$ is in $\EN_\epsilon(M)$ if and only if $\Er_\epsilon(M)\leq N\leq M$.
But while it follows from the lemma that ``only if'' holds, ``if'' is far from true.
In fact, \cref{thm_erosion_containment_gives_int} shows that any module in $\EN_\epsilon(M)$ is $\epsilon$-interleaved with $M$, while \cref{lem_dI_larger_dE} (see \cref{def_erosion_dist}) shows that for any $c>0$, there are one-parameter modules $\Er_\epsilon(M)\leq N\leq M$ with $d_I(M,N)> c\epsilon$.

A series of results in the next section and the remainder of this one show that $\EN_\epsilon(M)$ is strongly related to the $\epsilon$-neighborhood of $M$ in the interleaving distance.
\cref{thm_erosion_containment_gives_int} shows that the $\epsilon$-erosion neighborhood is contained in the $\epsilon$-interleaving neighborhood of $M$, while \cref{thm_strong_equiv} (i) shows that if $M$ and $N$ are $\epsilon$-interleaved, then their $\epsilon$-erosion neighborhoods intersect.
A word of warning:
The $\epsilon$-erosion neighborhood of $M$ does \emph{not} contain all subquotients of $M$ within interleaving distance at most $\epsilon$, and neither does the $c\epsilon$-erosion neighborhood for any fixed $c$ in general, as we will show in \cref{lem_int_subq_not_in_er_nhood}.

Since our main focus is on decompositions of modules, we would like the $\epsilon$-erosion neighborhood of $M$ to contain essentially all the possible decompositions of modules close to $M$ in the interleaving distance.
We get this from \cref{lem_if_int_then_er}:
If $M$ and $N$ are $\epsilon$-interleaved, then $\Er_\epsilon(N)$, which is an $\epsilon$-refinement of $N$ by \cref{lem_refins_are_ers} (i), is in the $2\epsilon$-erosion neighborhood of $M$.
Thus, the $2\epsilon$-erosion neighborhood of $M$ contains what we can view as an $\epsilon$-approximation of any decomposition that is $\epsilon$-close to $M$ in the interleaving distance.

\begin{lemma}
\label{lem_if_int_then_er}
Suppose $M$ and $N$ are $\epsilon$-interleaved.
Then $\Er_\epsilon(N)\in \EN_{2\epsilon}(M)$.
\end{lemma}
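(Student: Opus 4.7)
The plan is to exhibit $\Er_\epsilon(N)$ explicitly as a subquotient $M_1/M_2$ of $M$ meeting the two conditions in \cref{def_erosion_nbh} for $\EN_{2\epsilon}(M)$, using the interleaving morphisms to build $M_1$ and $M_2$. Let $\phi\colon M\to N(\epsilon)$ and $\psi\colon N\to M(\epsilon)$ be the interleaving maps, so $\psi(\epsilon)\circ\phi = M_{0\to 2\epsilon}$ and $\phi(\epsilon)\circ\psi = N_{0\to 2\epsilon}$. Shifting the second identity by $-\epsilon$ gives $\phi\circ\psi(-\epsilon) = N_{-\epsilon\to\epsilon}$, a factorization of $N_{-\epsilon\to\epsilon}$ through $M$. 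Since $\Er_\epsilon(N)\cong \img N_{-\epsilon\to\epsilon}$ (by the canonical isomorphism noted after \cref{rem_not_unique_er}), this factorization is what lets me see $\Er_\epsilon(N)$ inside $M$.

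Concretely, I would set
\[
M_1 \;=\; \img\bigl(\psi(-\epsilon)\colon N(-\epsilon)\to M\bigr), \qquad M_2 \;=\; \ker\bigl(\phi|_{M_1}\colon M_1\to N(\epsilon)\bigr).
\]
Then $\phi|_{M_1}$ is surjective onto $\phi(\img\psi(-\epsilon)) = \img(\phi\circ\psi(-\epsilon)) = \img N_{-\epsilon\to\epsilon}$, and its kernel is $M_2$ by definition, so the first isomorphism theorem gives $M_1/M_2 \cong \img N_{-\epsilon\to\epsilon} \cong \Er_\epsilon(N)$.

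It remains to verify the two containments. For $\Img_{2\epsilon}(M)\subseteq M_1$: shift the first interleaving identity by $-2\epsilon$ to get $M_{-2\epsilon\to 0} = \psi(-\epsilon)\circ\phi(-2\epsilon)$, whose image therefore lies in $\img\psi(-\epsilon) = M_1$. For $M_2\subseteq \Ker_{2\epsilon}(M)$: for any $x\in M_2$, apply $M_{0\to 2\epsilon} = \psi(\epsilon)\circ\phi$ pointwise; since $\phi$ annihilates $x$ by definition of $M_2$, so does $M_{0\to 2\epsilon}$, giving $x\in \Ker_{2\epsilon}(M)$.

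There is no real obstacle here beyond getting the shift bookkeeping right; the content of the lemma is precisely that the two interleaving identities, when shifted by $-2\epsilon$ and $0$ respectively, force $M_1$ to be large enough and $M_2$ to be small enough to satisfy the definition of $\EN_{2\epsilon}(M)$. The only subtlety worth double-checking is that the canonical isomorphism $\Er_\epsilon(N)\cong \img N_{-\epsilon\to\epsilon}$ is compatible with viewing $M_1/M_2$ as a subquotient of $M$ (as opposed to of a shift), but this is immediate from \cref{rem_not_unique_er} since $M_1,M_2\subseteq M$ by construction.
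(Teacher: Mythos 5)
Your proof is correct and follows essentially the same route as the paper's: both take the subquotient $\img\psi(-\epsilon)/(\ker\phi\cap\img\psi(-\epsilon))$, identify it with $\img N_{-\epsilon\to\epsilon}\cong\Er_\epsilon(N)$ via the factorization of $N_{-\epsilon\to\epsilon}$ through $M$, and verify the two containments by shifting the interleaving identities. The only difference is notational (the paper names the interleaving maps so that one goes $N(-\epsilon)\to M$, whereas you keep the convention of \cref{def_interleaving} and shift as needed).
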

\begin{proof}
Let $\phi(\epsilon)$ and $\psi$ be $\epsilon$-interleaving morphisms, where $\phi\colon N(-\epsilon)\to M$ and $\psi\colon M\to N(\epsilon)$.
Let
\[
N(-\epsilon) \xrightarrow{\ol \phi} \img(\phi)/(\ker(\psi)\cap \img(\phi)) \xrightarrow{\ol \psi} N(\epsilon)
\]
be the morphisms induced by $\phi$ and $\psi$.
Since $\ol\psi \circ \ol\phi = \psi\circ \phi = N_{-\epsilon \to \epsilon}$, and $\ol \phi$ is an epimorphism and $\ol \psi$ a monomorphism, the middle term is isomorphic to $\img(N_{-\epsilon \to \epsilon})$, which is isomorphic to $\Er_\epsilon(N)$.

To show that $\img(\phi)/(\ker(\psi)\cap \img(\phi))\in \oEN_{2\epsilon}(M)$, first note that
\[
\img(\phi)\supseteq \img(\phi\circ \psi(-2\epsilon)) = \img(M_{-2\epsilon\to 0}) = \Img_{2\epsilon}(M).
\]
Secondly,
\[
\ker(\psi) \subseteq \ker(\phi(2\epsilon)\circ \psi) = \ker(M_{0\to 2\epsilon}) = \Ker_{2\epsilon}(M).
\]
This concludes the proof.
\end{proof}

\begin{theorem}
\label{thm_erosion_containment_gives_int}
Let $N\in \EN_\epsilon(M)$.
Then $M$ and $N$ are $\epsilon$-interleaved.
\end{theorem}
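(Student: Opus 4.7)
The plan is to construct explicit interleaving morphisms directly from the defining data of $\EN_\epsilon(M)$. By \cref{def_erosion_nbh} (and since interleavings are invariant under isomorphism), we may assume $N = M_1/M_2$ for submodules $M_2\subseteq M_1\subseteq M$ with $\Img_\epsilon(M)\subseteq M_1$ and $M_2\subseteq \Ker_\epsilon(M)$. The strategy is to push $N$ forward into $M$ by a shift using the inclusion $M_1\hookrightarrow M$, and to push $M$ forward into $N$ by a shift using the quotient $M_1\twoheadrightarrow M_1/M_2$; the two containments in the definition are exactly what is needed to make these operations well-defined.

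More precisely, first I would define $\psi\colon N\to M(\epsilon)$ pointwise. At a point $p\in \Pb$, the composition $(M_1)_p \hookrightarrow M_p \xrightarrow{M_{0\to\epsilon}} M_{\Pb_\epsilon(p)}$ vanishes on $(M_2)_p$, since $M_2\subseteq \Ker_\epsilon(M)$ means exactly that $M_{0\to\epsilon}$ kills $M_2$. Hence it factors through $N_p = (M_1)_p/(M_2)_p$, yielding $\psi_p$, and naturality in $p$ is automatic. Dually, I would define $\phi\colon M\to N(\epsilon)$ at $p$ by noting that the hypothesis $\Img_\epsilon(M)\subseteq M_1$ says precisely that the image of $M_{0\to\epsilon}\colon M_p\to M_{\Pb_\epsilon(p)}$ lands in $(M_1)_{\Pb_\epsilon(p)}$; composing with the quotient $(M_1)_{\Pb_\epsilon(p)}\twoheadrightarrow N_{\Pb_\epsilon(p)} = N(\epsilon)_p$ gives $\phi_p$, again natural in $p$.

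Finally I would verify the two interleaving identities. For any $m\in M_p$, tracing through the definitions, $\psi(\epsilon)_p\circ \phi_p$ sends $m$ through $M_{0\to\epsilon}$ into $(M_1)_{\Pb_\epsilon(p)}$, then back along the inclusion, then along $M_{\epsilon\to 2\epsilon}$, producing $M_{0\to 2\epsilon}(m)$ by functoriality. Similarly, $\phi(\epsilon)_p\circ \psi_p$ applied to a class $[m]\in N_p$ computes $M_{0\to 2\epsilon}(m)\in (M_1)_{\Pb_{2\epsilon}(p)}$ and then takes its class modulo $(M_2)_{\Pb_{2\epsilon}(p)}$, which is $N_{0\to 2\epsilon}([m])$. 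Both equations thus hold on the nose, giving an $\epsilon$-interleaving.

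There is no real obstacle here; the proof is essentially unpacking the definitions. The only thing to be careful about is bookkeeping: making sure one applies the right containment ($\Img_\epsilon(M)\subseteq M_1$ for $\phi$, $M_2\subseteq \Ker_\epsilon(M)$ for $\psi$) at the right moment, and that the shifts land in the correct vector spaces so that the induced maps are legitimately defined.
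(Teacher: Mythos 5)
Your proof is correct and uses essentially the same argument as the paper: both interleaving morphisms are induced by the shift $M_{0\to\epsilon}$, with the containment $\Img_\epsilon(M)\subseteq M_1$ ensuring $\phi$ is well-defined, the containment $M_2\subseteq\Ker_\epsilon(M)$ ensuring $\psi$ is well-defined, and the compositions then being induced by $M_{0\to 2\epsilon}$. You spell out the pointwise bookkeeping a bit more explicitly than the paper does, but the construction is the same.
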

\begin{proof}
Possibly replacing $N$ with an isomorphic module, we can assume $N= M_1/M_2$, where $\Img_\epsilon(M)\sse M_1\sse M$ and $M_2\subseteq \Ker_\epsilon(M)$.
Let $\phi\colon M \to N(\epsilon)$ and $\psi\colon N\to M(\epsilon)$ be the morphisms induced by $M_{0\to \epsilon}$.
The former is well-defined because $\img M_{0\to \epsilon} = \Img_\epsilon(M)(\epsilon)\sse M_1(\epsilon)$, and the latter is well-defined because $M_2\sse \Ker_\epsilon(M) = \ker M_{0\to \epsilon}$.

We get that both $\psi(\epsilon)\circ \phi\colon M \to M(2\epsilon)$ and $\phi(\epsilon)\circ \psi\colon N \to N(2\epsilon)$ are induced by $M_{0\to \epsilon}(\epsilon) \circ M_{0\to \epsilon} = M_{0\to 2\epsilon}$.
Thus, $\psi(\epsilon)\circ \phi = M_{0\to 2\epsilon}$ and $\phi(\epsilon)\circ \psi = N_{0\to 2\epsilon}$.
\end{proof}

\begin{corollary}
\label{cor_if_er_then_int}
Suppose $\Er_{\epsilon}(N)\in \EN_{2\epsilon}(M)$.
Then $M$ and $N$ are $3\epsilon$-interleaved.
\end{corollary}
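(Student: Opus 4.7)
The plan is a short one: combine two applications of \cref{thm_erosion_containment_gives_int} using the triangle inequality for the interleaving distance. The hypothesis provides one interleaving directly, and $\Er_\epsilon(N)$ lives in its own $\epsilon$-erosion neighborhood almost tautologically, which provides the second.

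First I would check that $\Er_\epsilon(N)\in \EN_\epsilon(N)$. This is immediate from \cref{def_erosion_nbh} with $M_1 = \Img_\epsilon(N)$ and $M_2 = \Ker_\epsilon(N)\cap \Img_\epsilon(N)$: the containments $\Img_\epsilon(N) \sse M_1$ and $M_2 \sse \Ker_\epsilon(N)$ are trivial, and $M_1/M_2 = \Er_\epsilon(N)$ is exactly \cref{def_erosion}. So $\Er_\epsilon(N) \in \oEN_\epsilon(N) \sse \EN_\epsilon(N)$, and applying \cref{thm_erosion_containment_gives_int} yields that $N$ and $\Er_\epsilon(N)$ are $\epsilon$-interleaved.

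Second, the hypothesis $\Er_\epsilon(N) \in \EN_{2\epsilon}(M)$ combined with \cref{thm_erosion_containment_gives_int} says that $M$ and $\Er_\epsilon(N)$ are $2\epsilon$-interleaved. Since $\epsilon$-interleavings compose to $(\epsilon+\delta)$-interleavings (i.e., $d_I$ satisfies the triangle inequality, a standard fact from \cite{lesnick2015theory}), the two interleavings can be spliced through $\Er_\epsilon(N)$ to conclude that $M$ and $N$ are $3\epsilon$-interleaved.

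I do not anticipate any real obstacle: the content of the statement is essentially the composability of \cref{thm_erosion_containment_gives_int} with itself through $\Er_\epsilon(N)$. If one preferred to avoid appealing to a separately stated triangle inequality, one could make the four interleaving morphisms explicit --- namely, post-compose the morphism $M\to \Er_\epsilon(N)(2\epsilon)$ produced by \cref{thm_erosion_containment_gives_int} with the shifted inclusion $\Er_\epsilon(N)(2\epsilon)\hra N(3\epsilon)$ (identifying $\Er_\epsilon(N)$ with $\img N_{-\epsilon\to\epsilon}\sse N(\epsilon)$), and analogously for the other direction using the surjection $N\thra \Er_\epsilon(N)(\epsilon)$ --- and check the two required squares commute by functoriality of shifts.
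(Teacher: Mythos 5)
Your proof is correct and matches the paper's argument exactly: both apply \cref{thm_erosion_containment_gives_int} twice (to the hypothesis $\Er_\epsilon(N)\in\EN_{2\epsilon}(M)$ and to the immediate fact $\Er_\epsilon(N)\in\EN_\epsilon(N)$) and then compose the resulting $2\epsilon$- and $\epsilon$-interleavings. The only difference is that you spell out the easy verification that $\Er_\epsilon(N)\in\EN_\epsilon(N)$, which the paper leaves implicit.
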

\begin{proof}
By \cref{thm_erosion_containment_gives_int}, $\Er_\epsilon(N)$ is $2\epsilon$-interleaved with $M$ and $\epsilon$-interleaved with $N$.
By composing interleavings, we obtain a $3\epsilon$-interleaving between $M$ and $N$.
\end{proof}

\subsection{Examples, part I}

We close the section with an example showing that \cref{cor_if_er_then_int} cannot be strengthened, namely \cref{lem_in_er_nhood_not_interl} (i).
Part (ii) of the lemma will be used in the next section.

\begin{definition}
\label{def_interval}
An \textbf{interval} is a subset $I$ of $\Pb$ such that
\begin{itemize}
\item if $p\leq q\leq r$ and $p,r\in I$, then $q\in I$, and
\item for all $p,q\in I$, there are points $p= p_1, p_2,\dots, p_\ell = q$ in $I$ for some $\ell$ such that $p_1\geq p_2\leq p_3 \geq \dots \leq p_\ell$.
\end{itemize}
For any interval $I$, we define its associated \textbf{interval module}, which we also denote by $I$, by letting $I_p = k$ for $p\in I$ and $I_p = 0$ for $p\neq I$, and $I_{p\to q} = \id_k$ for all $p\leq q\in I$.
\end{definition}
Note that this is different from another definition of interval that is also used, namely as a set of the form $\{q\in \Pb\mid p\leq q\leq r \}$ for some $p,r\in \Pb$.
To avoid notational overload, Blanchette et al. \cite{blanchette2021homological} introduce the word \emph{spread}, though these do not quite correspond to our notion of interval, as they do not enforce the connectedness condition in the second bullet point of \cref{def_interval}.

We refer to any module isomorphic to the interval module associated to an interval as an interval module.
Given an interval module, we will often take for granted that it is given on the form as described in the definition, and as the notation suggests, we will sometimes talk about $I$ with only the context making it clear if we are talking about the interval $I$ or its associated interval module.

\begin{lemma}
\label{lem_in_er_nhood_not_interl}
(i) There are pfd $2$-parameter modules $M$ and $N$ such that $\Er_1(M)\in \EN_2(N)$ and $\Er_1(N)\in \EN_2(M)$, and $M$ and $N$ are not $\delta$-interleaved for any $\delta<3$.

(ii) There are pfd $2$-parameter modules $M$ and $N$ such that $\Er_1(M)\in \EN_1(N)$, and $M$ and $N$ are not $\delta$-interleaved for $\delta<2$.
\end{lemma}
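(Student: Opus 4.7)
The plan is to construct explicit pointwise finite-dimensional 2-parameter modules realizing the claimed distance gaps, thereby showing that the combined bound in \cref{cor_if_er_then_int} cannot be improved. The guiding strategy is to find modules for which the triangle inequality used in that proof---passing through the erosion as an intermediary, $d_I(M,N)\leq d_I(M,\Er_\epsilon(M))+d_I(\Er_\epsilon(M),N)$---is rigid, in the sense that the two legs of the triangle add rather than cancel.

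For (ii), the natural starting point is to take $M$ to be an interval module on a square such as $[0,4]^2$, so that $\Er_1(M)$ is the interval module on $[1,3]^2$, and then to choose $N$ such that $[1,3]^2$ appears as a subquotient squeezed between $\Img_1(N)$ and $\Ker_1(N)$. The simplest candidate, $N$ interval on $[0,3]^2$ (where $[1,3]^2=\Img_1(N)$ lies trivially in $\EN_1(N)$), yields only $d_I(M,N)=1$, since the natural shift-based morphisms compose into an efficient $1$-interleaving. To force $d_I(M,N)\geq 2$, I would replace this with a more rigid $N$---for instance a non-interval indecomposable module, in the spirit of the $N$ in \cref{fig_erosion_fail}, in which $[1,3]^2$ is realized as a twisted subquotient $N_1/N_2$ that cannot be aligned with $M$ by any $\delta$-interleaving with $\delta<2$. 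Verification would consist of exhibiting explicit $N_1\supseteq\Img_1(N)$ and $N_2\subseteq\Ker_1(N)$ with $N_1/N_2\cong\Er_1(M)$, and ruling out small-$\delta$ interleavings by analyzing the (often very restricted) morphisms $M\to N(\delta)$ and $N\to M(\delta)$ and showing that the interleaving equations $\psi(\delta)\circ\phi=M_{0\to 2\delta}$ and $\phi(\delta)\circ\psi=N_{0\to 2\delta}$ cannot be simultaneously solved.

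For (i), I would take $M$ and $N$ essentially symmetric so that both containments $\Er_1(M)\in\EN_2(N)$ and $\Er_1(N)\in\EN_2(M)$ hold simultaneously, making it natural for the triangle-inequality bound $d_I(M,N)\leq 2+1=3$ coming from \cref{cor_if_er_then_int} (with $\epsilon=1$) to be saturated from both sides. The main obstacle throughout is the structural tension between the erosion-neighborhood containment---which forces $N$ to contain a ``copy'' of $\Er_1(M)$ as a subquotient (and vice versa in (i))---and the requirement that $M$ and $N$ be far apart in interleaving distance. Resolving this tension is the crux of the construction and is precisely what rules out the simplest candidates, such as shifts of rectangular interval modules on commensurable supports, and forces the use of non-trivial indecomposable or direct-sum structure whose algebraic rigidity prevents any efficient morphisms between $M$ and $N(\delta)$ for $\delta$ below the claimed bound.
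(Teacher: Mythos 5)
Your plan correctly identifies the general shape of the construction---a non-interval indecomposable $N$ in which $\Er_1(M)$ sits as a ``twisted'' subquotient $N_1/N_2$, and the tension between the erosion-neighborhood containment and the interleaving lower bound---but it misses the key structural choice, which is the shape of $M$, not just $N$. You propose $M$ to be an interval module on a square $[0,4]^2$. A rectangular $M$ has a single generator and its structure maps impose almost no constraints on morphisms out of it, so the rigidity you need to push $d_I(M,N)$ up to $2$ (resp.\ $3$) is unlikely to materialize no matter how you twist $N$. The paper instead takes $M$ to be the interval module on the \emph{union of two shifted quadrants} $\la(2,0)\ra\cup\la(0,2)\ra$. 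This fork has two minimal points whose branches merge at $(2,2)$, and the lower bound is proved by tracking the generator $g\in M_{(2,2)}$: it lies in the image of both branches, so any interleaving morphism $\phi\colon M\to N(\delta)$ must send $g$ into the intersection of two images in $N$, which by design of $N$ is zero until the shift exceeds $1+\epsilon$. Since $M_{0\to 2\delta}$ is injective (because $M$ is supported on an upset), $\phi(g)=0$ gives a contradiction. Without the fork in $M$, this argument has no starting point, and you give no replacement.

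Your plan for (i), ``take $M$ and $N$ essentially symmetric so that both containments hold simultaneously,'' also does not match what actually works. The paper's example is deliberately asymmetric: $M$ is a plain interval module and $N$ is a non-interval indecomposable; the second containment $\Er_1(N)\in\EN_2(M)$ is obtained not by symmetry but by a separate computation showing $\Er_1(N)\cong\Er_2(M)$ when $\epsilon=2$. Both parts (i) and (ii) come from the same parametrized family (with $\epsilon=2$ and $\epsilon=1$ respectively), which is a cleaner device than building symmetric examples from scratch. In short: right instincts about where the difficulty lies, but the proposal stalls at the point where the paper's construction actually begins.
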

\begin{proof}
For $p\in \R^2$, let $\la p\ra = \{q\in \R^2 \mid q\geq p\}$.
Let $M$ be the $2$-parameter interval module supported on $\la (2,0) \ra \cup \la (0,2) \ra$.
At every point $p$ in the support of $N' \coloneqq M(-1) \oplus M(-1)$, we have $N'_p = k^2$.
Let $\epsilon\in [1,2]$.
Let $N_1$ be the submodule of $N'$ generated by $(1,0) \in N'_{(3,1)}$ and $(0,1) \in N'_{(1,3)}$, let $N_2$ be the submodule of $N'$ generated by $(1,1)\in N'_{(3+\epsilon, 3+\epsilon)}$, and let $N = N_1/N_2$.
See \cref{fig_ex_tight}.
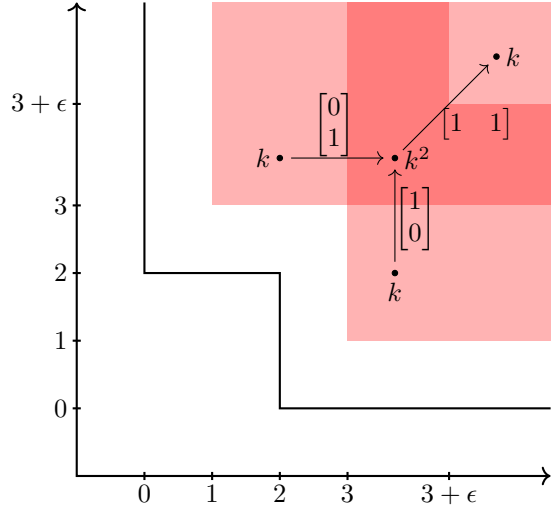
\begin{figure}
\centering
\begin{tikzpicture}[scale=.9]
\begin{scope}[xshift=-1cm, yshift=-1cm]
\draw[thick,<->] (0,7) to (0,0) to (7,0);
\draw[thick] (-.06,1) to (.06,1);
\node[left] at (0,1){$0$};
\draw[thick] (-.06,2) to (.06,2);
\node[left] at (0,2){$1$};
\draw[thick] (-.06,3) to (.06,3);
\node[left] at (0,3){$2$};
\draw[thick] (-.06,4) to (.06,4);
\node[left] at (0,4){$3$};
\draw[thick] (-.06,5.5) to (.06,5.5);
\node[left] at (0,5.5){$3+\epsilon$};
\draw[thick] (1,-.06) to (1,.06);
\node[below] at (1,0){$0$};
\draw[thick] (2,-.06) to (2,.06);
\node[below] at (2,0){$1$};
\draw[thick] (3,-.06) to (3,.06);
\node[below] at (3,0){$2$};
\draw[thick] (4,-.06) to (4,.06);
\node[below] at (4,0){$3$};
\draw[thick] (5.5,-.06) to (5.5,.06);
\node[below] at (5.5,0){$3+\epsilon$};
\end{scope}
\draw[thick] (6,0) to (2,0) to (2,2) to (0,2) to (0,6);
\fill[red, opacity=0.3] (3,1) rectangle (6,6);
\fill[red, opacity=0.3] (1,3) rectangle (6,6);
\fill[white] (4.5,4.5) rectangle (6,6);
\fill[red, opacity=0.3] (4.5,4.5) rectangle (6,6);
\coordinate (a) at (3.7,2);
\coordinate (b) at (2,3.7);
\coordinate (c) at (3.7,3.7);
\coordinate (d) at (5.2,5.2);
\node[below] at (a){$k$};
\draw[color=black,fill=black] (a) circle (.04);
\node[left] at (b){$k$};
\draw[color=black,fill=black] (b) circle (.04);
\node[right] at (c){$k^2$};
\draw[color=black,fill=black] (c) circle (.04);
\node[right] at (d){$k$};
\draw[color=black,fill=black] (d) circle (.04);
\draw[->, shorten <=.15cm, shorten >=.15cm] (a) to (c);
\draw[->, shorten <=.15cm, shorten >=.15cm] (b) to (c);
\draw[->, shorten <=.15cm, shorten >=.15cm] (c) to (d);
\node at (4,2.8){$\begin{bmatrix}
	1\\
	0
	\end{bmatrix}$};
\node at (2.8,4.2){$\begin{bmatrix}
	0\\
	1
	\end{bmatrix}$};
\node at (5.2,4.2){$\begin{bmatrix}
	1&-1
	\end{bmatrix}$};
\end{tikzpicture}
\caption{The black curve is the boundary of the support of $M$.
A module isomorphic to $N$ is drawn in red, with the shade illustrating the dimension in the four constant regions of the support.}
\label{fig_ex_tight}
\end{figure}

We will show three claims:
\begin{itemize}
\item[(a)] $M$ and $N$ are not $\delta$-interleaved for $\delta< 1+ \epsilon$,
\item[(b)] $\Er_1(M)\in \EN_\epsilon(N)$,
\item[(c)] if $\epsilon=2$, then $\Er_1(N) \cong \Er_2(M)$.
\end{itemize}
Now, (i) follows from (a), (b) and (c) with $\epsilon = 2$, and (ii) follows from (a) and (b) with $\epsilon=1$.

(a)
Suppose $\phi\colon M \to N(\delta)$ and $\psi\colon N \to M(\delta)$ form a $\delta$-interleaving for $\delta<1+\epsilon$.
Let $g$ be a generator of $M_{(2,2)}$.
Since $g\in \img M_{(2,0) \to (2,2)} \cap \img M_{(0,2) \to (2,2)}$, we must have $\phi(g) \in \img N_{(2+\delta,\delta) \to (2+\delta,2+\delta)} \cap \img N_{(\delta,2+\delta) \to (2+\delta,2+\delta)}$.
This intersection is zero, so $\phi(g) = 0$.
But $M_{0\to 2\delta}$ is injective, so $\phi$ must be injective, which contradicts $\phi(g) = 0$.
Thus, there is no $\delta$-interleaving between $M$ and $N$ for $\delta<1+\epsilon$.

(b)
Let $Q$ be the submodule of $N$ generated by $(1,1) \in N_{(3,3)}$.
We have $Q_{0\to \epsilon} = 0$, so $Q\in \Ker_\epsilon(N)$ and therefore $N/Q \in \EN_\epsilon(N)$.
But $N/Q$ is an interval module with the same support as $\Img_1(M)$, so $N/Q \cong \Img_1(M)$, which is isomorphic to $\Er_1(M)$, since $\Ker_1(M) = 0$

(c)
Let $\epsilon=2$.
Then $\Ker_1(N)$ is the submodule of $N$ generated by $(1,1)\in N_{(4,4)}$, and $\Img_1(N)$ is the submodule generated by $(1,0)\in N_{(4,2)}$ and $(0,1)\in N_{(2,4)}$.
It follows that $\Er_1(N)$ is an interval module supported on $\la (4,2)\ra \cup \la (2,4)\ra$, which is also true for $\Img_2(M)$.
We have $\Ker_2(M) = 0$, so $\Er_2(M) \cong \Img_2(M) \cong \Er_1(N)$.
\end{proof}

\section{Distances}
\label{sec_red_distances}

Distances provide a convenient language for comparing different notions of closeness between modules.
We have claimed that interleavings and erosion neighborhoods give rise to similar notions of neighborhoods of modules, and we will make this precise by showing that the distances $d_I$ (\cref{def_interleaving}) and $d_{EN}$ (\cref{def_str_er_dist}) they give rise to only differ by a multiplicative factor of at most two (\cref{thm_strong_equiv}), and are equal for one-parameter modules (\cref{thm_EN=I}).

We also discuss other versions of erosion distances (\cref{def_erosion_dist}), and observe in \cref{lem_dI_larger_dE} that another distance $\tilde d_E$ that at first glance looks similar to $d_{EN}$ in fact behaves very differently.
This lemma and \cref{lem_int_subq_not_in_er_nhood} (which we discussed in the beginning of the previous section) highlight traps one should avoid when working with erosion neighborhoods, subquotients and interleavings.
Our opinion is that $\EN_\epsilon(M)$ and $d_{EN}$ are the principled ways of defining a ``neighborhood of quotients'' and associated distance, and that one runs into these traps only when introducing deceptively similar, but less natural alternative definitions, like that of $\tilde d_E$.

In \cref{subsec_prunings}, we move on to the difficult question of how to define a good decomposition distance for multiparameter modules.
We comment that the bottleneck distance $d_B$ and a function $f_R$ defined using common refinements are not the answer due to instability and failure of the triangle inequality, respectively, and present an serious obstacle to defining a distance that depends on decompositions in a meaningful way without being extremely unstable.
We introduce prunings and an associated pruning distance that demonstrates how one might get around the obstacle.
We then show in \cref{sec_f_R_distance} that for interval decomposable modules, $f_R$ is actually a distance, and argue that it is a very well-behaved decomposition distance in this setting.

\begin{definition}
\label{def_distance}
A \emph{distance} $d$ on persistence modules associates a number $d(M,N)\in [0,\infty]$ to any pair $(M,N)$ of modules in a way such that
\begin{itemize}
\item $d(M,M) = 0$,
\item $d(M,N) = d(N,M)$ (symmetry), and
\item $d(M,Q) \leq d(M,N) + d(N,Q)$ (triangle inequality).
\end{itemize}
\end{definition}
This is also called an extended pseudometric: ``extended'' because the distance might be infinite and ``pseudo'' because we allow distance zero between different modules.
If $d$ and $d'$ are two distances and $d(M,N)\leq d'(M,N)$ hold for all modules $M$ and $N$, we write $d\leq d'$.
For any distance $d$ and constant $c\geq 0$, $cd$ is the distance defined by $(cd)(M,N) = c(d(M,N))$.

As mentioned, notions of erosion (distances) appear in several papers \cite{edelsbrunner2011stability, frosini2013stable, patel2018generalized, puuska2020erosion}, though the definition of erosion we use appears to be folklore.
To our knowledge, erosion makes its first appearance in the multiparameter persistence literature in \cite{frosini2013stable}.
In that paper, Frosini considers persistence modules valued in abelian groups instead of vector spaces, but adapting this to our setting is straightforward and gives the first of the following two distances, where we have exploited the definition of $\Er_\epsilon(M)$ to give a concise formulation.
Recall that $\leq$ denotes the subquotient relation between modules.
\begin{definition}
\label{def_erosion_dist}
For two modules $M$ and $N$, write $M\leq^* N$ if $\dim(M_p) \leq \dim(N_p)$ for all $p\in \Pb$.
We define the distance $d_E$ by
\[
d_E(M,N) = \inf\{\epsilon\mid \Er_\epsilon(M)\leq^* N \text{ and }\Er_\epsilon(N)\leq^* M\},
\]
and the distance $\tilde d_E$ by
\[
\tilde d_E(M,N) = \inf\{\epsilon\mid \Er_\epsilon(M)\leq N \text{ and }\Er_\epsilon(N)\leq M\}.
\]
\end{definition}
Frosini calls the first distance $d_T$ and in fact does not mention the word ``erosion''.
We use the notation $d_E$ (``E'' for ``erosion'') to be consistent with Puuska \cite{puuska2020erosion}, who gives a unified view of the erosion distance of Patel \cite{patel2018generalized} and the distance $d_T$.

The second distance, $\tilde d_E$, does not appear in the literature to our knowledge (though it might be known to the multipersistence community), but is a natural alternative to the erosion distance given the definition of $\Er_\epsilon(M)$.
To see that $\tilde d_E$ satisfies the triangle equality, observe that if $\Er_\epsilon(M)\leq N$ and $\Er_\delta(N)\leq Q$, then we get $\Er_{\epsilon+\delta}(M) \cong \Er_\delta(\Er_\epsilon(M)) \leq \Er_\delta(N) \leq Q$, where the isomorphism comes from \cref{lem_er_behaved} (iii), and the first ``$\leq$'' from $\Er_\epsilon(M)\leq N$ and \cref{lem_er_behaved} (i).

\subsection{The erosion neighborhood distance}

To discuss the properties of $\epsilon$-erosion neighborhoods and in particular how they behave with respect to the interleaving distance, we introduce another distance, which we call the \emph{erosion neighborhood distance}.

Even though $\epsilon$-erosion neighborhoods are not strictly speaking sets, we write $Q\in \EN_\epsilon(M) \cap \EN_\delta(N)$ when we mean $Q\in \EN_\epsilon(M)$ and $Q\in \EN_\delta(N)$.
\begin{definition}
\label{def_str_er_dist}
We define the erosion neighborhood distance $d_{EN}$ by
\[
d_{EN}(M,N) = \inf\{\epsilon\mid \exists Q\in \EN_\epsilon(M) \cap \EN_\epsilon(N)\}.
\]
\end{definition}
If $Q\in \EN_\epsilon(M) \cap \EN_\epsilon(N)$, then $\Er_\epsilon(M)\leq Q\leq N$, and similarly $\Er_\epsilon(N)\leq M$.
Thus, $\tilde d_E \leq d_{EN}$.
Furthermore, $M\leq N$ implies $M\leq^* N$, so $d_E\leq \tilde d_E$.
That is, we have $d_E\leq \tilde d_E \leq d_{EN}$.

One can ask if the opposite inequalities hold up to a constant.
That is, do we have $\tilde d_E\leq c d_E$ or $d_{EN} \leq c\tilde d_E$ for some $c$?
The answer is no in both cases.
In the first case, pick one-parameter modules $M$ and $N$ with $B(M) = \{[0,2)\}$ and $B(N) = \{[0,1),[1,2)\}$.
In this case, $d_E(M,N)=0$, while one can show that $\tilde d_E(M,N)\neq 0$.

That $d_{EN} \leq c\tilde d_E$ does not hold for any $c$ is harder to prove.
By \cref{lem_er_behaved} (i) and (iii), $\Er_\epsilon(M)\leq N$ and $\Er_\epsilon(N)\leq M$ together imply $\Er_{2\epsilon}(N)\leq \Er_\epsilon(M)\leq N$, which looks similar to $\Er_\epsilon(M)\in \EN_{2\epsilon}(N)$.
Based on this, one might be tempted to guess that one can use $Q= \Er_\epsilon(M)$ in the definition of $d_{EN}$ to prove that $d_{EN}\leq 2\tilde d_E$, but we will show in \cref{lem_dI_larger_dE} (using \cref{thm_strong_equiv}) that the two distances can actually differ by an arbitrarily large multiplicative constant, disproving $d_{EN} \leq c\tilde d_E$ for any $c$.

The erosion neighborhood distance is somewhat similar to the distance $d$ defined by Scolamiero et al. \cite[Def. 8.6]{scolamiero2017multidimensional} in that they both involve morphisms between modules with small kernels and cokernels.
We do not study how the two distances are related, but it is claimed (without proof) in \cite{scolamiero2017multidimensional} that $d_I\leq d\leq 6d_I$, which, put together with \cref{thm_strong_equiv}, implies that $d_{EN}\leq d\leq 12d_{EN}$.

We will now show that $d_{EN}$ is indeed a distance.
Symmetry and $d_{EN}(M,M) = 0$ are straightforward, which leaves only the triangle inequality to be checked.

\begin{lemma}
\label{lem_erosion_containment2}
Let $\epsilon,\delta\geq 0$ and suppose $N\in \EN_\epsilon(M)$ and $Q\in \EN_\delta(N)$.
Then $Q\in \EN_{\epsilon+ \delta}(M)$.
\end{lemma}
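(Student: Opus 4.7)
The plan is to unfold the definition of the erosion neighborhood twice and track what it says inside $M$. Up to isomorphism we may take $N = M_1/M_2$ in $\oEN_\epsilon(M)$, so $M_2 \subseteq M_1 \subseteq M$ with $\Img_\epsilon(M) \subseteq M_1$ and $M_2 \subseteq \Ker_\epsilon(M)$. By the correspondence theorem, submodules of $N$ are in bijection with submodules of $M_1$ containing $M_2$, so any representative of $Q$ in $\oEN_\delta(N)$ lifts to a chain $M_2 \subseteq M'_2 \subseteq M'_1 \subseteq M_1$ with $\Img_\delta(N) \subseteq M'_1/M_2$ and $M'_2/M_2 \subseteq \Ker_\delta(N)$; the third isomorphism theorem then gives $Q \cong (M'_1/M_2)/(M'_2/M_2) \cong M'_1/M'_2$. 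It therefore suffices to verify that $M'_1/M'_2 \in \oEN_{\epsilon+\delta}(M)$, i.e., that $\Img_{\epsilon+\delta}(M) \subseteq M'_1$ and $M'_2 \subseteq \Ker_{\epsilon+\delta}(M)$.

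For the image condition, I would factor the shift as $M_{-\epsilon-\delta \to 0} = M_{-\delta \to 0} \circ M_{-\epsilon-\delta \to -\delta}$. The image of the right factor is $\Img_\epsilon(M)(-\delta)$, which sits inside $M_1(-\delta)$ by the first erosion containment. Applying $M_{-\delta \to 0}$ to this submodule and then projecting to $N = M_1/M_2$ produces a submodule contained in $\img(N_{-\delta \to 0}) = \Img_\delta(N) \subseteq M'_1/M_2$. Lifting back along $M_1 \twoheadrightarrow N$ gives $\Img_{\epsilon+\delta}(M) + M_2 \subseteq M'_1$, and in particular $\Img_{\epsilon+\delta}(M) \subseteq M'_1$.

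For the kernel condition, fix $p \in \Pb$ and $x \in (M'_2)_p$. The inclusion $M'_2/M_2 \subseteq \Ker_\delta(N)$ translates to $M_{p \to \Pb_\delta(p)}(x) \in (M_2)_{\Pb_\delta(p)}$. Because $M_2 \subseteq \Ker_\epsilon(M)$, applying $M_{\Pb_\delta(p)\to \Pb_{\epsilon+\delta}(p)}$ annihilates this element, and using $\Pb_{\epsilon+\delta} = \Pb_\epsilon \circ \Pb_\delta$ we conclude $M_{p \to \Pb_{\epsilon+\delta}(p)}(x) = 0$. Hence $M'_2 \subseteq \Ker_{\epsilon+\delta}(M)$, finishing the argument.

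No step here looks genuinely difficult; the only place to be careful is in maintaining the dictionary between submodules of $N$ and intermediate submodules of $M_1$, and in using the factorization $\Pb_{\epsilon+\delta} = \Pb_\epsilon \circ \Pb_\delta$ (equivalently, the factorization of the shift morphism) to convert an $\epsilon$-condition on $M$ plus a $\delta$-condition on $N$ into an $(\epsilon+\delta)$-condition on $M$. So I would expect no real obstacle and a short proof.
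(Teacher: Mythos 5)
Your proposal is correct and follows essentially the same route as the paper: pull back the defining submodules of $Q$ inside $N$ to intermediate submodules $M_2 \subseteq M'_2 \subseteq M'_1 \subseteq M_1$ of $M$, then verify the $(\epsilon+\delta)$-conditions by composing shift maps. The only differences are cosmetic — the paper phrases the lift as taking inverse images along $M_1 \twoheadrightarrow N$ and cites an auxiliary lemma (\cref{lem_im_to_im}) for the image containment, and it establishes the kernel condition by the module-level statement that the $\delta$-shift sends $M'_2$ into $M_2$, whereas you unwind the image step by hand and do the kernel step pointwise; these are the same argument.
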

\begin{proof}
There are submodules $M_2\sse M_1$ of $M$ with $N\cong M_1/M_2$ and
\[
M_2\sse \Ker_\epsilon(M),\quad \Img_\epsilon(M)\sse M_1,
\]
and there are submodules $N_2\sse N_1$ of $N$ with $Q\cong N_1/N_2$ and
\[
N_2\subseteq \Ker_\delta(N),\quad \Img_\delta(N)\subseteq N_1.
\]
Let $M'_1$ and $M'_2$ be the inverse images of $N_1$ and $N_2$ under the composition
\[
\phi\colon M_1 \thra M_1/M_2 \xra{\sim} N,
\]
where the first map is the canonical projection and the second is an isomorphism.
We get $\Img_\delta(M_1)\sse M'_1$ by \cref{lem_im_to_im} (i).
Since $\Img_\epsilon(M)\sse M_1$, we get
\[
\Img_{\epsilon+ \delta}(M) = \Img_\delta(\Img_\epsilon(M)) \sse \Img_\delta(M_1) \sse M'_1.
\]
From $N_2\sse \Ker_\delta(N)$, we get $0 = N_{0\to \delta}(\delta)\circ \phi(M'_2) = \phi(\delta) \circ (M_1)_{0\to \delta}(M'_2)$, so $\Img_\delta(M'_2)\sse M_2$.
Together with $M_2\sse \Ker_\epsilon(M)$, this gives $M'_2\sse \ker M_{-\epsilon- \delta\to 0} = \Ker_{\epsilon+ \delta}(M)$.
Since $M'_1/M'_2\cong N_1/N_2$, the lemma follows.
\end{proof}

\begin{lemma}
\label{lem_str_er_triangle}
The erosion neighborhood distance $d_{EN}$ satisfies the triangle inequality.
\end{lemma}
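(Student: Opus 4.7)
The plan is to reduce the triangle inequality to two applications of \cref{lem_erosion_containment2}. Suppose $d_{EN}(X, Y) < \epsilon$ and $d_{EN}(Y, Z) < \delta$, witnessed by modules $A \in \EN_\epsilon(X) \cap \EN_\epsilon(Y)$ and $B \in \EN_\delta(Y) \cap \EN_\delta(Z)$. I will construct a module $C \in \EN_\delta(A) \cap \EN_\epsilon(B)$; then applying \cref{lem_erosion_containment2} first to $C \in \EN_\delta(A)$ and $A \in \EN_\epsilon(X)$, and then to $C \in \EN_\epsilon(B)$ and $B \in \EN_\delta(Z)$, gives $C \in \EN_{\epsilon+\delta}(X) \cap \EN_{\epsilon+\delta}(Z)$, so that $d_{EN}(X,Z) \leq \epsilon+\delta$. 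The triangle inequality then follows by taking infima.

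For the construction, after replacing $A$ and $B$ by isomorphic representatives, write $A = Y_1/Y_2$ with $\Img_\epsilon(Y) \sse Y_1$ and $Y_2 \sse \Ker_\epsilon(Y)$, and $B = Y_1'/Y_2'$ with $\Img_\delta(Y) \sse Y_1'$ and $Y_2' \sse \Ker_\delta(Y)$. Define
\[ C \coloneqq (Y_1 \cap Y_1')/\bigl((Y_2 \cap Y_1') + (Y_2' \cap Y_1)\bigr). \]
To exhibit $C \in \EN_\delta(A)$, I would take the submodules $A_1 = (Y_1 \cap Y_1' + Y_2)/Y_2$ and $A_2 = ((Y_2' \cap Y_1) + Y_2)/Y_2$ of $A$, and use the third isomorphism theorem together with the modular law (applied via the inclusion $Y_2' \cap Y_1 \sse Y_1 \cap Y_1'$) to verify that $A_1/A_2 \cong C$. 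Exhibiting $C \in \EN_\epsilon(B)$ is then entirely symmetric, swapping the roles of the two pairs of submodules.

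The main obstacle is verifying the two containment conditions required for membership in $\oEN_\delta(A)$. For $\Img_\delta(A) \sse A_1$, the key point is $\Img_\delta(Y_1) \sse \Img_\delta(Y) \sse Y_1'$, where the second inclusion comes from the $\EN$-condition for $B$; combined with the obvious $\Img_\delta(Y_1) \sse Y_1$, this gives $\Img_\delta(Y_1) \sse Y_1 \cap Y_1'$, and the claim follows after adding $Y_2$ and passing to the quotient by $Y_2$. For $A_2 \sse \Ker_\delta(A)$, the key point is that $Y_2' \sse \Ker_\delta(Y)$, so the $Y_2' \cap Y_1$-part of any element of $(Y_2' \cap Y_1) + Y_2$ vanishes under $Y_{0 \to \delta}$, while the $Y_2$-part lands in $Y_2(\delta)$ and hence in the kernel of the induced map on $A$. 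The symmetric conditions for $\EN_\epsilon(B)$ follow analogously from $\Img_\epsilon(Y_1') \sse \Img_\epsilon(Y) \sse Y_1$ and $Y_2 \sse \Ker_\epsilon(Y)$.
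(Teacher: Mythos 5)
Your proof is correct and takes essentially the same approach as the paper: construct the common subquotient, show it lies in the required erosion neighborhood of one witness via the third isomorphism theorem and the modular law, invoke \cref{lem_erosion_containment2}, and conclude by symmetry. The only cosmetic difference is that you write the denominator as $(Y_2 \cap Y_1') + (Y_2' \cap Y_1)$ while the paper uses $(Y_2 + Y_2') \cap Y_1 \cap Y_1'$, but two applications of the modular law (using $Y_2 \sse Y_1$ and $Y_2' \cap Y_1 \sse Y_2'\sse Y_1'$) show these are the same submodule.
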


\begin{proof}
It is enough to show that if there are modules $M$, $N$, $P$, $Q_1$ and $Q_2$ with $Q_1\in \EN_\epsilon(M)\cap \EN_\epsilon(N)$ and $Q_2\in \EN_\delta(N)\cap \EN_\delta(P)$, then there is a module $Q_3\in \EN_{\epsilon+ \delta}(M)\cap \EN_{\epsilon+ \delta}(P)$.

We can assume $Q_1 = N_1/N'_1$ and $Q_2 = N_2/N'_2$, where
\begin{align*}
N'_1\sse \Ker_\epsilon(N), \quad &\Img_\epsilon(N)\sse N_1,\\
N'_2\sse \Ker_\delta(N), \quad &\Img_\delta(N)\sse N_2.
\end{align*}
Let $N_3 = N_1\cap N_2$ and $N'_3 = (N'_1\oplus N'_2)\cap N_1\cap N_2$, and let $Q_3 = N_3/N'_3$.
We will show that $Q_3\in \EN_\delta(Q_1)$, which by \cref{lem_erosion_containment2} gives $Q_3\in \EN_{\epsilon+ \delta}(M)$.
By symmetry, we also have $Q_3\in \EN_{\epsilon+ \delta}(P)$, which finishes the proof.

Let $\pi\colon N_1\to N_1/N'_1$ be the canonical projection.
Since $N'_1\cap N_3\sse N'_3$, $Q_3 \cong \pi(N_3)/\pi(N'_3)$.
Thus, it suffices to show $\Img_\delta(Q_1)\sse\pi(N_3)$ and $\pi(N'_3)\sse \Ker_\delta(Q_1)$.
We have
\[
\Img_\delta(N_1) \sse N_1\cap \Img_\delta(N)\sse N_3
\]
and
\[
N'_3\sse (N'_1\oplus N'_2)\cap N_1\sse (N'_1\oplus \Ker_\delta(N))\cap N_1 = N'_1\oplus \Ker_\delta(N_1).
\]
Applying $\pi$ to the former gives
\[
\Img_\delta(Q_1) = \Img_\delta(\pi(N_1)) = \pi(\Img_\delta(N_1)) \sse \pi(N_3).
\]
Applying $\pi$ to the latter gives
\[
\pi(N'_3)\sse \pi(N'_1\oplus \Ker_\delta(N_1)) = \pi(\Ker_\delta(N_1)) \sse \Ker_\delta(\pi(N_1)) = \Ker_\delta(Q_1).\qedhere
\]
\end{proof}

Checking the rest of the conditions of \cref{def_distance} is straightforward, so we get the following corollary.
\begin{corollary}
\label{cor_d_EN_distance}
The erosion neighborhood distance $d_{EN}$ is a distance.
\end{corollary}

Next, we show that the erosion neighborhood distance and the interleaving distance can only differ by a multiplicative factor of up to two.
Since $d_I$ is NP-hard to approximate up to a constant of less than $3$, this means that $d_{EN}$ is NP-hard to approximate up to a constant of less than $\frac 3 2$.
In particular, computing $d_{EN}$ exactly is NP-hard.\footnote{More precisely: given $M$, $N$ and $\epsilon$, it is NP-hard to decide if $d_{EN}(M,N)\leq \epsilon$.}

Occasionally, we will use the notation $x\in M$ for a module $M$ meaning that $x\in M_p$ for some point $p\in \Pb$ that is suppressed from the notation.
\begin{theorem}
\label{thm_strong_equiv}
For any modules $M$ and $N$, $d_{EN}(M,N)\leq d_I(M,N)\leq 2d_{EN}(M,N)$.
In particular,
\begin{itemize}
\item[(i)] if $M$ and $N$ are $\epsilon$-interleaved, then there is a module $Q\in \EN_\epsilon(M)\cap \EN_\epsilon(N)$, and
\item[(ii)] if there is a module $Q\in \EN_\epsilon(M)\cap \EN_\delta(N)$, then $M$ and $N$ are $(\epsilon+\delta)$-interleaved.
\end{itemize}
\end{theorem}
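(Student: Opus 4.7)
The plan is to prove (i) and (ii) separately; together they yield the stated distance inequalities.

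Part (ii) follows directly from \cref{thm_erosion_containment_gives_int} and composition of interleavings: if $Q\in \EN_\epsilon(M)\cap \EN_\delta(N)$, then \cref{thm_erosion_containment_gives_int} yields an $\epsilon$-interleaving of $M$ and $Q$ and a $\delta$-interleaving of $Q$ and $N$, whose composition (with suitable shifts) is an $(\epsilon+\delta)$-interleaving of $M$ and $N$. The bound $d_I\le 2d_{EN}$ follows by setting $\epsilon=\delta$, so only (i) needs separate work.

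For (i), given interleaving morphisms $\phi\colon M\to N(\epsilon)$ and $\psi\colon N\to M(\epsilon)$, my plan is to construct a ``midpoint'' module $P$ that is $\epsilon/2$-interleaved with both $M$ and $N$. Applying \cref{lem_if_int_then_er} to each of these two interleavings then yields $\Er_{\epsilon/2}(P)\in \EN_{2\cdot\epsilon/2}(M)\cap \EN_{2\cdot\epsilon/2}(N)=\EN_\epsilon(M)\cap \EN_\epsilon(N)$, so $Q:=\Er_{\epsilon/2}(P)$ is the required common module. The factor $2$ inherent in \cref{lem_if_int_then_er} is exactly absorbed by the halving to $\epsilon/2$, giving the tight bound; any direct construction of $Q$ as a subquotient using only one of $\phi,\psi$ (e.g.\ $M_1 = \psi(-\epsilon)(N(-\epsilon))$) incurs this same factor and produces only containment in $\EN_{2\epsilon}$ on one side, as the images and kernels of the interleaving maps naturally live in the $2\epsilon$-shifted image/kernel submodules of the codomain.

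The main obstacle is the existence and explicit construction of the midpoint $P$. Using that $\Pb$ supports half-shifts $\Pb_{\pm \epsilon/2}$, one has the symmetric morphisms $\phi(-\epsilon/2)\colon M(-\epsilon/2)\to N(\epsilon/2)$ and $\psi(-\epsilon/2)\colon N(-\epsilon/2)\to M(\epsilon/2)$, whose common domains and codomains already sit ``halfway'' between $M$ and $N$ in the shift structure. A candidate $P$ can be built as an appropriate image or pullback/pushout involving these half-shifted morphisms, designed so that suitable restrictions and corestrictions of $\phi, \psi$ together with structure shift maps of $M$ and $N$ form $\epsilon/2$-interleaving pairs with $M$ and with $N$. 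Verifying the two interleaving identities $\psi' \circ \phi' = P_{0\to\epsilon}$ and $\phi' \circ \psi' = M_{0\to\epsilon}$ (and the analogous pair for $N$) for the induced morphisms requires combining naturality of $\phi, \psi$ with the interleaving identities $\psi\circ \phi(-\epsilon)=M_{-\epsilon\to\epsilon}$ and $\phi\circ \psi(-\epsilon)=N_{-\epsilon\to\epsilon}$, and is the delicate technical core of the argument.
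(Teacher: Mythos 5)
Your handling of part (ii) and the derivation of the two distance inequalities from (i) and (ii) matches the paper. For part (i) you take a genuinely different route. The paper constructs the common module $Q$ directly inside $M$ and $N$: it sets $M_1=\Img_\epsilon(M)+\img\psi(-\epsilon)$, $M_2=\Ker_\epsilon(M)\cap\ker\phi\cap M_1$, and the analogous $N_1,N_2\subseteq N$, and proves $M_1/M_2\cong N_1/N_2$ by exhibiting both as the quotient of $M(-\epsilon)\oplus N(-\epsilon)$ by the same submodule $\mu^{-1}(M_2)=\nu^{-1}(N_2)$, where $\mu=(M_{-\epsilon\to 0},\psi(-\epsilon))$ and $\nu=(\phi(-\epsilon),N_{-\epsilon\to 0})$. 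You instead reduce to the existence of an interleaving midpoint $P$ and then apply \cref{lem_if_int_then_er} twice; this does work, and the step you flag as delicate is exactly where the content lies. The correct choice of $P$ is the pullback
\[
P_p=\bigl\{(m,n)\in M(\epsilon/2)_p\oplus N(\epsilon/2)_p : (M_{\epsilon/2\to3\epsilon/2})_p(m)=(\psi(\epsilon/2))_p(n),\ (\phi(\epsilon/2))_p(m)=(N_{\epsilon/2\to3\epsilon/2})_p(n)\bigr\},
\]
with the two coordinate projections as one half of each interleaving and $x\mapsto(M_{-\epsilon/2\to\epsilon/2}(x),\phi(-\epsilon/2)(x))$ from $M(-\epsilon/2)$ (analogously from $N(-\epsilon/2)$) as the other half; that this map lands in $P$ uses $\psi(\epsilon)\circ\phi=M_{0\to2\epsilon}$ and naturality of $\phi$, the identity going through $M$ is trivial, and the identity going through $P$ is precisely the second defining equation of the pullback. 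The ``image'' variant you also mention (e.g.\ $\img\phi(-\epsilon/2)$) is not obviously $\epsilon/2$-close to either side, so the pullback is the one to use. Your route is more modular and reuses an already-proved lemma, isolating the pleasant auxiliary fact that interleaving midpoints exist; the paper's route is self-contained, avoids half-shifts and the intermediate module entirely, and produces $Q$ in a single step. The amount of verification is comparable.
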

By \cref{lem_in_er_nhood_not_interl} (ii), the second inequality cannot be strengthened for $2$-parameter modules.
To see that the first inequality is tight for $d$-parameter modules for $d\geq 1$, consider for instance the one-parameter interval modules supported on $[0,\infty)$ and $[1,\infty)$, between which both distances are $1$.
Such an example can be constructed also for general $d$.
\begin{proof}
The first inequality follows from (i).
By \cref{thm_erosion_containment_gives_int}, $Q$ in (ii) is $\epsilon$-interleaved with $M$ and $\delta$-interleaved with $N$, which implies that $M$ and $N$ are $(\epsilon+\delta)$-interleaved.
Putting $\epsilon=\delta$, we get the second inequality.

Let us show (i).
Suppose $\phi\colon M\to N(\epsilon)$ and $\psi\colon N\to M(\epsilon)$ form an $\epsilon$-interleaving.
Define submodules $M_1,M_2\sse M$ and $N_1, N_2\sse N$ by
\begin{align*}
M_1 &= \Img_\epsilon(M) + \img \psi(-\epsilon), \quad M_2 = \Ker_\epsilon(M) \cap \ker \phi\cap M_1,\\
N_1 &= \Img_\epsilon(N) + \img \phi(-\epsilon), \quad N_2 = \Ker_\epsilon(N) \cap \ker \psi\cap N_1.
\end{align*}
Clearly, $M_1/M_2\in \EN_\epsilon(M)$ and $N_1/N_2\in \EN_\epsilon(N)$.
It remains to prove that $M_1/M_2 \cong N_1/N_2$.

Consider the morphisms $\mu\colon M(-\epsilon)\oplus N(-\epsilon) \to M$ and $\nu\colon M(-\epsilon)\oplus N(-\epsilon) \to N$ given by $M_{-\epsilon\to 0}$ (resp. $\phi(-\epsilon)$) on the first summand and $\psi(-\epsilon)$ (resp. $N_{-\epsilon\to 0}$) on the second.
We have $\img \mu = M_1$ and $\img \nu = N_1$.
Thus, $M_1/M_2 \cong (M(-\epsilon)\oplus N(-\epsilon))/\mu^{-1}(M_2)$, and $N_1/N_2 \cong (M(-\epsilon)\oplus N(-\epsilon))/\nu^{-1}(N_2)$, so to show $M_1/M_2 \cong N_1/N_2$, it suffices to show $\mu^{-1}(M_2) = \nu^{-1}(N_2)$.

An element $(x,y)\in M(-\epsilon)\oplus N(-\epsilon)$ is in $\mu^{-1}(\Ker_\epsilon(M))$ if and only if $M_{-\epsilon\to 0}(x) + \psi(-\epsilon)(y)$ is in $\Ker_\epsilon(M)$.
Equivalently, $M_{0\to \epsilon}(M_{-\epsilon\to 0}(x)) = M_{0\to \epsilon}(\psi(-\epsilon)(-y))$.
\vspace{.2cm}

\begin{tikzpicture}
\node at (-3,2){$M(-\epsilon)$};
\node at (-3,0){$N(-\epsilon)$};
\node at (0,2){$M$};
\node at (0,0){$N$};
\node at (3,2){$M(\epsilon)$};
\node at (3,0){$N(\epsilon)$};
\draw[->, shorten <=.7cm, shorten >=.4cm] (-3,2) to (0,2);
\node[above] at (-1.5,2){$M_{-\epsilon\to 0}$};
\draw[->, shorten <=.4cm, shorten >=.6cm] (0,2) to (3,2);
\node[above] at (1.5,2){$M_{0\to \epsilon}$};
\draw[->, shorten <=.7cm, shorten >=.4cm] (-3,0) to (0,0);
\node[below] at (-1.5,0){$N_{-\epsilon\to 0}$};
\draw[->, shorten <=.4cm, shorten >=.6cm] (0,0) to (3,0);
\node[below] at (1.5,0){$N_{0\to \epsilon}$};
\draw[->, shorten <=.4cm, shorten >=.4cm] (-3,2) to (0,0);
\node at (-2.8,1.4){$\phi(-\epsilon)$};
\draw[->, shorten <=.4cm, shorten >=.4cm] (-3,0) to (0,2);
\node at (-2.8,0.6){$\psi(-\epsilon)$};
\draw[->, shorten <=.4cm, shorten >=.4cm] (0,2) to (3,0);
\node at (.5,1.4){$\phi$};
\draw[->, shorten <=.4cm, shorten >=.4cm] (0,0) to (3,2);
\node at (.5,0.6){$\psi$};
\draw[color=white,fill=white] (-7,0) circle (.001);
\end{tikzpicture}
\vspace{.2cm}

Since $\phi$ and $\psi$ form an interleaving, the above diagram commutes.
This means that the equation just stated is equivalent to $\psi(\phi(-\epsilon)(x)) = \psi(N_{-\epsilon\to 0}(-y))$.
This, in turn, is equivalent to $\nu(x,y)\in \ker \psi$.
By symmetry, $\mu(x,y)\in \ker \phi$ is equivalent to $\nu(x,y)\in \Ker_\epsilon(N)$.
Putting the two together, $\mu(x,y)\in M_2$ holds if and only if $\nu(x,y)\in N_2$, so $\mu^{-1}(M_2) = \nu^{-1}(N_2)$.
\end{proof}

Call a module $R$ \emph{$\epsilon$-trivial} if $\Img_\epsilon(R)$ is the zero module.
Equivalently, $\Ker_\epsilon(R) = R$.
To prove our next theorem, we will need the following:
\begin{theorem}\cite[Theorem~6.1, special case]{bauer2015induced}
\label{thm_induced_matchings}
Suppose $A$ and $B$ are pfd $1$-parameter modules whose barcodes only contain intervals of the form $[a,b)$.
Then any morphism $\phi\colon X\to Y$ with $\epsilon$-trivial kernel and $\delta$-trivial cokernel induces a matching $\phi$ from $X$ to $Y$ that
\begin{itemize}
	\item[(i)] matches any interval in $B(X)$ of length more than $\epsilon$ and
	\item[(ii)] any interval in $B(Y)$ of length more than $\delta$,
\end{itemize}
and such that if $\sigma([a,b)) = [c,d)$, then
\begin{itemize}
	\item[(iii)] $b-\epsilon\leq d\leq b$ and
	\item[(iv)] $c\leq a\leq c+\delta$.
\end{itemize}
\end{theorem}

\begin{theorem}
\label{thm_EN=I}
Let $\epsilon\geq 0$.
Any pfd $1$-parameter modules $M$ and $N$ are $\epsilon$-interleaved if and only if there is a module $Q\in \EN_\epsilon(M)\cap \EN_\epsilon(N)$.
Thus, $d_{EN}(M,N) = d_I(M,N)$.
\end{theorem}

\begin{proof}
By \cref{thm_strong_equiv} (i), it suffices to show that if there is a module $Q\in \EN_\epsilon(M)\cap \EN_\epsilon(N)$, then there is an $\epsilon$-interleaving between $M$ and $N$.
Since $Q\in \EN_\epsilon(M)$, we can write $Q\cong M'/M''$, where $M''\sse M' \sse M$, and we have $M''\sse \Ker_\epsilon(M)$ and $\Img_\epsilon(M)\sse M'$.
Consider the morphisms $f\colon M\to M/M''$ and $g\colon M'/M'' \to M/M''$ induced by the identity morphism of $M$.
We have that $f$ is surjective and $g$ injective, so $\ker(g)$ and $\cok(f)$ are zero and therefore $0$-trivial.
Furthermore, $\ker(f)\cong M''$ and $\cok(g)\cong M/M'$ are $\epsilon$-trivial.
For simplicity, we will assume that any interval that appears is of the form $[a,b)$ for $a\in \R$ and $b\in \R\cup \{\infty\}$.
The more general case is dealt with carefully in \cite{bauer2015induced} and is not conceptually different.

\cref{thm_induced_matchings} gives us matchings $\sigma_f$ from $M$ to $M/M''$ and $\sigma_g$ from $M'/M''$ to $M/M''$ such that if $\sigma_f([a,b)) = [c,d)$, then $a=c$ and $b-\epsilon\leq d\leq b$, and such that if $\sigma_g([a,b)) = [c,d)$, then $c\leq a\leq c+\epsilon$ and $b =d$.
Moreover, $\sigma_f$ matches all the intervals in $B(M)$ with length more than $\epsilon$, and all the intervals in $B(M/M'')$; and $\sigma_g$ matches all the intervals in $B(M/M'')$ with length more than $\epsilon$, and all the intervals in $B(M'/M'')$.

Putting all of this together, $\sigma_g^{-1}\circ \sigma_f$ is a matching from $M$ to $M'/M''$ that matches all intervals in $B(M)$ of length at least $2\epsilon$, and all intervals in $B(M'/M'')$, and if $\sigma_g^{-1}\circ \sigma_f([a,b)) = [c,d)$, then $a\leq c \leq a+\epsilon$ and $b-\epsilon\leq d \leq b$.
Since $M'/M''\cong Q$, $B(M'/M'')=B(Q)$.

We can construct a similar matching $\gamma$ from $B(N)$ to $B(Q)$.
Then $\gamma\circ\sigma_g^{-1}\circ \sigma_f$ is an $\epsilon$-matching: Firstly, every interval of length at least $2\epsilon$ is matched.
Secondly, if $\gamma\circ\sigma_g^{-1}\circ \sigma_f([a,b)) = [a',b')$, then there is a $[c,d)$ with $\sigma_g^{-1}\circ \sigma_f([a,b)) = [c,d) = \gamma([a',b'))$.
Then $a\leq c \leq a+\epsilon$ and $a'\leq c \leq a'+\epsilon$, which gives $a-\epsilon\leq a'\leq a+\epsilon$.
Similarly, $b-\epsilon\leq b'\leq b+\epsilon$.
Any pair of interval modules that are matched by this matching is $\epsilon$-interleaved, and any unmatched interval module is $\epsilon$-interleaved with the zero module.
Thus, summing all these interleavings, we obtain an $\epsilon$-interleaving between $M$ and $N$.
\end{proof}

\subsection{Examples, part II}

The following lemma states that no result like \cref{thm_strong_equiv} holds for the erosion distance or $\tilde d_E$, not even for one-parameter modules.
\begin{lemma}
\label{lem_dI_larger_dE}
For any $c\in \R$, there are one-parameter pfd modules $M$ and $N$ with $d_I(M,N)\geq c$ and $\tilde d_E(M,N)\leq 1$.
\end{lemma}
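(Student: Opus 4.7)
Plan: I plan to construct an explicit family of one-parameter pfd modules $M_c, N_c$ witnessing the claim, leveraging the fundamental asymmetry between the subquotient relation $\leq$ and the symmetric proximity required by interleavings. The subquotient relation only demands one-sided containment, so an interval of $M$ can be absorbed into a much longer interval of $N$ via the erosion inequality $\Er_1(I^M_i) \leq I^N_{\sigma(i)}$, while the bottleneck cost of pairing those same intervals in any bijection can be large. The two injections $\sigma\colon B(\Er_1(M_c)) \to B(N_c)$ and $\tau\colon B(\Er_1(N_c)) \to B(M_c)$ witnessing $\tilde d_E \leq 1$ need not be inverse to each other, and in particular neither needs to be
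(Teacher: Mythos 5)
Your plan identifies exactly the right mechanism and matches the paper's strategy: the subquotient relation is asymmetric and does not force a consistent bijection, so one interval of $\Er_1(N)$ can be ``absorbed'' into a much longer interval of $M$ while the algebraic stability theorem forces a large $d_I$. However, the proposal stops mid-sentence and never produces the modules, so there is no proof to check. For reference, the paper realizes your idea with $A_i=[-i,i)$, $B_i=[-i,2c+i)$, $M=\bigoplus_{i=1}^{c-1}B_i$, $N=A_c\oplus M$; then $M\leq N$ trivially, $\Er_1(N)=A_{c-1}\oplus\bigoplus_{i=1}^{c-2}B_i\leq M$ because $A_{c-1}\subseteq B_{c-1}$ (the ``absorption'' you describe), so $\tilde d_E(M,N)\leq 1$, while \cref{thm_ast} shows $A_c$ can be neither discarded nor matched within $\epsilon<c$, giving $d_I(M,N)\geq c$. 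You need to supply a concrete construction of this type, verify both erosion inclusions, and invoke the algebraic stability theorem (or argue directly) to lower-bound $d_I$; as written the argument is not complete.
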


\begin{proof}
It suffices to show the lemma under the assumption that $c$ is a positive integer.
For $i\geq 0$, let $A_i = [-i,0)$ and $B_i = [-i,i)$, where as before, we identify intervals and their associated modules.
Let $M = \bigoplus_{i=1}^{2c-1} B_i$, and let $N = A_{2c}\oplus M$.
Clearly, $M\leq N$, which gives $\Er_1(M)\leq N$.
Applying the definition of erosion, one can see that for any interval $I = [a,b)$, we have $\Er_1(I) = [a+1, b-1)$.
Using this, we get that
\begin{align*}
\Er_1(M) &= \bigoplus_{i=1}^{2c-2} B_i,\\
\Er_1(N) &= [2c+1,-1) \oplus \bigoplus_{i=1}^{2c-2} B_i.
\end{align*}
We also have that for intervals $I$ and $I'$, $I$ is a subquotient of $I'$ if and only if $I\sse I'$.
Thus, $[2c+1,-1) \leq B_{2c-1}$, and it follows that $\Er_1(N)\leq M$.
Thus, $\tilde d_E(M,N)\leq 1$.

It remains to show that $d_I(M,N)\geq c$.
By \cref{thm_ast}, it suffices to show that there is no $\epsilon$-matching between $M$ and $N$ for $\epsilon<c$.
Observe that $A_{2c}$ is too large to be left unmatched in an $\epsilon$-matching.
Moreover, for each $i\geq c$, the difference between the right endpoint of $A_{2c}$ and the right endpoint of $B_i$ is at least $c>\epsilon$, and the same holds for the left endpoints when $i\leq c$.
Thus, no interval in $B(M)$ can be matched with $A_{2c}$ in an $\epsilon$-matching, either.
\end{proof}

We finish off the section with an example not strictly related to distances.
By a slight twist on the modules in the proof of the previous lemma, we show (as promised in the previous section) that for any constant $c$, the $c\epsilon$-erosion neighborhood of a module $M$ does not in general contain all subquotients within interleaving distance $\epsilon$ of $M$.

\begin{lemma}
\label{lem_int_subq_not_in_er_nhood}
For any $c>0$, there are pfd one-parameter modules $M$ and $N$ such that $N\leq M$ and $M$ and $N$ are $1$-interleaved, but $N\notin \EN_c(M)$.
\end{lemma}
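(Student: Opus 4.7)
My plan is to construct explicit one-parameter interval modules. Given $c > 0$, set $M = [0, L)$ and $N = [0, L - 1)$ for some $L > 1$.

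The first two verifications are routine. For $N \leq M$: take the subquotient given by $M_1 = M$ and $M_2 = [L-1, L)$, giving $M_1/M_2 \cong [0, L-1) = N$. For the interleaving, $d_I(M, N) = 1$ follows either from the bottleneck formula for interval modules via algebraic stability (\cref{thm_ast}), or by exhibiting explicit 1-interleaving morphisms $\phi \colon M \to N(1)$ and $\psi \colon N \to M(1)$ induced by the shift $M_{0\to 1}$.

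The key step is showing $N \notin \EN_c(M)$. The argument exploits the rigidity of subquotient representations of interval modules: every submodule of $M = [0, L)$ has the form $[x, L)$ for some $x \in [0, L]$, so for any submodules $M_1', M_2' \subseteq M$, the quotient $M_1'/M_2' \cong [a, b)$ is an interval. Isomorphism with $N = [0, L-1)$ forces $a = 0$ and $b = L-1$, so $M_2' = [L-1, L)$ is the unique choice. The constraint $M_2' \subseteq \Ker_c(M) = [L - c, L)$ then demands $L - c \leq L - 1$, i.e., $c \geq 1$. For $c < 1$, no valid representation exists, proving $N \notin \EN_c(M)$.

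The main obstacle is extending the argument to all $c > 0$, since the single-interval example only handles $c < 1$. For $c \geq 1$, a more elaborate construction is needed. The natural attempt using multi-interval modules (say $M = [0, L) \oplus J$ for a short interval $J$ and $N = [0, L)$) tends to collapse: the flexibility of choosing $M_1' \subsetneq M$ — e.g., taking $M_1' = [0, L) \oplus [a, r(J))$ with $M_2' = [a, r(J))$ for $a$ chosen in a window dictated by $\Img_c$ and $\Ker_c$ — makes the threshold coincide with $d_I(M, N)$, giving ratio $1$. The hard part, therefore, is engineering $M$ and $N$ so that this flexibility is blocked: every candidate pair $(M_1', M_2')$ must force some generator of $M_2'$ to sit outside the top $c$-strip of one of the indecomposable summands of $M$, while still keeping $d_I(M, N) \leq 1$.
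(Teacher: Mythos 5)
Your construction $M=[0,L)$, $N=[0,L-1)$ is correct as far as it goes: the rigidity argument for submodules of an interval module does force $M_2 = [L-1,L)$, and the constraint $M_2 \sse \Ker_c(M) = [L-c,L)$ then fails exactly when $c<1$. But the lemma quantifies over \emph{all} $c>0$, and you explicitly concede that your example only handles $c<1$ and that "a more elaborate construction is needed" for $c\geq 1$. That unresolved case is the entire content of the lemma (the case $c<1$ already follows from the $c$ slightly less than $1$ instance by monotonicity of $\EN_c$ in $c$), so as written the proposal has a genuine gap rather than a complete proof. Your diagnosis of the difficulty is accurate, though: one must block the flexibility of choosing $M_1\subsetneq M$ and force some generator of $M_2$ outside the top $c$-strip of a summand, while keeping $d_I(M,N)\leq 1$.

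For comparison, the paper resolves exactly this tension with a staggered direct sum: with $A_i=[-i,i)$ and $B_i=[-i,2c+i)$, it takes $M = A_{c-1}\oplus\bigoplus_{i=1}^{c}B_i$ and $N = A_c\oplus\bigoplus_{i=1}^{c-1}B_i$. The $1$-interleaving comes from matching $A_{c-1}\leftrightarrow A_c$ and $B_i\leftrightarrow B_{i-1}$, and $N\leq M$ because $A_c$ is a quotient of $B_c$ and $A_{c-1}$ surjects onto $0$. The forcing mechanism is support-based: $N_{-c}\neq 0$, and the only summand of $M$ alive at $-c$ is the long bar $B_c=[-c,3c)$, so any representation $M_1/M_2$ must contain all of $B_c$ in $M_1$; but $M_2\cap B_c\sse\Ker_\epsilon(B_c)$ can only remove the top strip of $B_c$, so the quotient retains a nonzero map $(M_1/M_2)_{-c\to 2c}$, whereas $N_{-c\to 2c}=0$ because $A_c$ dies at $c$. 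This is precisely the kind of "engineered obstruction" you identified as the hard part; without some construction of this type your proof does not establish the statement.
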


\begin{proof}
Like in the proof of the previous lemma, we can assume that $c$ is a positive integer, and we will use modules decomposing into modules of the form $A_i$ and $B_i$.
Let $M = A_c\oplus \bigoplus_{i=1}^{c+1} B_i$ and $N = A_{c+1}\oplus \bigoplus_{i=1}^c B_i$.
There is a $1$-interleaving between $A_c$ and $A_{c+1}$, and between $B_i$ and $B_{i-1}$ for all $i\in \{1,\dots,c+1\}$.
($B_0$ is empty and thus corresponds to the zero module.)
We can sum these interleavings to get a $1$-interleaving between $M$ and $N$.

$A_{c+1}$ is a quotient of $B_{c+1}$, the zero module is a quotient of $A_c$, and $B_i$ is a quotient of $B_i$ for all $i$.
Putting these together, we get that $N\leq M$.

We will now show that $N\notin \EN_\epsilon(M)$ for $\epsilon< c+1$.
Assuming $N\in \EN_\epsilon(M)$, we have submodules $M_1\supseteq M_2$ of $M$ with $M_1\supseteq \Img_\epsilon(M)$ and $M_2\sse \Ker_\epsilon(M)$, and $N\cong M_1/M_2$.

Since $N_{-c-1}\neq 0$, we must have $(M_1)_{-c-1}\neq 0$, which implies that $M_1$ contains the summand $B_{c+1}$ of $M$ as a submodule.
At the same time, $B_{c+1}\cap M_2\subseteq B_{c+1}\cap \Ker_\epsilon(M)$, the support of which does not intersect $[-c-1,c+1-\epsilon)\sse [-c-1,0]$.
This implies that the linear transformation $(M_1/M_2)_{-c-1\to 0}$ is nonzero.
But $N_{-c-1\to 0}$ is zero, so $N\ncong M_1/M_2$, which contradicts our assumptions.
\end{proof}

\subsection{Prunings and the pruning distance}
\label{subsec_prunings}

In this subsection, we discuss how to define a well-behaved ``decomposition distance''; that is, a distance that measures how different the decompositions of two modules are.
We observe how a couple of natural attempts at defining such a distance fail, show that these failures are due to a serious more general obstacle (\cref{thm_uli_luis_cor}), and then explain how this obstacle might be circumvented, introducing the pruning distance (\cref{def_pruning_dist}) as a concrete suggestion of a well-behaved decomposition distance.

The most obvious candidate for a decomposition distance is the bottleneck distance $d_B$ (\cref{def_matching}).
Unfortunately, the bottleneck distance is very unstable:
In the example in \cref{fig_large_bottleneck}, the three modules are pairwise $0.03$-interleaved, but the bottleneck distance between any pair of them is $0.4$.
By making the ``bridges'' connecting the squares smaller, the interleaving distance can be made arbitrarily small while $d_B$ stays equal to $0.4$.
(Moreover, we can fix $Q$ and let every module have pointwise dimension at most one.)

Fixing such instability issues is the main motivation for our definition of $\epsilon$-refinements (\cref{def_refinement}), so a natural next attempt at defining a good decomposition distance is the following:
\begin{definition}
\label{def_f_R}
For pfd modules $M$ and $N$, let
\begin{equation}
f_R(M,N) = \inf\{\epsilon \mid \exists Q \text{ that is an }\epsilon\text{-refinement of both } M \text{ and }N\}.
\end{equation}
\end{definition}
That is, we define $f_R$ using common $\epsilon$-refinements in the same way $d_I$ and $d_B$ are defined using $\epsilon$-interleavings and $\epsilon$-matchings, respectively.
Sadly, also this definition has an issue:
$f_R$ does not satisfy the triangle inequality, and is therefore not a distance.\footnote{We omit a rigorous proof of this fact, but here is an outline:
$f_R$ agrees with the distance $d_{EN}$ on pairs of indecomposable modules, but $f_R\neq d_{EN}$ in general, and $f_R$ and $d_{EN}$ have stability properties by \cref{cor_f_R} and \cref{thm_strong_equiv}, respectively.
It then follows from \cref{thm_uli_luis_cor} that $f_R$ is not a distance.}

As a matter of fact, the instability of $d_B$ and the failure of $f_R$ to satisfy the triangle inequality are due to a fundamental obstacle to defining good decomposition distances, which is expressed by \cref{thm_uli_luis_cor} below.
This theorem is a corollary of \cite[Theorem A]{bauer2022generic}, which is phrased in terms of \emph{finitely presentable} modules.
For $p\in \Pb$, let $\la p\ra = \{q\in \Pb \mid q\geq p\}$.
\begin{definition}
A \textbf{finitely presentable} module is a module isomorphic to the cokernel of a morphism $M\to N$, where $B(M)$ and $B(N)$ are finite and only contain interval modules whose underlying intervals are of the form $\la p\ra$.
\end{definition}
The theorem says that under a technical condition (ii), any distance on finitely presentable modules is fully determined by the values it takes on pairs of indecomposable modules.
The condition (ii) can be interpreted as saying that the distances $d$ and $d'$ are not extremely unstable:
If (ii) fails, then there are $M$ and $M_1,M_2, \dots$ with $\lim_{i\to \infty} d_I(M,M_i) = 0$, but $\lim_{i\to \infty} d(M,M_i) \neq 0$ (or the same for $d'$).
This means that there is a constant $\delta$ such that for any $\epsilon>0$, there is a module $M_\epsilon$ with $d_I(M,M_\epsilon)<\epsilon$ and $d(M,M_\epsilon)>\delta$.
As a consequence, there is no $c_M\in \R$ such that $d(M,N)\leq c_Md_I(M,N)$ for all $N$.
In particular, $d_B$ fails condition (ii).
\begin{theorem}[{Follows from \cite[Theorem~A]{bauer2022generic}}]
\label{thm_uli_luis_cor}
Suppose $d$ and $d'$ are distances on $m$-parameter modules for $m\geq 2$ such that
\begin{itemize}
\item[(i)] for all indecomposable $M$ and $N$, $d(M,N) = d'(M,N)$,
\item[(ii)] for any module $M$ and indecomposable $M_1, M_2,\dots$ such that $\lim_{i\to \infty} d_I(M,M_i) = 0$, we have
\[
\lim_{i\to \infty} d(M,M_i) = \lim_{i\to \infty} d'(M,M_i) = 0.
\]
\end{itemize}
Then $d(M,N)=d'(M,N)$ for any finitely presentable modules $M$ and $N$.
\end{theorem}

\begin{proof}
Let $M$ and $N$ be finitely presentable modules.
By \cite[Theorem A]{bauer2022generic}, there are sequences $M_1, M_2,\dots$ and $N_1, N_2,\dots$ of finitely presentable indecomposable modules such that
\[
\lim_{i\to \infty} d_I(M,M_i) = \lim_{i\to \infty} d_I(N,N_i) = 0.
\]
By (ii), we also have
\begin{align*}
\lim_{i\to \infty} d(M,M_i) &= \lim_{i\to \infty} d(N,N_i) = 
\lim_{i\to \infty} d'(M,M_i) = \lim_{i\to \infty} d'(N,N_i) = 0.
\end{align*}
By the triangle inequality, this gives $d(M,N) = \lim_{i\to \infty} d(M_i,N_i)$ and $d'(M,N) = \lim_{i\to \infty} d'(M_i,N_i)$.
But by (i), $d(M_i,N_i)=d'(M_i,N_i)$ for all $i$, so $d(M,N) = d'(M,N)$.
\end{proof}
This theorem demonstrates the difficulty of defining a well-behaved distance that depends on decompositions:
We want to work with stable distances, but imposing even a very weak stability statement like (ii) seems to remove decomposable modules completely from the picture, since the distance is then completely determined by the values it takes on pairs of indecomposable modules.
This explains why neither $d_B$ nor $f_R$ is a stable distance: For indecomposable $M$ and $N$, we have $d_B(M,N) = d_I(M,N)$, but we have $d_B\neq d_I$ for general modules, and $d_I$ is stable.
Thus, by \cref{thm_uli_luis_cor}, $d_B$ is either not a distance, or it is extremely unstable.
By the same argument, replacing $d_B$ by $f_R$ and $d_I$ by $d_{EN}$, we get that $f_R$ is not a stable distance, either.
The moral is that the reason $d_I$ and $d_{EN}$ are stable distances and $d_B$ and $f_R$ are not, is that $d_B$ and $f_R$ depend on decompositions, while $d_I$ and $d_{EN}$ do not.

Thus, \cref{thm_uli_luis_cor} paints a bleak picture for decomposition distances: it almost seems like extreme instability is an inevitable property of any distance depending on decompositions.
We conjecture that the situation is less hopeless than it seems.
The ``loophole'' that we exploit is that even though any stable distance is determined by the values it takes on pairs of indecomposable modules $M$ and $N$, there is nothing stopping us from defining a distance that depends on the decompositions of modules in a neighborhood of $M$ and $N$, not just the decompositions of $M$ and $N$ themselves.
To this end, we define \emph{prunings} below, and use these to define the \emph{pruning distance}.
The $\epsilon$-pruning of a module plays a crucial part in our main stability result \cref{thm_main}, where we show that the $\epsilon$-pruning of a module $M$ is a refinement of all the modules in a neighborhood of $M$.
Thus, one can view the $\epsilon$-pruning of $M$ as an approximate decomposition of $M$ that is finer than all decompositions of modules close to $M$.
As a consequence, the $\epsilon$-pruning contains information about all decompositions in a neighborhood of $M$.

The definition of pruning is quite technical, and understanding the full motivation for the definition requires a careful reading of \cref{sec_stability_theorem}.
For now, the motivation we give is that the definition introduces submodules $I$ and $K$ and makes sure that they are well-behaved with regards to morphisms $f\colon M\to M(2\epsilon)$, which we view as approximate endomorphisms.
That an object encoding (approximate) decompositions is defined in terms of (approximate) endomorphisms should not be surprising, since there is a strong relation between decompositions and endomorphisms:
Any decomposition $M=M'\oplus M''$ gives rise to idempotent endomorphisms by projecting onto $M'$ and $M''$, while the Fitting lemma \cite[Sec.~3.4]{jacobson2012basic} shows how to obtain a decomposition from an endomorphism under suitable assumptions.
\begin{definition}
\label{def_pruning}
Let $M$ be a module.
Let $I$ be the largest submodule of $M$ such that for any morphism $f\colon M\to M(2\epsilon)$, $f(I)\sse M_{0\to 2\epsilon}(I)$.
Let $K$ be the smallest submodule of $I$ such that for any morphism $f\colon M\to M(2\epsilon)$, $I_{0\to 2\epsilon}^{-1}(f(K))\sse K$.
We define the \textbf{$\epsilon$-pruning pair} of $M$ to be $(I,K)$, and the \textbf{$\epsilon$-pruning} of $M$ to be $\Pru_\epsilon(M)\coloneqq (I/K)(-\epsilon)$.
\end{definition}
We show that the $\epsilon$-pruning pair is well-defined in \cref{lem_pruning_pair}.
We have that the $0$-pruning pair of a module $M$ is $(M,0)$, and the $0$-pruning of $M$ is $M/0\cong M$.

\begin{example}
\label{ex_pruning}
We construct an almost-decomposable module and describe its $\epsilon$-pruning for an appropriate $\epsilon$, illustrating how pruning a module can split it into smaller pieces.
The construction is illustrated in \cref{fig_pruning_ex}.
Let $0\leq \epsilon < \frac 12$, and let $I_1 = [1,3]\times [1-2\epsilon,3]$, $I_2 = [1-2\epsilon,3]\times [1,3]$ and $I_3 = [3-2\epsilon,3]^2$.
For $p\in [1,3]^2$, we have $(I_1\oplus I_2)_p=k^2$ with the two canonical basis elements of $k^2$ coming from $I_1$ and $I_2$, respectively.
Let $\phi \colon I_3\to I_1\oplus I_2$ be the map with $\phi_p\colon k\to k^2$ being given by the matrix $\begin{bmatrix}
1\\
1
\end{bmatrix}$ for $p\in I_3$, and let $M = (I_1\oplus I_2)/\img \phi$.
One can show that $M$ is indecomposable.

Let us find the $\epsilon$-pruning pair $(I,K)$ of $M$.
There is a morphism $f\colon M \to M(2\epsilon)$ with $f_p = \begin{bmatrix}
0\\
1
\end{bmatrix}$ and $f_q = \begin{bmatrix}
1\\
0
\end{bmatrix}$ for $p\in J_1\coloneqq [1,3-2\epsilon]\times [1-2\epsilon,1]$ and $q\in J_2\coloneqq [1-2\epsilon,1]\times [1,3-2\epsilon]$.
Using the definition of $I$, one can use this to show that the support of $I$ does not intersect $J_1 \cup J_2$.
Letting $I_p = M_p$ for $p\notin J_1\cup J_2$ and $I_p=0$ otherwise, we get that $f(I)\sse M_{0\to 2\epsilon}(I)$ for all $f\colon M\to M(2\epsilon)$, so we have found $I$.

By definition, $\Ker_{2\epsilon}(I)\sse K$, so $K_{(p_1,p_2)}=I_{(p_1,p_2)}$ if $p_1\geq 3-2\epsilon$ or $p_2\geq 3-2\epsilon$.
For $p\in [3-4\epsilon,3-2\epsilon]^2$, we have that $K_p\supseteq \Ker_{2\epsilon}(I)_p \neq 0$, and one can construct an $f\colon M\to M(2\epsilon)$ such that the restriction of $f_p$ to $\Ker_{2\epsilon}(I)_p$ is surjective.
It follows that $K_p=I_p$ for $p\in [3-4\epsilon,3-2\epsilon]^2$.
It turns out that nothing more needs to be added to $K$, so $K_p=0$ for
\[
p\in [1-2\epsilon,3-2\epsilon]^2 \setminus [3-4\epsilon,3-2\epsilon]^2,
\]
and $K_p=I_p$ otherwise.
The $\epsilon$-pruning $(I/K)(-\epsilon)$ is isomorphic to $L\oplus L$, where
\[
L = [1+\epsilon,3-\epsilon]^2 \setminus [3-3\epsilon,3-\epsilon]^2.
\]
\end{example}

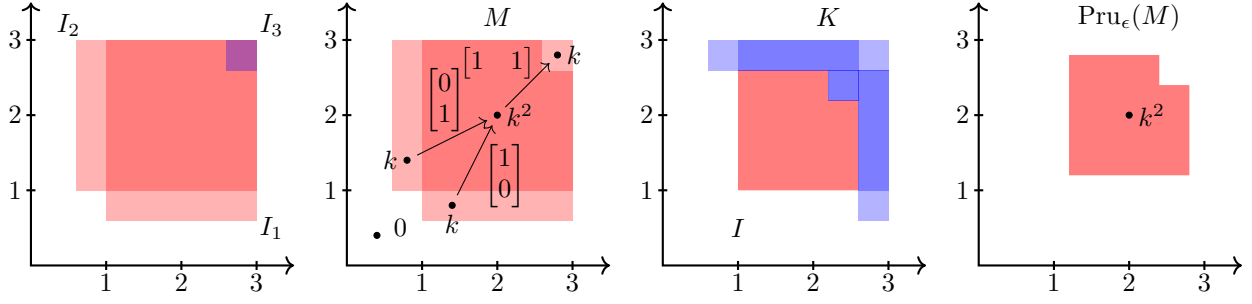
\begin{figure}
\centering
\begin{tikzpicture}[scale=1]
\draw[thick,<->] (0,3.5) to (0,0) to (3.5,0);
\draw[thick] (-.06,1) to (.06,1);
\node[left] at (0,1){$1$};
\draw[thick] (-.06,2) to (.06,2);
\node[left] at (0,2){$2$};
\draw[thick] (-.06,3) to (.06,3);
\node[left] at (0,3){$3$};
\draw[thick] (1,-.06) to (1,.06);
\node[below] at (1,0){$1$};
\draw[thick] (2,-.06) to (2,.06);
\node[below] at (2,0){$2$};
\draw[thick] (3,-.06) to (3,.06);
\node[below] at (3,0){$3$};
\node at (3.2,.5){$I_1$};
\node at (.5,3.2){$I_2$};
\node at (3.2,3.2){$I_3$};
\fill[red, opacity=0.3] (1,.6) rectangle (3,3);
\fill[red, opacity=0.3] (.6,1) rectangle (3,3);
\fill[blue, opacity=0.3] (2.6,2.6) rectangle (3,3);
\begin{scope}[xshift=4.2cm]
\draw[thick,<->] (0,3.5) to (0,0) to (3.5,0);
\draw[thick] (-.06,1) to (.06,1);
\node[left] at (0,1){$1$};
\draw[thick] (-.06,2) to (.06,2);
\node[left] at (0,2){$2$};
\draw[thick] (-.06,3) to (.06,3);
\node[left] at (0,3){$3$};
\draw[thick] (1,-.06) to (1,.06);
\node[below] at (1,0){$1$};
\draw[thick] (2,-.06) to (2,.06);
\node[below] at (2,0){$2$};
\draw[thick] (3,-.06) to (3,.06);
\node[below] at (3,0){$3$};
\node at (2,3.3){$M$};
\fill[red, opacity=0.3] (.6,1) rectangle (3,3);
\fill[red, opacity=0.3] (1,.6) rectangle (3,3);
\fill[white] (2.6,2.6) rectangle (3,3);
\fill[red, opacity=0.3] (2.6,2.6) rectangle (3,3);
\node[right] at (2,2){$k^2$};
\draw[color=black,fill=black] (2,2) circle (.04);
\node[right] at (.5,.5){$0$};
\draw[color=black,fill=black] (.4,.4) circle (.04);
\node[below] at (1.4,.8){$k$};
\draw[color=black,fill=black] (1.4,.8) circle (.04);
\node[left] at (.8,1.4){$k$};
\draw[color=black,fill=black] (.8,1.4) circle (.04);
\node[right] at (2.8,2.8){$k$};
\draw[color=black,fill=black] (2.8,2.8) circle (.04);
\draw[->, shorten <=.15cm, shorten >=.15cm] (.8,1.4) to (2,2);
\draw[->, shorten <=.15cm, shorten >=.15cm] (1.4,.8) to (2,2);
\draw[->, shorten <=.15cm, shorten >=.15cm] (2,2) to (2.8,2.8);
\node at (2.1,1.2){$\begin{bmatrix}
1\\
0
\end{bmatrix}$};
\node at (1.3,2.2){$\begin{bmatrix}
0\\
1
\end{bmatrix}$};
\node at (3.1,2.2){$\begin{bmatrix}
1&-1
\end{bmatrix}$};
\end{scope}
\begin{scope}[xshift=8.4cm]
\draw[thick,<->] (0,3.5) to (0,0) to (3.5,0);
\draw[thick] (-.06,1) to (.06,1);
\node[left] at (0,1){$1$};
\draw[thick] (-.06,2) to (.06,2);
\node[left] at (0,2){$2$};
\draw[thick] (-.06,3) to (.06,3);
\node[left] at (0,3){$3$};
\draw[thick] (1,-.06) to (1,.06);
\node[below] at (1,0){$1$};
\draw[thick] (2,-.06) to (2,.06);
\node[below] at (2,0){$2$};
\draw[thick] (3,-.06) to (3,.06);
\node[below] at (3,0){$3$};
\node at (2.2,3.3){$K$};
\node at (1,.5){$I$};
\fill[red, opacity=0.3] (1,1) to (2.6,1) to (2.6,2.2) to (2.2,2.2) to (2.2,2.6) to (1,2.6);
\fill[red, opacity=0.3] (1,1) to (2.6,1) to (2.6,2.2) to (2.2,2.2) to (2.2,2.6) to (1,2.6);
\fill[blue, opacity=0.3] (.6,2.6) rectangle (3,3);
\fill[blue, opacity=0.3] (1,2.6) rectangle (2.6,3);
\fill[blue, opacity=0.3] (2.6,.6) rectangle (3,2.6);
\fill[blue, opacity=0.3] (2.6,1) rectangle (3,2.6);
\fill[blue, opacity=0.3] (2.2,2.2) rectangle (2.6,2.6);
\fill[blue, opacity=0.3] (2.2,2.2) rectangle (2.6,2.6);
\end{scope}
\begin{scope}[xshift=12.6cm]
\draw[thick,<->] (0,3.5) to (0,0) to (3.5,0);
\draw[thick] (-.06,1) to (.06,1);
\node[left] at (0,1){$1$};
\draw[thick] (-.06,2) to (.06,2);
\node[left] at (0,2){$2$};
\draw[thick] (-.06,3) to (.06,3);
\node[left] at (0,3){$3$};
\draw[thick] (1,-.06) to (1,.06);
\node[below] at (1,0){$1$};
\draw[thick] (2,-.06) to (2,.06);
\node[below] at (2,0){$2$};
\draw[thick] (3,-.06) to (3,.06);
\node[below] at (3,0){$3$};
\node at (2,3.3){$\Pru_\epsilon(M)$};
\fill[red, opacity=0.3] (1.2,1.2) to (2.8,1.2) to (2.8,2.4) to (2.4,2.4) to (2.4,2.8) to (1.2,2.8);
\fill[red, opacity=0.3] (1.2,1.2) to (2.8,1.2) to (2.8,2.4) to (2.4,2.4) to (2.4,2.8) to (1.2,2.8);
\node[right] at (2,2){$k^2$};
\draw[color=black,fill=black] (2,2) circle (.04);
\end{scope}
\end{tikzpicture}
\caption{Left: The intervals used to construct $M$ in \cref{ex_pruning}.
Middle left: $M$, which is indecomposable.
Middle right: The $\epsilon$-pruning pair $(I,K)$.
Right: The $\epsilon$-pruning of $M$, which decomposes.}
\label{fig_pruning_ex}
\end{figure}

For the following, we will find it convenient to extend our definition of $\epsilon$-refinement to $\epsilon=\infty$.
Let $\epsilon\geq 0$ and $M\cong \bigoplus_{i\in \Lambda} M_i$, where each $M_i$ is indecomposable.
An \emph{$\infty$-refinement} of $M$ is a module isomorphic to a module $\bigoplus_{i\in \Lambda} N_i$ such that $N_i$ is a subquotient of $M_i$ for each $i\in \Lambda$.
Let 
\begin{definition}
\label{def_pruning_dist}
The \emph{pruning distance} $d_P$ is defined by
\begin{align*}
d_P(M,N) = \inf\{\epsilon\geq 0\mid \forall \delta\geq 0, &\Pru_{\epsilon+\delta}(M) \text{ is an } \infty\text{-refinement of } \Pru_\delta(N) \text{ and }\\ &\Pru_{\epsilon+\delta}(N) \text{ is an } \infty\text{-refinement of } \Pru_\delta(M)\}
\end{align*}
for any pfd modules $M$ and $N$.
\end{definition}
For $M$ and $N$ to have a small pruning distance, we require $\Pru_\epsilon(M)$ to be more finely decomposed than $N$ and vice versa for a small $\epsilon$, which is similar in spirit to $M$ and $N$ having a common $\epsilon$-refinement.
But to get around the problem of indecomposable $M$ and $N$, for which ``more finely decomposed'' is not an interesting property, we also require $\Pru_{\epsilon+\delta}(M)$ to be more finely decomposed than $\Pru_\delta(N)$ for $\delta>0$, which contains information about decompositions of modules in a neighborhood of $N$.
This is how we exploit the loophole mentioned above, and get a distance that depends meaningfully on decompositions.

\begin{lemma}
\label{lem_refine_add}
Let $\epsilon,\delta\in [0,\infty]$, and let $M$ be a pfd module.
Suppose $N$ is an $\epsilon$-refinement of $M$ and $Q$ is a $\delta$-refinement of $N$.
Then $Q$ is an $(\epsilon+\delta)$-refinement of $M$.
\end{lemma}
\begin{proof}
To unify notation for finite and infinite $\epsilon, \delta$, for any module $A$, let $\EN_\infty(A)$ be the collection of subquotients of $A$.
Recall that we use the notation $B\leq A$ if $B$ is a subquotient of $A$.

Write $M \cong \bigoplus_{i\in \Lambda} M_i$ and $N \cong \bigoplus_{i\in \Lambda} N_i$ with $M_i$ indecomposable and $N_i\in \EN_\epsilon(M_i)$ for all $i\in \Lambda$.
Let also $N \cong \bigoplus_{j\in \Gamma} N'_j$ and $Q \cong \bigoplus_{j\in \Gamma} Q_j$ with $N'_j$ indecomposable and $Q_j\in \EN_\delta(N'_j)$ for all $j\in \Gamma$.
By \cref{thm_unique_dec}, there is a partition of $\Gamma$ into subsets $\Gamma_i$ for $i\in \Lambda$ such that $N_i \cong \bigoplus_{j\in \Gamma_i} N'_j$ for all $i\in \Lambda$.
We get that $Q_i\coloneqq \bigoplus_{j\in \Gamma_i} Q_j\in \EN_\delta(N_i)$ for all $i\in \Lambda$.
Putting together $N_i\in \EN_\epsilon(M_i)$ and $Q_i \in \EN_\delta(N_i)$, we get $Q_i\in \EN_{\epsilon+\delta}(M_i)$ as follows:
If $\epsilon$ and $\delta$ are finite, then this is \cref{lem_erosion_containment2}.
If at least one of $\epsilon$ or $\delta$ is infinite, then $Q_i \leq N_i \leq M_i$, and then $Q_i\leq M_i$ follows from the transitivity of \cref{lem_subq_poset}.

Thus, since $Q \cong \bigoplus_{i\in \Lambda} \bigoplus_{j\in \Gamma_i} Q_j = \bigoplus_{i\in \Lambda} Q_i$ and $M \cong \bigoplus_{i\in \Lambda} M_i$, we get that $Q$ is an $(\epsilon+\delta)$-refinement of $M$ regardless of whether $\epsilon$ and $\delta$ are finite.
\end{proof}

\begin{proposition}
The pruning distance is a distance.
\end{proposition}
\begin{proof}
Symmetry and $d_P(M,M) = 0$ are immediate.
For the triangle inequality, let $\epsilon,\epsilon'\geq 0$ and suppose that $\Pru_{\epsilon+\delta}(M)$ is an $\infty$-refinement of $\Pru_\delta(N)$ and $\Pru_{\epsilon'+\delta'}(N)$ is an $\infty$-refinement of $\Pru_{\delta'}(Q)$ for all $\delta,\delta'\geq 0$.
Let $\gamma\geq 0$.
Picking $\delta=\epsilon'+\gamma$ and $\delta'=\gamma$, we get that $\Pru_{\epsilon+\epsilon'+\gamma}(M)$ is an $\infty$-refinement of $\Pru_{\epsilon'+\gamma}(N)$, which is in turn an $\infty$-refinement of $\Pru_\gamma(Q)$.
By \cref{lem_refine_add} applied to $\infty$-refinements, we get that $\Pru_{\epsilon+\epsilon'+\gamma}(M)$ is an $\infty$-refinement of $\Pru_\gamma(Q)$.
Running the same argument in the opposite direction from $Q$ to $M$, we get that $\Pru_{\epsilon+\epsilon'+\gamma}(Q)$ is an $\infty$-refinement of $\Pru_\gamma(M)$, and the triangle inequality of $d_P(M,N)$ follows.
\end{proof}

We now state our conjecture that this distance is stable, and even Lipschitz equivalent to the interleaving distance if we restrict ourselves to modules of a bounded pointwise dimension.
\begin{conjecture}
\label{conj_pruning}
Let $M$ and $N$ be pfd modules with $r = \supdim M<\infty$.
Then
\[
d_P(M,N)\leq d_I(M,N)\leq 2rd_P(M,N).
\]
\end{conjecture}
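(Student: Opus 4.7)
The plan is to handle the two inequalities separately. The first, $d_P(M,N) \leq d_I(M,N)$, should be a fairly direct corollary of \cref{cor_main}, while the second, $d_I(M,N) \leq 2r\, d_P(M,N)$, is the main obstacle and is why the statement is posed as a conjecture rather than a theorem.

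For the first inequality, suppose $M$ and $N$ are $\epsilon$-interleaved and fix $\delta \geq 0$. The first step is to verify directly from \cref{def_pruning} that $\Pru_\delta(N) \in \EN_\delta(N)$: unpacking the definition, the shifted quotient $(I_N/K_N)(-\delta)$ can be identified with a subquotient $M_1/M_2$ of $N$ with $\Img_\delta(N) \subseteq M_1$ (the extremal defining property of $I_N$ forces it to contain the image of $N_{-2\delta\to 0}$ once shifted appropriately) and $M_2 \subseteq \Ker_\delta(N)$ (analogously for $K_N$). Then \cref{thm_erosion_containment_gives_int} gives a $\delta$-interleaving between $\Pru_\delta(N)$ and $N$, which composed with the given $\epsilon$-interleaving yields an $(\epsilon+\delta)$-interleaving between $M$ and $\Pru_\delta(N)$. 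Applying \cref{cor_main} (using $\supdim M = r$), $\Pru_{\epsilon+\delta}(M)$ is a $2r(\epsilon+\delta)$-refinement, and hence an $\infty$-refinement, of $\Pru_\delta(N)$. The reverse direction (with $M$ and $N$ swapped) works identically provided $\supdim N < \infty$, an assumption that appears implicit in the conjecture; otherwise, a variant of \cref{cor_main} requiring only one-sided dimension control would be needed.

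For the second inequality, assume $d_P(M,N) \leq \epsilon$. Setting $\delta = 0$ yields that $\Pru_\epsilon(M)$ is a subquotient of $N$ and $\Pru_\epsilon(N)$ a subquotient of $M$; combined with $\Pru_\epsilon(M) \in \EN_\epsilon(M)$ (so that $\Er_\epsilon(M) \leq \Pru_\epsilon(M)$, and hence $\Er_\epsilon(M) \leq N$ by transitivity from \cref{lem_subq_poset}), this only proves $\tilde d_E(M,N) \leq \epsilon$. By \cref{lem_dI_larger_dE} the erosion distance can be arbitrarily smaller than $d_I$, so the $\delta = 0$ information alone cannot suffice. Any proof must therefore exploit the entire family of $\infty$-refinement relations between $\Pru_{\epsilon+\delta}(M)$ and $\Pru_\delta(N)$ indexed by $\delta \geq 0$, which together encode compatibility of decompositions at every scale. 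The natural strategy is to reverse the proof of \cref{thm_main}: starting from the coherent system of refinements across all scales, construct interleaving morphisms $M \to N(2r\epsilon)$ and $N \to M(2r\epsilon)$ summand by summand on an appropriately chosen common refinement, with the factor $2r$ entering via the same linear-algebraic mechanism as in the main theorem.

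The hard part is precisely this reverse construction. It lies in the same circle of difficulties as the tension exposed by \cref{thm_uli_luis_cor} and is closely related in spirit to \cref{conj_main}: one must translate purely subquotient-level information into quantitative interleaving information with a bound depending only on $r$. I would expect progress to require, at minimum, a ``composition'' identity comparing $\Pru_\delta(\Pru_\epsilon(M))$ with $\Pru_{\epsilon+\delta}(M)$ up to controlled error, together with a carefully chosen matching on the indecomposable summands of a common refinement to patch the resulting local interleavings into a global one.
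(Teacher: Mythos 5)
This statement is a conjecture (\cref{conj_pruning}), not a theorem; the paper does not prove it, and in the discussion right after the statement it explicitly flags exactly the step you are claiming to fill in: ``To show $d_P(M,N)\leq d_I(M,N)$, we need to extend this result to all $\delta\geq 0$.'' Your attempt to do so has a genuine gap. The key unjustified claim is that $\Pru_\delta(N)\in\EN_\delta(N)$, i.e.\ that the pruning pair $(I,K)$ of $N$ satisfies $\Img_{2\delta}(N)\sse I$. Unwinding the iterative description of $I$ from \cref{lem_pruning_pair}, one can check that $\Img_{2\delta}(N)\sse I_1$ always holds (because $f\circ N_{-2\delta\to 0}=N_{0\to 2\delta}\circ f(-2\delta)$), but there is no reason for $\Img_{2\delta}(N)$ to survive into $I_2,I_3,\dots$: the iteration can cut further, and it needs up to $r_N=\supdim N$ steps to stabilize. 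This is precisely why \cref{lem_I_K} only delivers $\Img_{2r_N\delta}(N(2\delta))\sse N_{0\to 2\delta}(I)$ and $K\sse\Ker_{2r_N\delta}(I)$, and hence why the paper (via \cref{thm_main}~(ii) with $M=N$, then \cref{lem_refins_are_ers}~(ii) and \cref{thm_erosion_containment_gives_int}) can only conclude that $\Pru_\delta(N)$ is $2r_N\delta$-interleaved with $N$, not $\delta$-interleaved.

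Even if you substitute the correct bound, your argument breaks down at the index bookkeeping. With $\Pru_\delta(N)\in\EN_{2r_N\delta}(N)$, composing interleavings gives that $M$ and $\Pru_\delta(N)$ are $(\epsilon+2r_N\delta)$-interleaved, so \cref{cor_main} produces $\Pru_{\epsilon+2r_N\delta}(M)$ as a refinement of $\Pru_\delta(N)$ --- but \cref{def_pruning_dist} requires $\Pru_{\epsilon+\delta}(M)$, a pruning at a strictly smaller scale, to be the $\infty$-refinement. Since the paper never establishes a comparison between $\Pru_{\epsilon'}(M)$ and $\Pru_{\epsilon''}(M)$ for $\epsilon'\neq\epsilon''$ (this is itself listed as an open direction, ``investigating the structure of the family $\{\Pru_\epsilon(M)\}_{\epsilon\geq 0}$''), you cannot trade one index for the other. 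Your treatment of the second inequality is reasonable and matches the paper's own informal discussion, correctly identifying that $\delta=0$ information alone is insufficient (since that only bounds $\tilde d_E$, which by \cref{lem_dI_larger_dE} can be arbitrarily smaller than $d_I$) and that a reverse engineering of the \cref{thm_main} argument is the plausible route; but no proof is offered there, consistent with the statement's conjectural status.
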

We can make some initial progress towards proving the conjecture:
Since a $c$-refinement for a finite $c$ is an $\infty$-refinement and $\Pru_0(N)\cong N$, \cref{thm_main} (ii) shows that for $\epsilon$-interleaved $M$ and $N$, $\Pru_{\epsilon+\delta}(M)$ is an $\infty$-refinement of $\Pru_\delta(N)$ for $\delta=0$.
To show $d_P(M,N)\leq d_I(M,N)$, we need to extend this result to all $\delta\geq 0$.

By \cref{thm_main} (ii), $\Pru_{\epsilon}(M)$ is a $2r\epsilon$-refinement of $M$, so by \cref{lem_refins_are_ers} (ii) and \cref{thm_erosion_containment_gives_int}, $M$ and $\Pru_{\epsilon}(M)$ are $2r\epsilon$-interleaved.
Suppose $d_P(M,N)<\epsilon$.
Then $\Pru_{\epsilon}(M)$ is an $\infty$-refinement of $N$, and $\Pru_{\epsilon}(N)$ is an $\infty$-refinement of $M$.
The former means that, informally speaking, $M$ is within interleaving distance $2r\epsilon$ of being smaller than $N$.
If one can also show that $N$ is within interleaving distance $2r\epsilon$ of being smaller than $M$, one could hope to combine and make precise these informal statements to show that $M$ and $N$ are $2r\epsilon$-interleaved, which would imply the inequality $d_I(M,N)\leq 2rd_P(M,N)$.

Another interesting question is the computability of prunings and the pruning distance.
We conjecture that computing prunings is similar in flavor to computing direct decompositions and as a result can be done in polynomial time.
The pruning distance is superficially related to both interleavings (if \cref{conj_pruning} holds) and decompositions, so it is not clear to us if it is more likely to be NP-hard or polynomial time computable.
In the examples used to show NP-hardness of computing $d_I$ \cite{bjerkevik2019computing}, we believe that computing $d_P$ can be done in polynomial time, so there is some hope that a general polynomial time algorithm can be found.

The definition of the pruning distance is a reasonable one, but one can argue that it does not behave quite the way a decomposition distance should.
Instead of having $d_P\leq d_I$, as in \cref{conj_pruning}, it might make more sense to define a decomposition distance $d$ in such a way that $d(M,N)> d_I(M,N)$ in cases where $M$ and $N$ are close in the interleaving distance, but decompose in very different ways.
With this in mind, the definition of $d_P$ might need some adjustments to tick off all the boxes for an optimal decomposition distance.
In any case, $d_P$ demonstrates how to get around the obstacle of \cref{thm_uli_luis_cor}, and how prunings can be useful to answer this question.

\subsection{A good decomposition distance for interval decomposable modules}
\label{sec_f_R_distance}

Though $f_R$ is not a distance in the general setting, we will now see in \cref{thm_f_R_distance} that it is a distance when restricted to a more limited class of modules that includes interval decomposable modules.
We also show in \cref{thm_eq_f_R_d_B} that $f_R$ agrees with the bottleneck distance on one-parameter modules and multiparameter modules that decompose into certain interval modules called \emph{upset} modules, which we will study in \cref{sec_matchings}.

With this in mind, $f_R$ is arguably a satisfying answer to the question of how to define a good decomposition distance for interval decomposable modules.
It agrees with $d_B$ in many cases, but unlike $d_B$, it is flexible enough to behave in a stable way in cases like the one in \cref{fig_large_bottleneck}.
We also conjecture that for interval decomposable modules, it approximates the interleaving distance up to a multiplicative constant.
(This follows from \cref{conj_main} with $r=1$.)

\begin{lemma}
\label{lem_subq_to_ref_for_intervals}
Suppose $M$ is pointwise at most one-dimensional; that is, for every $p\in \Pb$, $\dim(M_p)\leq 1$.
Then any $N\in \EN_\epsilon(M)$ is an $\epsilon$-refinement of $M$.
\end{lemma}
\begin{proof}
Write $M = \bigoplus_{i\in \Lambda} M_i$ with each $M_i$ indecomposable.
Since $N\in \EN_\epsilon(M)$, there are $N''\sse N'\sse M$ with $N \cong N'/N''$, $\Img_\epsilon(M)\sse N'$ and $N''\sse \Ker_\epsilon(M)$.
For all $i\in \Lambda$, let $N'_i = N'\cap M_i$ and $N''_i = N''\cap M_i$.

We now show that $N' = \bigoplus_{i\in \Lambda}N'_i$.
It is enough to show that this holds at an arbitrary $p\in \Pb$, and $N' \supseteq \bigoplus_{i\in \Lambda}N'_i$ is immediate, so it suffices to show $N'_p \sse \bigoplus_{i\in \Lambda}(N'_i)_p$.
This is trivial if $N'_p = 0$, so assume $N'_p \neq 0$.
Then $M_p\neq 0$, so there is an $i\in \Lambda$ with $(M_i)_p\neq 0$.
We have $N'_p\sse M_p \supseteq (M_i)_p$, so since $\dim(M_p)\leq 1$ and $\dim(M'_p), \dim(N'_p)\geq 1$, both inclusions are equalities.
Thus, $N'_p = (M_i)_p$, which gives $N'_p = (N'_i)_p$.
In other words, for any point $p$, there is an $i$ with $N'_p = (N'_i)_p$, so $N'_p \sse \bigoplus_{i\in \Lambda}(N'_i)_p$ follows.
As pointed out above, this gives us $N' = \bigoplus_{i\in \Lambda}N'_i$.
A similar argument gives $N'' = \bigoplus_{i\in \Lambda}N''_i$.
Thus,
\[
N \cong \bigoplus_{i\in \Lambda}N'_i/\bigoplus_{i\in \Lambda}N''_i \cong \bigoplus_{i\in \Lambda}(N'_i/N''_i).
\]

It remains to show that for every $i\in \Lambda$, $N'_i/N''_i\in \EN_\epsilon(M_i)$.
We know that $N''_i\sse N''\sse \Ker_\epsilon(M)$ and $N''_i\sse M_i$, so $N''_i\sse \Ker_\epsilon(M)\cap M_i = \Ker_\epsilon(M_i)$.
We also know that $\Img_\epsilon(M)\sse N'$, which gives $\Img_\epsilon(M)\cap M_i\sse N'\cap M_i = N'_i$.
But $\Img_\epsilon(M_i)\sse \Img_\epsilon(M)\cap M_i$, so we get $\Img_\epsilon(M_i) \sse N'_i$.
Thus, $N'_i/N''_i\in \EN_\epsilon(M_i)$, which completes the proof.
\end{proof}

$f_R$ trivially satisfies the first two conditions of a distance (\cref{def_distance}), and the following shows that it also satisfies the last condition, namely the triangle inequality, in a restricted setting.
\begin{theorem}
\label{thm_f_R_distance}
Let $M$, $N$ and $Q$ be pfd modules, and write $N = \bigoplus_{i\in \Lambda} N_i$ with each $N_i$ indecomposable..
Suppose each $N_i$ is pointwise at most one-dimensional; that is, $\dim((N_i)_p)\leq 1$ for all $p\in \Pb$.
Then \[f_R(M,Q)\leq f_R(M,N)+f_R(N,Q).\]
\end{theorem}
\begin{proof}
Pick $\epsilon>f_R(M,N)$ and $\delta> f_R(N,Q)$.
(If $f_R(M,N)+f_R(N,Q)=\infty$, the theorem is trivial.)
Then there exists an $\epsilon$-refinement $A$ of both $M$ and $N$, and a $\delta$-refinement $B$ of both $N$ and $Q$.
We can write $A = \bigoplus_{i\in \Lambda} A_i$ and $B = \bigoplus_{i\in \Lambda} B_i$ with $A_i\in \EN_\epsilon(N_i)$ and $B_i\in \EN_\delta(N_i)$ for every $i\in \Lambda$.
This means that we can write $A_i \cong A'_i/A''_i$ and $B_i \cong B'_i/B''_i$, where
\begin{align*}
A''_i\sse A'_i\sse N_i, & & \Img_\epsilon(N_i)\sse A'_i, & & A''_i\sse \Ker_\epsilon(N_i),\\
B''_i\sse B'_i\sse N_i, & & \Img_\delta(N_i)\sse B'_i, & & B''_i\sse \Ker_\delta(N_i).
\end{align*}
For any $S\sse A'_i$, let $\ol S$ denote the image of $S$ under the projection $A'_i\to A'_i/A''_i$.
For $i\in \Lambda$, let $C_i = \ol{A'_i\cap B'_i}/\ol{A'_i\cap B'_i \cap (B''_i\cup A_i'')}$.
We claim that $C\coloneqq\bigoplus_{i\in \Lambda}C_i$ is a $\delta$-refinement of $A$.
By \cref{lem_refine_add}, it follows that $C$ is an $(\epsilon+\delta)$-refinement of $M$.
A symmetric proof gives that $C$ is also an $(\epsilon+\delta)$-refinement of $Q$.
Thus, $f_R(M,Q)\leq \epsilon+\delta$, and the theorem follows.

Note that for any $S\sse S'\sse A'_i$, we have $\ol S\sse \ol{S'}$.
We have $\Img_\delta(N_i)\sse B'_i$, so $\Img_\delta(A'_i)\sse A'_i\cap B'_i$ by \cref{lem_im_to_im} (i) with $f$ the inclusion $A'_i\hookrightarrow N_i$, which gives $\ol{\Img_\delta(A'_i)} \sse \ol{A'_i\cap B'_i}$.
Since $\Img_\delta(A'_i/A''_i) = \ol{\Img_\delta(A'_i)}$, we get $\Img_\delta(A'_i/A''_i) \sse \ol{A'_i\cap B'_i}$.

We also have $B''_i\sse \Ker_\delta(N_i)$, which gives $A'_i\cap B''_i \sse A'_i \cap \Ker_\delta(N_i) = \Ker_\delta(A'_i)$.
Observe that $\ol{A'_i\cap B''_i} = \ol{A'_i\cap B'_i\cap B''_i} = \ol{A'_i\cap B'_i\cap (B''_i\cup A''_i)}$, where the first equality follows from $B''_i\sse B'_i$ and the second from $\ol{A''_i} = 0$.
We get
\[
\ol{A'_i\cap B'_i\cap (B''_i\cup A''_i)} = \ol{A'_i\cap B''_i} \sse \ol{\Ker_\delta(A'_i)}\sse \Ker_\delta(A'_i/A''_i).
\]
It follows that $C_i\in \EN_\delta(A'_i/A''_i) = \EN_\delta(A_i)$.
Since $A_i$ is a subquotient of $N_i$, $A_i$ is pointwise at most one-dimensional.
By \cref{lem_subq_to_ref_for_intervals}, $C_i$ is a $\delta$-refinement of $A_i$.
Thus, $C$ is a $\delta$-refinement of $A$.
\end{proof}

\begin{theorem}
\label{thm_eq_f_R_d_B}
Let $M$ and $N$ be pfd and either both one-parameter modules or both upset decomposable $n$-parameter modules.
For any $\epsilon\geq 0$, there is an $\epsilon$-matching between $M$ and $N$ if and only if there is a module $Q$ that is an $\epsilon$-refinement of both $M$ and $N$.
Thus, $f_R(M,N) = d_B(M,N)$.
\end{theorem}
\begin{proof}
The upset decomposable case is exactly \cref{lem_match_eq_to_refine}.

Assume that $M$ and $N$ are one-parameter modules.
Suppose first that there is an $\epsilon$-matching between them.
The definition of $\epsilon$-matching (\cref{def_matching}) gives us a bijection $\sigma\colon \bar B(M)\to \bar B(N)$ between subsets of the barcodes of $M$ and $N$ such that matched elements are $\epsilon$-interleaved, and non-matched elements are $\epsilon$-interleaved with the zero module.
\cref{thm_strong_equiv} (i) says that for each $R\in \bar B(M)$, there is a module $Q_R \in \EN_\epsilon(R)\cap \EN_\epsilon(\sigma(R))$.
It also says that for $R\in B(M)\setminus \bar B(M)$, we have a $Q_R \in \EN_\epsilon(R)\cap \EN_\epsilon(0)$, which has to be the zero module, since it is a subquotient of the zero module.
Let $Q = \bigoplus_{R\in B(M)} Q_R$.
By construction, $Q$ is an $\epsilon$-refinement of $M$.
Note that $Q \cong \bigoplus_{R\in \bar B(M)} Q_R$, since we have only removed zero summands.
Thus, up to isomorphism, $Q$ would have been defined in exactly the same way if $M$ and $N$ had been switched (and we had used $\sigma^{-1}$ instead of $\sigma$), so by symmetry, $Q$ is also an $\epsilon$-refinement of $N$.

Next, we show that if $M$ and $N$ have a common $\epsilon$-refinement $Q$, then there is an $\epsilon$-matching between $M$ and $N$.
By definition of $\epsilon$-refinement, we have that if we write $M = \bigoplus_{i\in\Lambda} M_i$ with each $M_i$ nonzero and indecomposable, then we can write $Q\cong \bigoplus_{i\in\Lambda} Q_i$ with each $Q_i = Q'_i/Q''_i\in \oEN_\epsilon(M_i)$.
It follows that $Q\cong (\bigoplus_{i\in\Lambda} Q'_i)/(\bigoplus_{i\in\Lambda} Q''_i) \in \oEN_\epsilon(M)$.
Similarly, $Q\in \EN_\epsilon(N)$.
By \cref{thm_EN=I}, it follows that $M$ and $N$ are $\epsilon$-interleaved, and then \cref{thm_ast} says that there is an $\epsilon$-matching between $M$ and $N$.
\end{proof}

\section{Stability}
\label{sec_main}

In this section, we prove \cref{thm_main}, which says that the $\epsilon$-pruning $\Pr_\epsilon(M)$ of a module $M$ is a $2r\epsilon$-refinement of all the modules in an interleaving neighborhood of $M$ ($r$ being the maximum pointwise dimension of $M$).
Thus, the pruning of a module can be viewed as a common approximate barcode of a whole neighborhood of a module.
As explained in \cref{sec_motivation}, we view a common refinement between two modules as an approximate matching between them.
Therefore, we interpret \cref{thm_main} as showing stability of approximate decompositions up to a certain factor.

In \cref{thm_counterex}, we show that up to a multiplicative constant, the factor $2r\epsilon$ appearing in \cref{thm_main} is optimal.
We believe that this expresses a fundamental instability property of multiparameter decomposition; that no reasonable definition of a ``decomposition distance'' $d$ (like the pruning distance) will allow a theorem of the form $\frac 1cd_I \leq d\leq cd_I$ for a constant $c$.
Therefore, even though the factor $r$ showing up in the theorem is larger than one might hope for, it is the result of encountering what seems to be some hard limits of decomposition stability.
The result of Theorems \ref{thm_main} and \ref{thm_counterex} is that we have a much more nuanced picture of decomposition (in)stability:
\cref{thm_main} shows that positive decomposition stability results for general modules are possible, which was not at all clear before this paper.
\cref{thm_counterex}, on the other hand, suggests that equivalence of meaningful decomposition metrics with $d_I$ ``up to a constant'' is impossible, because it would contradict fundamental properties of decompositions.
Thus, we have stability up to a factor depending on pointwise dimension, but stability up to a \emph{constant} factor looks unrealistic.
Our guess is that these upper and lower limits for decomposition stability meet in \cref{conj_main}, where the factor $r$ depends on the pointwise dimensions of summands of $M$, instead of the pointwise dimensions of $M$.

Due to the factor $2r$, the theorem is mainly of interest when comparing modules $M$ and $N$ such that $2rd_I(M,N)$ is not too big compared to the size of the supports of $M$ and $N$, since if $M$ and $N$ have bounded support, then for large enough $\delta$, the zero module is a $\delta$-refinement of both $M$ and $N$.
On the other hand, putting together \cref{lem_refins_are_ers} (ii), \cref{thm_erosion_containment_gives_int} and \cref{cor_main}, we get $d_I(M,\Pr_\epsilon(M))\leq 2r\epsilon$.
Thus, if $d_I(M,0)$ is large compared to $2r\epsilon$, then $d_I(\Pr_\epsilon(M),0)$ is large, which means that \cref{thm_main} guarantees that $M$ and $N$ have a large approximate summand (or several) in common.
(See \cref{ex_pruning} for an illustration.)

For practical purposes, it is worth remarking that even though the theorem only promises that the $\epsilon$-pruning of $M$ is a $2r\epsilon$-refinement of $M$ and $N$, this is a worst-case scenario that is not always realized.
The $r$ in the theorem is the result of an iterative process of constructing submodules in the proof of \cref{lem_I_K}.
If this process stabilizes in $r'<r$ steps, then $\Pru_\epsilon(M)$ is a $2r'\epsilon$-refinement of $M$ and $N$, not just a $2r\epsilon$-refinement.
Therefore, if one is given a specific module and constructs its $\epsilon$-pruning (leaving aside the question of algorithms for such a construction), one might be able to check that the pruning has better stability properties than what is promised in the theorem.
For large $r$, we suspect that the mentioned process will usually stabilize in much fewer than $r$ steps, but this should be tested empirically.

\subsection{The stability theorem}
\label{sec_stability_theorem}

Before getting into the details, we give some motivation for the proof of \cref{thm_main}, which also motivates our definition of $\epsilon$-pruning.
In \cref{thm_main} (i), we assume that we are given modules $M$ and $N$ and morphisms
\[
M \xra{\phi} N(\epsilon) \xra{\psi} M(2\epsilon)
\]
for some $\epsilon\geq 0$ whose composition is $M_{0\to 2\epsilon}$, and we want to use these morphisms to construct a $\delta$-refinement $\tilde N$ of $N$ for a certain $\delta\geq 2\epsilon$.
Moreover, we want $\tilde N$ to have a direct summand that is isomorphic to a $\delta$-refinement of $M$.

Suppose first that $M$ and $N$ are indecomposable.
Then $\tilde N$ being a $\delta$-refinement of $N$ is equivalent to $\tilde N\in \EN_\delta(N)$, and similarly for $M$.
In this case, we can choose $\tilde N = N'/N''$ with $N' = \psi^{-1}(\img(M_{0\to 2\epsilon}))$ and $N'' = \phi(\ker M_{0\to 2\epsilon})$, and consider the morphisms
\[
M/\ker(M_{0\to 2\epsilon}) \xra{\ol \phi} \tilde N \xra{\ol\psi} \img(M_{0\to 2\epsilon})
\]
induced by $\phi$ and $\psi$.
It is possible to prove that $M/\ker(M_{0\to 2\epsilon})\in \EN_{2\epsilon}(M)$, that $\tilde N\in \EN_{2\epsilon}(N)$, and that $\ol \phi\circ \ol \psi$ is an isomorphism.
A version of the splitting lemma then tells us that $M/\ker(M_{0\to 2\epsilon})$ is a direct summand of $\tilde N$, and we are done.

Next, suppose that $N=N_1\oplus N_2$ with $N_1$ and $N_2$ indecomposable.
In this case, the procedure above does not work, because we have no guarantee that $\tilde N$ is a refinement of $N$.
To be sure that we get a refinement, we need to take subquotients of $N_1$ and $N_2$ separately.
A reasonable attempt to do this is to let $\tilde N = \iota_1^{-1}(N')/\pi_1(N'')\oplus \iota_2^{-1}(N')/\pi_2(N'')$ instead, where $\pi_i$ and $\iota_i$ denote the canonical projection and inclusion between $N_i$ and $N$.
But now new problems appear: for instance, $\pi_1(N'')$ is not necessarily in the kernel of $\psi$, which means that we do not always get a well-defined morphism $\tilde N\to \img(M_{0\to 2\epsilon})$ induced by $\psi$.

Issues like these of having to respect direct sums is the main challenge that needs to be overcome to prove \cref{thm_main}, and prunings are specifically constructed to deal with this.
In the proof, we consider the $\epsilon$-pruning $I/K$ of $M$, where $K\sse I$ are the submodules of $M$ forming the $\epsilon$-pruning pair of $M$.
Writing $N = \bigoplus_{j\in \Gamma} N_j$ with each $N_j$ indecomposable, we define
\[
\ol{N_j} = (\psi\circ \iota_j)(\epsilon)^{-1}(M_{0\to 2\epsilon}(I))/\pi_j\circ \phi(K)
\]
and $\ol{N} = \bigoplus_{j\in \Gamma} \ol{N_j}$, and consider the sequence
\[
I/K\xra{\ol{\phi_j}} \ol{N_j} \xra{\ol{\psi_j(\epsilon)}} M_{0\to 2\epsilon}(I)/ M_{0\to 2\epsilon}(K)
\]
of morphisms induced by $\pi_j\circ \phi$ and $\psi\circ \iota_j$.
This time, it turns out that we do get valid morphisms because of the particular properties of prunings.
For instance, we need to check that $\psi(\pi_j\circ \phi(K))\sse M_{0\to 2\epsilon}(K)$, but since the left hand side is of the form $f(K)$, this holds by \cref{lem_I_K} (iv).
This type of argument is the motivation for the perhaps mysterious-looking conditions on $I$ and $K$ in \cref{def_pruning}.

After having proved that $\ol N$ is a direct summand of a $\delta$-pruning in \cref{thm_main} (i), we assume that $\phi$ and $\psi$ form an interleaving between $M$ and $N$ in part (ii) of the theorem, and show that $I/K$ is a $\delta$-refinement of $N$.
We are able to write $\ol{N} = N' \oplus \ker \ol{\psi(\epsilon)}$ with $N'\cong I/K$, and because of the interleaving condition, we know that $\ol{\psi(\epsilon)}$ is small in a precise sense.
Again, the case with $N$ indecomposable is not too hard: one can show that $\ol{N}/\ker \ol{\psi(\epsilon)}\cong I/K$ is a $\delta$-refinement of $N$.
And again, the case where $N$ decomposes is significantly more involved, because to obtain a refinement, we need to be careful that the submodules we quotient out respect the direct decomposition of $N$.
We solve this problem by using Zorn's lemma to replace indecomposable summands of $\ker \ol{\psi(\epsilon)}$ with isomorphic modules that do respect the decomposition of $N$ into indecomposables, so that we can take a quotient of $\ol N$ to get a $\delta$-refinement of $N$ that is isomorphic to $I/K$.

Recall the convention that writing $x\in M$ means that $x\in M_p$ for some point $p\in \Pb$.
\begin{lemma}
\label{lem_im_to_im}
Let $f\colon M \to N$ be a morphism of modules, $I$ a submodule of $N$ and $K$ a submodule of $M$.
\begin{itemize}
\item[(i)] If $\Img_\epsilon(N)\sse I$, then $\Img_\epsilon(M)\sse f^{-1}(I)$.
\item[(ii)] If $K\subseteq \Ker_\epsilon(M)$, then $f(K)\subseteq \Ker_\epsilon(N)$.
\end{itemize}
\end{lemma}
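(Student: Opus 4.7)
The plan is that both parts are immediate applications of the naturality of $f$, namely that for every $p\leq q\in \Pb$ the square
\[
f_q\circ M_{p\to q} = N_{p\to q}\circ f_p
\]
commutes. In more compact language (viewing shifts as endofunctors on $\mathbf{Vec}^\Pb$), $f$ commutes with the shift morphisms in the sense that $f(\delta) \circ M_{\epsilon\to\delta} = N_{\epsilon\to\delta} \circ f(\epsilon)$ for any $\epsilon\leq \delta$. I would state this once explicitly at the start of the proof, as both (i) and (ii) are direct consequences.

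For (i), the plan is to pick an arbitrary $p\in \Pb$ and an arbitrary $x\in \Img_\epsilon(M)_p$, write $x = M_{-\epsilon\to 0, p}(y)$ for some $y\in M_{\Pb_{-\epsilon}(p)}$, and then compute
\[
f_p(x) = f_p(M_{-\epsilon\to 0, p}(y)) = N_{-\epsilon\to 0, p}(f_{\Pb_{-\epsilon}(p)}(y)) \in \img N_{-\epsilon\to 0} = \Img_\epsilon(N) \subseteq I.
\]
Therefore $x\in f^{-1}(I)$, as desired.

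For (ii), the plan is similarly to pick an arbitrary $x\in K_p\subseteq \Ker_\epsilon(M)_p$, so $M_{0\to\epsilon, p}(x) = 0$, and then compute
\[
N_{0\to\epsilon, p}(f_p(x)) = f_{\Pb_\epsilon(p)}(M_{0\to\epsilon, p}(x)) = f_{\Pb_\epsilon(p)}(0) = 0,
\]
which shows $f_p(x)\in \ker N_{0\to\epsilon} = \Ker_\epsilon(N)$.

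There is no real obstacle here; the lemma is essentially a bookkeeping statement recording the functorial behavior of $\Img_\epsilon$ and $\Ker_\epsilon$ under morphisms, and the only ``content'' is the single use of naturality in each of the two short chains of equalities above. I would keep the written proof to just a few lines.
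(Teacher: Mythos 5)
Your proof is correct and takes essentially the same approach as the paper: both parts are a one-line application of the naturality square $f(\delta)\circ M_{\epsilon\to\delta} = N_{\epsilon\to\delta}\circ f(\epsilon)$, and your two chains of equalities match the paper's almost verbatim (the paper writes them at the level of modules with suppressed basepoints, you write them pointwise, but the content is identical).
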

\begin{proof}
(i) Let $x\in \Img_\epsilon(M)$.
Then there exists $y\in M(-\epsilon)$ such that $M_{-\epsilon\to 0}(y) = x$.
We get
\[
f(x) = f(M_{-\epsilon\to 0}(y)) = N_{-\epsilon\to 0}(f(-\epsilon)(y)),
\]
which means that $f(x)\in \Img_\epsilon(N) \subseteq I$, so $x\in f^{-1}(I)$.

To prove (ii), suppose $x\in K$.
Then $M_{0\to \epsilon}(x)=0$, so
\[
N_{0\to \epsilon}(f(x)) = f(\epsilon)(M_{0\to \epsilon}(x)) = f(\epsilon)(0) = 0,
\]
which gives $f(x)\in \Ker_\epsilon(N)$.
\end{proof}

\begin{lemma}
\label{lem_pruning_pair}
Let $M$ be a module.
Then the $\delta$-pruning pair $(I,K)$ of $M$ is well-defined.
Let $\{f_j\}_{j\in \Gamma}$ be the set of all morphisms from $M$ to $M(2\delta)$.
Explicitly, $I$ is equal to $\bigcap_{i=0}^\infty I_i$, where we define
$I_0=M$, and for all $i\geq 1$,
\begin{itemize}
\item $I_i = \bigcap_{j\in \Gamma} f_j^{-1}(M_{0\to 2\delta}(I_{i-1}))\sse M$.
\end{itemize}
$K$ is equal to $\bigcup_{i=0}^\infty K_i$, where we define $K_0 = 0$, and for all $i\geq 1$,
\begin{itemize}
\item $K_i= \sum_{j\in \Gamma}I_{0\to 2\delta}^{-1}( f_j(K_{i-1})) \subseteq I$.
\end{itemize}
Moreover, for all $i\geq 0$, we have $I_i\supseteq I_{i+1}$ and $K_i\sse K_{i+1}$.
\end{lemma}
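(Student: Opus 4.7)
The plan is to establish the existence of the pruning pair abstractly via order-theoretic closure, and then check that the explicit descending and ascending sequences $(I_i)$ and $(K_i)$ converge to the desired extremes. Monotonicity of these sequences will be immediate from the monotonicity of image and preimage, so the real content lies in the two-sided comparison with the abstract $I$ and $K$.

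First I would observe that the class of submodules $J \subseteq M$ with $f(J) \subseteq M_{0\to 2\delta}(J)$ for every morphism $f\colon M \to M(2\delta)$ is closed under arbitrary sums, because image commutes with sums: $f(\sum_\alpha J_\alpha) = \sum_\alpha f(J_\alpha) \subseteq \sum_\alpha M_{0\to 2\delta}(J_\alpha) = M_{0\to 2\delta}(\sum_\alpha J_\alpha)$. Taking the sum of all such $J$ produces a largest $I$. Then $I$ itself satisfies $I_{0\to 2\delta}^{-1}(f(I)) \subseteq I$ trivially (the preimage lives in $I$ by definition), and the class of submodules $L \subseteq I$ with $I_{0\to 2\delta}^{-1}(f(L)) \subseteq L$ for every $f$ is closed under arbitrary intersections, since preimage commutes with intersection and image is monotone. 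Intersecting all such $L$ yields a smallest $K$. Monotonicity $I_{i+1} \subseteq I_i$ and $K_{i-1} \subseteq K_i$ is a direct induction on $i$; along the way I would also verify $K_i \subseteq I$ inductively, using that each summand $I_{0\to 2\delta}^{-1}(f_j(K_{i-1}))$ in the definition of $K_i$ lies in $I$ by construction.

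Next I would prove the explicit formulas by two inclusions each. For $I \subseteq \bigcap_i I_i$ I would induct on $i$: if $I \subseteq I_{i-1}$, then $f_j(I) \subseteq M_{0\to 2\delta}(I) \subseteq M_{0\to 2\delta}(I_{i-1})$ for every $j$, so $I \subseteq f_j^{-1}(M_{0\to 2\delta}(I_{i-1}))$, and intersecting over $j$ gives $I \subseteq I_i$. For the reverse inclusion, set $J = \bigcap_i I_i$ and verify the defining property $f_j(J) \subseteq M_{0\to 2\delta}(J)$, after which maximality of $I$ forces $J \subseteq I$. From $J \subseteq I_i$ we get $f_j(J) \subseteq f_j(I_i) \subseteq M_{0\to 2\delta}(I_{i-1})$, hence $f_j(J) \subseteq \bigcap_i M_{0\to 2\delta}(I_i)$, so it remains to check the pointwise identity $\bigcap_i (M_{0\to 2\delta})_p(I_{i,p}) = (M_{0\to 2\delta})_p(\bigcap_i I_{i,p})$ at each $p\in \Pb$. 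This is the main obstacle, since image does not commute with infinite intersection in general; however, in the pfd setting where this lemma will be applied, $\{I_{i,p}\}_i$ is a descending chain of subspaces of the finite-dimensional space $M_p$ and therefore stabilizes, and once it does, both sides of the identity coincide with the image of the stabilized subspace.

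Finally I would handle $K = \bigcup_i K_i$ symmetrically, but here no subtlety arises because image and preimage both commute with directed unions. The inductive proof that $K_i \subseteq K$ uses $f_j(K_{i-1}) \subseteq f_j(K)$ together with the defining property $I_{0\to 2\delta}^{-1}(f_j(K)) \subseteq K$, yielding $K_i \subseteq K$ and hence $\bigcup_i K_i \subseteq K$. For the reverse, setting $L = \bigcup_i K_i$ one has $I_{0\to 2\delta}^{-1}(f_j(L)) = \bigcup_i I_{0\to 2\delta}^{-1}(f_j(K_i)) \subseteq \bigcup_i K_{i+1} = L$, so $L$ satisfies the defining property and minimality of $K$ gives $K \subseteq L$. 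The only delicate point in the whole argument is the image-intersection commutation, which is precisely what pointwise finite-dimensionality buys us and which will reappear in quantitative form as the stabilization after $\dim(M_p)$ steps used later to extract the factor $r$ in the main theorem.
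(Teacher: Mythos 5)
Your proposal is essentially correct and follows the same overall scheme as the paper: monotonicity of the two sequences by induction, the inclusion $I\subseteq\bigcap_i I_i$ by induction on $i$, the reverse inclusion by checking that $\bigcap_i I_i$ satisfies the pruning condition and invoking maximality, and the dual argument for $K$. Your preliminary step---establishing abstract existence of $I$ and $K$ via closure of the defining classes under arbitrary sums (resp.\ intersections)---is a small presentational addition that the paper omits, since the paper treats the explicit $\bigcap_i I_i$ and $\bigcup_i K_i$ directly as the candidates; both routes are fine.

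You have, however, put your finger on a genuine subtlety that the paper's own proof passes over. The paper justifies ``$f(I)\subseteq M_{0\to 2\delta}(I_i)$ for all $i$ implies $f(I)\subseteq M_{0\to 2\delta}(I)$'' merely by citing the monotonicity of the chain $I_0\supseteq I_1\supseteq\cdots$, but this amounts to the inclusion $\bigcap_i M_{0\to 2\delta}(I_i)\subseteq M_{0\to 2\delta}\bigl(\bigcap_i I_i\bigr)$, which does \emph{not} hold for infinite descending chains in general (image and infinite intersection do not commute). As you say, in the pfd setting the pointwise chain $(I_i)_p$ in the finite-dimensional space $M_p$ stabilizes, after which both sides agree; this is exactly the phenomenon \cref{lem_I_K} later quantifies with the bound $r=\supdim M$. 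Your proposal is thus not only correct but slightly more careful than the published argument on this point. The $K$ side is, as you note, unproblematic, since preimage commutes with directed unions.
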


\begin{proof}
It is trivially true that $I_0\supseteq I_1$ and $K_0\sse K_1$.
Assuming $I_i\supseteq I_{i+1}$ for some $i\geq 0$, we get $I_{i+1}\supseteq I_{i+2}$ from the definition, so by induction, $I_i\supseteq I_{i+1}$ holds for all $i\geq 0$.
A similar inductive argument shows that $K_i\sse K_{i+1}$ holds for all $i\geq 0$.

Let $f\colon M\to M(2\delta)$.
To show $f(I)\sse M_{0\to 2\delta}(I)$, it suffices to show $f(I)\sse M_{0\to 2\delta}(I_i)$ for all $i\geq 0$, since $I_0\supseteq I_1\supseteq \dots$.
Fix $i\geq 0$.
We have
\begin{align*}
f(I) &\sse f(I_{i+1})\\
&= f(\bigcap_{j\in \Gamma} f_j^{-1}(M_{0\to 2\delta}(I_i)))\\
&\sse M_{0\to 2\delta}(I_i).
\end{align*}

We now show that $I$ has the maximality property, which implies that $I$ satisfies the conditions in the definition of $\delta$-pruning pair.
Let $\tilde I$ be a submodule of $M$ with $f_j(\tilde I)\sse M_{0\to 2\delta}(\tilde I)$ for all $j\in \Gamma$, which implies $\tilde I\sse f_j^{-1}(M_{0\to 2\delta}(\tilde I))$ for all $j\in \Gamma$, and thus,
\begin{equation}
\label{eq_inters}
\tilde I\sse \bigcap_{j\in \Gamma} f_j^{-1}(M_{0\to 2\delta}(\tilde I)).
\end{equation}
We have $\tilde I\sse I_0$ trivially.
If $\tilde I\sse I_i$ for some $i\geq 0$, then we immediately get $\tilde I\sse I_{i+1}$ by definition of $I_{i+1}$.
By induction, $\tilde I\sse I_i$ holds for all $i$, which gives $\tilde I\sse I$.

Let $f\colon M\to M(2\delta)$.
To show $I_{0\to 2\delta}^{-1}(f(K))\sse K$, it is enough to show that $I_{0\to 2\delta}^{-1}(f(K_i))\sse K$ for an arbitrary $i\geq 0$, since $K_0\sse K_1\sse \dots$.
But this follows from the definition of $K_{i+1}$:
\begin{align*}
I_{0\to 2\delta}^{-1}(f(K_i)) \sse K_{i+1} \sse K.
\end{align*}

Lastly, we show minimality of $K$ by induction.
Let $\tilde K$ be a module with $I_{0\to 2\delta}^{-1}(f_j(\tilde K))\sse \tilde K$ for all $j\in \Gamma$.
We have $K_0\sse \tilde K$ trivially.
If $K_i\sse \tilde K$ for some $i\geq 0$, then we immediately get $K_{i+1}\sse \tilde K$ by definition of $K_{i+1}$.
By induction, $K_i\sse \tilde K$ holds for all $i$, which gives $K\sse \tilde K$.
\end{proof}

For a module $M$, let $\supdim M = \sup_{p\in \Pb} \dim M_p \in \{0, 1, \dots, \infty\}$.
\begin{lemma}
\label{lem_I_K}
Let $M$ be a module with $\supdim M =r <\infty$ and let $\epsilon\geq 0$, and let $(I,K)$ be the $\epsilon$-pruning pair of $M$.
Then
\begin{itemize}
\item[(i)] $\Img_{2r\epsilon}(M(2\epsilon)) \sse M_{0\to2\epsilon}(I)$,
\item[(ii)] $K\sse \Ker_{2r\epsilon}(I)$,
\item[(iii)] $K = I_{0\to 2\epsilon}^{-1}(I_{0\to 2\epsilon}(K))$,
\item[(iv)] for any morphism $f\colon M\to M(2\epsilon)$, we have $f(K)\subseteq M_{0\to 2\epsilon}(K)$.
\end{itemize}
\end{lemma}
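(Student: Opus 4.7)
My plan is to derive (iii) and (iv) directly from the extremal descriptions of $I$ and $K$ in \cref{lem_pruning_pair}, and to prove (i) and (ii) by parallel inductions on the index of the sequences $(I_i)$ and $(K_i)$, with the factor $r$ coming from a pointwise dimension-stabilization argument.

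For (i), I first prove by induction that $\Img_{2i\epsilon}(M) \sse I_i$ for all $i \geq 0$. The base case $i = 0$ is $I_0 = M$. For the step, given $x = M_{\Pb_{-2(i+1)\epsilon}(p) \to p}(y) \in \Img_{2(i+1)\epsilon}(M)_p$ and any $f_j \colon M \to M(2\epsilon)$, naturality of $f_j$ factors
\[
f_{j,p}(x) = M_{p \to \Pb_{2\epsilon}(p)}\bigl(M_{\Pb_{-2i\epsilon}(p) \to p}(f_{j,\Pb_{-2(i+1)\epsilon}(p)}(y))\bigr),
\]
and the inner element lies in $\Img_{2i\epsilon}(M)_p \sse I_{i,p}$ by the inductive hypothesis, so $f_{j,p}(x) \in (M_{0\to 2\epsilon}(I_i))_p$ and hence $\Img_{2(i+1)\epsilon}(M) \sse I_{i+1}$. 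The key combinatorial observation is that $(f_j^{-1}(X))_p$ depends only on $X_p$, so if $I_{i+1,p} = I_{i,p}$ at a point, then $I_{i+2,p} = I_{i+1,p}$ automatically; hence the descending chain $I_{0,p} \supseteq I_{1,p} \supseteq \cdots$ strictly decreases until it stabilizes, and since $\dim I_{0,p} = \dim M_p \leq r$, stabilization happens by step $r$. Thus $I_r = I$, giving $\Img_{2r\epsilon}(M) \sse I$, and applying $M_{0\to 2\epsilon}$ together with the identity $M_{0\to 2\epsilon}(\Img_\delta(M)) = \Img_{\delta+2\epsilon}(M(2\epsilon))$ yields (i).

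For (ii), the dual induction shows $K_i \sse \Ker_{2i\epsilon}(I)$ starting from $K_0 = 0$. Given $x \in I_{0\to 2\epsilon}^{-1}(f_j(K_i))$ with $I_{0\to 2\epsilon}(x) = f_j(y)$ for some $y \in K_i$, the factorization $I_{0\to 2(i+1)\epsilon} = I(2\epsilon)_{0\to 2i\epsilon}\circ I_{0\to 2\epsilon}$ combined with naturality of $f_j$ gives
\[
I_{0\to 2(i+1)\epsilon}(x) = (M(2\epsilon))_{0\to 2i\epsilon}(f_j(y)) = f_j(2i\epsilon)(I_{0\to 2i\epsilon}(y)) = 0,
\]
whence $x \in \Ker_{2(i+1)\epsilon}(I)$. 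The ascending chain $K_{0,p} \sse K_{1,p} \sse \cdots$ is bounded by $\dim I_p \leq r$ and stabilizes analogously, yielding $K_r = K$ and so $K \sse \Ker_{2r\epsilon}(I)$.

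Part (iii) follows from the identity $I_{0\to 2\epsilon}^{-1}(I_{0\to 2\epsilon}(K)) = K + \Ker_{2\epsilon}(I)$, so the nontrivial inclusion reduces to $\Ker_{2\epsilon}(I) \sse K$, which is the minimality of $K$ from \cref{lem_pruning_pair} applied to $f = M_{0\to 2\epsilon}$ (whose restriction to $I$ is $I_{0\to 2\epsilon}$). For (iv), the defining property of $I$ gives $f(K) \sse f(I) \sse M_{0\to 2\epsilon}(I) = \img I_{0\to 2\epsilon}$, so every $y \in f(K)$ is $y = I_{0\to 2\epsilon}(x)$ with $x \in I$; the minimality of $K$ forces $x \in I_{0\to 2\epsilon}^{-1}(f(K)) \sse K$, and hence $y \in I_{0\to 2\epsilon}(K) = M_{0\to 2\epsilon}(K)$. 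The main technical obstacle is the index bookkeeping in (i): one must carefully track how $M_{0\to 2\epsilon}$, the shifts, and the operators $\Img_\delta$ and $\Ker_\delta$ interact to ensure the pointwise dimension bound yields exactly the constant $2r\epsilon$ appearing in the statement.
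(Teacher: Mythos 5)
Your proofs of (ii), (iii), and (iv) are correct and follow essentially the same route as the paper (with one minor slip: in (iii) and (iv) you invoke ``minimality'' of $K$ when what you actually use is the defining invariance property $I_{0\to 2\epsilon}^{-1}(f(K))\sse K$, which $K$ satisfies by construction — minimality is the extremal characterization, not the property itself, but the content is correct).

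There is a genuine gap in (i). Your induction establishes $\Img_{2i\epsilon}(M)\sse I_i$, hence $\Img_{2r\epsilon}(M)\sse I_r=I$. Applying $M_{0\to 2\epsilon}$ and the identity $M_{0\to 2\epsilon}(\Img_\delta(M))=\Img_{\delta+2\epsilon}(M(2\epsilon))$ then yields
\[
\Img_{2(r+1)\epsilon}(M(2\epsilon))\sse M_{0\to 2\epsilon}(I),
\]
which is strictly weaker than the claimed $\Img_{2r\epsilon}(M(2\epsilon))\sse M_{0\to 2\epsilon}(I)$; you are off by one shift of $2\epsilon$. The paper closes this gap with an auxiliary sequence $I'_0=M(2\epsilon)$, $I'_i=M_{0\to 2\epsilon}(I_{i-1})$ for $i\geq 1$. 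It proves $\Img_{2i\epsilon}(M(2\epsilon))\sse I'_i$ by induction (equivalent to your inductive statement shifted by one index) and, crucially, that the sequence $(I'_i)$ is pointwise decreasing in $M(2\epsilon)$ and therefore also stabilizes after at most $r$ strict inclusions, giving $I'_r=I'_{r+1}$. Then $M_{0\to 2\epsilon}(I)=M_{0\to 2\epsilon}(I_r)=I'_{r+1}=I'_r\supseteq \Img_{2r\epsilon}(M(2\epsilon))$, which recovers the sharp bound. The point is that $I'_0=M(2\epsilon)$ gives the chain a ``free'' extra step at the top that your sequence $M_{0\to 2\epsilon}(I_i)$ alone does not. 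Your argument as written cannot be salvaged by invoking stabilization of $I_i$ itself, since $(I_{r-1})_p=(I_r)_p$ need not hold — stabilization is only guaranteed by step $r$, not earlier — so you genuinely need the stabilization of the shifted sequence $(I'_i)$ as a separate observation.
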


\begin{proof}
Observe that in \cref{lem_pruning_pair}, $I_i$ and $K_i$ are defined pointwise.
That is, for all $p\in \Pb$,
\begin{align}
\label{eq_I_p}
(I_i)_p &= \bigcap_{j\in \Gamma} (f_j)_p^{-1}((M_{0\to 2\epsilon})_p((I_{i-1})_p)),\\
\label{eq_I_p2}
(K_i)_p &= \sum_{j\in \Gamma}(I_{0\to 2\epsilon})_p^{-1}((f_j)_p((K_{i-1})_p)).
\end{align}
We also have the pointwise inclusions $(I_i)_p\supseteq (I_{i+1})_p$ and $(K_i)_p\sse (K_{i+1})_p$ for all $i\geq 0$.
Since $\dim M_p\leq r$, there can be at most $r$ strict inclusions in the sequence $(I_0)_p\supseteq (I_1)_p \supseteq (I_2)_p \supseteq \dots$, so there is a $t\leq r$ with $(I_t)_p = (I_{t+1})_p$.
From this equality and \cref{eq_I_p}, we get $(I_{t+1})_p = (I_{t+2})_p$.
Iterating this argument, we get $(I_t)_p = (I_{t'})_p$ for all $t'\geq t$, from which it follows that $(I_r)_p = (I_{r'})_p$ for all $r'\geq r$.
Thus, $I_p = (I_r)_p$.
Since $p$ was arbitrary, $I=I_r$ follows.
A completely analogous argument using \cref{eq_I_p2} gives $K = K_r$.

Let $I'_0=M(2\epsilon)$, and define $I'_i = M_{0\to 2\epsilon}(I_{i-1})$ for $i\geq 1$.
Observe that $I'_i = M_{0\to 2\epsilon}(\bigcap_{j\in \Gamma} f_j^{-1}(I'_{i-1}))$.
By arguments similar to those for $I_i$ and $K_i$, we get $I'_r = I'_{r'}$ for all $r'\geq r$.
We claim that for all $i\geq 0$, $\Img_{2i\epsilon}(M(2\epsilon)) \sse I'_i$.
We proceed by induction.
For $i=0$, both sides are equal to $M(2\epsilon)$.
Assume $\Img_{2(i-1)\epsilon}(M(2\epsilon)) \sse I'_{i-1}$, for some $i\geq 1$.
By \cref{lem_im_to_im} (i), $\Img_{2(i-1)\epsilon}(M) \sse f_j^{-1}(I'_{i-1})$ for all $j\in \Gamma$, so
\begin{align*}
I'_i &= M_{0\to 2\epsilon}(\bigcap_{j\in \Gamma} f_j^{-1}(I'_{i-1}))\\
&\supseteq M_{0\to 2\epsilon}(\Img_{2(i-1)\epsilon}(M))\\
&= \Img_{2i\epsilon}(M(2\epsilon)).
\end{align*}
The claim follows by induction, which gives us (i):
\[
M_{0\to 2\epsilon}(I) = M_{0\to 2\epsilon}(I_r) = I'_{r+1} = I'_r \supseteq \Img_{2r\epsilon}(M(2\epsilon)).
\]

Wo now show (ii).
By \cref{lem_pruning_pair}, it suffices to show $K_i\sse \Ker_{2r\epsilon}(I)$ for all $i\geq 0$, which we will prove by induction.
The statement is trivial for $i=0$.
Assuming $i\geq 1$ and $K_{i-1} \sse \Ker_{2(i-1)\epsilon}(I)$, we have
\[
f_j(K_{i-1}) \sse f_j(\Ker_{2(i-1)\epsilon}(I)) \sse \Ker_{2(i-1)\epsilon}(I(2\epsilon))
\]
by \cref{lem_im_to_im} (ii).
This gives
\begin{align*}
K_i &= \sum_{j\in \Gamma}I_{0\to 2\epsilon}^{-1}( f_j(K_{i-1}))\\
&\sse I_{0\to 2\epsilon}^{-1}( \Ker_{2(i-1)\epsilon}(I(2\epsilon)))\\
&= I_{0\to 2\epsilon}^{-1}(I_{2\epsilon\to 2i\epsilon}^{-1}(0))\\
&= I_{0\to 2i\epsilon}^{-1}(0)\\
&= \Ker_{2i\epsilon}(I).
\end{align*}
This concludes the proof by induction.
Since $K=K_r$, we get $K\sse \Ker_{2r\epsilon}(M)$.

In (iii), $I_{0\to 2\epsilon}^{-1}(I_{0\to 2\epsilon}(K))\sse K$ is the only nontrivial inclusion.
But this follows immediately by choosing $f=I_{0\to 2\epsilon}$ in $I_{0\to 2\epsilon}^{-1}(f(K))\sse K$ in \cref{def_pruning}.

Since $K\sse I$, $I_{0\to 2\epsilon}(K)$ is well-defined and equal to $M_{0\to 2\epsilon}(K)$.
Applying $I_{0\to 2\epsilon}$ to both sides in $I_{0\to 2\epsilon}^{-1}(f(K))\sse K$, we get (iv).
\end{proof}

This brings us to the stability theorem.
One can state a weaker version of the theorem in terms of interleavings.
Say that $N$ is an \emph{$\epsilon$-interleaving refinement} of $M$ if $N$ is of the form $\bigoplus_{i\in \Gamma} N_i$ (the $N_i$ do not have to be indecomposable), where there is a bijection between $\{N_i\}_{i\in \Gamma}$ and $B(M)$ such that matched modules are $\epsilon$-interleaved.
By \cref{thm_erosion_containment_gives_int}, an $\epsilon$-refinement of a module is also an $\epsilon$-interleaving refinement, while by $\epsilon$-eroding the $N_i$ and applying \cref{lem_if_int_then_er}, one gets a $2\epsilon$-refinement from an $\epsilon$-interleaving refinement.
From this, it follows that the theorem below and a version using interleaving refinements instead of refinements are equivalent up to a constant of $2$.
Thus, the counterexample of \cref{thm_counterex} is a counterexample also to a strengthening of the interleaving refinement version by an appropriate constant.

\begin{theorem}
\label{thm_main}
Let $\epsilon\geq 0$, and let $M$ and $N$ be pfd modules with morphisms $\phi\colon M\to N(\epsilon)$ and $\psi\colon N\to M(\epsilon)$ satisfying $\psi(\epsilon)\circ \phi = M_{0\to 2\epsilon}$.
Let $r = \supdim M<\infty$.
Then
\begin{itemize}
\item[(i)] there is a $2r\epsilon$-refinement $\tilde N$ of $N$ such that $B(\Pru_\epsilon(M))\sse B(\tilde N)$,
\item[(ii)] if $\phi(\epsilon)\circ \psi = N_{0\to 2\epsilon}$ (so $\phi$ and $\psi$ form an $\epsilon$-interleaving), then $\Pru_\epsilon(M)$ is a $2r\epsilon$-refinement of $N$.
\end{itemize}
\end{theorem}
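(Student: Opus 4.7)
The plan is to drive the proof with the $\epsilon$-pruning pair $(I, K)$ of $M$ from \cref{def_pruning} together with its structural properties in \cref{lem_I_K}. Fix a direct sum decomposition $N \cong \bigoplus_{j \in \Gamma} N_j$ into indecomposables (using \cref{thm_unique_dec}), and let $\phi_j \colon M \to N_j(\epsilon)$ and $\psi_j \colon N_j \to M(\epsilon)$ be the components of $\phi$ and $\psi$ along this decomposition. Each composition $\psi_j(\epsilon) \circ \phi_j$ is a morphism $M \to M(2\epsilon)$, so the defining property of $I$ and \cref{lem_I_K}(iv) yield
\[
\psi_j(\epsilon)(\phi_j(I)) \sse M_{0\to 2\epsilon}(I), \qquad \psi_j(\epsilon)(\phi_j(K)) \sse M_{0\to 2\epsilon}(K)
\]
for every $j\in \Gamma$. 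These are precisely the compatibilities that let the pruning interact with the summands of $N$ individually rather than gluing them together, which is the essential difficulty flagged in the informal outline.

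Next I would construct the diagram
\[
I/K \xra{\bar\phi} \bar N \xra{\bar\psi} M_{0\to 2\epsilon}(I)/M_{0\to 2\epsilon}(K)
\]
by setting $\bar N \coloneqq \bigoplus_{j\in \Gamma} \bar N_j$ with $\bar N_j \coloneqq (\phi_j(I) + \Img_{2r\epsilon}(N_j(\epsilon)))/\phi_j(K)$, and taking $\bar\phi_j, \bar\psi_j$ to be induced by $\phi_j|_I$ and $\psi_j(\epsilon)|_{\bar N_j}$ respectively. The displayed containments together with \cref{lem_I_K}(i) (which gives $\Img_{2r\epsilon}(M(2\epsilon)) \sse M_{0\to 2\epsilon}(I)$) make $\bar\psi$ well-defined, while \cref{lem_I_K}(ii) combined with naturality of $\phi_j$ places each $\bar N_j$ in $\EN_{2r\epsilon}(N_j(\epsilon))$. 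By construction, $\bar\psi \circ \bar\phi$ is the map $I/K \to M_{0\to 2\epsilon}(I)/M_{0\to 2\epsilon}(K)$ induced by $M_{0\to 2\epsilon}|_I$; surjectivity is immediate and injectivity is exactly \cref{lem_I_K}(iii). The splitting lemma therefore yields $\bar N \cong \img \bar\phi \oplus \ker\bar\psi$ with $\img\bar\phi \cong I/K$, and shifting by $-\epsilon$ produces the desired $2r\epsilon$-refinement $\tilde N \coloneqq \bar N(-\epsilon)$ of $N$, inside which the summand $(\img \bar\phi)(-\epsilon)\cong \Pru_\epsilon(M)$ contributes all of $B(\Pru_\epsilon(M))$ to $B(\tilde N)$. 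This proves part (i).

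For part (ii), the full interleaving identity $\phi(\epsilon)\circ\psi = N_{0\to 2\epsilon}$ must be invoked to eliminate the residual summand $(\ker\bar\psi)(-\epsilon)$, so that $\tilde N$ may be taken to be $\Pru_\epsilon(M)$ itself. The strategy is to peel off $\ker\bar\psi$ one indecomposable at a time, using $\psi$ as a partial inverse for $\phi$ to reabsorb kernel elements into $\img \bar\phi$ without disturbing the decomposition of $N$ along $\{N_j\}_{j\in \Gamma}$. The hard part will be the combinatorial bookkeeping: $\Gamma$ may be uncountable and carries no natural well-ordering, so the peeling cannot be carried out by ordinary recursion. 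The workaround is to organise the partial peelings --- pairs consisting of a subset $\Lambda \sse \Gamma$ together with a decomposition of $\bigoplus_{j\in\Lambda} N_j$ that is consistent with $\phi,\psi$ and the already constructed $\tilde N$ --- into a poset and apply Zorn's lemma to obtain a maximal element, which the second interleaving identity then forces to have trivial residual kernel, completing the argument.
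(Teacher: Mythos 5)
Your proposal follows essentially the same route as the paper's proof: use the $\epsilon$-pruning pair $(I,K)$, build maps $I/K \to \bar N \to M_{0\to 2\epsilon}(I)/M_{0\to 2\epsilon}(K)$ whose composition is an isomorphism via \cref{lem_I_K}(iii), apply the splitting lemma to get $\Pru_\epsilon(M)$ as a summand of $\bar N(-\epsilon)$, and then for (ii) peel off $\ker\bar\psi$ with a Zorn's lemma argument. There is one genuine (but valid) variation: you take $\bar N_j = (\phi_j(I) + \Img_{2r\epsilon}(N_j(\epsilon)))/\phi_j(K)$, whereas the paper takes $\bar N_j = \psi_j(\epsilon)^{-1}(M_{0\to 2\epsilon}(I))/\phi_j(K)$; your numerator is a submodule of the paper's, and both verifications go through, so either works. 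Your choice makes membership in $\oEN_{2r\epsilon}(N_j(\epsilon))$ manifest at the cost of having to explicitly add the term $\Img_{2r\epsilon}(N_j(\epsilon))$, while the paper's preimage formulation absorbs it automatically.

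One caution on part (ii): the Zorn poset is sketched, and as stated it is indexed by subsets $\Lambda\sse\Gamma$ together with decompositions of $\bigoplus_{j\in\Lambda} N_j$, which is not quite the right bookkeeping. The obstruction is that $\ker\bar\psi$ need not respect the decomposition $\bigoplus_j \bar N_j$, so the object you must track is which indecomposable summands of $\ker\bar\psi$ have been successfully placed inside a single $\bar N_j$ (as a summand lying inside the ``short-lived'' part $\ker_{2r\epsilon}$-region of $\bar N_j$, so that quotienting it out keeps $N^\Ss_j$ inside $\EN_{2r\epsilon}(N_j(\epsilon))$). The paper's partial decompositions are therefore indexed by subsets of the index set of indecomposable summands of $\ker\bar\psi$, not subsets of $\Gamma$, and a single index $j\in\Gamma$ may absorb several such pieces. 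The local-endomorphism-ring property from \cref{thm_unique_dec} is what guarantees that each unplaced indecomposable of $\ker\bar\psi$ can be pushed into at least one $N^\Ss_j$. This is a gap in the written details rather than in the idea, but it would need to be filled.
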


Putting $N=M$ in the theorem, we get that $\Pru_\epsilon(M)$ is a $2r\epsilon$-refinement of $M$, which gives the following corollary.
\begin{corollary}
\label{cor_main}
Let $\epsilon\geq 0$, and let $M$ and $N$ be $\epsilon$-interleaved pfd modules with $r = \supdim M<\infty$.
Then $\Pru_\epsilon(M)$ is a $2r\epsilon$-refinement of both $M$ and $N$.
\end{corollary}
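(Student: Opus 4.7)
The plan is to derive both halves of the corollary as immediate instances of Theorem \ref{thm_main}(ii). First I would invoke the theorem on the interleaving morphisms $\phi\colon M \to N(\epsilon)$ and $\psi\colon N \to M(\epsilon)$ witnessing that $M$ and $N$ are $\epsilon$-interleaved; since these satisfy both $\psi(\epsilon)\circ\phi = M_{0\to 2\epsilon}$ and $\phi(\epsilon)\circ\psi = N_{0\to 2\epsilon}$ by definition of an $\epsilon$-interleaving, part (ii) of the theorem directly yields that $\Pru_\epsilon(M)$ is a $2r\epsilon$-refinement of $N$. The hypothesis $\supdim M < \infty$ transfers verbatim, and no further argument is needed for this half.

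For the second half---that $\Pru_\epsilon(M)$ is also a $2r\epsilon$-refinement of $M$ itself---I would specialize the theorem to the pair $(M,M)$. The module $M$ is canonically $\epsilon$-interleaved with itself via $\phi := M_{0\to\epsilon}$ and $\psi := M_{0\to\epsilon}$, whose compositions $\psi(\epsilon)\circ \phi$ and $\phi(\epsilon)\circ \psi$ both equal $M_{0\to 2\epsilon}$ by functoriality of the shift (concretely, $M_{\epsilon\to 2\epsilon}\circ M_{0\to\epsilon} = M_{0\to 2\epsilon}$ from the last axiom imposed on $\Pb_\epsilon$). Thus Theorem \ref{thm_main}(ii), applied with $N$ replaced by $M$ and with this trivial self-interleaving, produces $\Pru_\epsilon(M)$ as a $2r\epsilon$-refinement of $M$. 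There is no genuine obstacle to overcome here: all of the substantive work has already been carried out in the proof of Theorem \ref{thm_main}, and the corollary is a pure substitution argument combining the $M$-to-$N$ application with the $M$-to-$M$ application.
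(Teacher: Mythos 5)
Your proposal is correct and matches the paper's own argument: the paper likewise obtains the refinement of $N$ by applying \cref{thm_main} (ii) to the given $\epsilon$-interleaving, and obtains the refinement of $M$ by putting $N=M$ in the theorem with the trivial self-interleaving given by the shift morphisms. Nothing further is needed.
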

Recall the definition of $f_R$ in \cref{def_f_R}.
The following is an immediate consequence of \cref{cor_main}.
A popular alternative way of stating the algebraic stability theorem \cref{thm_ast} for 1-parameter modules is as the inequality $d_B\leq d_I$.
The following is an immediate consequence of \cref{cor_main}, and is similar in spirit to the inequality $d_B\leq d_I$: it is an upper bound on the ``decomposition dissimilarity'' between $M$ and $N$ by their interleaving distance.
\begin{corollary}
\label{cor_f_R}
Let $\epsilon\geq 0$, and let $M$ and $N$ be pfd modules with $r = \supdim M<\infty$.
Then
\[f_R(M,N)\leq 2r d_I(M,N).\]
\end{corollary}
The drawback of the phrasing of this corollary is that $f_R$ is not a distance, as pointed out in the previous section, so it does not quite fit into the tradition of stating stability results as inequalities of distances.

We now move on to the proof of \cref{thm_main}.
Write $N = \bigoplus_{j\in \Gamma} N_j$ with each $N_j$ indecomposable.
We write $\pi_-$ and $\iota_-$ for the canonical projections and inclusion from and into direct sums.
For each $j\in \Gamma$, let $\phi_j$ and $\psi_j$ be the compositions
\[
M \xra{\phi} N(\epsilon) \xra{\pi_j} N_j(\epsilon), \quad N_j \xrightarrow{\iota_j} N \xrightarrow{\psi} M(\epsilon),
\]
respectively.
Let $f_j = \psi_j(\epsilon)\circ \phi_j$, and note that
\begin{align*}
\sum_{j\in \Gamma} f_j &= \psi(\epsilon) \circ \left(\sum_{j\in \Gamma} \iota_j\circ\pi_j\right)(\epsilon)\circ \phi\\
&= \psi(\epsilon) \circ \id_{N(\epsilon)} \circ \phi\\
&= M_{0\to 2\epsilon}.
\end{align*}

Let $(I,K)$ be the $\epsilon$-pruning pair of $M$.
The following lemma gives us a $2r\epsilon$-refinement of $N(\epsilon)$.
Then, in \cref{lem_main_2}, we show that we have the inclusion of barcodes needed for part (i) of the theorem.
After \cref{lem_main_2}, we assume that $\phi$ and $\psi$ form an $\epsilon$-interleaving, and show part (ii) of the theorem.
\begin{lemma}
\label{lem_main_1}
Let $(I,K)$ be the $\epsilon$-pruning pair of $M$.
Define the subquotient
\[
\ol{N_j} = \psi_j(\epsilon)^{-1}(M_{0\to 2\epsilon}(I))/\phi_j(K)
\]
of $N_j(\epsilon)$.
Then $\ol{N}\coloneqq \bigoplus_{j\in \Gamma} \ol{N_j}$ is a $2r\epsilon$-refinement of $N(\epsilon)$.
\end{lemma}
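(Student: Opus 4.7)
The plan is to verify, for each $j\in \Gamma$ separately, that $\ol{N_j}\in \EN_{2r\epsilon}(N_j(\epsilon))$. Once this is established, the claim follows directly from \cref{def_refinement}: the shift functor $(\epsilon)$ is an autoequivalence and therefore preserves indecomposability, so $N(\epsilon) \cong \bigoplus_{j\in \Gamma} N_j(\epsilon)$ is a decomposition of $N(\epsilon)$ into indecomposables, and $\ol{N} = \bigoplus_{j\in \Gamma} \ol{N_j}$ is a corresponding refinement in the sense of \cref{def_refinement}.

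To show $\ol{N_j}\in \EN_{2r\epsilon}(N_j(\epsilon))$, I would take $M_1 = \psi_j(\epsilon)^{-1}(M_{0\to 2\epsilon}(I))$ and $M_2 = \phi_j(K)$ as the two nested submodules of $N_j(\epsilon)$ required by \cref{def_erosion_nbh}, and check the three defining conditions in turn. First, the inclusion $M_2\sse M_1$ is equivalent to $\psi_j(\epsilon)(\phi_j(K)) = f_j(K)\sse M_{0\to 2\epsilon}(I)$, which follows from \cref{lem_I_K} (iv), giving $f_j(K)\sse M_{0\to 2\epsilon}(K)$, combined with $K\sse I$. Second, the containment $\Img_{2r\epsilon}(N_j(\epsilon))\sse M_1$ is obtained by applying \cref{lem_im_to_im} (i) to the morphism $\psi_j(\epsilon)\colon N_j(\epsilon)\to M(2\epsilon)$, using $\Img_{2r\epsilon}(M(2\epsilon))\sse M_{0\to 2\epsilon}(I)$ from \cref{lem_I_K} (i). Third, $M_2\sse \Ker_{2r\epsilon}(N_j(\epsilon))$ comes from applying \cref{lem_im_to_im} (ii) to $\phi_j\colon M\to N_j(\epsilon)$ and noting that \cref{lem_I_K} (ii) combined with $I\sse M$ yields $K\sse \Ker_{2r\epsilon}(I) \sse \Ker_{2r\epsilon}(M)$.

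There is no real obstacle here, since the essential work has already been done in setting up the pruning pair $(I,K)$ and proving \cref{lem_I_K}; this lemma is primarily a translation step, repackaging the four parts of \cref{lem_I_K} into the language of erosion neighborhoods and refinements. The one place where I would take extra care is keeping track of which module everything lives in, since the construction mixes morphisms $\phi_j$ landing in $N_j(\epsilon)$ with morphisms $\psi_j(\epsilon)$ going from $N_j(\epsilon)$ to $M(2\epsilon)$, and a mismatched shift could silently invalidate an application of \cref{lem_im_to_im}. In particular, the erosion parameter $2r\epsilon$ is measured relative to the ambient module $N_j(\epsilon)$, so one must confirm that $\Img_{2r\epsilon}$ and $\Ker_{2r\epsilon}$ are being taken of $N_j(\epsilon)$ rather than of $N_j$.
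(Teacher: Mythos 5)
Your proposal is correct and follows essentially the same route as the paper: establish $\ol{N_j}\in \EN_{2r\epsilon}(N_j(\epsilon))$ for each $j$ by checking the three conditions via \cref{lem_im_to_im} and \cref{lem_I_K}, then conclude directly from \cref{def_refinement}. The only cosmetic differences are that the paper derives $\phi_j(K)\sse \psi_j(\epsilon)^{-1}(M_{0\to 2\epsilon}(I))$ from the defining property $f_j(I)\sse M_{0\to 2\epsilon}(I)$ of the pruning pair rather than from \cref{lem_I_K} (iv), and that it silently upgrades $K\sse\Ker_{2r\epsilon}(I)$ to $K\sse\Ker_{2r\epsilon}(M)$ where you spell out the step $\Ker_{2r\epsilon}(I)\sse\Ker_{2r\epsilon}(M)$ via $I\sse M$; both variants are sound.
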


\begin{proof}
By \cref{def_pruning}, $\psi_j(\epsilon)\circ \phi_j(K) \sse \psi_j(\epsilon)\circ \phi_j(I) \sse M_{0\to 2\epsilon}(I)$.
It follows that $\phi_j(K) \sse \psi_j(\epsilon)^{-1}(M_{0\to 2\epsilon}(I))$, so $\ol{N_j}$ is well-defined.
By \cref{lem_I_K} (i) and (ii), we have $\Img_{2r\epsilon}(M(2\epsilon)) \sse M_{0\to 2\epsilon}(I)$ and $K\sse \Ker_{2r\epsilon}(M)$.
By \cref{lem_im_to_im}, these inclusions imply that $\ol{N_j}\in \oEN_{2r\epsilon}(N_j(\epsilon))$ for all $j\in \Gamma$.
Thus, $\ol{N}$ is a $2r\epsilon$-refinement of $N(\epsilon)$.
\end{proof}

\begin{lemma}
\label{lem_main_2}
Let $\ol N$ be as defined in the previous lemma.
We have an inclusion $B(I/K)\sse B(\ol{N})$ of barcodes.
\end{lemma}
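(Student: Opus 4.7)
The plan is to exhibit $I/K$ as a direct summand of $\ol N$ by constructing morphisms
\[
I/K \xrightarrow{\ol\phi} \ol N \xrightarrow{\ol\psi} M_{0\to 2\epsilon}(I)/M_{0\to 2\epsilon}(K)
\]
whose composition is an isomorphism, and then applying the splitting lemma together with \cref{thm_unique_dec}. The construction is essentially dictated by the definitions: $\ol\phi$ should be induced by $\phi|_I$, and $\ol\psi$ should be induced by $\psi(\epsilon)$ restricted to $\bigoplus_j \psi_j(\epsilon)^{-1}(M_{0\to 2\epsilon}(I))$.

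First I would check that $\ol\phi$ is well-defined. Writing $\phi = \bigoplus_j \phi_j$, we need $\phi_j(I)\sse \psi_j(\epsilon)^{-1}(M_{0\to 2\epsilon}(I))$, which is equivalent to $f_j(I)\sse M_{0\to 2\epsilon}(I)$; this is immediate from the defining property of $I$ in the $\epsilon$-pruning pair, since each $f_j\colon M\to M(2\epsilon)$ is one of the morphisms appearing there. We also need $\phi(K)$ to land in $\bigoplus_j \phi_j(K)$, which is trivial. Next, $\ol\psi$ is well-defined provided $\psi(\epsilon)(\bigoplus_j \phi_j(K))\sse M_{0\to 2\epsilon}(K)$; expanding, this sum equals $\sum_j f_j(K)$, and by \cref{lem_I_K}(iv) each $f_j(K)\sse M_{0\to 2\epsilon}(K)$.

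For the composition, observe that for $x\in I$,
\[
\ol\psi(\ol\phi(x)) = \sum_{j\in \Gamma} \psi_j(\epsilon)(\phi_j(x)) = \Bigl(\sum_{j\in \Gamma} f_j\Bigr)(x) = M_{0\to 2\epsilon}(x),
\]
so $\ol\psi\circ \ol\phi$ is induced by $M_{0\to 2\epsilon}|_I\colon I\to M_{0\to 2\epsilon}(I)$. This map is surjective on the target $M_{0\to 2\epsilon}(I)/M_{0\to 2\epsilon}(K)$ by construction, and its injectivity on $I/K$ amounts to the identity $M_{0\to 2\epsilon}^{-1}(M_{0\to 2\epsilon}(K))\cap I = K$, which is exactly \cref{lem_I_K}(iii) (using that $I_{0\to 2\epsilon}$ is the restriction of $M_{0\to 2\epsilon}$ to $I$). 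Hence $\ol\psi\circ\ol\phi$ is an isomorphism.

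Now the splitting lemma gives $\ol N = \img \ol\phi \oplus \ker \ol\psi$ with $\img \ol\phi \cong I/K$. Since $\ol N$ is pfd (as a subquotient of a pfd module), both $I/K$ and $\ker\ol\psi$ decompose into indecomposables with local endomorphism rings by \cref{thm_unique_dec}, and the uniqueness part of that theorem yields $B(\ol N) = B(I/K)\sqcup B(\ker \ol\psi)$, so in particular $B(I/K)\sse B(\ol N)$. The only real content is the verification that $\ol\phi$ and $\ol\psi$ are well-defined and that their composition is an isomorphism; once the three items of \cref{lem_I_K} are in hand, each of these is a one-line check, so I do not expect any serious obstacle.
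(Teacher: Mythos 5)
Your proof is correct and follows essentially the same route as the paper: you identify $\ol N$ with the subquotient $\bigoplus_j \psi_j(\epsilon)^{-1}(M_{0\to 2\epsilon}(I))/\bigoplus_j \phi_j(K)$, induce $\ol\phi$ and $\ol\psi$ from $\phi$ and $\psi(\epsilon)$, verify well-definedness from the defining property of $I$ together with \cref{lem_I_K}(iv), and then invoke the splitting lemma and \cref{thm_unique_dec}. The one place you are more explicit than the paper is the check that $\ol{M_{0\to 2\epsilon}}\colon I/K\to M_{0\to 2\epsilon}(I)/M_{0\to 2\epsilon}(K)$ is injective, which you rightly trace back to \cref{lem_I_K}(iii); the paper asserts this isomorphism without spelling out that step, even though (iii) was included in \cref{lem_I_K} exactly for this purpose.
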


\begin{proof}
We identify $\ol{N}$ with the subquotient
\[
\bigoplus_{j\in \Gamma} \psi_j(\epsilon)^{-1}(M_{0\to 2\epsilon}(I))/\bigoplus_{j\in \Gamma} \phi_j(K)
\]
of $N(\epsilon)$ in the obvious way.
We claim that $\phi$ and $\psi(\epsilon)$ induce morphisms
\[
I/K\xra{\ol \phi} \ol{N} \xra{\ol{\psi(\epsilon)}} M_{0\to 2\epsilon}(I)/ M_{0\to 2\epsilon}(K).
\]
To prove this, it suffices to show that the induced morphisms
\[
I/K\xra{\ol{\phi_j}} \ol{N_j} \xra{\ol{\psi_j(\epsilon)}} M_{0\to 2\epsilon}(I)/ M_{0\to 2\epsilon}(K)
\]
are well-defined, as we obtain $\ol{\phi}$ and $\ol{\psi(\epsilon)}$ by summing over $j$.
The nontrivial properties we need to check are $\phi_j(I) \sse \psi_j(\epsilon)^{-1}(M_{0\to 2\epsilon}(I))$ and $\psi_j(\epsilon)(\phi_j(K)) \sse M_{0\to 2\epsilon}(K)$.
Equivalently, $\psi_j(\epsilon)\circ \phi_j$ has to map $I$ into $M_{0\to 2\epsilon}(I)$ and $K$ into $M_{0\to 2\epsilon}(K)$.
But this follows from \cref{def_pruning} and \cref{lem_I_K} (iv), respectively.

Since $\ol{\psi(\epsilon)} \circ \ol \phi$ is equal to the isomorphism $\ol{M_{0\to 2\epsilon}}$, the splitting lemma implies that $\ol{N} = N' \oplus \ker \ol{\psi(\epsilon)}$, where $N' = \img \ol \phi \cong I/K$.
Thus, $B(\ol{N}) = B(I/K \oplus \ker \ol \psi)$, so since $B(I/K \oplus \ker \ol \psi) = B(I/K) \sqcup B(\ker \ol \psi)$, we have $B(I/K)\sse B(\ol{N})$.
\end{proof}

\begin{proof}[proof of \cref{thm_main}]
Since $\Pru_\epsilon(M) = (I/K)(-\epsilon)$, part (i) of the theorem follows from \cref{lem_main_1} and \cref{lem_main_2}.
We now prove part (ii).
Recall that $\ol{N} = N' \oplus \ker \ol{\psi(\epsilon)}$, where $N'\cong I/K$ and $\ol{\psi(\epsilon)}$ is the morphism from $\ol{N}$ to $M_{0\to 2\epsilon}(I)/ M_{0\to 2\epsilon}(K)$ induced by $\psi(\epsilon)$.
Let $Z_j = \pi_j(\psi(\epsilon)^{-1}(M_{0\to 2\epsilon}(K)))$.
Because $K\sse \Ker_{2r\epsilon}(M)$, we have $M_{0\to 2\epsilon}(K) \sse \Ker_{2(r-1)\epsilon}(M)$.
Since $\ker \psi \sse \Ker_{2\epsilon}(N)$ by the interleaving properties, we get
\begin{align*}
\psi(\epsilon)^{-1}(M_{0\to 2\epsilon}(K)) &\sse \psi(\epsilon)^{-1}(\Ker_{2(r-1)\epsilon}(M(2\epsilon)))\\
&= \ker(M_{2\epsilon\to 2r\epsilon}\circ \psi(\epsilon))\\
&= \ker(\psi((2r-1)\epsilon)\circ N_{\epsilon\to (2r-1)\epsilon})\\
&\sse \ker(\phi(2r\epsilon)\circ\psi((2r-1)\epsilon)\circ N_{\epsilon\to (2r-1)\epsilon})\\
&= \ker(N_{(2r-1)\epsilon\to (2r+1)\epsilon}\circ N_{\epsilon\to (2r-1)\epsilon})\\
&= \ker(N_{\epsilon\to (2r+1)\epsilon})\\
&= \Ker_{2r\epsilon}(N(\epsilon)),
\end{align*}
and thus also $Z_j \sse \Ker_{2r\epsilon}(N(\epsilon))$ for all $j$.
We define $\ol{Z_j}$ as the submodule of $\ol{N_j}$ induced by $Z_j$; that is,
\[
\ol{Z_j} = (Z_j\cap \psi_j(\epsilon)^{-1}(M_{0\to 2\epsilon}(I)))/\phi_j(K).
\]

We want to obtain a $2r\epsilon$-refinement of $N(\epsilon)$ by starting with $\ol{N}$ and peeling away $\ker \ol{\psi(\epsilon)}$ so that we are left with $N'$.
Unfortunately, we cannot simply quotient out $\ker \ol{\psi(\epsilon)}$ directly:
To obtain a refinement of $N(\epsilon)$, we need to take subquotients of the individual $N_j(\epsilon)$ (or rather the $\ol{N_j}$, at this stage of the proof), and we have no guarantee that $\ker{\psi(\epsilon)}$ is a submodule of any $\ol{N_j}$, or even that we can write $\ker \ol{\psi(\epsilon)} = \bigoplus_{j\in \Gamma}(\ker \ol{\psi(\epsilon)})_j$ with each $(\ker \ol{\psi(\epsilon)})_j$ a submodule of $\ol{N_j}$.
Our strategy for dealing with this problem is to start with a decomposition $\ker \ol{\psi(\epsilon)} = \bigoplus_{X \in \Xi} X$ and replace each $X$ in turn by an isomorphic module $X^\Ss$ that we are allowed to quotient out because $X^\Ss\sse \ol{Z_j}$ for some $j\in \Gamma$.
(Below, at the stage $\Ss$, we have replaced each $X\in S^\Ss$ with an isomorphic $X^\Ss$, and the indecomposables in $\Xi\setminus S^\Ss$ are yet to be dealt with.)
In the end, we will have a submodule $\bigoplus_{X \in \Xi} X^\Ss \cong \ker \ol{\psi(\epsilon)}$ of $\ol N$ that we can quotient out to get the refinement that we want.
Since $\Xi$ can be uncountable, usual induction is not powerful enough to make this idea work, but Zorn's Lemma (which one can think of as induction for large sets) will get the job done.

Write $\ker \ol{\psi(\epsilon)}$ as a direct sum $\bigoplus_{X \in \Xi} X$ of indecomposables.
Define a \emph{partial decomposition} $\mathcal S$ as a choice of a set $S^\Ss_j\sse \Xi$ and a module $N^{\mathcal S}_j \sse \ol{N_j}$ for each $j\in \Gamma$ such that $S^\Ss_j\cap S^\Ss_{j'}$ is empty for $j\neq j'$, and a module $X^\Ss\sse \ker \ol{\psi(\epsilon)}$ for each $X\in S^\Ss\coloneqq \bigcup_{j\in\Gamma} S^\Ss_j$ such that
\begin{itemize}
\item[(i)] for each $X\in S^\Ss$, $X^\Ss\cong X$,
\item[(ii)] for each $X\in S^\Ss_j$, $X^\Ss\sse \ol{Z_j}$,
\item[(iii)] $\ol{N} = \bigoplus_{j\in \Gamma} N^\Ss_j \oplus \bigoplus_{X \in S^\Ss} X^\Ss = \bigoplus_{j\in \Gamma} N^\Ss_j \oplus \bigoplus_{X \in S^\Ss} X$.
\end{itemize}
Suppose there is a partial decomposition $\Ss$ with $S^\Ss= \Xi$.
Then we have $\ol{N} = \bigoplus_{j\in \Gamma} N^\Ss_j \oplus \bigoplus_{X \in \Xi} X = \bigoplus_{j\in \Gamma} N^\Ss_j \oplus \ker \ol{\psi(\epsilon)}$.
Since $\ol{N} = N' \oplus \ker \ol{\psi(\epsilon)}$, \cref{thm_unique_dec} gives $\bigoplus_{j\in \Gamma} N^\Ss_j \cong N' \cong I/K$.
At the same time, (ii) and the first equality in (iii) give $\ol{N} = \bigoplus_{j\in \Gamma} (N^\Ss_j \oplus K_j)$ for $K_j = \bigoplus_{X\in S_j^\Ss} X^\Ss\sse \ol{Z_j}$, so $N^\Ss_j \cong \ol{N_j}/K_j$.
Because $Z_j \sse \Ker_{2r\epsilon}(N(\epsilon))$ and $\ol{N_j} \in \oEN_{2r\epsilon}(N_j(\epsilon))$, we get $N^\Ss_j \in \EN_{2r\epsilon}(N_j(\epsilon))$, so $\bigoplus_{j\in \Gamma} N^\Ss_j$ is a $2r\epsilon$-refinement of $N(\epsilon)$, and we are done.

Let $\Ss$ and $\T$ be partial decompositions.
We write $\Ss \leq \T$ if $S^\Ss\sse S^\T$, $N^\T_j\sse N^\Ss_j$ for each $j\in\Gamma$, and for each $X\in S^\Ss$, we have $X^\Ss = X^\T$.
This equips the set of partial decompositions with a partial order.
There is a valid (minimal) partial decomposition $\Ss$ defined by $S^\Ss= \emptyset$ and $N^\Ss_j = \ol{N_j}$ for all $j$, so the poset is nonempty.
We want to show that there is a partial decomposition $\Ss$ with $S^\Ss = \Xi$.
By Zorn's lemma, it suffices to show that every chain in the poset of partial decompositions has an upper bound, and that there is no maximal element $\Ss$ with $S^\Ss\nsubseteq \Xi$.

Suppose $C$ is a chain; that is, a set of partial decompositions that is totally ordered by $\leq$.
We claim that an upper bound $\T$ is given by $S^\T=\bigcup_{\Ss\in C} S^\Ss$, $N^\T_j = \bigcap_{\Ss\in C} N^\Ss_j$ and $X^\T = X^\Ss$ for any $\Ss\in C$ such that $X\in \Ss$.
The only nontrivial condition to check is (iii).
Since $N$ is pfd, $\ol{N}_p$ is finite-dimensional for any $p\in \Pb$.
This means that fixing $p$, there is an $\Ss\in C$ such that $(N^\Ss_j)_p = (N^\T_j)_p$ for all $j$, and such that $X_p = X^\T_p = 0$ for all $X\in S^\T\setminus S^\Ss$.
Thus, since (iii) holds for $\Ss$, it also holds for $\T$ at $p$, and for all other $p'\in \Pb$ by the same argument.
$\T$ is therefore indeed a partial decomposition, and an upper bound of $C$.

Now assume $\Ss$ is a maximal element and $S^\Ss\neq \Xi$.
Consider the sequence of identity maps
\[
\ol{N} = N' \oplus \bigoplus_{X \in \Xi} X \to \bigoplus_{j\in \Gamma} N^\Ss_j \oplus \bigoplus_{X \in S^\Ss} X \to N' \oplus \bigoplus_{X \in \Xi} X.
\]
For each $X' \in S^\Ss$, let $\pi_{X'}$ be the canonical projection from $\bigoplus_{j\in \Gamma} N^\Ss_j \oplus \bigoplus_{X \in S^\Ss} X$ to the submodule $X'$, and similarly let $\pi_j$ be the projection onto $N^\Ss_j$.
Pick a module $Y \in \Xi\setminus S^\Ss$.
Let $\pi_Y\colon N' \oplus \bigoplus_{X \in \Xi} X \to Y$ be the canonical projection and $\iota_Y\colon Y\to \ol{N}$ the inclusion.
We have that the sum $\sum_{j\in \Gamma} \pi_j + \sum_{X\in S^\Ss} \pi_X$ is the identity on $\ol{N}$, so $\pi_Y\circ (\sum_{j\in \Gamma} \pi_j + \sum_{X\in S^\Ss} \pi_X)\circ \iota_Y$ is the identity on $Y$.
But $\pi_Y\circ \pi_X = 0$ for all $X\in S^\Ss$, so also $\pi_Y\circ (\sum_{j\in \Gamma} \pi_j)\circ \iota_Y = \sum_{j\in \Gamma} \pi_Y \circ \pi_j\circ \iota_Y$ is the identity on $Y$.
Since $Y$ is indecomposable, its endomorphism ring is local by \cref{thm_unique_dec}.
Thus, at least one of the $\pi_Y \circ \pi_j\circ \iota_Y$ is a unit; i.e., an isomorphism.
Fix a $j = i$ with this property.
We now define a partial decomposition $\T$ with $S^\T = S^\Ss\cup \{Y\}$, $X^\T = X^\Ss$ for all $X\in S^\Ss$ and $Y^\T = \pi_i(Y)$.
We have $\pi_i(Y)\cong Y$, and by the splitting lemma, $\pi_i(Y)$ is a direct summand of $N^\Ss_i$.
Let $N^\T_i$ be a module such that $N^\Ss_i = N^\T_i \oplus \pi_i(Y)$.
It follows that
\begin{equation}
\label{eq_dir_sums}
\bigoplus_{j\in \Gamma} N^\T_j \oplus \bigoplus_{X \in S^\T} X^\T = \bigoplus_{j\in \Gamma} N^\Ss_j \oplus \bigoplus_{X \in S^\Ss} X^\Ss = \ol{N}.
\end{equation}

Next, we show $\ol{N} = \bigoplus_{j\in \Gamma} N^\T_j \oplus \bigoplus_{X \in S^\T} X$.
It suffices to show
\begin{equation}
\label{eq_N'}
\ol{N} = \sum_{j\in \Gamma} N^\T_j + \sum_{X \in S^\T} X,
\end{equation}
since by \cref{eq_dir_sums}, for every $p\in \Pb$,
\[
\sum_{j\in \Gamma} \dim((N^\T_j)_p) + \sum_{X \in S^\T} \dim(X_p) = \sum_{j\in \Gamma} \dim((N^\T_j)_p) + \sum_{X \in S^\T} \dim(X^\T_p) = \dim(\ol{N}_p).
\]
Thus, if \cref{eq_N'} holds, then this sum is pointwise direct and thus direct.
By condition (iii) for $\Ss$, we know that $\pi_i(Y) +\sum_{j\in \Gamma} N^\T_j + \sum_{X \in S^\T} X = \ol{N}$, since $\pi_i(Y) + N^\T_i = N^\Ss_i$, and $N_j^\T = N_j^\Ss$ for $j\neq i$.
This means that to show \cref{eq_N'}, we only have to show that $\pi_i(Y)\sse \sum_{j\in \Gamma} N^\T_j + \sum_{X \in S^\T} X$.
Let $x\in \pi_i(Y)$ and $y = \pi_i^{-1}(x)\in Y$ (viewing $\pi_i$ as an isomorphism $Y \xra \sim \pi_i(Y)$).
We have
\begin{equation}
\label{eq_pi_sum}
y = \sum_{j\in \Gamma} \pi_j(y) + \sum_{X\in S^\Ss} \pi_X(y).
\end{equation}
Now, $y$ and all $\pi_X(y)$ in the sum are elements of $\sum_{X \in S^\T} X$, and for $j\neq i$, $\pi_j(y)$ is an element of $N_j^\T = N_j^\Ss$.
Thus, by rearranging, we can express $x$ as an element of $\sum_{j\in \Gamma} N^\T_j + \sum_{X \in S^\T} X$.

Finally, we need to show $\pi_i(Y)\sse \ol{Z_i}$.
Let $x$ and $y$ be as above.
In \cref{eq_pi_sum}, we know that $y$ and the second sum are in $\ker \ol{\psi(\epsilon)}$, so also $\sum_{j\in \Gamma} \pi_j(y)$ is in $\ker \ol{\psi(\epsilon)}$.
Let $z\in \psi(\epsilon)^{-1}(M_{0\to 2\epsilon}(K))$ be an inverse image of this sum under the projection $\pi\colon \bigoplus_{j\in \Gamma} \psi_j(\epsilon)^{-1}(M_{0\to 2\epsilon}(I)) \to \bigoplus_{j\in \Gamma} \psi_j(\epsilon)^{-1}(M_{0\to 2\epsilon}(I))/\bigoplus_{j\in \Gamma} \phi_j(K) = \ol{N}$.
Since this projection respects the direct sum $\bigoplus_{j\in \Gamma} N_j$, we get $\pi_i(y) = \pi(\pi_i(z))\in \ol{Z_i}$.
\end{proof}

\subsection{Counterexample to a strengthening of the stability theorem}

We now give an example showing that strengthening \cref{thm_main} in a certain way is impossible.
By \cref{lem_refins_are_ers} (ii), the $2r\epsilon$-refinement $\Pru_{2r\epsilon}(M)$ in \cref{thm_main} (i) is in $\EN_{2r\epsilon}(M)$.
By restricting the interleaving morphisms in the following theorem, we get $\phi'\colon M \to (N_1\oplus N_2)(1)$ and $\psi'\colon N_1\oplus N_2 \to M(1)$ with $\psi'\circ \phi' = M_{0\to 2}$.
Putting $n=\frac{r}{2}$, the theorem shows that \cref{thm_main} (i) does not hold for any even integer $r\geq 4$ if $2r$ is replaced by $\delta <\frac{r}{2} -1$, and putting $n=\frac{r+1}{4}$, we get that part (ii) does not hold for $r\geq 7$ with $r\equiv 3 \pmod 4$ if $2r$ is replaced by $\delta<\frac{r+1}{4} -1$.
In other words, \cref{thm_main} (i) cannot be strengthened by a factor of more than roughly $4$, and (ii) by a factor of more than roughly $8$.
\begin{theorem}
\label{thm_counterex}
For any integer $n\geq 2$, there are $1$-interleaved pfd $2$-parameter modules $M\oplus Q$ and $N_1\oplus N_2$ with $\supdim M = 2n$ and $\supdim Q \leq 2n-1$ such that for any $\delta< n-1$ and $R \in \EN_\delta(M)$, there is an $R'\in B(R)$ such that $R'\nleq N_1$ and $R'\nleq N_2$.
\end{theorem}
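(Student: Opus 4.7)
The proof is by an explicit construction generalizing the example of Figure \ref{fig_erosion_fail} from two ``strands'' to $2n$. I define $M$ as a direct sum of $2n$ interval modules $I_1,\ldots,I_{2n}$ whose supports all contain a common ``central region'' of $\R^2$, giving $\supdim M=2n$. The modules $N_1$ and $N_2$ are indecomposable modules of pointwise dimension $n$ each, built by gluing $n$ intervals at a distinguished corner in the style of the module $N$ of Figure \ref{fig_erosion_fail}; $N_1$ corresponds (up to a $1$-shift) to $I_1,\ldots,I_n$ and $N_2$ to $I_{n+1},\ldots,I_{2n}$, and the glue corners are positioned so that a $1$-shift destroys the gluing. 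The key geometric features I arrange are: (i) there exist points $p\leq q\in\R^2$ with $p\notin\mathrm{supp}(N_2)$ and $q\notin\mathrm{supp}(N_1)$, so that the induced maps $(N_j)_{p\to q}$ both vanish; and (ii) $\mathrm{rk}(M_{p-\delta\to q+\delta})\geq 1$ for every $\delta<n-1$, achieved by placing the central region at ``depth'' at least $n-1$ from the relevant boundaries of each $I_k$. The auxiliary module $Q$ collects the leftover shift discrepancies and has $\supdim Q\leq 2n-1$ by design.

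The $1$-interleaving between $M\oplus Q$ and $N_1\oplus N_2$ is verified directly using morphisms induced by the shift maps of the underlying intervals together with the collapse of the glue corners of $N_1,N_2$; one checks pointwise that $\psi(1)\circ\phi=(M\oplus Q)_{0\to 2}$ and $\phi(1)\circ\psi=(N_1\oplus N_2)_{0\to 2}$.

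The core of the proof is the rigidity claim. Let $R\in\EN_\delta(M)$ with $\delta<n-1$ and decompose $R\cong\bigoplus_i R'_i$ into indecomposables. For any subquotient $A/B\leq N_j$, the map $(A/B)_{p\to q}$ induced by $A_{p\to q}$ has rank at most $\mathrm{rk}((N_j)_{p\to q})=0$ by (i). On the other hand, by the definition of $\EN_\delta(M)$ (see the discussion after \cref{def_erosion_nbh}), $\Er_\delta(M)\leq R$; and under the canonical isomorphism $\Er_\delta(M)\cong\img M_{-\delta\to\delta}$ one obtains $\mathrm{rk}(\Er_\delta(M)_{p\to q})=\mathrm{rk}(M_{p-\delta\to q+\delta})\geq 1$ by (ii). Since rank is monotone under the subquotient relation, $\mathrm{rk}(R_{p\to q})\geq 1$, and so at least one indecomposable summand $R'=R'_i$ of $R$ satisfies $\mathrm{rk}(R'_{p\to q})\geq 1$; hence $R'\nleq N_1$ and $R'\nleq N_2$, as required.

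The main technical obstacle is the geometric engineering of the modules: conditions (i), (ii), and the existence of a $1$-interleaving pull in different directions, so realizing all three simultaneously requires a careful placement of the intervals $I_k$, the glue corners of $N_1$ and $N_2$, and the witnessing pair $p,q$, together with a suitable choice of $Q$ to absorb the shift mismatch. Once the construction is in place, the pointwise verifications of the interleaving identities and of (i), (ii) are routine, and the rigidity argument above is immediate.
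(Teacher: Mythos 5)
Your rigidity criterion is blocked by an obstruction that is fatal for all $n\geq 3$. You want points $p\leq q$ with $(N_1)_{p\to q}=(N_2)_{p\to q}=0$ and simultaneously $\mathrm{rk}(M_{p-\delta\to q+\delta})\geq 1$ for every $\delta<n-1$. But the hypothesized $1$-interleaving $\phi\colon M\oplus Q\to (N_1\oplus N_2)(1)$, $\psi\colon N_1\oplus N_2\to (M\oplus Q)(1)$ factors the shift morphism as
\[
(M\oplus Q)_{p-1\to q+1}=\psi_q\circ (N_1\oplus N_2)_{p\to q}\circ\phi_{p-1},
\]
so $(N_1\oplus N_2)_{p\to q}=0$ forces $(M\oplus Q)_{p-1\to q+1}=0$ and hence $M_{p-1\to q+1}=0$. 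Since $1<n-1$ once $n\geq 3$, this contradicts condition (ii) at $\delta=1$. The underlying problem is conceptual: the rank invariant is itself $1$-stable under interleaving, so a rank-zero witness for both $N_j$ can only certify that something breaks within a shift of $1$, never within a shift growing like $n$. The theorem is precisely about instability at scale $\sim n$, which rank cannot detect, so no amount of geometric engineering of the intervals will rescue this approach.

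The paper's proof takes a genuinely different route that avoids this ceiling. The module $M$ is \emph{not} a direct sum of intervals: it is defined as a submodule of $\bigoplus_{i=1}^{2n}I_i$ generated by elements such as $e_i-\tilde e_{i+n}$ and $e_i-\tilde e_{i+n-1}$ at carefully staggered points. The key lemma (\cref{lem_dim_2n}) then shows, by a linear-algebra argument tracking bases $\beta_V,\beta_W$ of complementary subspaces at a single point $p$ together with a connectivity relation $\sim$ induced by the generator structure, that \emph{any} decomposition $M_1\oplus M_2\in\EN_\delta(M)$ for $\delta<n-1$ must have all of $M_p=k^{2n}$ concentrated in one summand. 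Combined with $\supdim N_j\leq 2n-1$, this yields the indecomposable summand $R'$ that cannot be a subquotient of either $N_j$. This is a pointwise-dimension obstruction tied to how direct-sum decompositions can split $M_p$, not a rank obstruction, and that is precisely what lets the argument see the factor $n-1$.
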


\begin{figure}
\centering
\begin{tikzpicture}[scale=.23]
\coordinate (q) at (0,12);
\coordinate (x) at (8,8);
\coordinate (y) at (12,4);
\coordinate (z) at (16,0);
\node[left] at (q){$q=q_4$};
\node[right] at (6,18){$q_1$};
\node[left] at (x){$x$};
\node[right] at (9,9){$x_1$};
\node[right] at (12,12){$x_4$};
\node[left] at (y){$y$};
\node[right] at (13,5){$y_1$};
\node[right] at (16,8){$y_4$};
\node[left] at (z){$z$};
\node[right] at (17,1){$z_1$};
\node[right] at (20,4){$z_4$};
\foreach \x in {0,...,4}{
	\foreach \y in {(x),(y),(z)}{
		\draw[color=black,fill=black] \y+(\x,\x) circle (.1);
}
}
\foreach \x in {0,...,3}{
	\draw[color=black,fill=black] (q)+(2*\x,2*\x) circle (.1);
}
\draw[thick] (28,25) to (28,0) to (z) to (16,4) to (y) to (12,8) to (x) to (8,12) to (q) to (0,28) to (25,28);
\foreach \x in {0,...,3}{
	\draw[dashed,thick] (25,28) to (25+\x,28) to (25+\x,25+\x) to (28,25+\x) to (28,25);
}
\node at (24.5,24.5){$I_1$};
\node at (31,28.2){$I_{2n} = I_4$};
\draw[color=black,fill=black] (22,22) circle (.1);
\node[right] at (22,22){$p$};
\end{tikzpicture}
\caption{The curves show the boundary of the $I_i$ for $n=2$.
The solid part is the part that is equal for all $i$, while the dashed curves in the top right corner show the different boundaries for different $i$.
The point $p$ is used in \cref{lem_dim_2n}.}
\label{fig_big_ex}
\end{figure}
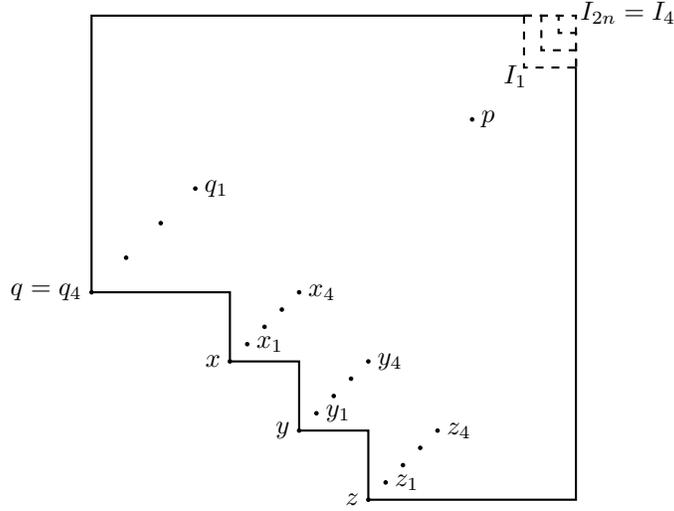

We begin by fixing $n$ and constructing $M$.
Recall that for $p\in \R^2$, we defined $\la p\ra = \{q\in \R^2 \mid q\geq p\}$.
For a set $S$ of points in $\R^2$, let $\la S \ra = \bigcup_{s\in S}\la s \ra$.

Let $q=(0,6n)$, $x=(2n^2,4n)$, $y= (2n^2+2n,2n)$ and $z = (2n^2+4n,0)$.
For $1\leq i\leq 2n$, define intervals $I_i\subseteq \R^2$ as follows.
\begin{align*}
I_i = &\la \{q, x, y, z\}\ra\\
&\setminus \la\{(0,2n^2+10n), (2n^2+8n+i,2n^2+8n+i), (2n^2+10n,0)\}\ra.
\end{align*}
Let $I = \bigoplus_{i=1}^{2n} I_i$.
Let $e_i$ be the $i^\text{th}$ unit vector of $k^{2n}$.
Possibly replacing $I$ by an isomorphic module, we can assume that $I_p$ is the subspace of $k^{2n}$ generated by $\{e_i\mid p\in I_i\}$, and that $I_{p\to q}(v_1,\dots,v_{2n}) = (\bar v_1,\dots,\bar v_{2n})$, where $\bar v_i$ is $v_i$ if $q\in I_i$ and $0$ otherwise.

For $i\in \{1,\dots,2n\}$, let
\begin{align*}
q_i &= q+(2n^2-in,2n^2-in)\\
x_i &= x+(i,i)\\
y_i &= y+(i,i)\\
z_i &= z+(i,i).
\end{align*}
We define $M$ as the submodule of $I$ with the following set of generators:
For each $i\in \{1,\dots, 2n-1\}$, $M$ has a generator $e_i$ at $q_i$, a generator $e_i$ at $x_i$, a generator $e_i-\tilde e_{i+n}$ at $y_i$,
and a generator $e_i-\tilde e_{i+n-1}$ at $z_i$, where we define $\tilde e_i$ as $e_i$ for $i\leq 2n$ and as $e_{2n}$ for $i\geq 2n$.
In addition, $M$ has a generator $e_{2n}$ at $q_{2n}$, $x_{2n}$, $y_{2n}$ and $z_{2n}$.
\begin{lemma}
\label{lem_dim_2n}
Let $0\leq \delta<n-1$, and let $p=(2n^2+7n, 2n^2+7n)$.
If $M_1 \oplus M_2 \in \EN_\delta(M)$, then either $(M_1)_p$ or $(M_2)_p$ has dimension $2n$.
\end{lemma}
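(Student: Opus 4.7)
My plan is to reduce the claim to two sub-claims: (i) any module in $\EN_\delta(M)$ with $\delta < n-1$ has pointwise dimension $2n$ at $p$, and (ii) for any decomposition $R = M_1 \oplus M_2 \in \EN_\delta(M)$, the associated idempotent on $R_p \cong k^{2n}$ is either $0$ or the identity.

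For (i), a direct calculation suffices. Every generator $q_i, x_i, y_i, z_i$ of $M$ has both coordinates at most $2n^2 + 6n$ (the maximum is attained at $z_{2n}$), so all of them lie below $p - (\delta, \delta) = (2n^2 + 7n - \delta, 2n^2 + 7n - \delta)$ whenever $\delta \leq n$; hence $\Img_\delta(M)_p = M_p = k^{2n}$. On the other hand, the tightest upper exclusion of the intervals $I_i$ lies at the point $(2n^2 + 8n + 1, 2n^2 + 8n + 1)$, which is strictly above $p + (\delta, \delta)$ for $\delta < n+1$; hence $\Ker_\delta(M)_p = 0$. Writing $R = M_1'/M_2'$ with $\Img_\delta(M) \sse M_1' \sse M$ and $M_2' \sse \Ker_\delta(M)$ then forces $(M_1')_p = M_p$ and $(M_2')_p = 0$, so $R_p \cong k^{2n}$ and $(M_1)_p \oplus (M_2)_p = k^{2n}$.

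For (ii), let $\pi\colon R \to R$ be the idempotent projection onto $M_1$; I would show $\pi_p$ is a scalar on $k^{2n}$ by exploiting the many points at which $M$ is $1$-dimensional. Examples include: $M_{x_1}$ spanned by $e_1$; $M_{q_{2n}}$ spanned by $e_{2n}$; $M_{y_1}$ spanned by $e_1 - e_{n+1}$; $M_{z_1}$ spanned by $e_1 - e_n$; and analogous points pinning down the other generator-vectors $e_i$, $e_i - \tilde e_{i+n}$, $e_i - \tilde e_{i+n-1}$. At each such $1$-dimensional point, the restriction of $\pi$ is a scalar in $\{0,1\}$ by idempotency, and by naturality with respect to the transition into $p$, the corresponding vector in $k^{2n}$ is an eigenvector of $\pi_p$ with eigenvalue in $\{0,1\}$. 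Combining, for instance, the equations coming from $x_1$, $y_1$ and $x_{n+1}$ forces the three associated scalars to coincide; iterating this matching along the connected graph whose vertices are $\{1, \dots, 2n\}$ and whose edges come from the $y_\ell$- and $z_\ell$-crossings then shows that $\pi_p(e_i) = \lambda e_i$ for a single scalar $\lambda$, for every $i$. Since $\pi_p$ is idempotent, $\lambda \in \{0,1\}$, so $\pi_p$ is either $0$ or the identity and $(M_1)_p$ has dimension $0$ or $2n$.

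The main obstacle I expect is that the $1$-dimensional structure I am exploiting lives in $M$, whereas we only control the subquotient $R = M_1'/M_2'$; at a $1$-dimensional point $q$ of $M$, a priori $(M_1')_q$ could be zero (since $\Img_\delta(M)_q$ can vanish at such points), and then the naturality argument from $q$ is not directly available inside $R$. The hypothesis $\delta < n-1$ is precisely what I would use to circumvent this: the horizontal and vertical gaps of size roughly $n$ between consecutive generators of $M$ provide enough slack that each relevant $1$-dimensional point can be joined to $p$ by a short chain of intermediate points at which $\Img_\delta(M) = M$ and $\Ker_\delta(M) = 0$, so that $R$ agrees with $M$ along the chain and the eigenvalue information is safely transported into $R_p$. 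Verifying that such chains can be built simultaneously for all generator-vectors needed to connect the crossing graph, with the precise constant $\delta < n-1$ and without losing information at any step, is the delicate part of the proof.
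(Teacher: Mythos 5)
Your Step (i) is correct and matches the paper: since every generator of $M$ lies below $p-(\delta,\delta)$ and every exclusion corner of the $I_i$ lies above $p+(\delta,\delta)$ when $\delta<n-1$, one gets $\Img_\delta(M)_p = M_p = k^{2n}$ and $\Ker_\delta(M)_p = 0$, hence $R_p\cong k^{2n}$ and $(M_1)_p\oplus(M_2)_p = k^{2n}$.

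Step (ii) contains a genuine gap, and it is the gap you yourself flag at the end. Your plan hinges on finding points where $M$ is one-dimensional \emph{and} where $R$ agrees with $M$, so that idempotency of the projection $\pi$ forces a scalar eigenvalue that you then push to $p$ by naturality. But these two requirements are incompatible in this construction, by design. At $x_1$, for instance, $M_{x_1}$ is spanned by $e_1$ alone, but there is no generator of $M$ below $x_1 - (\delta,\delta)$ for any $\delta>0$, so $\Img_\delta(M)_{x_1}=0$ and hence $(M'_1)_{x_1}$, and therefore $R_{x_1}$, can be taken to be zero -- the one-dimensional information is simply not present in $R$. Conversely, at the shifted point $x_1+(\delta,\delta)$, where $\Img_\delta(M)$ is full, $M$ already has dimension $\geq \lfloor 1+\delta\rfloor$, so idempotency no longer forces $\pi$ to be scalar. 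The ``chain'' you propose cannot start at a one-dimensional point of $M$ (where $R$ is unconstrained) nor at a point where $R$ is determined (where $M$ fails to be one-dimensional); there are no points where both hold. This is precisely the kind of subquotient freedom that makes the example nontrivial, and it is why the factor $r$ cannot be avoided.

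The paper sidesteps this by working entirely at the shifted points $q_i+(\delta,\delta)$, $x_c+(\delta,\delta)$, $y_c+(\delta,\delta)$, $z_c+(\delta,\delta)$, where $R_{p'}$ is forced to equal $M_{p'}$ but where $M$ is not one-dimensional. Instead of pinning down an idempotent, it fixes $V=(M_1)_p$, $W=(M_2)_p$, observes that the filtration at the points $q_i+(\delta,\delta)$ yields ``leading-coordinate'' bases $B_V$, $B_W$ (a vector $b_i$ with first nonzero coordinate in position $i$ belongs to exactly one of $V$, $W$), and then uses the subspace constraints coming from the shifted $x_c$, $y_c$, $z_c$ points to show, by tracking first nonzero coordinates, that the relations $c\sim c+n$ and $c\sim c+n-1$ hold, so that one of the two sides absorbs all indices. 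This bypasses any need for one-dimensional vector spaces in the subquotient and is the missing ingredient in your argument.
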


\begin{proof}
We can assume that $M_1\oplus M_2$ is of the form $M'/M''$, where $\Img_\delta(M)\sse M'$ and $M''\subseteq \Ker_\delta(M)$.
Note that for any $p'\leq p$, $\Ker_\delta(M)_{p'} = 0$, so $(M_1)_{p'}$ and $(M_2)_{p'}$ are subspaces of $M_{p'}$.
Let $V=(M_1)_p$ and $W=(M_2)_p$.
Because $\Img_\delta(M)_p = M_p$, we have $M_p = V\oplus W$.
Since the map $M_{p'\to p}$ is an inclusion into $k^{2n}$, this means that $(M_1)_{p'} \sse M_{p'}\cap V$ and $(M_2)_{p'} \sse M_{p'}\cap W$ for all $p'\leq p$.
For the rest of the proof, we will only consider points $p'$ with $p'\leq p$, so the observations above hold.

For $i\in \{1,2,\dots,2n\}$, let $q'_i = q_i+(\delta,\delta)$.
By construction, $M_{q_i \to q'_i}$ is an isomorphism.
Thus, $\Img_\delta(M)_{q'_i} = M_{q'_i}$, which gives $(M_1\oplus M_2)_{q'_i} = M_{q'_i}$.
For $i\geq 0$, let $k^i$ denote the subspace of $k^{2n}$ generated by $\{e_1,\dots,e_i\}$ and $k^{-i}$ the subspace generated by $\{e_i,\dots,e_{2n}\}$.
Let $V_i= (M_1)_{q'_i}\sse V$ and $W_i=(M_2)_{q'_i}\sse W$ for $i\leq 2n$ and $V_{2n+1} = W_{2n+1} = 0$.
For each $i\leq 2n$, we must have $V_i\oplus W_i = (M_1\oplus M_2)_{q'_i} = M_{q'_i} = k^{-i}$, and either $\dim(V_i) > \dim(V_{i+1})$ or $\dim(W_i) > \dim(W_{i+1})$.
This means that either $V$ or $W$ must contain a nonzero vector $b_i\in k^{-i}\setminus k^{-(i+1)}$; i.e., a vector whose first nonzero coordinate is in position $i$.
These vectors actually form bases for $V$ and $W$:
There are sets $\beta_V$ and $\beta_W$ with $\beta_V\sqcup \beta_W = \{1,\dots,2n\}$ such that $B_V \coloneqq \{b_i\mid i\in \beta_V\}$ and $B_W \coloneqq \{b_i\mid i\in \beta_W\}$ are bases for $V$ and $W$, respectively.
We claim that if $c\in \beta_V$, then $W$ contains no vector $w$ whose first nonzero coordinate is in position $c$.
To see this, write $w$ as a linear combination over the basis $B_W$ and observe that the first nonzero coefficient of $w$ is in position $i$, where $i$ is the smallest number such that the coefficient of $b_i$ in the linear combination is nonzero.

For $c,d\in \{1,\dots,2n\}$, write $c\sim d$ if $c$ and $d$ are either both in $\beta_V$ or both in $\beta_W$, and assume without loss of generality that $1\in \beta_V$.
We first prove that
\begin{itemize}
\item[(i)] if  $c\leq n-1$, then $c\sim c+n$ and $c\sim c+n-1$.
\end{itemize}
It follows that $n\sim 1\sim n+1\sim 2\sim n+2\sim \dots \sim n-1 \sim 2n-1$, so $\beta_V$ contains $\{1,\dots, 2n-1\}$.
We will also prove that
\begin{itemize}
\item[(ii)] assuming $\{1,\dots, 2n-1\}\subseteq \beta_V$, then $2n\in \beta_V$,
\end{itemize}
which concludes the proof that $\beta_V = \{1,\dots, 2n\}$.
The lemma follows.

To prove the claims, let $c\in \{1,\dots,n\}$, and without loss of generality, assume $c\in \beta_V$.
First,
\begin{align*}
e_c\in M_{x_c}&= \Img_\delta(M)_{x_c +(\delta,\delta)}\\
&\subseteq(M_1)_{x_c+(\delta,\delta)}\oplus (M_2)_{x_c+(\delta,\delta)}\\
&\subseteq M_{x_c+(\delta, \delta)}\\
&= k^{c+n-2}
\end{align*}
This means that there are $v\in V\cap k^{c+n-2}$ and $w\in W\cap k^{c+n-2}$ such that $e_c = v+w$.
Similarly, letting $E = \la\{e_1-\tilde e_{1+n},\dots,e_{c+n-2}-\tilde e_{c+2n-2}\}\ra$, we have
\[
e_c-e_{c+n}\in (M_1)_{y_c+(\delta,\delta)}\oplus (M_2)_{y_c+(\delta,\delta)} \subseteq E,
\]
so $e_c-e_{c+n} = v'+w'$ for some $v'\in V\cap E$ and $w'\in W\cap E$.
We get $e_{c+n} = (v-v')+(w-w')$.
Since $v-v'\in V$ and $w-w'\in W$, the vectors $v-v'$ and $w-w'$ do not have the same first coordinate.
For the same reason, $v'$ and $w'$ do not have the same first coordinate.
The only way this is possible is if the first nonzero coordinate of either $v'-v$ or $w'-w$ is in position $c+n$, and the first coordinate of $v'$ is in position $c$ (and thus $w'$ is zero in position $c$), using $c\in \beta_V$.

At this point, the arguments for (i) and (ii) diverge.
For (i), we can assume $c\leq n-1$, so the only vectors in $E$ whose support contain either $c$ or $c+n$ are in $\la\{e_{c}- e_{c+n}\}\ra$.
Thus, $w'$ is zero also in position $c+n$, so since $v,w\in k^{c+n-2}$, the only vector in $\{v,v',w,w'\}$ supported on $c+n$ is $v'$.
Therefore, the first nonzero coordinate of $v'-v$ is in position $c+n$, so $c+n\in \beta_V$.

The argument for $c+n-1\in \beta_V$ is very similar to what we just did, so we only sketch it.
Defining $F = \la\{e_1-\tilde e_{1+n-1},\dots,e_{c+n-2}-\tilde e_{c+2n-3}\}\ra$ and using $z_c$ instead of $y_c$, we get $e_c - e_{c+n-1} \in F$ and thus $e_c - e_{c+n-1} = v'' + w''$ for some $v''\in V\cap F$ and $w''\in W\cap F$.
This allows us to prove that the first nonzero coordinate of either $v''-v$ or $w''-w$ is in position $c+n-1$, and that out of $\{v,v',w,w'\}$, only $v''$ is supported in position $c+n-1$.
Thus, the first nonzero coordinate of $v''-v$ is in position $c+n-1$, so $c+n-1\in \beta_V$.

Lastly, we prove (ii).
We assume $c=n$, and we have allowed ourselves to assume that $\beta_W$ is empty or contains only $2n$.
If $\beta_W = \{2n\}$, the first nonzero coordinate of $v'-v\in V$ cannot be in position $c+n=2n$, so the first nonzero coordinate of $w'-w$ must be in position $2n$.
Thus, either $w'$ or $w$ is in $\la\{e_{2n}\}\ra$.
But neither $k^{2n-1}$ nor $\la\{e_1-\tilde e_{1+n},\dots,e_{c+n-2}-\tilde e_{c+2n-2}\}\ra$ contain $e_{2n}$, so we have a contradiction.
We conclude that $\beta_W$ is empty.
\end{proof}

\begin{lemma}
\label{lem_interval_morph}
Let $G,R,G',R'\sse \R^2$.
Suppose $I = \la G\ra \setminus \la R\ra$ and $I' = \la G'\ra \setminus \la R'\ra$.
If for all $g\in G$ there is a $g'\in G'$ with $g'\leq g$ and for all $r\in R$ there is a $r'\in R'$ with $r'\leq r$, then for all $c\in k$, there is a valid morphism $f_c\colon I\to I'$ given by multiplication by $c$ in $I\cap I'$ and the zero morphism everywhere else.
\end{lemma}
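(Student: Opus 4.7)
The plan is to verify naturality of $f_c$: for every $p\leq q$ in $\R^2$, the square $(f_c)_q\circ I_{p\to q} = I'_{p\to q}\circ (f_c)_p$ must commute. Since $I$ and $I'$ are interval modules, each of $I_{p\to q}$ and $I'_{p\to q}$ is either the identity (when both endpoints lie in the relevant interval) or zero, while $(f_c)_p$ is multiplication by $c$ when $p\in I\cap I'$ and zero otherwise. A routine case split shows that both sides of the square collapse to zero whenever $p\notin I$, whenever $q\notin I\cup I'$, or whenever $p\in I\cap I'$ and $q\in I\setminus I'$. The only configurations in which naturality is not immediate are (a) $p\in I\setminus I'$ with $q\in I\cap I'$, where the left side is $c$ but the right side is $0$, and (b) $p\in I\cap I'$ with $q\in I'\setminus I$, where the right side is $c$ but the left side is $0$.

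The key step is to rule out (a) and (b) using the two hypotheses. These translate directly into the upward-closed containments $\la G\ra \sse \la G'\ra$ and $\la R\ra \sse \la R'\ra$. Combining these with the definitions of $I$ and $I'$, one obtains the set-theoretic identities
\[
I\cap I' = \la G\ra\setminus \la R'\ra,\quad I\setminus I'\sse \la R'\ra,\quad I'\setminus I\sse \la R\ra \sse \la R'\ra.
\]
For case (a), $p\in I\setminus I'$ forces $p\in \la R'\ra$, and since $\la R'\ra$ is upward closed, $q\geq p$ gives $q\in \la R'\ra$, contradicting $q\in I'$. For case (b), $q\in I'\setminus I$ gives $q\in \la R\ra \sse \la R'\ra$, again contradicting $q\in I'$. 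Thus neither configuration occurs, and naturality holds in every case.

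I do not anticipate a serious obstacle: the content of the lemma is exactly that the two hypotheses on $G,G',R,R'$ rule out precisely the order-theoretic configurations in which $f_c$ could fail to be natural. The entire proof is a short case analysis together with the one-line observation that upward-closed sets are closed under going up.
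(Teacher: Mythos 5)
Your decomposition into cases (a) and (b) and the observation that everything else is routine are correct, and your argument for (a) is sound. But there is a genuine error in how you rule out case (b): the claimed containment $I'\setminus I\sse\la R\ra$ is false. In fact the opposite holds: any $q\in I'$ satisfies $q\notin\la R'\ra$, and since $\la R\ra\sse\la R'\ra$ this gives $q\notin\la R\ra$, so $I'$ (and a fortiori $I'\setminus I$) is \emph{disjoint} from $\la R\ra$. The containment you wrote would force $I'\setminus I$ to be empty, which the hypotheses do not imply (take $R=R'=\emptyset$ and $\la G\ra\subsetneq\la G'\ra$). The ``contradiction'' your argument derives in case (b) comes from this false premise, not from the hypotheses of the lemma; what one actually gets from $q\in I'\setminus I$ directly is $q\notin\la G\ra$ and $q\notin\la R\ra$, which by itself contradicts nothing.

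To rule out case (b) correctly you must use $p\leq q$ together with the location of $p$, symmetrically to what you did for (a). Since $p\in I\sse\la G\ra$ and $\la G\ra$ is upward closed, $q\geq p$ gives $q\in\la G\ra$; combined with $q\notin I=\la G\ra\setminus\la R\ra$ this forces $q\in\la R\ra\sse\la R'\ra$, contradicting $q\in I'$. With that correction the case analysis closes. The paper's own proof does not enumerate failure cases: after reducing to $p\in I$, $q\in I'$, it shows directly that the hypotheses plus $p\leq q$ force $p\in I'$ and $q\in I$, so both endpoints land in $I\cap I'$ and both sides of the naturality square are multiplication by $c$. Its derivation of $q\in I$ is precisely the fix above, and your argument for ruling out (a) is the dual of its derivation of $p\in I'$.
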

\begin{proof}
If $c=0$, $f_c$ is simply the zero morphism, so assume $c\neq 0$.
We need to check that for all $p\leq q$, $(f_c)_q\circ I_{p\to q} = I'_{p\to q}\circ (f_c)_p$.
We can assume $p\in I$ and $q\in I'$, as otherwise both sides are zero.
Since $p\in I$, there is a $g\in G$ with $g\leq p$, and thus, by one of the assumptions in the lemma, there is a $g'\in G'$ with $g'\leq g$.
Since $q\in I'$, there is no $r'\in R'$ such that $r'\leq q$, and thus no $r'\in R'$ such that $r'\leq p$.
Therefore, $p\in I'$.
A dual argument gives $q\in I$.

We get that $I_{p\to q}$ and $I'_{p\to q}$ are the identity on $k$, and both $(f_c)_p$ and $(f_c)_q$ are multiplication by $c$.
The lemma follows.
\end{proof}

\begin{proof}[Proof of \cref{thm_counterex}]
Using \cref{lem_interval_morph}, one can see that there are well-defined nonzero morphisms from $I_i$ to $I_j(1)$ whenever $j\in \{i,i+1\}$.
Thus, the morphism $f\colon I\to I(1)$ given by $e_i\mapsto e_{i+1}$ for $1\leq i \leq 2n-1$ and $e_{2n}\mapsto e_{2n}$ is well-defined.
One can also check that $f$ restricts to a well-defined morphism $M\to M(1)$:
It is enough to check that $f(M)\subseteq M(1)$, and for this, it is enough to check $f$ applied to the generators.
For $i\leq 2n-1$, we have $f_{q_i}(e_i) = e_{i+1}\in I_{q_i+(1,1)}$, which is in $M$, as there is a generator $e_{i+1}\in M(1)_{q_{i+1}-(1,1)}$, and $q_{i+1}-(1,1)\leq q_i$.
We leave the almost identical check of the other generators of $M$ to the reader.

Observe that at no point is $e_1$ in the image of $f$.
Nor is $e_{2n}$ in the image of $M_{0\to 1}-f$ at any point, as this image is contained in $\la \{-e_1+e_2,\dots, -e_{2n-1}+e_{2n}\} \ra$.
Thus, both $\img f$ and $\img(M_{0\to\epsilon}-f)$ have pointwise dimension at most $2n-1$.
Therefore, the following lemma shows that there are $1$-interleaved modules of the form $M\oplus Q$ and $N_1\oplus N_2$ where $N_1$, $N_2$ and $Q$ each have pointwise dimension at most $2n-1$.
By \cref{lem_dim_2n}, this means that there is no $R \in \EN_\delta(M)$ such that each $R'\in B(R)$ is a subquotient of $N_1$ or $N_2$, which concludes the proof of the theorem.
\end{proof}

\begin{lemma}
Let $M$ be a module and $f\colon M\to M(\epsilon)$ be a morphism.
Then
\begin{align*}
A &\coloneqq M\oplus \img ((M_{-\epsilon\to 0} -f(-\epsilon))\circ f(-2\epsilon)) \text{ and }\\
B &\coloneqq \img f(-\epsilon)\oplus \img(M_{-\epsilon\to 0}-f(-\epsilon))
\end{align*}
are $\epsilon$-interleaved.
\end{lemma}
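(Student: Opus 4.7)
The plan is to construct an explicit $\epsilon$-interleaving. Write $h := M_{0\to\epsilon} - f$, so that $f + h = M_{0\to\epsilon}$, and set $I_f := \img f(-\epsilon)$, $I_h := \img h(-\epsilon)$, and $J := \img((h \circ f)(-2\epsilon))$, all as submodules of $M$, so that $A = M \oplus J$ and $B = I_f \oplus I_h$. Two structural observations underpin the construction: first, $J \subseteq I_h$, since every element of $J$ has the form $h_{p-\epsilon}(f_{p-2\epsilon}(u)) \in h_{p-\epsilon}(M_{p-\epsilon}) = I_h{}_p$; second, $h$ restricts to a natural surjection $h|_{I_f} \colon I_f \twoheadrightarrow J(\epsilon)$, so $h_p(y) \in J_{p+\epsilon}$ for every $y \in I_f{}_p$.

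The starting point is a canonical one-sided interleaving of $M$ with $B$. The morphisms $\nu \colon M \to B(\epsilon)$, $\nu_p(x) = (f_p(x), h_p(x))$, and $\tau \colon B \to M(\epsilon)$, $\tau_p(y, w) = M_{p\to p+\epsilon}(y+w)$, satisfy $\tau(\epsilon) \circ \nu = M_{0\to 2\epsilon}$ by a direct expansion of $f + h = M_{0\to\epsilon}$. Computing $\nu(\epsilon)\circ\tau - B_{0\to 2\epsilon}$, one finds that it takes values in the ``antidiagonal'' of $B(2\epsilon)$: the defect is $(v, -v)$ with $v = M_{p+\epsilon\to p+2\epsilon}(f_p(w) - h_p(y))$. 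The role of the extra summand $J$ of $A$ is precisely to carry a correction absorbing this defect.

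Concretely, I would upgrade $\nu$ and $\tau$ to morphisms
\[
\phi(x, z) = \bigl(f_p(x) + a_p(z),\, h_p(x) + b_p(z)\bigr), \qquad \psi(y, w) = \bigl(M_{p\to p+\epsilon}(y+w),\, h_p(y)\bigr),
\]
where $h_p(y) \in J_{p+\epsilon}$ by the surjection observation, and $a \colon J \to I_f(\epsilon)$, $b \colon J \to I_h(\epsilon)$ are correction morphisms chosen so that $a_p(z) + b_p(z)$ lies in $\ker M_{p+\epsilon\to p+2\epsilon}$ (giving the correct $M$-component of $\psi(\epsilon)\circ\phi$) and so that the $J$-component of $\psi(\epsilon)\circ\phi$ applied to the input $(0, z)$ equals $M_{p\to p+2\epsilon}(z)$. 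With $a, b$ in place, the four block identities making up $\psi(\epsilon)\circ\phi = A_{0\to 2\epsilon}$ and $\phi(\epsilon)\circ\psi = B_{0\to 2\epsilon}$ reduce via $f + h = M_{0\to\epsilon}$ and naturality of $f, h$ to routine checks, with the antidiagonal defect cancelling exactly against the $(a, b)$-correction applied through $\phi$.

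The hard part will be defining $a$ and $b$ naturally: a symmetric choice $b = -a$ would force $a$ into $I_f \cap I_h$, which is too restrictive in general. The resolution exploits that $z \in J \subseteq M$ admits the decomposition $M_{p\to p+\epsilon}(z) = f_p(z) + h_p(z)$ with $f_p(z) \in I_f$ and $h_p(z) \in I_h$, which permits an asymmetric construction built from $(f_p(z), h_p(z))$ with signs adjusted so that $a_p(z) + b_p(z) \in \ker M_{p+\epsilon\to p+2\epsilon}$ holds while $\psi$ still recovers the full shift of $z$ in the $J$-coordinate. Once these maps are fixed, the verification is purely mechanical and goes through for arbitrary $M$ and $f$.
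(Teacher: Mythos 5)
You have located the right problem --- the defect of the naive pair $(\nu,\tau)$ is antidiagonal, with value $(v, -v)$ for $v = M_{\epsilon\to 2\epsilon}(f(w) - h(y))$ --- but your $\psi$ discards exactly the part of that defect which the correction must carry. Setting the $J$-component of $\psi(y,w)$ to $h(y)$ depends only on $y$, while the defect depends on both $y$ and $w$, and no choice of $a,b$ in $\phi$ can compensate afterward: the $M$-component of $\phi(\epsilon)\circ\psi(y,w)$ is $M_{\epsilon\to 2\epsilon}(f(y) + f(w)) + a(\epsilon)(h(y))$, whereas the target $M_{0\to 2\epsilon}(y)$ has no $w$-dependence. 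Setting $y=0$ shows you would need $M_{\epsilon\to 2\epsilon}\circ f$ to vanish on $I_h$, which is false in general (e.g.\ $M$ an infinite interval module and $f = \tfrac12 M_{0\to\epsilon}$, so that $I_h = I_f = \Img_\epsilon(M)$).

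The search for $a,b$ is also misdirected by the claim that the symmetric choice $b=-a$ ``would force $a$ into $I_f\cap I_h$, which is too restrictive.'' In fact $f$ and $h$ commute after shift --- $f(\epsilon)\circ h = h(\epsilon)\circ f$ follows from naturality $f(\epsilon)\circ M_{0\to\epsilon} = M_{\epsilon\to 2\epsilon}\circ f$ --- so $J = \img((h\circ f)(-2\epsilon)) = \img((f\circ h)(-2\epsilon))$, and $M_{0\to\epsilon}|_J$ lands in $I_f(\epsilon)\cap I_h(\epsilon)$. The symmetric choice $a = M_{0\to\epsilon}|_J$, $b=-a$ is therefore available and is what the paper uses. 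The same commutativity gives a morphism $f|_{I_h}\colon I_h \to J(\epsilon)$, the companion of your $h|_{I_f}$, and this is precisely what repairs $\psi$: its $J$-component should be $h(y) - f(w)$, not $h(y)$. With
\[
\phi = \begin{bmatrix} f & M_{0\to\epsilon} \\ h & -M_{0\to\epsilon} \end{bmatrix},
\qquad
\psi = \begin{bmatrix} M_{0\to\epsilon} & M_{0\to\epsilon} \\ h & -f \end{bmatrix},
\]
both interleaving identities reduce, as you anticipated, to expanding $f + h = M_{0\to\epsilon}$ and applying naturality of $f$ and $h$.
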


\begin{proof}
Let the morphisms $\phi\colon A \to B(\epsilon)$ and $\psi\colon B \to A(\epsilon)$ be defined by
\[
\phi =
\begin{bmatrix}
f&M_{0\to \epsilon}\\
M_{0\to \epsilon}-f&-M_{0\to \epsilon}
\end{bmatrix},
\quad \psi =
\begin{bmatrix}
M_{0\to \epsilon}&M_{0\to \epsilon}\\
M_{0\to \epsilon}-f&-f
\end{bmatrix},
\]
Where we use the same notation for a morphism and its restriction to a submodule.
We need to check that the images of these morphisms are actually contained in the given submodules.
We have $\phi(M\oplus 0) \sse B(\epsilon)$, and
\begin{align*}
\phi(0\oplus \img ((M_{-\epsilon\to 0} -f(-\epsilon))\circ f(-2\epsilon))) \sse &\img(M_{0\to\epsilon}\circ (M_{-\epsilon\to 0} -f(-\epsilon))\circ f(-2\epsilon))\\
&\oplus \img(-M_{0\to\epsilon}\circ (M_{-\epsilon\to 0} -f(-\epsilon))\circ f(-2\epsilon))\\
= &\img(f\circ M_{-\epsilon\to 0}\circ (M_{-2\epsilon\to 0} -f(-2\epsilon)))\\
&\oplus \img((M_{0\to\epsilon} -f)\circ M_{-\epsilon\to 0}\circ f(-2\epsilon))\\
\sse &B(\epsilon).
\end{align*}
Thus, $\img \phi\sse B(\epsilon)$, and a similar calculation shows that $\img \psi\sse A(\epsilon)$.

Having checked that $\phi$ and $\psi$ are well-defined, we now check that they form an interleaving.
\begin{align*}
\phi(\epsilon)\circ \psi &= \begin{bmatrix}
f&M_{0\to \epsilon}\\
M_{0\to \epsilon}-f&-M_{0\to \epsilon}
\end{bmatrix}(\epsilon)
\begin{bmatrix}
M_{0\to \epsilon}&M_{0\to \epsilon}\\
M_{0\to \epsilon}-f&-f
\end{bmatrix}\\
&= \begin{bmatrix}
f(\epsilon)\circ M_{0\to \epsilon} + M_{\epsilon\to 2\epsilon} \circ (M_{0\to \epsilon}-f)&f(\epsilon)\circ M_{0\to \epsilon} - M_{\epsilon\to 2\epsilon} \circ f\\
(M_{\epsilon\to 2\epsilon}-f(\epsilon))\circ M_{0\to \epsilon}- M_{\epsilon\to 2\epsilon}\circ (M_{0\to \epsilon}-f)& (M_{\epsilon\to 2\epsilon}-f(\epsilon))\circ M_{0\to \epsilon}+ M_{\epsilon\to 2\epsilon}\circ f
\end{bmatrix}\\
&= \begin{bmatrix}
M_{\epsilon\to 2\epsilon} \circ M_{0\to \epsilon}&0\\
0& M_{\epsilon\to 2\epsilon} \circ M_{0\to \epsilon}
\end{bmatrix}\\
&= B_{0\to 2\epsilon}.
\end{align*}
Similarly, one can calculate $\psi(\epsilon)\circ \phi = A_{0\to 2\epsilon}$.
\end{proof}

\section{Matchings between interval decomposable modules}
\label{sec_matchings}

We observed that \cref{thm_counterex} shows that \cref{thm_main} is close to optimal in the sense that if the $r$ appearing in the theorem is replaced by something less than roughly $\frac{r}{4}$ in (i) and $\frac{r}{8}$ in (ii), then the theorem is false.
However, there are other ways in which \cref{thm_main} can potentially be significantly improved.
Specifically, we conjecture the following:
\begin{conjecture}
\label{conj_main}
Let $\epsilon\geq 0$, and let $M$ and $N$ be pfd $\epsilon$-interleaved modules.
Let $r = \sup_{M'\in B(M)}\supdim M' < \infty$.
Then there is a constant $c$ independent of $M$, $N$, $\epsilon$ and $r$, and a module $Q$ that is both a $cr\epsilon$-refinement of $M$ and a $cr\epsilon$-refinement of $N$.
Equivalently, there exists a constant $c$ such that $f_R(M,N)\leq crd_I(M,N)$.
\end{conjecture}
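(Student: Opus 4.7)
The plan is to reduce \cref{conj_main} to the combinatorial \cref{conj_CI}. I would begin by fixing a decomposition $M \cong \bigoplus_{i \in \Lambda} M_i$ into indecomposables with $\supdim M_i \leq r$, and, given interleaving morphisms $\phi \colon M \to N(\epsilon)$ and $\psi \colon N \to M(\epsilon)$, extracting the induced cross-morphisms
\[
f_{ij} \coloneqq \pi_j(2\epsilon) \circ \psi(\epsilon) \circ \phi \circ \iota_i \colon M_i \to M_j(2\epsilon),
\]
where $\pi_j$ and $\iota_i$ are the canonical projection and inclusion. These satisfy $\sum_{j} f_{ij} = (M_i)_{0 \to 2\epsilon}$, and encode the full interaction between $\phi$, $\psi$ and the chosen decomposition of $M$.

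Mirroring \cref{def_pruning} and the proof of \cref{thm_main}, I would then construct, simultaneously for all $i \in \Lambda$, submodule pairs $(I_i, K_i)$ with $K_i \sse I_i \sse M_i$ that are closed under the family $\{f_{ij}\}$ in the appropriate sense: $\bigoplus_i I_i$ should be the largest submodule of $M$ with $f_{ij}(I_i) \sse (M_j)_{0 \to 2\epsilon}(I_j)$ for all $i,j$, and $\bigoplus_i K_i$ the smallest with the dual property. The splitting-lemma argument from \cref{lem_main_1,lem_main_2} should then produce a candidate common refinement $Q = \bigoplus_i (I_i/K_i)(-\epsilon)$, which by construction respects $B(M)$ summand by summand and, after tracking the erosion parameter, should be a $cr\epsilon$-refinement of both $M$ and $N$ for some absolute constant $c$.

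The main obstacle lies in the dimension count. The argument in \cref{lem_I_K} bounds the number of iterative steps until stabilization by $\supdim M$, because at a point $p$ the nested chain $(I_0)_p \supseteq (I_1)_p \supseteq \cdots$ of subspaces of $M_p$ has length at most $\dim M_p$. In the per-summand version one would need each $(I_i)_p$ to stabilize in at most $\dim (M_i)_p \leq r$ steps, but the coupling through the $f_{ij}$ makes the per-summand chains interdependent: a drop in $(I_j)_p$ at some round can force a later drop in $(I_i)_p$, and a naive iteration through the ``interaction graph'' whose vertices are the $M_i$ and whose edges record the nonzero $f_{ij}$ may in principle require $\sum_i \dim (M_i)_p$ rounds. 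The heart of the conjecture is therefore to show that, despite this coupling, the total number of effective rounds at every $p$ can be bounded by $O(r)$ independently of $|\Lambda|$. This is precisely the linear-algebraic/graph-theoretic phenomenon that \cref{conj_CI} is designed to capture, so a positive answer to \cref{conj_CI} should, by this reduction, yield \cref{conj_main}.
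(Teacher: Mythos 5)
This statement is a \emph{conjecture}; the paper provides no proof of it. What the paper offers is (a) the observation that applying \cref{thm_main} (i) to each summand $M'\in B(M)$ separately yields $2r\epsilon$-refinements, with the open difficulty being how to combine them so that each summand of $M$ gets its own piece of the refinement of $N$; (b) \cref{thm_eq_conjs}, proving the restriction to upset-decomposable modules equivalent to \cref{conj_CI}; and (c) the \emph{informal} conjecture that \cref{conj_CI} captures the combinatorial obstacle in general. Your proposal is in the same spirit as (a) and (c), so the overall direction is sensible, but it is a strategy sketch, not a proof, and it contains a concrete defect.

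The flaw is in the very first step. You set $f_{ij} = \pi_j(2\epsilon)\circ\psi(\epsilon)\circ\phi\circ\iota_i$. But $\psi(\epsilon)\circ\phi = M_{0\to 2\epsilon}$ by the one-sided interleaving identity, and $M_{0\to 2\epsilon}$ already respects the decomposition $M\cong\bigoplus_i M_i$, being the direct sum of the $(M_i)_{0\to 2\epsilon}$. Hence $f_{ij}=\delta_{ij}\,(M_i)_{0\to 2\epsilon}$: the off-diagonal blocks are identically zero, and your ``interaction graph'' has no edges. This is consistent with your claim $\sum_j f_{ij} = (M_i)_{0\to 2\epsilon}$, but the family encodes nothing about the interleaving and the closure construction collapses to $I_i = M_i$, $K_i=0$. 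The genuine cross-terms come from threading through individual summands of $N$: writing $N\cong\bigoplus_{\ell\in\Gamma}N_\ell$ and setting $\phi_\ell = \pi^N_\ell(\epsilon)\circ\phi$, $\psi_\ell = \psi\circ\iota^N_\ell$, the compositions $f_\ell = \psi_\ell(\epsilon)\circ\phi_\ell$ are the morphisms that appear in the proof of \cref{thm_main} and do \emph{not} respect the decomposition of $M$; their $(i_1,i_2)$-blocks $\pi^M_{i_2}(2\epsilon)\circ f_\ell\circ\iota^M_{i_1}$ are what you would need to close under, with the additional coupling that $\sum_\ell f_\ell = M_{0\to 2\epsilon}$ rather than each $f_\ell$ being diagonal.

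Beyond that, the claim that ``bounding the number of effective iteration rounds by $O(r)$'' is the same phenomenon as \cref{conj_CI} is asserted but not established. \cref{conj_CI} is phrased as a statement about whether a perfect matching (a permutation matrix $A$) exists in the $c$-weakened graph once a solution of the CI problem exists; \cref{thm_eq_conjs} proves the equivalence with \cref{conj_upset} via \cref{lem_bij_CI,lem_mod_to_CI,lem_CI_to_mod}, none of which is an iteration-depth argument. It is plausible that your reformulation and \cref{conj_CI} are two faces of the same combinatorial difficulty, but you would need to exhibit a reduction in at least one direction, and the paper itself stops at the level of an informal conjecture (after \cref{conj_CI}) that resolving the CI problem would overcome ``most of'' the difficulty --- not all of it, and not by the route you sketch.
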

This differs from \cref{thm_main} in that it is the maximum dimension of the direct summands of $M$ that matters instead of the maximum dimension of $M$ itself.
If $M$ is indecomposable, the two versions are the same, but if $M$ is a direct sum of many modules with small pointwise dimensions, this can make a large difference.
Moreover, considering the pipeline we outlined in the introduction, we are particularly interested in situations where we have a complicated module with large pointwise dimension that we can (approximately) decompose into many small components.
In this situation, \cref{conj_main} gives a much stronger statement (at least if $c$ is small) than \cref{thm_main}.

In this section, we will first consider a special class of modules, namely those that are \emph{upset decomposable}, and then extend our work to the more general setting of what we refer to as \emph{benign} sets of intervals.
Our main motivation for working with upsets is that these are much less complicated than general modules, while at the same time our feeling is that the main difficulties in proving \cref{conj_main} are preserved in this restricted setting.
The idea is that we want to avoid the ``noisy'' setting of general modules and get a cleaner picture of what it will take to prove \cref{conj_main}.

The main results of this section are \cref{thm_eq_conjs} and \cref{thm_benign}.
The former states that three conjectures are equivalent.
The first is the restriction of \cref{conj_main} to upset decomposable modules; the second, \cref{conj_upset}, is a straightforward statement about matchings between interleaved modules similar to the algebraic stability theorem (\cref{thm_ast}); and the third, \cref{conj_CI}, is phrased in terms of simple graph theory and linear algebra without any reference to persistence modules, which makes the conjectures accessible to a much broader audience than the topological data analysis community.
\cref{thm_benign} says that the equivalence between \cref{conj_upset} and \cref{conj_CI} holds even if we replace upsets in \cref{conj_upset} with a class of benign intervals.
The motivation behind \cref{thm_benign} is that it allows us to consider (among other intervals) \emph{hooks} and \emph{rectangles}, which have made appearances in recent work applying relative homological algebra to multipersistence.
This has become a very active subfield of multipersistence \cite{asashiba2023approximation, blanchette2021homological, botnan2024signed, botnan2024bottleneck, chacholski2024koszul, oudot2024stability} where the idea is to define a simplified barcode that describes a module in terms of intervals.
To show that invariants obtained this way are stable, one often ends up needing a statement analogous to \cref{conj_upset} (see e.g. \cite[Prop. 4.3]{botnan2024bottleneck}) for some class of intervals.
Due to \cref{thm_benign}, we believe that a proof of \cref{conj_CI} would have a major impact on stability questions in this new research direction combining homological algebra and multipersistence.

We make the informal conjecture that proving \cref{conj_main} requires solving an algebraic and a more combinatorial problem, where the algebraic problem is taken care of in \cref{thm_main} and the combinatorial problem is \cref{conj_CI}.
That is, if \cref{conj_CI} is proved, we suspect that we will have overcome most of the difficulties in proving \cref{conj_main}.

Much of the work in this section is inspired by and generalizes results from \cite{bjerkevik_botnan, bjerkevik2019computing} used to prove NP-hardness of approximating the interleaving distance up to a constant of less than $3$.
In \cref{thm_NP}, we give a precise statement describing what it would take to extend the strategy from the NP-hardness proof to obtain a stronger result, where the $3$ is replaced by a larger number.

Some immediate progress towards proving \cref{conj_main} is possible:
Given modules $M$ and $N$, and an $\epsilon$-interleaving $\phi\colon M\to N(\epsilon)$, $\psi\colon N\to M(\epsilon)$, we can apply \cref{thm_main} (i) to each $M'\in B(M)$ and the restricted morphisms $\phi'\colon M'\to N(\epsilon)$ and $\psi'\colon N\to M'(\epsilon)$ to obtain $2r\epsilon$-refinements $R$ of $N$ and $R'$ of $M$, where $r= \supdim M_i$ and $B(R')\sse B(R)$.
What remains, and this seems to be the hardest part, is to combine these refinements of $N$ into a refinement that contains a separate summand associated to each $M'$.
By considering upset decomposable modules, we highlight what might be the main difficulty in proving \cref{conj_main}.
In fact, we will see that even in this restricted setting, \cref{conj_main} is false for $c<3$, meaning that in combining the $2r\epsilon$-refinements as suggested above, we have to sacrifice something.

\subsection{Upset decomposable modules}

We now fix $d\geq 2$.
Recall that we assume that the shift function $\R^d_\epsilon$ is the standard one; that is, the one that sends $p$ to $p+ (\epsilon, \dots, \epsilon)$.
\begin{definition}
An \textbf{upset} is a set $U\sse \R^d$ such that if $p\leq q$ and $p\in U$, then $q\in U$.
Observe that an upset is an interval.
An \textbf{upset module} is an interval module whose supporting interval is an upset.
A module is \textbf{upset decomposable} if it is isomorphic to a direct sum of upset modules.
\end{definition}

\begin{lemma}
\label{lem_upset_er}
Let $U$ be an upset (module) and let $\epsilon\geq 0$.
Then a module $M$ is in $\EN_\epsilon(U)$ if and only if $M$ is isomorphic to an upset module $V$ with $U(-\epsilon)\sse V \sse U$ as upsets.
\end{lemma}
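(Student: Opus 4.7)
The plan is to unpack the definitions and reduce the statement to an elementary classification of submodules of $U$.

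First, I would compute $\Img_\epsilon(U)$ and $\Ker_\epsilon(U)$ explicitly. Since $U$ is an upset and $\epsilon \geq 0$, for any $p\in U$ we have $p+(\epsilon,\dots,\epsilon)\in U$, so the transition map $U_{p\to p+(\epsilon,\dots,\epsilon)}$ is the identity $k\to k$; if $p\notin U$ the map is trivially injective (zero domain). Thus $\Ker_\epsilon(U) = 0$. For the image, $\Img_\epsilon(U)_p = \img(U_{p-(\epsilon,\dots,\epsilon)\to p})$ equals $k$ exactly when $p-(\epsilon,\dots,\epsilon)\in U$, i.e.\ when $p$ lies in the upset that corresponds (in the sense of supports) to $U(-\epsilon)$, and is $0$ otherwise. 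Hence $\Img_\epsilon(U)$ is the upset submodule of $U$ whose support is the shifted upset $U(-\epsilon)$.

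Next, I would classify all submodules of $U$. Because each $U_p$ is either $0$ or $k$, a submodule $M_1\sse U$ is determined by $V \coloneqq \{p\mid (M_1)_p = k\}\sse U$. If $p\in V$ and $p\leq q$, then $q\in U$ (since $U$ is an upset), so $U_{p\to q}=\id_k$; compatibility forces $(M_1)_q\supseteq U_{p\to q}((M_1)_p) = k$, hence $q\in V$. So $V$ is itself an upset, and conversely every upset $V\sse U$ gives a submodule. In particular every submodule of $U$ is (literally equal to) an upset module.

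With these two observations in hand, the lemma is immediate from \cref{def_erosion_nbh}. A module $M$ lies in $\EN_\epsilon(U)$ iff $M\cong M_1/M_2$ with $\Img_\epsilon(U)\sse M_1\sse U$ and $M_2\sse \Ker_\epsilon(U)$. Since $\Ker_\epsilon(U)=0$, we must have $M_2=0$ and $M\cong M_1$; by the classification, $M_1$ is the upset module on some upset $V$ with $U(-\epsilon)\sse V\sse U$. Conversely, any upset $V$ with $U(-\epsilon)\sse V\sse U$ gives a submodule $M_1\sse U$ containing $\Img_\epsilon(U)$, so the associated upset module lies in $\oEN_\epsilon(U)\sse \EN_\epsilon(U)$.

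There is no real obstacle; the only part that requires care is verifying that submodules of an upset module are again upset modules, but this is a one-line check using that the nonzero transition maps of $U$ are identities of $k$.
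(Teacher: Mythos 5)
Your proof is correct and follows essentially the same route as the paper: both compute $\Ker_\epsilon(U)=0$ and $\Img_\epsilon(U)=U(-\epsilon)$ using injectivity of the shift maps on an upset, and both observe that since $U$ is pointwise at most one-dimensional, any submodule is determined by its support, which is forced to be an upset. The only difference is cosmetic — you present the classification of submodules of $U$ as a separate step, whereas the paper folds it into the last two sentences of its proof.
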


\begin{proof}
Since the shift morphisms $U(-\epsilon) \to U \to U(\epsilon)$ are injective, $\Ker_\epsilon(U)$ is the zero module, while $\Img_\epsilon(U) = U(-\epsilon)$.
``If'' follows from this.
It also follows that if $M\in \EN_\epsilon(U)$, then $M$ is isomorphic to a submodule $V$ of $U$ with $U(-\epsilon)\sse V$ as submodules of $U$.
Since $U$ has pointwise dimension at most one, $V$ is completely determined by its support.
If $M_p\neq 0$, then, since $V$ is a submodule of $U$, $V_q\supseteq \img U_{p\to q} = k$ for all $q\geq p$, which means that $V$ is supported on an upset.
It follows that $V$ is an upset module.
\end{proof}

\begin{lemma}
\label{lem_match_eq_to_refine}
Let $M$ and $N$ be pfd upset decomposable modules.
Then there is an $\epsilon$-matching between $M$ and $N$ if and only if there is a module $Q$ that is an $\epsilon$-refinement of both $M$ and $N$.
\end{lemma}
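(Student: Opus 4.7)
The plan is to translate both sides of the equivalence into containments of upsets in $\R^d$ via \cref{lem_upset_er}. Since $M$ and $N$ are upset decomposable and Krull--Schmidt decompositions are unique, their indecomposable summands are upset modules; write $M\cong\bigoplus_{i\in\Lambda}U_i$ and $N\cong\bigoplus_{j\in\Gamma}V_j$. A direct calculation (using that a morphism between upset modules is determined by a scalar on the intersection of their supports) shows that two upset modules $U,V$ are $\epsilon$-interleaved iff, as subsets of $\R^d$, $U+\epsilon\sse V$ and $V+\epsilon\sse U$. Furthermore, a nonzero upset module is never $\epsilon$-interleaved with $0$, since the required composition $U\to 0\to U(2\epsilon)$ is zero while $U_{0\to 2\epsilon}$ is the identity on $k$ at any point of $U$. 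Consequently, any $\epsilon$-matching between upset decomposable modules leaves nothing unmatched and is in fact a bijection between $B(M)$ and $B(N)$.

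For the ``only if'' direction, re-index so the matching pairs $U_i$ with $V_i$, giving $U_i+\epsilon\sse V_i$ and $V_i+\epsilon\sse U_i$. Let $W_i$ be the upset module supported on $U_i\cap V_i$ (an intersection of upsets, hence an upset). Combining $U_i+\epsilon\sse U_i$ (as $U_i$ is an upset) with $U_i+\epsilon\sse V_i$ gives $U_i+\epsilon\sse W_i\sse U_i$, and symmetrically $V_i+\epsilon\sse W_i\sse V_i$. By \cref{lem_upset_er}, $W_i\in\EN_\epsilon(U_i)\cap\EN_\epsilon(V_i)$, so $Q\coloneqq\bigoplus_i W_i$ is a common $\epsilon$-refinement of $M$ and $N$.

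For the ``if'' direction, let $Q$ be a common $\epsilon$-refinement, so by \cref{def_refinement} we have $Q\cong\bigoplus_{i\in\Lambda}W_i\cong\bigoplus_{j\in\Gamma}W'_j$ with $W_i\in\EN_\epsilon(U_i)$ and $W'_j\in\EN_\epsilon(V_j)$. By \cref{lem_upset_er} each $W_i$ (resp.\ $W'_j$) is a nonzero upset module satisfying $U_i+\epsilon\sse W_i\sse U_i$ (resp.\ $V_j+\epsilon\sse W'_j\sse V_j$); nonzeroness follows from $U_i$ being nonempty. Applying \cref{thm_unique_dec} to $Q$ yields a bijection $\sigma\colon\Lambda\to\Gamma$ with $W_i\cong W'_{\sigma(i)}$, hence with identical supporting upsets. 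Chaining the sandwich inclusions produces $U_i+\epsilon\sse W_i=W'_{\sigma(i)}\sse V_{\sigma(i)}$ and $V_{\sigma(i)}+\epsilon\sse W'_{\sigma(i)}=W_i\sse U_i$, so $U_i$ and $V_{\sigma(i)}$ are $\epsilon$-interleaved, and $\sigma$ is the required matching.

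I do not anticipate a substantial obstacle here: the lemma is essentially bookkeeping once \cref{lem_upset_er} reduces erosion neighborhoods of upset modules to a sandwich condition on supports. The only mildly delicate step is the initial observation that in this restricted setting matchings must be bijections, which is what lets Krull--Schmidt line up the two decompositions of $Q$ in the ``if'' direction without having to worry about unmatched or degenerate summands.
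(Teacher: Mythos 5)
Your proof is correct and follows essentially the same route as the paper's: both reduce to \cref{lem_upset_er}, translating common $\epsilon$-refinements and $\epsilon$-matchings into containments of upsets. Your explicit observation that no nonzero upset module is $\epsilon$-interleaved with the zero module (so any $\epsilon$-matching is automatically a full bijection) is a detail the paper's proof leaves implicit, and spelling it out is a small but genuine improvement in rigor.
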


\begin{proof}
Write $M \cong \bigoplus_{i\in \Lambda} U_i$ and $N \cong \bigoplus_{j\in \Gamma} V_j$, where the $U_i$ and $V_j$ are upsets.
By \cref{lem_upset_er}, $Q$ is an $\epsilon$-refinement of both $M$ and $N$ if and only if there is a bijection $\sigma\colon \Lambda\to \Gamma$ such that $Q$ is isomorphic to a module of the form $\bigoplus_{i\in \Lambda} Q_i$ with $Q_i$ upsets, $U_i(-\epsilon) \sse Q_i \sse U_i$ and $V_{\sigma(i)}(-\epsilon) \sse Q_i \sse V_{\sigma(i)}$.
The existence of such a $Q$ is equivalent to the existence of a $\sigma\colon \Lambda\to \Gamma$ such that $U_i(-\epsilon) \sse V_{\sigma(i)}$ and $V_{\sigma(i)}(-\epsilon) \sse U_i$.
But this is equivalent to $U_i$ and $V_{\sigma_i}$ being $\epsilon$-interleaved (essentially because this allows nonzero morphisms in both directions), so this $\sigma$ gives us an $\epsilon$-matching between $M$ and $N$.
\end{proof}

By \cref{lem_match_eq_to_refine}, the restriction of \cref{conj_main} to the special case where $M$ and $N$ are upset decomposable modules is equivalent to the following conjecture, even if $c$ is fixed:
\begin{conjecture}
\label{conj_upset}
There is a constant $c\geq 1$ such that for any $\epsilon$-interleaved pfd upset decomposable modules $M$ and $N$, there is a $c\epsilon$-matching between $M$ and $N$.
\end{conjecture}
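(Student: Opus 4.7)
The plan is to reduce the conjecture to a bipartite matching problem and attack it via Hall's theorem. Write $M \cong \bigoplus_{i \in \Lambda} U_i$ and $N \cong \bigoplus_{j \in \Gamma} V_j$ as direct sums of nonzero upset modules. Two upset modules $U$ and $V$ are $c\epsilon$-interleaved precisely when $U(-c\epsilon) \subseteq V$ and $V(-c\epsilon) \subseteq U$, and since a nonempty upset in $\R^d$ cannot be $c\epsilon$-interleaved with the zero module for any finite $c\epsilon$, a $c\epsilon$-matching must be a genuine bijection $\sigma\colon \Lambda \to \Gamma$ pairing the summands into $c\epsilon$-interleaved pairs. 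Let $G_{c\epsilon}$ be the bipartite graph on $\Lambda \sqcup \Gamma$ with an edge $\{i,j\}$ whenever $U_i$ and $V_j$ are $c\epsilon$-interleaved; the task is to show that $G_{c\epsilon}$ admits a perfect matching.

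By Hall's theorem (in an infinite version if needed), it is enough to verify $|N_{G_{c\epsilon}}(S)| \geq |S|$ for every finite $S \subseteq \Lambda$, and symmetrically for subsets of $\Gamma$. The input from the $\epsilon$-interleaving is combinatorial: the morphisms $\phi\colon M \to N(\epsilon)$ and $\psi\colon N \to M(\epsilon)$ decompose into scalar components $\phi_{ji}\colon U_i \to V_j(\epsilon)$ and $\psi_{ij}\colon V_j \to U_i(\epsilon)$ — each a single scalar, since upsets are pointwise at most one-dimensional — with nonzero components forcing $U_i(-\epsilon) \subseteq V_j$ and $V_j(-\epsilon) \subseteq U_i$ respectively. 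The interleaving equations give, pointwise, the diagonal identities
\[
\sum_j \psi_{ij}(\epsilon)\phi_{ji} = (U_i)_{0\to 2\epsilon} \quad \text{and} \quad \sum_i \phi_{ji}(\epsilon)\psi_{ij} = (V_j)_{0\to 2\epsilon},
\]
together with the off-diagonal vanishings $\sum_j \psi_{i'j}(\epsilon)\phi_{ji} = 0$ for $i' \neq i$, and symmetrically.

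The main obstacle — and, given \cref{thm_eq_conjs}, the unavoidable core of any proof — is upgrading these scale-$\epsilon$ identities into a Hall condition at scale $c\epsilon$. Suppose $|N_{G_{c\epsilon}}(S)| < |S|$ for some finite $S \subseteq \Lambda$. The natural attempt is to select a point $p$ deep in the intersection of the $U_i(-c\epsilon)$ for $i \in S$, extract from the pointwise identities a square matrix equation indexed by $S$ and its scale-$c\epsilon$ neighborhood in $\Gamma$, and deduce a rank/dimension contradiction. The catch is that the scale-$\epsilon$ structure allows weight to be routed through $V_j$ which are $c\epsilon$-interleavable with only some $i \in S$, so a naive count fails; ruling this out demands leveraging the full combinatorics of upset inclusions at the two scales $\epsilon$ and $c\epsilon$ simultaneously. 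This is precisely the purely combinatorial and linear-algebraic content that \cref{conj_CI} isolates, and I expect that $c = 3$ is the correct constant, matching the NP-hardness threshold in \cite{bjerkevik2019computing}. I see no way to bypass the combinatorial obstacle: the algebraic machinery of \cref{sec_main} delivers a factor depending on $\supdim M$, and improving it to depend only on the pointwise dimensions of the summands genuinely requires new input, in the form of a solution to \cref{conj_CI}.
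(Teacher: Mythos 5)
The statement you were asked about is a \emph{conjecture}, and the paper offers no proof of it; what the paper does is show it equivalent to the combinatorial \cref{conj_CI} via \cref{thm_eq_conjs} (built on \cref{lem_match_eq_to_refine}, \cref{lem_bij_CI}, \cref{lem_mod_to_CI}, and \cref{lem_CI_to_mod}) and leaves both open. Your proposal is not a proof either, nor does it claim to be; it is a correct reduction and a correct diagnosis of where the difficulty sits, and it lands in essentially the same place as the paper.

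Your setup matches the paper's closely: observing that a $c\epsilon$-matching between upset decomposable modules must be a genuine bijection on summands is \cref{lem_match_eq_to_refine} together with \cref{lem_upset_er}; framing it as a perfect matching in the bipartite graph of $c\epsilon$-interleavable pairs (with Hall's theorem as the proposed tool) corresponds to the ``simple solution'' side of \cref{lem_bij_CI}; and encoding the $\epsilon$-interleaving via the scalar matrices $(\phi_{ji})$, $(\psi_{ij})$ and the identities $\sum_j \psi_{ij}(\epsilon)\phi_{ji} = (U_i)_{0\to 2\epsilon}$ with off-diagonal vanishings is exactly a solution of the CI problem $\Pp_{U,V,\epsilon}$. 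One small discrepancy worth noting: your Hall graph $G_{c\epsilon}$ has an edge whenever the two upsets happen to be $c\epsilon$-interleaved, whereas the $c$-weakening of the graph of $\Pp_{U,V,\epsilon}$ only records pairs joined by short directed paths at scale $\epsilon$; the latter is a priori a subgraph of the former, which is why the paper's equivalence needs both directions \cref{lem_mod_to_CI} and \cref{lem_CI_to_mod}.

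You correctly conclude that the missing step --- promoting the scale-$\epsilon$ linear-algebraic data to a scale-$c\epsilon$ perfect matching --- is exactly \cref{conj_CI}, and your guess that $c=3$ is the right constant agrees with the paper's lower bound (the statement fails for $c<3$, e.g.\ by \cref{ex_CI} combined with \cref{lem_CI_to_mod}). Since you never claim to close this gap, the only ``gap'' in your proposal is the one the paper itself leaves open.
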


Nothing is known about this conjecture except that the statement is false for $c<3$.
That is, there are $\epsilon$-interleaved upset decomposable modules $M$ and $N$ that do not allow a $c\epsilon$-matching for $c<3$.
Such an example can be constructed by adapting \cite[Ex. 5.2]{bjerkevik2016stability}, or by using \cref{ex_CI} and \cref{lem_CI_to_mod} (with $1$-interleavings and $c$-matchings for $c<3$) below.
A consequence of this is that \cref{conj_main} is false for $c<3$.

\subsection{CI problems}

In \cite{bjerkevik_botnan}, upset decomposable modules (or rather the special case of \emph{staircase decomposable} modules, but the distinction is irrelevant here) are studied, and it is observed that there is a close connection between interleavings between upset decomposable modules and solutions of \emph{constrained invertibility problems}, or \emph{CI problems} for short, which are also introduced in that paper.
In \cite{bjerkevik_botnan} and \cite{bjerkevik2019computing}, CI problems are used to prove that the interleaving distance is NP-hard to compute, and to approximate up to a constant of less than $3$.
\begin{definition}
A \textbf{CI problem} is a pair $(P,Q)$ of $n\times n$-matrices for some $n$ such that each entry of $P$ and $Q$ is either $0$ or $*$.
We say that $n$ is the \textbf{size} of $(P,Q)$.
A \textbf{solution} of $(P,Q)$ is a pair $(A,B)$ of $n\times n$-matrices over $k$ such that $AB$ is the identity matrix, $A$ is zero at each entry corresponding to a zero entry of $P$, and $B$ is zero at each entry corresponding to a zero entry of $Q$.
A \textbf{simple} solution of $(P,Q)$ is a solution $(A,B)$ such that each row of $A$ has exactly one $1$ and the rest of the entries of $A$ are zero.
\end{definition}
It follows from the definition of simple solution that each column of $A$ has exactly one $1$, and that the same holds for the rows and columns of $B$.

The following example is borrowed from \cite[Ex. 4.1]{bjerkevik_botnan}, which is adapted from \cite[Ex. 5.2]{bjerkevik2016stability}.
\begin{example}
\label{ex_CI}
Let
\[
P =
\begin{bmatrix}
*&*&*\\
*&0&*\\
*&*&0
\end{bmatrix},
\quad
Q =
\begin{bmatrix}
*&*&*\\
*&*&0\\
*&0&*
\end{bmatrix}.
\]
Then the CI problem $(P,Q)$ has a solution $(A,B)$, where
\[
A =
\begin{bmatrix}
1&1&1\\
1&0&1\\
1&1&0
\end{bmatrix},
\quad
B =
\begin{bmatrix}
-1&1&1\\
1&-1&0\\
1&0&-1
\end{bmatrix}.
\]
However, $(P,Q)$ does not have a simple solution.
\end{example}

In what follows, we fix two infinite sets $\{u_1,u_2, \dots\}$ and $\{v_1,v_2, \dots\}$ that we will use as vertices in graphs.
For any CI problem $(P,Q)$ of size $n$, we construct an associated bipartite directed graph $\mathcal G_{(P,Q)}$ which has a set $\{u_1,\dots, u_n,v_1,\dots, v_n\}$ of vertices, and an edge from $u_i$ to $v_j$ if and only if $P_{j,i} = *$ and an edge from $v_j$ to $u_i$ if and only if $Q_{i,j} = *$.
Given a graph of this form, we can recover the CI problem in the obvious way, so this is an equivalent description of a CI problem.
A simple solution of $(P,Q)$ can be viewed as a bijection $\sigma\colon \{u_1, \dots, u_n\}\to \{v_1, \dots, v_n\}$ such that for every $i$, there is an edge in both directions between $u_i$ and $\sigma(u_i)$ in $\mathcal G_{(P,Q)}$

Let $U = (U_1,\dots, U_n)$ and $V = (V_1,\dots, V_n)$ be tuples of upsets.
Let $\mathcal P_{U,V,\epsilon}$ be the CI problem $(P,Q)$ of size $n$ with $P_{i,j} = *$ if and only if $V_j\sse U_i(\epsilon)$ and $Q_{j,i} = *$ if and only if $U_i\sse V_j(\epsilon)$.

\cref{lem_bij_CI} below essentially follows from the work in \cite[Sec. 4]{bjerkevik_botnan}.
There, $U$ and $V$ are constructed from a CI problem instead of the other way around, but the only property that matters for the proof is that edges $u_i \to v_j$ correspond to inclusions $U_i\sse V_j(\epsilon)$ (and the corresponding property with $u_i$ and $v_j$ swapped).
Concretely, the morphisms from $U_i$ to $V_j(\epsilon)$ can be identified with elements of the field $k$ if $U_i\sse V_j$ (\cite[Lemma 4.4]{bjerkevik_botnan}), which means that morphisms from $\bigoplus_{i=1}^n U_i$ to $\bigoplus_{j=1}^n V_j(\epsilon)$ and vice versa can be identified with $(n\times n)$-matrices.
One can use this understanding to show how $\epsilon$-matchings correspond to simple solutions of the CI problem:
Namely, pairing $U_i$ with $V_j$ in an $\epsilon$-matching corresponds to pairing vertices $u_i$ and $v_j$ with arrows going in both directions in $\G_{\Pp_{U,V,\epsilon}}$.
\begin{lemma}
\label{lem_bij_CI}
There is a bijection between the $\epsilon$-interleavings between $\bigoplus_{i=1}^n U_i$ and $\bigoplus_{j=1}^n V_j$, and the solutions of $\mathcal P_{U,V,\epsilon}$.
There is also a bijection between the $\epsilon$-matchings between $\bigoplus_{i=1}^n U_i$ and $\bigoplus_{j=1}^n V_j$, and the simple solutions of $\mathcal P_{U,V,\epsilon}$
\end{lemma}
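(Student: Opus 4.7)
The plan is to translate morphisms between direct sums of upset modules into matrices of scalars and verify that the interleaving equations correspond to the matrix equation $AB = I$. First I would establish the Hom-space calculation that underlies everything: for upset modules $U, V$, one has $\Hom(U, V(\epsilon)) \cong k$ when $U \subseteq V(\epsilon)$ (any morphism is a scalar multiple of the canonical inclusion, by naturality and the fact that $U_{p\to q}$ is the identity on $k$ along any comparison inside $U$), and $\Hom(U, V(\epsilon)) = 0$ otherwise. For the vanishing case, if $\phi \colon U \to V(\epsilon)$ were nonzero at some $q \in U$ while $p \in U \setminus V(\epsilon)$, then the join $s = p \vee q$ lies in $U$ by upward-closure; naturality along $p \leq s$ would force $\phi_s = 0$, and naturality along $q \leq s$ would then give $\phi_q = 0$, a contradiction.

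Given this, a morphism $\phi \colon M \to N(\epsilon)$ decomposes as a direct sum of its components $\phi_{j,i} \colon U_i \to V_j(\epsilon)$ and is encoded by a matrix $B$ with $B_{j,i} \in k$ allowed to be nonzero precisely when $U_i \subseteq V_j(\epsilon)$, i.e.\ when $Q_{j,i} = *$. Symmetrically $\psi \colon N \to M(\epsilon)$ is encoded by a matrix $A$ supported on the $*$-entries of $P$. Under the same identification, the shift $U_i \to U_i(2\epsilon)$ corresponds to $1 \in k$, so $M_{0 \to 2\epsilon}$ corresponds to the identity matrix. Computing the $(i',i)$-component of $\psi(\epsilon) \circ \phi$ as $\sum_j \psi_{i',j}(\epsilon) \circ \phi_{j,i}$, the interleaving condition $\psi(\epsilon) \circ \phi = M_{0 \to 2\epsilon}$ becomes $AB = I$; the dual equation $\phi(\epsilon) \circ \psi = N_{0 \to 2\epsilon}$ becomes $BA = I$, automatic for square matrices once $AB = I$. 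Reversing the encoding gives an inverse map and the first bijection.

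For the second bijection, I would first observe that each nonzero upset module has a nonzero shift $U_i \to U_i(2\epsilon)$, so no such module is $\epsilon$-interleaved with the zero module; thus any $\epsilon$-matching is a genuine bijection $\sigma$ pairing each $U_i$ with some $V_{\sigma(i)}$. Applying the first bijection to the two-summand subsystem, $U_i$ and $V_{\sigma(i)}$ are $\epsilon$-interleaved iff $U_i \subseteq V_{\sigma(i)}(\epsilon)$ and $V_{\sigma(i)} \subseteq U_i(\epsilon)$, i.e.\ iff $Q_{\sigma(i),i} = P_{i,\sigma(i)} = *$. The permutation matrix $A$ with $A_{i,\sigma(i)} = 1$ is then supported on $P$, and $B = A^{-1}$ (the permutation matrix of $\sigma^{-1}$) is supported on $Q$, giving a simple solution. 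Conversely, a simple solution forces $A$ to be a permutation matrix (one $1$ per row combined with invertibility from $AB = I$), and the support constraints on $A$ and $B$ recover the two inclusions needed for $\epsilon$-interleaving. I do not anticipate any substantive obstacle here: the content is entirely in the upset Hom-space computation, and the remainder is matrix bookkeeping.
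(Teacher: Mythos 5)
Your proof is correct and follows the same approach as the paper, which only sketches the argument in the paragraph preceding the lemma and defers the Hom-space computation to an earlier paper of Bjerkevik and Botnan; you have simply written out those details (the scalar description of $\Hom(U,V(\epsilon))$, the matrix encoding, the reduction of both interleaving equations to $AB=I$ over a field, and the identification of matchings with permutation matrices). The one tacit assumption, here as in the paper, is that the upsets $U_i,V_j$ are nonempty, so that no summand is $\epsilon$-interleaved with the zero module and an $\epsilon$-matching is forced to be a genuine bijection.
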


To be able to rephrase \cref{conj_upset} in terms of CI problems, we need the following definition.
\begin{definition}
Let $c\geq 1$ be an odd integer and $(P,Q)$ a CI problem of size $n$.
The \textbf{$c$-weakening} of $(P,Q)$ is the CI problem corresponding to the graph with vertex set $\{u_1,\dots, u_n,v_1,\dots, v_n\}$ and an edge from $u_i$ to $v_j$ (resp. $v_j$ to $u_i$) if and only if there is a path from $u_i$ to $v_j$ (resp. $v_j$ to $u_i$) in $\mathcal G_{(P,Q)}$ of length at most $c$.
\end{definition}

The case $c=3$ of the following is a slight weakening of a conjecture in the appendix of the author's PhD thesis \cite{bjerkevik2020stability}.
\begin{conjecture}
\label{conj_CI}
There is an odd integer $c\geq 1$ such that for any CI problem $\Pp$, if $\Pp$ has a solution, then the $c$-weakening of $\Pp$ has a simple solution.
\end{conjecture}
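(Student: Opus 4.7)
The plan is to extract a simple solution of the $c$-weakening from an algebraic solution $(A,B)$ of $\Pp$ by bootstrapping from the support structure of $A$ and $B$. Since $A$ and $B$ are invertible, expanding their determinants forces the bipartite support graph of each matrix to contain a perfect matching: there exist permutations $\sigma_A,\sigma_B$ of $\{1,\dots,n\}$ with $A_{\sigma_A(i),i}\neq 0$ and $B_{i,\sigma_B(i)}\neq 0$ for all $i$. These correspond to pairings $u_i\mapsto v_{\sigma_A(i)}$ via a length-$1$ edge $u_i\to v_{\sigma_A(i)}$ in $\Gg_{\Pp}$, and $u_i\mapsto v_{\sigma_B(i)}$ via a length-$1$ edge $v_{\sigma_B(i)}\to u_i$. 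Additionally, the diagonal identities $\sum_k A_{ik}B_{ki}=1$ and (via $BA=I$) $\sum_k B_{ik}A_{ki}=1$ show that every vertex of $\Gg_{\Pp}$ already has at least one length-$1$ bidirectional neighbor, so the subgraph of length-$1$ bidirectional edges has no isolated vertices.

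If $\sigma_A=\sigma_B$, we are done with $c=1$, so the difficulty lies in analyzing the discrepancy permutation $\tau=\sigma_A\circ\sigma_B^{-1}$ and its cycles. Each nontrivial cycle $(v_{j_1},\dots,v_{j_k})$ of $\tau$ produces a closed alternating walk $v_{j_1}\to u_{i_1}\to v_{j_2}\to\cdots\to v_{j_k}\to u_{i_k}\to v_{j_1}$ in $\Gg_{\Pp}$, and naively pairing $u_{i_\ell}$ with $v_{j_\ell}$ would require a forward path of length $2k-1$, which is unbounded. The key step is to use the off-diagonal identities $\sum_k A_{ik}B_{kj}=0$ for $i\neq j$ to force the existence of additional length-$2$ paths $v_j\to u_k\to v_i$ that interlock with the cycle and create bounded-length shortcuts. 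I would try to distill this into a Hall-type inequality on the $c$-weakening: for every $S\sse\{u_1,\dots,u_n\}$, the set of $v_j$ connected to some $u\in S$ by paths of length $\leq c$ in \emph{both} directions has cardinality $\geq|S|$, and then invoke Hall's theorem to produce the simple solution.

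The main obstacle I expect is bridging the asymmetry between the unidirectional algebraic input and the bidirectional combinatorial target. The constraint $AB=I$ gives strong information about length-$2$ paths of the form $v\to u\to v$, but simple solutions require length-$\leq c$ paths $u\to\cdots\to v$ \emph{and} $v\to\cdots\to u$ to coexist at each matched pair, which does not seem to follow from rank conditions alone. The lower bound adapted from \cite[Ex.~5.2]{bjerkevik2016stability} rules out $c<3$, making $c=3$ the natural conjectured value, but proving this sharp bound appears to demand a tight local analysis with no inductive slack; settling the conjecture for any universal constant $c$ would already suffice for the applications in \cref{sec_matchings}. I suspect a resolution will require a new combinatorial identity linking the joint rank structure of $A$ and $B$ to reachability in $\Gg_{\Pp}$, perhaps via a matroid union argument applied to the column matroids of $A$ and $B$, or an induction on the cycle structure of $\tau$ that absorbs long cycles into short ones using the off-diagonal identities above.
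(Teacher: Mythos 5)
This statement is \cref{conj_CI}, which the paper explicitly labels as an open \emph{conjecture}, not a theorem; the paper offers no proof and in fact states that ``Nothing is known about this conjecture except that the statement is false for $c<3$.'' So there is no ``paper's own proof'' to compare against. More importantly, your proposal is not a proof either, and you acknowledge this yourself repeatedly (``I would try to distill this into a Hall-type inequality,'' ``The main obstacle I expect is\ldots,'' ``I suspect a resolution will require\ldots''). A sketch of an approach with identified gaps is not a proof of a conjecture.

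That said, the preliminary observations in your sketch are correct but weaker than they may appear. Extracting permutations $\sigma_A,\sigma_B$ from nonzero terms in the Leibniz expansion of $\det A$ and $\det B$ is fine, and the diagonal identities $\sum_k A_{ik}B_{ki}=1$ and $\sum_k B_{ik}A_{ki}=1$ (using $BA=I$, which holds since $A,B$ are square) do show every vertex has a bidirectional length-$1$ neighbor. But ``every vertex has a bidirectional neighbor'' is far from ``the bidirectional subgraph (even after $c$-weakening) contains a perfect matching'' --- a graph can have minimum degree $1$ and fail Hall's condition badly. The transition from the off-diagonal identities $\sum_k A_{ik}B_{kj}=0$ ($i\neq j$) to a Hall-type bound on the $c$-weakening is exactly the step you leave open, and it is precisely the substance of the conjecture. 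The paper itself highlights this as the combinatorial core of the problem (\cref{thm_eq_conjs}, \cref{thm_benign}), so identifying the obstacle is not progress beyond what the paper already says. To be actionable, you would need to either prove the Hall inequality for some fixed $c$ or construct an infinite family of CI problems showing that no finite $c$ works; as written, the proposal does neither.
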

Our goal is to show that this is equivalent to \cref{conj_upset}.

\subsection{Translating between upset decomposable modules and CI problems}

For $c\geq 1$, let $\lfloor c \rfloor^{\text{odd}}$ be the smallest odd integer less than or equal to $c$.
The following two lemmas let us translate between (simple) solutions of CI problems and interleavings (matchings) between modules.

\begin{lemma}
\label{lem_mod_to_CI}
Let $U = (U_1,\dots, U_n)$ and $V = (V_1,\dots, V_n)$ be tuples of upsets and let $c>0$ and $\epsilon\geq 0$.
If the $\lfloor c \rfloor^{\text{odd}}$-weakening of $\Pp_{U,V,\epsilon}$ has a solution (simple solution), then $\bigoplus_{i=1}^n U_i$ and $\bigoplus_{j=1}^n V_j$ are $c\epsilon$-interleaved ($c\epsilon$-matched).
\end{lemma}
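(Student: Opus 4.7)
The plan is to set $c' = \lfloor c \rfloor^{\text{odd}}$, a positive odd integer with $c'\leq c$, and translate a (simple) solution of the $c'$-weakening $\Pp'$ of $\Pp_{U,V,\epsilon}$ into a (simple) solution of $\Pp_{U,V,c'\epsilon}$. Invoking \cref{lem_bij_CI} on the latter then yields a $c'\epsilon$-interleaving (resp.\ $c'\epsilon$-matching) between $\bigoplus_{i=1}^n U_i$ and $\bigoplus_{j=1}^n V_j$; since $c'\leq c$, this is in particular a $c\epsilon$-interleaving (resp.\ $c\epsilon$-matching), which is the conclusion.

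The core step is to show that the $*$-pattern of $\Pp'$ is contained, entry by entry, in the $*$-pattern of $\Pp_{U,V,c'\epsilon}$. Unwinding the definitions, an edge $u_i\to v_j$ in $\mathcal G_{\Pp_{U,V,\epsilon}}$ records the inclusion $V_i\sse U_j(\epsilon)$, while an edge $v_j\to u_i$ records $U_j\sse V_i(\epsilon)$. Composing and shifting the recorded inclusions alternately along a path of length $2k+1\leq c'$ from $u_\alpha$ to $v_\beta$ produces the chain
\[
V_\alpha\sse U_{j_1}(\epsilon)\sse V_{i_1}(2\epsilon)\sse U_{j_2}(3\epsilon)\sse \cdots \sse U_\beta((2k+1)\epsilon)\sse U_\beta(c'\epsilon),
\]
where the final inclusion uses the monotonicity $U_\beta(\delta)\sse U_\beta(\delta')$ for $0\leq\delta\leq\delta'$ (shifting an upset nonnegatively can only enlarge its support). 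A symmetric computation handles paths from $v_\alpha$ to $u_\beta$. Hence every $*$-entry of $\Pp'$ at position $(\beta,\alpha)$ is also a $*$-entry of $\Pp_{U,V,c'\epsilon}$ at the same position.

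It follows that the $0$-pattern of $\Pp_{U,V,c'\epsilon}$ is contained in the $0$-pattern of $\Pp'$, so any solution $(A,B)$ of $\Pp'$ automatically satisfies the vanishing constraints defining a solution of $\Pp_{U,V,c'\epsilon}$. Simpleness is an intrinsic property of $(A,B)$ and is therefore preserved. \cref{lem_bij_CI} then converts $(A,B)$ into the desired $c'\epsilon$-interleaving (resp.\ matching), finishing the argument. I do not expect any serious obstacle here: the proof is essentially a dictionary translation between paths in $\mathcal G_{\Pp_{U,V,\epsilon}}$ and compositions of shifted inclusions of upsets. The only care required is in tracking the index conventions of $\mathcal G$, $P$ and $Q$ consistently while checking that the simple matrix solution produced by the weakening really lives in the allowed support of $\Pp_{U,V,c'\epsilon}$.
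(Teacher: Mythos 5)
Your proof is correct and takes essentially the same route as the paper's: show that the $\lfloor c\rfloor^{\text{odd}}$-weakening is, entrywise, a sub-$*$-pattern of $\Pp_{U,V,\lfloor c\rfloor^{\text{odd}}\epsilon}$ by composing the shifted inclusions recorded along paths, transfer a (simple) solution, and invoke \cref{lem_bij_CI}. You are a bit more explicit about the reduction to odd $c'$ and the monotonicity of upset shift, and your edge-to-inclusion dictionary actually tracks the paper's index conventions more carefully than the paper's own proof text does.
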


\begin{proof}
It suffices to show the lemma under the assumption that $c$ is an odd integer, so assume $c = \lfloor c \rfloor^{\text{odd}}$.
Let $\Qq$ be the $c$-weakening of $\Pp_{U,V,\epsilon}$.
If there is an edge from $u_i$ to $v_j$ in $\Qq$, then there is a path $u_i = u_{r_0} \to v_{r_1} \to \dots \to u_{r_{c'-1}} \to v_{r_{c'}} = v_j$ of length $c'\leq c$ from $u_i$ to $v_j$ in $\Pp_{U,V,\epsilon}$.
The edges in this path correspond to containments $U_{r_0} \sse V_{r_1}(\epsilon) \sse \dots \sse U_{r_{c'-1}}((c'-1)\epsilon) \sse V_{r_{c'}}(c'\epsilon)$.
Thus, $U_i \sse V_j(c\epsilon)$, so there is an edge from $u_i$ to $v_j$ in $\Pp_{U,V,c\epsilon}$.
This means that $\Qq$ is a subgraph of $\Pp_{U,V,c\epsilon}$ (on the same set of vertices), so if $\Qq$ has a (simple) solution, then $\Pp_{U,V,c\epsilon}$ has a (simple) solution.
But by \cref{lem_bij_CI}, $\Pp_{U,V,c\epsilon}$ has a (simple) solution if and only if $\bigoplus_{i=1}^n U_i$ and $\bigoplus_{j=1}^n V_j$ are $c\epsilon$-interleaved ($c\epsilon$-matched).
\end{proof}

The following is a generalization of the construction in \cite[Sec. 4.2]{bjerkevik2019computing}.

\begin{lemma}
\label{lem_CI_to_mod}
Let $\Qq$ be a CI problem and $C\geq 1$ a constant.
Then there are pfd upset decomposable modules $M$ and $N$ such that for all $1\leq c\leq C$, there is a bijection
\begin{itemize}
\item between the $c$-interleavings between $M$ and $N$ and the solutions of the $\lfloor c \rfloor^{\text{odd}}$-weakening of $\Qq$, and
\item between the $c$-matchings between $M$ and $N$ and the simple solutions of the $\lfloor c \rfloor^{\text{odd}}$-weakening of $\Qq$,
\end{itemize}
and such that there is no $\epsilon$-interleaving between $M$ and $N$ for $\epsilon<1$.
\end{lemma}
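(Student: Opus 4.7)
The plan is to generalize the construction of \cite{bjerkevik2019computing}, Section 4.2, which handled essentially the case $C$ small (roughly $C<3$, where the $\lfloor c \rfloor^{\text{odd}}$-weakening collapses to $\Qq$ itself). The central technical task is to construct upsets $U_1,\dots,U_n$ and $V_1,\dots,V_n$ in $\R^d$, for some dimension $d$ depending on $\Qq$ and $C$, so that the CI problem $\Pp_{U,V,c}$ coincides exactly with the $\lfloor c \rfloor^{\text{odd}}$-weakening of $\Qq$ for every $c\in[1,C]$. Once this is done, setting $M=\bigoplus_{i=1}^n U_i$ and $N=\bigoplus_{j=1}^n V_j$ and applying \cref{lem_bij_CI} at each $c$ produces both claimed bijections. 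The final statement (no $\epsilon$-interleaving for $\epsilon<1$) follows because the construction will force every entry of $\Pp_{U,V,\epsilon}$ to be $0$ for $\epsilon<1$, and such a CI problem admits no solution ($AB=I$ requires $A$ and $B$ to be nonzero).

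For the upset construction itself, I would use a dedicated coordinate direction in $\R^d$ for each directed edge of $\mathcal G_\Qq$, together with auxiliary ``blocker'' directions. For each edge $e : u_i \to v_j$, I place a generator of $U_i$ and one of $V_j$ whose coordinates in the dedicated direction for $e$ differ by exactly $1$, arranged so that $U_i \sse V_j(\epsilon)$ is first witnessed precisely at $\epsilon=1$. Telescoping such shift-$1$ inclusions along a directed path of odd length $k$ from $u_i$ to $v_j$ in $\mathcal G_\Qq$ then yields $U_i \sse V_j(k)$ by composition (and symmetrically for paths in the opposite direction), providing one half of the desired equivalence. To enforce the converse — that no unwanted inclusion occurs at a shift strictly smaller than the true odd-path distance — each $U_i$ and $V_j$ is also equipped with additional blocker generators (so the upsets are non-principal in general), positioned so that whenever a pair $(u_i,v_j)$ lacks a sufficiently short path in $\mathcal G_\Qq$, some coordinate difference between generators of $U_i$ and $V_j$ exceeds $C$, ruling out any inclusion at shifts in $[1,C]$.

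The main obstacle is precisely this combinatorial bookkeeping: the embedding must be simultaneously correct at every shift $c\in[1,C]$, so that every genuine directed path of odd length $k$ composes to an inclusion at exactly shift $k$ (and never smaller), while no shortcut is accidentally created among the many generators or coordinates, and unrelated pairs remain unrelated at all relevant shifts. For $C<3$ the simple principal-upset construction of \cite{bjerkevik2019computing} essentially suffices; as $C$ grows, both the dimension $d$ and the number of blocker generators per upset must scale with $|E(\mathcal G_\Qq)|$ and $C$. Once the embedding is in place, verifying that $\Pp_{U,V,c}$ agrees with the $\lfloor c \rfloor^{\text{odd}}$-weakening reduces to a finite case analysis over pairs of vertices and the discrete set of odd shift values $1, 3, 5, \dots, \lfloor C \rfloor^{\text{odd}}$, after which the bijections of the lemma follow directly from \cref{lem_bij_CI}.
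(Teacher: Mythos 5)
Your high-level plan is the right one: build upsets $U_1,\dots,U_n$ and $V_1,\dots,V_n$ so that $\Pp_{U,V,c}$ equals the $\lfloor c\rfloor^{\text{odd}}$-weakening of $\Qq$ for all $c\in[1,C]$, then invoke \cref{lem_bij_CI}. However, the construction you sketch is both substantially different from the paper's and, more importantly, incomplete: you defer the ``combinatorial bookkeeping'' of coordinating generators and blockers across all shift values, and that deferred step \emph{is} the proof of the lemma. There is no reason to believe your one-coordinate-per-edge, growing-dimension scheme works without doing the verification. In particular, it is unclear how a single unit gap in a per-edge coordinate can simultaneously encode all odd path lengths up to $C$, and how blockers can rule out shortcuts without accidentally destroying legitimate long-path inclusions. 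You also do not explain why your containment-defining inequalities respect only same-index comparisons and never produce spurious cross-dominations between generators attached to different edges.

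The paper's construction avoids all of this and is much more economical. It stays in $\R^2$ (with a trivial extension by products of $[0,\infty)$ for $d>2$), builds \emph{staircase} upsets with exactly $2n$ corners each, and uses a single clean numerical invariant: for each of the $2n$ vertices $y$, the vector $x\in\Z^{2n}$ whose $k$-th entry is the shortest-path distance in $\Gg_\Qq$ from the $k$-th vertex to $y$ (capped at $C+2n$ when no path exists). The $2n$ corners of the upset for $y$ are $p_k+(x_k,x_k)$ with widely separated anchors $p_k$ (separation $C+2n$). The crucial observation is then that for $0\le\epsilon\le C$, $U_i\sse V_j(\epsilon)$ reduces to the coordinatewise comparison $w_i\ge z_j-(\epsilon,\dots,\epsilon)$ (the spacing bound of $C+2n$ exceeds every possible offset $(w_i)_k+\epsilon< C+2n$ when same-index domination fails, so no cross-domination between different anchors can occur), and the triangle inequality for shortest paths converts this to ``there is a path from $u_i$ to $v_j$ of length at most $c$.'' This gives $\Pp_{U,V,c}$ equal to the $\lfloor c\rfloor^{\text{odd}}$-weakening of $\Qq$ for all $c\le C$ in one stroke, with no per-edge coordinates and no separate blocker construction. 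Your sketch does not reach this, and as written it does not establish the lemma.
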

\begin{proof}
We will prove the lemma under the assumption that we are working with two-parameter modules.
For $d$-parameter modules with $d>2$, simply take the product of each upset with $[0,\infty)$ $d-2$ times.

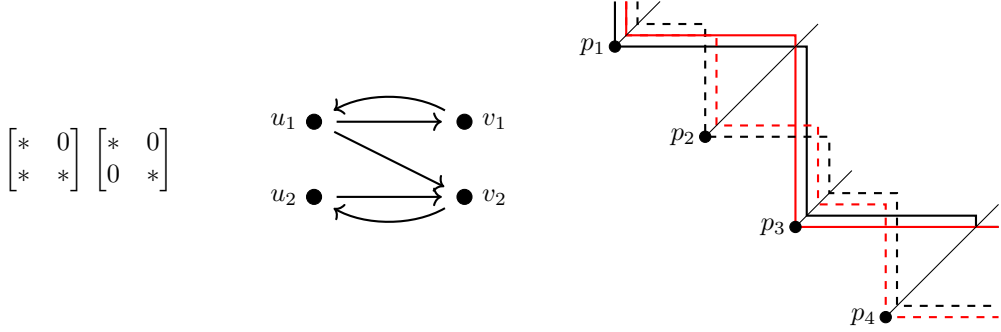
\begin{figure}
\centering
\begin{tikzpicture}[scale=1]
\node at (-3,.5){$\begin{bmatrix}
*&0\\
*&*
\end{bmatrix}$
$\begin{bmatrix}
*&0\\
0&*
\end{bmatrix}$};
\draw[color=black,fill=black] (0,0) circle (.1);
\node at (-.4,0){$u_2$};
\draw[color=black,fill=black] (0,1) circle (.1);
\node at (-.4,1){$u_1$};
\draw[color=black,fill=black] (2,0) circle (.1);
\node at (2.4,0){$v_2$};
\draw[color=black,fill=black] (2,1) circle (.1);
\node at (2.4,1){$v_1$};
\draw[thick, ->, shorten <=.3cm, shorten >=.3cm] (0,1) to (2,1);
\draw[thick, ->, shorten <=.3cm, shorten >=.3cm] (0,1) to (2,0);
\draw[thick, ->, shorten <=.3cm, shorten >=.3cm] (0,0) to (2,0);
\draw[thick, ->, shorten <=.3cm, shorten >=.3cm] (2,1) to [out=150, in=30] (0,1);
\draw[thick, ->, shorten <=.3cm, shorten >=.3cm] (2,0) to [out=-150, in=-30] (0,0);
\begin{scope}[xshift=4cm, yshift=2cm, scale=.15]
\draw[thick] (0,4) to (0,0) to (17,0) to (17,-15) to (32,-15) to (32,-16) to (34,-16);
\draw[thick,color=red] (1,4) to (1,1) to (16,1) to (16,0) to (16,-16) to (32,-16) to (34,-16);
\draw[thick, dashed] (2,4) to (2,2) to (8,2) to (8,-8) to (19,-8) to (19,-13) to (25,-13) to (25,-23) to (34,-23);
\draw[thick,dashed,color=red] (1,4) to (1,1) to (9,1) to (9,-7) to (18,-7) to (18,-14) to (24,-14) to (24,-24) to (34,-24);
\draw[color=black,fill=black] (0,0) circle (.5);
\node[left] at (0,0){$p_1$};
\draw (0,0) to (4,4);
\draw[color=black,fill=black] (8,-8) circle (.5);
\node[left] at (8,-8){$p_2$};
\draw (8,-8) to (18,2);
\draw[color=black,fill=black] (16,-16) circle (.5);
\node[left] at (16,-16){$p_3$};
\draw (16,-16) to (21,-11);
\draw[color=black,fill=black] (24,-24) circle (.5);
\node[left] at (24,-24){$p_4$};
\draw (24,-24) to (34,-14);
\end{scope}
\end{tikzpicture}
\caption{A CI problem $\Qq$ on the left and the graph $\G_\Qq$ in the middle.
On the right is an example of a valid choice of $p_i$ for $C=4$ in the proof of \cref{lem_CI_to_mod}, where the solid black, dashed black, solid red and dashed red curves are the boundaries of the upsets associated to $u_1$, $u_2$, $v_1$ and $v_2$, respectively.
The vectors $w_1 = (0,8,1,8)$, $w_2 = (2,0,3,1)$, $z_1 = (1,8,0,8)$, $z_2 = (1,1,2,0)$ determine where the curves cross the diagonal rays from the $p_i$.
}
\label{fig_CI_to_upsets}
\end{figure}
Let $n$ be the size of $\Qq$.
See \cref{fig_CI_to_upsets} for the construction ahead.
We first construct vectors $w_1, \dots, w_n, z_1, \dots, z_n\in \Z^{2n}$, where we say that the vertex associated to $w_i$ is $u_i$ and the vertex associated to $z_j$ is $v_j$.
We construct the vector $x = (x_1,\dots, x_{2n})$ associated to the vertex $y$ by letting $x_i$ for $1\leq i\leq n$ be the length of the shortest path from $u_i$ to $y$ if such a path exists and $C+2n$ otherwise, and for $n+1\leq i\leq 2n$, letting $x_i$ be the length of the shortest path from $v_{i-n}$ to $y$ if such a path exists and $C+2n$ otherwise.
Observe that $0\leq x_i\leq C+2n$.

Pick $p_1 = (p^1_1,p^2_1),\dots, p_{2n} = (p^1_{2n},p^2_{2n}) \in \R^2$ such that for all $i$, we have $p^1_{i+1}\geq p^1_i+C+ 2n$ and $p^2_{i+1}\leq p^2_i-C- 2n$.
For $x = (x_1,\dots, x_{2n}) \in \{w_1, \dots, w_n, z_1, \dots, z_n\}$, let
\[
x' = \{p_1 + (x_1,x_1),\dots, p_{2n} + (x_{2n}, x_{2n})\}.
\]
For $1\leq i\leq n$, let $U_i$ be the smallest upset containing $w'_i$ and $V_i$ the smallest upset containing $z'_i$.
Because of our assumptions on the $p_j$ and the observation that $0\leq x_j\leq C+2n$ for all $j$, all the points of $w'_i$ (resp. $z'_i$) are on the boundary of $U_i$ (resp. $V_i$).
Thus, for all $\epsilon$, $U_i \sse V_j(\epsilon)$ if and only if $w_i \geq z_j - (\epsilon,\dots,\epsilon)$.
Similarly, $V_j \sse U_i(\epsilon)$ if and only if $z_j \geq w_i - (\epsilon,\dots,\epsilon)$.
But because of the way we constructed $w_i$ and $z_j$, for $0\leq c\leq C$, we have that $w_i \geq z_j - (c,\dots,c)$ holds if and only if there is a path from $u_i$ to $v_j$ of length $\leq c$.
Thus, if $U = (U_1,\dots, U_n)$ and $V = (V_1,\dots, V_n)$, then $\Pp_{U,V,c}$ is the $\lfloor c \rfloor^{\text{odd}}$-weakening of $\Qq$.
As the $c$-interleavings ($c$-matchings) between $\bigoplus_{i=1}^n U_i$ and $\bigoplus_{j=1}^n V_j$ are in bijection with the (simple) solutions of $\Pp_{U,V,c}$ by \cref{lem_bij_CI}, and by construction, there are no $\epsilon$-interleavings between $\bigoplus_{i=1}^n U_i$ and $\bigoplus_{j=1}^n V_j$ for $\epsilon<1$, the lemma follows.
\end{proof}

Though our main focus is not on computational complexity, we include the following theorem, as it is fairly easy to prove using the lemmas we have shown so far.
It describes precisely how to convert the hardness question of $c$-approximating $d_I$ for upset decomposable modules into an equivalent statement about CI problems, which is similar to how we are translating stability questions between upset decomposable modules and CI problems.
The theorem is effectively a generalization of the approach used in \cite{bjerkevik_botnan, bjerkevik2019computing} to prove NP-hardness of $c$-approximating $d_I$ for $c<3$.
\cref{thm_NP} and \cref{conj_CI} give insight into the difficulty of proving NP-hardness of $c$-approximating $d_I$ for $c\geq 3$.
If \cref{conj_CI} holds for some $c$, then the second computational problem in the theorem is easily computable in polytime for the same $c$:
Check if the $c$-weakening has a simple solution.
If yes, we are done; if no, then we know that the original CI problem does not have a solution, and we are done.
Thus, \cref{conj_CI} is highly relevant for the hardness of $c$-approximating $d_I$ for upset decomposable modules, which is the setting of the only proof we have of NP-hardness of computing $d_I$.

For the purpose of determining input size, assume that a CI problem of size $n$ is described in space $O(n^2)$, and that an upset $U$ is described as a list of points $p_1, p_2, \dots, p_m$, where $U = \bigcup_{i=1}^m \la p_i \ra$.
If $M$ and $N$ are finite direct sums of such upsets, then $d_I(M,N) = 0$ if and only if $M\cong N$ if and only if $B(M) = B(N)$, and $d_I(M,N) = \infty$ if and only if $|B(M)| \neq |B(N)|$.
These cases are easy to check in polynomial time, and we disregard them in the theorem below for technical convenience.
\begin{theorem}
\label{thm_NP}
Let $c\geq 1$ be odd.
Given a polynomial time algorithm for one of the following computational problems, we can construct a polynomial time algorithm for the other.
\begin{itemize}
\item[(i)] Given upset decomposable modules $M\ncong N$ with $|B(M)|= |B(N)|$, return a number in the interval $[d_I(M,N), (c+2)d_I(M,N))$.
\item[(ii)] Given a CI problem $\Pp$, decide either that $\Pp$ does not have a solution or that the $c$-weakening of $\Pp$ has a solution.
\end{itemize}
\end{theorem}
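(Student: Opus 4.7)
The plan is to construct polynomial-time reductions in both directions between (i) and (ii), leveraging the correspondences between interleavings of upset decomposable modules and solutions of CI problems packaged by Lemmas~\ref{lem_bij_CI}, \ref{lem_mod_to_CI} and \ref{lem_CI_to_mod}.

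\textbf{Direction (i)$\Rightarrow$(ii).} Given a CI problem $\Pp$, apply \cref{lem_CI_to_mod} with $C = c+2$ to construct in polynomial time upset decomposable modules $M, N$ with $|B(M)| = |B(N)| = n$ (the size of $\Pp$) and $d_I(M,N) \geq 1$, so in particular $M \ncong N$. Because $\lfloor \epsilon \rfloor^{\text{odd}}$ is constant on intervals $[2\ell+1, 2\ell+3)$, the bijections of the lemma confine $d_I(M,N)$ to $\{1,3,5,\dots,c+2\} \cup (c+2, \infty]$, with $d_I(M,N) = 1$ iff $\Pp$ has a solution and $d_I(M,N) \leq c$ iff the $c$-weakening of $\Pp$ has a solution. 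Feed $(M,N)$ to the algorithm for (i) to obtain $\alpha \in [d_I, (c+2)d_I)$. If $\alpha < c+2$, then $d_I \leq \alpha < c+2$, which forces the odd positive integer $d_I$ to satisfy $d_I \leq c$, so output ``the $c$-weakening has a solution''. Otherwise $\alpha \geq c+2$, whence $d_I > \alpha/(c+2) \geq 1$, forcing $d_I \geq 3$, and we output ``$\Pp$ has no solution''.

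\textbf{Direction (ii)$\Rightarrow$(i).} Given $M \cong \bigoplus_{i=1}^n U_i$ and $N \cong \bigoplus_{j=1}^n V_j$, first enumerate the $O(n^2)$ critical values $\epsilon_1 < \dots < \epsilon_m$ at which some new edge appears in $\Pp_{U,V,\epsilon}$; each such $\epsilon_k$ is computable in polynomial time from pairwise max-min comparisons of the coordinates of the generators of the $U_i$ and $V_j$. For each $k$, apply the algorithm for (ii) to $\Pp_{U,V,\epsilon_k}$ and record its answer. Let $k^+$ be the smallest index at which the answer is ``$c$-weakening has a solution''; output $\alpha = c\epsilon_{k^+}$, or $\infty$ if no such $k^+$ exists. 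Correctness rests on two observations: by monotonicity of $\Pp_{U,V,\epsilon}$ in $\epsilon$ together with \cref{lem_bij_CI}, the quantity $d_I(M,N)$ equals some critical value $\epsilon_{k^*}$ (or is $\infty$); and at $k = k^*$ the problem $\Pp_{U,V,\epsilon_{k^*}}$ genuinely has a solution, so the only valid output of (ii) there is ``$c$-weakening has a solution'', forcing $k^+ \leq k^*$. Consequently $\alpha = c\epsilon_{k^+} \leq c\epsilon_{k^*} = c\,d_I(M,N) < (c+2)\,d_I(M,N)$, and \cref{lem_mod_to_CI} supplies $\alpha \geq d_I(M,N)$.

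\textbf{Main obstacle.} The substance is essentially encapsulated by the existing lemmas; the only genuine idea is the pairing of the multiplicative slack $c+2$ with two discrete gaps. In direction (ii)$\Rightarrow$(i), outputting $c\epsilon_{k^+}$ rather than $\epsilon_{k^+}$ absorbs the factor-$c$ gap between ``$\Pp$ has a solution'' and ``$c$-weakening has a solution'', and one gets the factor $c+2$ for free because $c < c+2$. In direction (i)$\Rightarrow$(ii), the analogue is the discrete gap of $2$ between consecutive admissible odd values of $d_I$, matched exactly by the multiplicative factor $c+2$ separating the thresholds $c$ and $c+2$. The remaining issues—polynomial enumeration of critical values, the corner case $d_I = \infty$, and verifying $|B(M)| = |B(N)|$ and $M \ncong N$ in the constructed instance—are routine.
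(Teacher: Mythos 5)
Your proof is correct and matches the paper's own argument in both directions: the same use of \cref{lem_CI_to_mod} (with $C$ large enough, here $C=c+2$) plus the odd-integer dichotomy on $d_I(M,N)$ for (i)$\Rightarrow$(ii), and the same enumeration of $O(n^2)$ critical values of $\epsilon$ combined with \cref{lem_mod_to_CI} and \cref{lem_bij_CI} for (ii)$\Rightarrow$(i), returning $c$ times the first threshold giving a positive answer. The only difference is expository — you make the choice $C=c+2$ and the discrete spectrum $\{1,3,\dots,c+2\}\cup(c+2,\infty]$ explicit and spell out why $k^+\leq k^*$, where the paper states these facts more tersely.
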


\begin{proof}
The proof of \cref{lem_CI_to_mod} gives a polynomial time algorithm taking a CI problem $\Pp$ and returning two modules $M$ and $N$ with properties as described in \cref{lem_CI_to_mod}.
In particular, choosing $C$ large enough, $d_I(M,N)$ is either in $\{1,3,\dots,c\}$, or is at least $c+2$.
Assume we have a polynomial time algorithm giving us a number $r\in [d_I(M,N), (c+2)d_I(M,N))$.
If $r\geq c+2$, then $d_I(M,N)>1$, so $\Pp$ does not have a solution.
If $r<c+2$, then $M$ and $N$ are $c$-interleaved, so the $c$-weakening of $\Pp$ has a solution.
In both cases, we are able to solve (ii).

By \cref{lem_bij_CI}, whether $M$ and $N$ are $\epsilon$-interleaved depends only on the properties of the associated CI problem, which in turn depends only on whether $U\sse V(\epsilon)$ for $U$ and $V$ appearing in the barcodes of $M$ and $N$.
If $B(M)$ and $B(N)$ each have size $n$, there are $O(n^2)$ such pairs $(U,V)$, and the minimal $\epsilon$ such that $U\sse V(\epsilon)$ can be found in polynomial time.
Thus, we can write down a list $L$ of $O(n^2)$ numbers in polynomial time where $d_I(M,N)$ is the smallest value $\epsilon$ of these such that $M$ and $N$ are $\epsilon$-interleaved.
Assume we have a polynomial time algorithm solving (ii).
By \cref{lem_mod_to_CI}, given an $\epsilon\geq 0$, we can use this algorithm to find out either that $M$ and $N$ are not $\epsilon$-interleaved (negative answer), or that they are $c\epsilon$-interleaved (positive answer).
We run this test for each $\epsilon\in L$.
Since $M$ and $N$ are $\epsilon$-interleaved for the largest $\epsilon$ in the list, we must get a positive answer at least once.
Let $\delta$ be the smallest element of $L$ for which we get a positive answer.
Then $d_I(M,N)\in [\delta, c\delta]$, so returning $c\delta$ solves (i).
\end{proof}

We now get to the first of the main results of the section, showing the equivalence between \cref{conj_main} for upset decomposable modules, \cref{conj_upset} and \cref{conj_CI}.
We prove it by putting together results from earlier in the section.
\begin{theorem}
\label{thm_eq_conjs}
For any $c\geq 1$, the following three statements are equivalent:
\begin{itemize}
\item[(i)] Any pfd $\epsilon$-interleaved upset decomposable modules $M$ and $N$ have a common $c\epsilon$-refinement $Q$,
\item[(ii)] for any pfd $\epsilon$-interleaved upset decomposable modules $M$ and $N$, there is a $c\epsilon$-matching between $M$ and $N$,
\item[(iii)] for any CI problem $\Pp$ with a solution, the $\lfloor c \rfloor^{\text{odd}}$-weakening of $\Pp$ has a matching.
\end{itemize}
\end{theorem}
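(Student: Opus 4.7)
The plan is to prove the equivalences by chaining together results already established in the excerpt, with the CI-problem dictionary doing the heavy lifting for (ii)$\iff$(iii).

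For (i)$\iff$(ii), the equivalence is essentially immediate from \cref{lem_match_eq_to_refine}, which already gives a bijective translation between common $\epsilon$-refinements of upset decomposable modules and $\epsilon$-matchings between them. So this piece of the theorem is just a direct appeal to that lemma, noting that the constant $c\epsilon$ plays the role of the $\epsilon$ in the lemma.

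For (iii)$\Rightarrow$(ii), I would start with $\epsilon$-interleaved pfd upset decomposable modules $M\cong\bigoplus_{i\in\Lambda} U_i$ and $N\cong\bigoplus_{j\in\Gamma}V_j$. Write the interleaving morphisms as matrices with respect to these decompositions; only finitely many entries can be nonzero at any given point, but in general $\Lambda$ and $\Gamma$ may be infinite. The key point is that for any fixed pair $(U_i,V_j)$ appearing with nonzero morphism in either direction, all four morphisms between $U_i$ and $V_j(\epsilon)$ and back are forced to be scalars, and the interleaving condition $\psi(\epsilon)\circ\phi=M_{0\to 2\epsilon}$ picks out a CI problem on index sets $\Lambda$ and $\Gamma$ exactly as in \cref{lem_bij_CI} (padding the smaller index set with zero summands if $|\Lambda|\neq|\Gamma|$). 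This CI problem $\Pp_{U,V,\epsilon}$ has a solution by \cref{lem_bij_CI}, so (iii) gives a simple solution of its $\lfloor c\rfloor^{\text{odd}}$-weakening, and then \cref{lem_mod_to_CI} converts that simple solution into a $c\epsilon$-matching between $M$ and $N$.

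For (ii)$\Rightarrow$(iii), given a CI problem $\Pp$ with a solution, apply \cref{lem_CI_to_mod} with any constant $C\geq c$ to produce pfd upset decomposable modules $M$ and $N$ such that $\epsilon$-interleavings between $M$ and $N$ correspond bijectively to solutions of the $\lfloor\epsilon\rfloor^{\text{odd}}$-weakening of $\Pp$ (for $1\leq\epsilon\leq C$), with no interleaving for $\epsilon<1$. Since $\Pp$ itself has a solution, $M$ and $N$ are $1$-interleaved. Applying (ii) with $\epsilon=1$ yields a $c$-matching between $M$ and $N$, which under the bijection of \cref{lem_CI_to_mod} corresponds to a simple solution of the $\lfloor c\rfloor^{\text{odd}}$-weakening of $\Pp$, as required.

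The main routine obstacle is bookkeeping around the cases where $M$ and $N$ decompose into unequal (and possibly infinite) numbers of summands, since \cref{lem_bij_CI} as stated is for finite index sets of the same size; this is handled by padding with zero upsets and observing that pointwise finite-dimensionality localizes the interleaving data so that the CI-problem translation still applies at the level of individual interleaving witnesses. The more conceptual point, which is where all the real content lives, is the $\lfloor c\rfloor^{\text{odd}}$-weakening: this matches exactly with the fact that any length-$k$ zig-zag chain of inclusions $U_{i_0}\sse V_{j_1}(\epsilon)\sse U_{i_2}(2\epsilon)\sse\cdots$ produces a containment $U_{i_0}\sse V_{j_k}(k\epsilon)$, which is why edges in the $c$-weakening of $\Pp_{U,V,\epsilon}$ correspond to edges in $\Pp_{U,V,c\epsilon}$—a correspondence \cref{lem_mod_to_CI} and \cref{lem_CI_to_mod} already encode.
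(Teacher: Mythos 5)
Your proposal follows essentially the same path as the paper's proof: (i)$\iff$(ii) is cited from \cref{lem_match_eq_to_refine}, and the two directions of (ii)$\iff$(iii) are obtained by combining \cref{lem_bij_CI}, \cref{lem_mod_to_CI}, and \cref{lem_CI_to_mod} in exactly the same way; the only stylistic difference is that you argue the implications directly whereas the paper phrases them contrapositively (supposing a counterexample to one side and producing a counterexample to the other). One small remark: your worry about unequal or infinite index sets is slightly misplaced --- the paper simply asserts without further comment that one may take $M=\bigoplus_{i=1}^n U_i$ and $N=\bigoplus_{j=1}^n V_j$ with a common finite $n$, and your ``padding with zero upsets'' plus ``localization'' sketch does not actually close that reduction any more than the paper does; so you have not added a genuine new step, only flagged a hidden assumption that the paper makes silently.
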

\begin{proof}
By \cref{lem_match_eq_to_refine}, (i) and (ii) are equivalent.
It remains to show that (ii) holds if and only if (iii) does.

If:
Suppose $M$ and $N$ are a counterexample to (ii), so there is an $\epsilon>0$ such that $M$ and $N$ are $\epsilon$-interleaved, but there is no $c\epsilon$-matching between $M$ and $N$.
We can assume $M$ is of the form $\bigoplus_{i=1}^n U_i$ and $N$ of the form $\bigoplus_{j=1}^n V_j$
By \cref{lem_bij_CI}, $\Pp_{(U_1,\dots,U_n),(V_1,\dots,V_n), \epsilon}$ has a solution, but by \cref{lem_mod_to_CI}, its $c$-weakening does not have a simple solution.

Only if:
Suppose $\Pp$ has a solution, but its $\lfloor c \rfloor^{\text{odd}}$-weakening does not have a simple solution.
Then by \cref{lem_CI_to_mod} with $C\geq c$, there are upset decomposable modules $M$ and $N$ that are $1$-interleaved, but do not allow a $c$-matching.
\end{proof}

\subsection{Extending the conjecture to more general classes of intervals}
\label{subsec_benign}

So far in this section, we have worked with upsets.
We now show \cref{thm_benign}, which says that the equivalence between \cref{conj_upset} and \cref{conj_CI} holds if we replace upsets in \cref{conj_upset} with any \emph{benign} collection of intervals.
This extension to benign classes of intervals is of particular interest in the mentioned applications of relative homological algebra to multipersistence.
By \cref{lem_hooks_upsets}, there is a benign set of intervals that contains all hooks and rectangles, and another one containing all upsets.
Thus, \cref{thm_benign} shows that a proof of \cref{conj_CI} would give strong stability results for hook, rectangle and upset decomposable modules, which have been studied in \cite{blanchette2021homological, botnan2024signed, botnan2024bottleneck, chacholski2024koszul, oudot2024stability}.
Furthermore, it would immediately improve some known stability results about homological algebra applications to multipersistence, like \cite[Theorem 1.1]{oudot2024stability}, (the term $n^2-1$ would become linear) and \cite[Theorem 6.1]{botnan2024bottleneck} ($(2n-1)^2$ would become linear).

For an interval $I\sse \R^d$, let $U(I)$ be the smallest upset containing $I$, and let $E(I) = U(I)\setminus I$.
We have $I = U(I) \setminus E(I)$.
Define a poset structure $\leq$ on the set of all intervals\footnote{This poset relation is different from the subquotient relation (\cref{def_subquotient}) on the modules corresponding to the intervals: for instance, $[1,2) \leq [0,1)$, but neither of the interval module supported on $[1,2)$ and the one supported on $[0,1)$ is a subquotient of the other.} by $I\leq J$ if $U(I)\sse U(J)$ and $E(I)\sse E(J)$.
Note that if $I\leq J$, then
\[
I\cap J = (U(I)\setminus E(I)) \cap (U(J)\setminus E(J)) = U(I)\setminus E(J).
\]

\begin{definition}
Let $\I$ be a set of intervals of $\R^d$ closed under shift by any $\epsilon\in \R$.
We call $\I$ \textbf{benign} if for every $I,J\in \I$ such that there exists a nonzero morphism from $I$ to $J$, we have that $I\cap J$ is an interval and $I\leq J$.
\end{definition}

The following lemma shows that this poset structure behaves nicely with respect to morphisms between intervals belonging to the same benign set, which is what allows us to prove \cref{thm_benign}.

\begin{lemma}
\label{lem_benign_poset}
Let $I$ and $J$ be intervals.
\begin{itemize}
\item[(i)] If $I\leq J$, then there is a morphism $f\colon I\to J$ with $f_p = \id_k$ for all $p\in U(I)\setminus E(J) = I\cap J$.
\item[(ii)] Suppose that $I\cap J$ is an interval, and that there is a nonzero morphism $f\colon I\to J$.
Then there is a $0\neq a\in k$ such that for all $p\in I\cap J$, $f_p$ is given by multiplication by $a$.
In this case, write $w(f) = a$.
(We also define $w(g)=0$ for any zero morphism $g$ between intervals.)
\end{itemize}
Let $\I$ be benign and let $I,J,K\in \I$.
\begin{itemize}
\item[(iii)] If there are nonzero morphisms $f\colon I\to J$ and $g\colon J\to K$, then either $U(I)\sse E(K)$, or $g\circ f\neq 0$ with $w(g\circ f) = w(g)w(f)$.
\item[(iv)] If $I\leq J(\epsilon)$ and $J\leq I(\epsilon)$, then $I$ and $J$ are $\epsilon$-interleaved.
\end{itemize}
\end{lemma}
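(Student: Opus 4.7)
My plan is to handle the four parts in order, using earlier machinery where possible. Parts (i) and (ii) do not need $\I$ to be benign and follow by direct naturality; parts (iii) and (iv) then build on these plus the definition of benign.

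For (i), I will invoke \cref{lem_interval_morph} with $G = I$, $R = E(I)$, $G' = J$, $R' = E(J)$. The hypothesis $I \leq J$ is precisely $U(I) \sse U(J)$ and $E(I) \sse E(J)$, so every generator of $U(I)$ lies in $U(J)$ and every generator of $E(I)$ lies in $E(J)$, giving the bounds needed by that lemma. Taking $c = 1$ yields the required $f$. For (ii), the nontrivial points are $p \in I \cap J$, at which $f_p \colon k \to k$ is multiplication by some $a_p$. Naturality applied to $p \leq q$ in $I \cap J$ with $I_{p\to q} = J_{p\to q} = \id_k$ forces $a_p = a_q$, and the zigzag connectedness in \cref{def_interval} then makes $a_p$ constant over $I \cap J$; nonvanishing follows because $f$ is zero at every $p \notin I \cap J$ for support reasons, so $a = 0$ would force $f = 0$.

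For (iii), benignness gives $I \leq J$ and $J \leq K$, and since inclusion on $U$ and $E$ are each transitive, $I \leq K$ as well. The crucial identity is
\[
I \cap K \;=\; U(I) \setminus E(K) \;=\; \bigl(U(I)\cap U(J)\cap U(K)\bigr) \setminus \bigl(E(I)\cup E(J)\cup E(K)\bigr) \;=\; I \cap J \cap K,
\]
using the chains $U(I)\sse U(J)\sse U(K)$ and $E(I)\sse E(J)\sse E(K)$. By (ii), on this common set $f$ acts as $w(f)\cdot\id$ and $g$ as $w(g)\cdot\id$, so $(g\circ f)_p = w(g)w(f)$. If $U(I)\sse E(K)$ then $I\cap K=\emptyset$ and support considerations give $g\circ f = 0$; otherwise $g\circ f$ is nonzero on $I\cap K$, and by benignness $I\cap K$ is an interval, so (ii) applies to $g\circ f$ and pins down its weight as $w(g)w(f)$.

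For (iv), (i) produces morphisms $f\colon I\to J(\epsilon)$ and $g\colon J\to I(\epsilon)$ that are the identity on $I\cap J(\epsilon)$ and $J\cap I(\epsilon)$ respectively, so $w(f) = w(g) = 1$ whenever these intersections are nonempty. To verify $g(\epsilon)\circ f = I_{0\to 2\epsilon}$ (the other identity is symmetric), I will apply (iii) to $f$ and $g(\epsilon)$, which lives in $\I$ by the shift-closure assumption: the composition is either zero (when $U(I)\sse E(I(2\epsilon))$, i.e.\ $I\cap I(2\epsilon)=\emptyset$) or equals $w(g(\epsilon))w(f) = 1$ times $\id$ on $I\cap I(2\epsilon)$. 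But $I_{0\to 2\epsilon}$ has exactly the same behaviour: identity on $I\cap I(2\epsilon)$ and zero outside. Hence the two morphisms agree, proving the interleaving. The only bookkeeping subtlety I foresee is the degenerate case in which one of $f, g$ is zero, but a short shift computation (propagating $U(I)\sse E(J(\epsilon))$ through $J\leq I(\epsilon)$, or the symmetric inclusion) shows this forces $I\cap I(2\epsilon) = \emptyset$, so both sides of the interleaving identity vanish. There is no substantive obstacle; once the identification $I\cap K = I\cap J\cap K$ in (iii) is in hand, the rest is essentially bookkeeping.
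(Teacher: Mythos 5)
Your proof is correct. Parts (i)–(iii) track the paper's argument closely: (ii) is verbatim the paper's naturality-plus-zigzag argument, and your (iii) is the same idea but slightly more structured — you establish $I\cap K = I\cap J\cap K$ as a set identity whereas the paper just picks one point $p\in U(I)\setminus E(K)$ and observes $p\in I\cap J\cap K$. The genuine divergence is (iv). The paper proves (iv) by a direct computation: the morphisms $\phi,\psi$ from (i) are pointwise $\id_k$ on the nested intersections, and the chain $U(I)\sse U(J(\epsilon))\sse U(I(2\epsilon))$, $E(I)\sse E(J(\epsilon))\sse E(I(2\epsilon))$ immediately gives $\psi(\epsilon)\circ\phi = \id_k$ on $I\cap I(2\epsilon)$, which is exactly $I_{0\to 2\epsilon}$; this needs neither benignness nor a case split. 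You instead invoke (iii) as a black box, which buys you a slicker-looking argument but (a) requires benignness of $\I$ where the paper's (iv) is actually free of it, and (b) forces you to separately handle the degenerate case $f=0$ or $g=0$ (since (iii) presupposes both morphisms are nonzero), which you do correctly. One small technical point: for (i) you invoke \cref{lem_interval_morph}, but that lemma is literally stated only for $\R^2$ while \cref{lem_benign_poset} lives in $\R^d$; the proof of \cref{lem_interval_morph} is dimension-independent so nothing breaks, but the paper sidesteps this by giving the two-line direct naturality check rather than citing the lemma.
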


\begin{proof}
(i): Let $f_p = \id_k$ for all $p\in U(I)\setminus E(J)$ and $f_p = 0$ otherwise.
We need to check that $J_{p\to q}\circ f_p = f_q \circ I_{p\to q}$ for all $p\leq q$.
This can only fail if $I_p$ and $J_q$ are both nonzero, so $p\in I$ and $q\in J$.
But since $U(I)\sse U(J)$ and $E(I)\sse E(J)$, this means that $p,q\in I\cap J$, in which case $I_{p\to q}$, $J_{p\to q}$, $f_p$ and $f_q$ are all the identity.

(ii): Let $p,q\in I\cap J$ with $q\geq p$.
Then
\[
f_q = f_q\circ I_{p\to q} = J_{p\to q} \circ f_p = f_p,
\]
where we have used that $I_{p\to q}$ and $J_{p\to q}$ are both the identity on $k$, since $p$ and $q$ are in both $I$ and $J$.
By the connectedness property of intervals (the second point in \cref{def_interval}), the equality $f_p = f_q$ extends to all $q\in I\cap J$.
Since $f$ is nonzero, there must be a $p\in I\cap J$ and $0\neq a\in k$ such that $f_p\colon k\to k$ is given by multiplication by $a$.
By the above, the same holds for $f_q$ for all $q\in I\cap J$.

(iii): Suppose $f$ and $g$ are nonzero and $U(I)\nsubseteq E(K)$.
Pick $p\in U(I)\setminus E(K)$.
By benignness, $U(I)\sse U(J)\sse U(K)$ and $E(I)\sse E(J)\sse E(K)$, which gives $p\in I\cap J\cap K$.
By (ii), $f_p$ and $g_p$ are given by multiplication by $w(f)$ and $w(g)$, respectively, so $g_p\circ f_p = (g\circ f)_p$ is given by multiplication by $w(f)w(g)$.
By (ii) again, this means that $w(g\circ f) = w(f)w(g)$.

(iv):
By (i), we have morphisms $\phi\colon I\to J(\epsilon)$ and $\psi\colon J\to I(\epsilon)$ with $\phi$ and $\psi(\epsilon)$ the pointwise identity in $U(I)\setminus E(J(\epsilon))$ and $U(J(\epsilon))\setminus E(I(2\epsilon))$, respectively.
Since $U(I)\sse U(J(\epsilon))\sse U(I(2\epsilon))$ and $E(I)\sse E(J(\epsilon))\sse E(I(2\epsilon))$, $\psi(\epsilon)\circ\phi$ is the pointwise identity in $U(I)\setminus E(I(2\epsilon))$.
By symmetry, the same holds for $\phi(\epsilon)\circ \psi$ and $J$, so $\phi$ and $\psi$ form an interleaving.
\end{proof}

Recall that for $p\in \R^d$, we have $\langle p\rangle = \{q\in \R^d \mid q\geq p\}$.
The following lemma shows that the set of all intervals with a minimum element is benign.
This set of intervals is of particular interest, since it contains all rectangles (intervals of the form $[a_1,b_1)\times \dots \times [a_d,b_d)$, with $a_i\in \R$ and $b_i\in \R\cup \{\infty\}$) and hooks (intervals of the form $\langle p\rangle\setminus \langle q\rangle$).
We also show that the collection of upsets is benign, tying together benignness and our work in the previous subsections.

\begin{lemma}
\label{lem_hooks_upsets}
Let $\I$ be the set of all intervals $I\sse \R^d$ such that $U(I)$ is of the form $\langle p\rangle$ (where $p$ depends on $I$), and let $\J$ be the set of all upsets of $\R^d$.
Both $\I$ and $\J$ are benign.
\end{lemma}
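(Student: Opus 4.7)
The plan is to check benignness for each of $\J$ and $\I$ in turn. Both sets are clearly closed under shifts, so the only content is to show that a nonzero morphism $f\colon I\to J$ between members forces both $I\cap J$ to be an interval and $I\leq J$ in the poset of intervals (i.e., $U(I)\subseteq U(J)$ and $E(I)\subseteq E(J)$). The one technical tool I will use throughout is the naturality square $f_t\circ I_{s\to t}=J_{s\to t}\circ f_s$, specialized to points where the internal maps are identities of $k$.

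For $\J$, I pick any $r$ with $f_r\neq 0$, so $r\in I\cap J$. Given any $s\in I$, I take $t=r\vee s$; since $I$ and $J$ are upsets, $t\in I\cap J$ and $I_{r\to t}=J_{r\to t}=\mathrm{id}$, so the square at $r\leq t$ yields $f_t=f_r\neq 0$. Applying the square again at $s\leq t$ with $I_{s\to t}=\mathrm{id}$ forces $f_s\neq 0$, hence $s\in J$. Thus $I\subseteq J$, which in the upset case is exactly $I\leq J$ (both $E$-sets are empty), and $I\cap J$ is itself an upset, hence an interval.

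For $\I$, write $U(I)=\langle p\rangle$ and $U(J)=\langle q\rangle$. Because $U(I)=\bigcup_{s\in I}\langle s\rangle$ and $p$ is the minimum of $U(I)$, I first observe $p\in I$. The naturality square at $p\leq s$ with $I_{p\to s}=\mathrm{id}$ then reduces $f$ to its value at $p$: $f_s=J_{p\to s}\circ f_p$ for every $s\in I$. So $f\neq 0$ implies $f_p\neq 0$, hence $p\in J$, and consequently $p\geq q$ and $U(I)\subseteq U(J)$. For $E(I)\subseteq E(J)$, I would suppose for contradiction that some $s\in E(I)=\langle p\rangle\setminus I$ also lies in $J$; then $s\geq p$ and $s\in J$ give $J_{p\to s}=\mathrm{id}$, while $I_{p\to s}=0$ since $s\notin I$, so the naturality square becomes $0=f_p\neq 0$, a contradiction. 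Finally, $I\cap J$ contains $p$ as a common minimum, so the zigzag $s\geq p\leq t$ witnesses connectedness between any $s,t\in I\cap J$, and convexity is immediate from the convexity of $I$ and $J$.

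The only place where I expect mild friction is the bookkeeping step that $p\in I$ (not just $p\in U(I)$) together with the observation that once $p\in J$ is established, the entire behavior of $f$ is pinned down by $f_p$; this is what lets the $E$-inclusion be extracted from a single naturality square. Granted this, both benignness verifications fall out in parallel, and no additional structural input about hooks, rectangles, or general upsets is required beyond what has already been exploited.
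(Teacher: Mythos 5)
Your proof is correct and follows essentially the same strategy as the paper's: establish $p\in I$, push everything through the naturality square at $p$, derive $p\in J$ to get $U(I)\subseteq U(J)$, use the vanishing of $I_{p\to s}$ on $E(I)$ to force $E(I)\subseteq E(J)$, and verify $I\cap J$ is an interval via the common minimum $p$; for upsets, show $I\subseteq J$ directly. The only small deviations are stylistic: you show $I\subseteq J$ in the upset case by a direct argument using the join $r\vee s$ (the paper instead derives $f_q=0$ and invokes \cref{lem_benign_poset}(ii)), and for convexity of $I\cap J$ you appeal directly to convexity of $I$ and $J$ rather than to the upset property of $U(-)$ and $E(-)$; both are minor variants of the same argument.
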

\begin{proof}
It is immediate that $\I$ and $\J$ are closed under shifts.
Suppose there is a nonzero morphism $f\colon I\to J$, where $I, J\in \I$.
Let $p$ and $q$ be defined by $U(I) = \langle p \rangle$ and $U(J) = \langle q \rangle$.
We have that $f_x$ is nonzero for some $x\in I\cap J$.
Since $p\leq x$, the composition $f_x\circ I_{p\to x}$ is well-defined, and equal to $f_x\neq 0$.
We have
\begin{align*}
f_x\circ I_{p\to x} &= J_{p\to x}\circ f_p,
\end{align*}
so both $J_{p\to x}$ and $f_p$ are nonzero.
We get that $p\in J$.
Thus, $q\leq p$, so $U(I)\sse U(J)$.

Suppose $y\in E(I)$.
Since $q\leq p\leq y$, we have $y\in U(J)$.
Since $y\notin I$, we have $I_{p\to y} = 0$.
Thus,
\begin{align*}
0 &= f_y\circ I_{p\to y} = J_{p\to y}\circ f_p.
\end{align*}
From the previous paragraph, we know that $f_p\neq 0$, so we must have $J_{p\to y}=0$.
Since $p\in J$, it follows that $y\notin J$.
Thus, $y\in E(J)$, so we conclude that $E(I)\sse E(J)$, and therefore $I\leq J$.

It remains to show that $I\cap J$ is an interval.
Since $p\in I\cap J$ and $I\sse \langle p\rangle$, $I\cap J$ is nonempty, and every $x,y\in I\cap J$ are connected by $x\geq p \leq y$.
Suppose that $x\leq y\leq z$ with $x,z\in I\cap J$.
We have $x\in U(I)\cap U(J)$ and $z\notin E(I)\cup E(J)$.
By using the properties of upsets, we get $y\in U(I)\cap U(J)$ and $y\notin E(I)\cup E(J)$, and thus $y\in I\cap J$.

We now show that $\J$ is benign.
For every $I\in \J$, $E(I)$ is empty, and $U(I) = I$.
The intersection of two upsets $I$ and $J$ in $\J$ is an upset, which is an interval, and $E(I)\sse E(J)$ is trivially true.
Suppose that $U(I)\nsubseteq U(J)$, and pick $p\in U(I)\setminus U(J) = I\setminus J$ and $q\in I\cap J$ with $q\geq p$.
This gives
\[
0 = J_{p\to q}\circ f_p = f_q\circ I_{p\to q} = f_q,
\]
since $J_{p\to q}$ is zero and $I_{p\to q}$ is the identity.
By \cref{lem_benign_poset} (ii), we get $f=0$, which finishes the proof.
\end{proof}

We now show that \cref{conj_CI}, which is (i) in the theorem below, is equivalent to a stability result for any set of benign intervals.
\begin{theorem}
\label{thm_benign}
Let $c\geq 1$.
Then the following statements are equivalent.
\begin{itemize}
\item[(i)] For any CI problem $\Pp$, if $\Pp$ has a solution, then the $\lfloor c\rfloor^{\text{odd}}$-weakening of $\Pp$ has a simple solution.
\item[(ii)] For any benign set $\I$ and for any $\epsilon$-interleaved modules $M$ and $N$ such that $B(M)$ and $B(N)$ are finite and only contain intervals in $\I$, there is a $c\epsilon$-matching between $M$ and $N$.
\end{itemize}
\end{theorem}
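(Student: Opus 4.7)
The plan is to prove the two directions of the equivalence separately. The reverse implication (ii) $\Rightarrow$ (i) reduces immediately to prior work, while the forward implication (i) $\Rightarrow$ (ii) generalizes the ``Only if'' direction in the proof of \cref{thm_eq_conjs} by replacing scalar-valued morphism spaces between upsets with the weight-function machinery of \cref{lem_benign_poset}.

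For (ii) $\Rightarrow$ (i): by \cref{lem_hooks_upsets}, the set $\J$ of all upsets of $\R^d$ is benign, so specializing (ii) to $\I = \J$ yields exactly statement (ii) of \cref{thm_eq_conjs}, which by that theorem is equivalent to statement (iii) of \cref{thm_eq_conjs}, i.e., to (i).

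For (i) $\Rightarrow$ (ii), I proceed as follows. Given an $\epsilon$-interleaving $(\phi,\psi)$ between benign $M = \bigoplus_{i=1}^n I_i$ and $N = \bigoplus_{j=1}^n J_j$, define the CI problem $\Pp$ of size $n$ by declaring $u_i \to v_j$ in $\G_\Pp$ iff $I_i \leq J_j(\epsilon)$ and $v_j \to u_i$ iff $J_j \leq I_i(\epsilon)$ in the benign poset of \cref{lem_benign_poset}; this is the direct analog of $\Pp_{U,V,\epsilon}$. Use the weight function $w$ of \cref{lem_benign_poset} (ii) to extract scalar matrices $A$ and $B$ from $\phi$ and $\psi$; by construction these have support in the pattern of $\Pp$. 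Combine the multiplicativity of $w$ under composition (\cref{lem_benign_poset} (iii)) with the interleaving identities $\psi(\epsilon)\circ\phi = M_{0\to 2\epsilon}$ and $\phi(\epsilon)\circ\psi = N_{0\to 2\epsilon}$ to show that $AB = I$, so that $(A,B)$ is a solution of $\Pp$. Now apply (i) to obtain a simple solution of the $\lfloor c\rfloor^{\text{odd}}$-weakening of $\Pp$: a bijection $\sigma$ such that each pair $(u_i, v_{\sigma(i)})$ is joined in both directions by paths of length at most $\lfloor c\rfloor^{\text{odd}}\leq c$ in $\G_\Pp$. Concatenating the $\leq(\epsilon)$ relations along these paths and using transitivity of the shifted benign poset gives $I_i \leq J_{\sigma(i)}(c\epsilon)$ and $J_{\sigma(i)} \leq I_i(c\epsilon)$, so \cref{lem_benign_poset} (iv) produces a $c\epsilon$-interleaving between $I_i$ and $J_{\sigma(i)}$, and $\sigma$ is the desired $c\epsilon$-matching.

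The delicate step is establishing $AB = I$ exactly. \cref{lem_benign_poset} (iii) permits the composition $\psi_{i',j}(\epsilon)\circ\phi_{j,i}$ to be forced to zero even when the scalar product $w(\psi_{i',j})\,w(\phi_{j,i})$ is nonzero, precisely when $U(I_i) \subseteq E(I_{i'}(2\epsilon))$. For upsets this degeneracy never arises because $E(U) = \emptyset$, so the scalars multiply cleanly and the matrix identity follows mechanically from the interleaving; but for general benign intervals it can occur, potentially making the naive $(A,B)$ fail to satisfy $AB = I$ at off-diagonal entries that the morphism equation leaves unconstrained. I expect the fix to involve first discarding those $I_i$ and $J_j$ with $I_i \cap I_i(2\epsilon) = \emptyset$ (respectively $J_j \cap J_j(2\epsilon) = \emptyset$), which are $\epsilon$-interleaved with the zero module and may be left unmatched, and then adjusting $(A,B)$ within the allowed pattern so that the remaining off-diagonal cancellations are enforced; this bookkeeping is where I expect the bulk of the technical work to lie.
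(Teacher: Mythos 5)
Your (ii) $\Rightarrow$ (i) direction is exactly the paper's: specialize to $\I =$ upsets, invoke \cref{lem_hooks_upsets} for benignness, and then chain through \cref{thm_eq_conjs}. That part is fine.

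Your (i) $\Rightarrow$ (ii) direction has a genuine gap, and you have correctly located where it lives but not how to close it. You propose to form a CI problem of size $n$ directly from the scalar matrices $A$ and $B$ extracted via $w$, observe that $AB$ need not be the identity because \cref{lem_benign_poset}~(iii) only gives $w(g\circ f) = w(g)w(f)$ when $U(I) \nsubseteq E(K)$, and then suggest repairing this by discarding ``short'' intervals ($I_i\cap I_i(2\epsilon) = \emptyset$) and ``adjusting $(A,B)$ within the allowed pattern.'' That repair does not work: the problematic entries are off-diagonal ones $(BA)_{i,i'}$ with $i\ne i'$, where the failure mode is $U(I_i) \subseteq E(I_{i'}(2\epsilon))$; neither $I_i$ nor $I_{i'}$ need be short for this to occur, and there is no principled way to tweak $A$ and $B$ inside the support pattern of $\Pp$ so that $AB$ becomes the identity while the matrices still arise from an interleaving. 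The paper sidesteps this entirely by building a CI problem of size $m+n$ (notice $m$ need not equal $n$, which your size-$n$ setup also silently assumes) whose vertices are $B(M)\sqcup B(N)(\epsilon)$ on one side and $B(N)\sqcup B(M)(\epsilon)$ on the other, and taking the block matrices
\[
C=\begin{bmatrix} A & \id_n\\ \id_m - BA & -B \end{bmatrix},
\qquad
D=\begin{bmatrix} B & \id_m\\ \id_n - AB & -A \end{bmatrix}.
\]
One checks directly that $CD=\id_{m+n}$ holds as a matrix identity in $A,B$ regardless of whether $AB=\id$, so $(C,D)$ is automatically a solution once one verifies the support constraints (which is where \cref{lem_benign_poset}~(iii) is actually used: a nonzero entry of $\id_m-BA$ forces $U(I_i)\subseteq E(I_{i'}(2\epsilon))$). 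The $c$-weakening then yields a simple solution, i.e., a bijection on the $m+n$ vertices. A summand $I_i$ matched to some $J_j$ in the original barcode gets a $c\epsilon$-interleaving via the path concatenation argument you sketch; a summand $I_i$ matched to some $I_{i'}(\epsilon)$ gets shown to be $c\epsilon$-interleaved with zero using the two claims about edges into the shifted copies ($U(X)\sse E(Y(\epsilon))$), and is left unmatched. This ``double the vertex set and use a block matrix'' trick is the key idea your proposal is missing; the bookkeeping you deferred is not a routine loose end but the content of the proof.

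Your path-concatenation step and the invocation of \cref{lem_benign_poset}~(iv) to convert $I_i\leq J_{\sigma(i)}(c\epsilon)$ and $J_{\sigma(i)}\leq I_i(c\epsilon)$ into a $c\epsilon$-interleaving is correct and matches the paper; what is wrong is the claim that the naive $(A,B)$ solve a size-$n$ CI problem, and the proposed remedy for when they do not.
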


\begin{proof}
Assume that (ii) is true, and pick $\I$ equal to the set of upsets, which is benign by \cref{lem_hooks_upsets}.
By the implication (ii) $\implies$ (iii) in \cref{thm_eq_conjs}, (i) follows.

Thus, it remains to prove that (i) implies (ii), and we can assume that $c$ is an odd integer.
Fix a benign set $\I$ and modules $M$ and $N$ as in (ii), with interleaving morphisms $\phi\colon M\to N(\epsilon)$ and $\psi\colon N\to M(\epsilon)$.
We can assume that $M = \bigoplus_{i=1}^m I_i$ and $N = \bigoplus_{i=1}^n J_j$ for some $m$ and $n$, where $I_i$ and $J_j$ are in $\I$ for all $i$ and $j$.
We can write $\phi$ as a sum of morphisms $\phi_{i,j}\colon M_i \to N_j(\epsilon)$, and define $\psi_{j,i}$ similarly.
Define an $n\times m$ matrix $A$ and an $m\times n$ matrix $B$ by letting $A_{i,j} = w(\phi_{i,j})$ and $B_{j,i} = w(\psi_{j,i})$, using the notation from \cref{lem_benign_poset} (ii).

Consider the pair
\begin{align*}
C=\begin{bmatrix}
A & \id_n\\
\id_m - BA & -B
\end{bmatrix},
\quad
D=\begin{bmatrix}
B & \id_m\\
\id_n - AB & -A
\end{bmatrix}
\end{align*}
of $(m+n)\times (m+n)$-matrices, where $\id_r$ is the $r\times r$ identity matrix, and let $(P,Q)$ be the CI problem where $P$ and $Q$ are equal to $C$ and $D$, respectively, except that all nonzero elements have been replaced with $*$.
An easy calculation shows that $CD$ is the identity matrix, so $(C,D)$ is a solution of $(P,Q)$.
Thus, assuming (i) in the theorem, the $c$-weakening of $(P,Q)$ has a simple solution.
Label the rows of $P$ by $I_1,\dots, I_m, J_1(\epsilon)\dots, J_n(\epsilon)$ and the columns by $J_1,\dots, J_n, I_1(\epsilon)\dots, I_m(\epsilon)$.
Define multisets
\begin{align*}
B_M &= \{I_1,\dots, I_m\}, & B_N &= \{J_1,\dots, J_n\},\\
B_M^\epsilon &= \{I_1(\epsilon),\dots, I_m(\epsilon)\}, & B_N^\epsilon &= \{J_1(\epsilon),\dots, J_n(\epsilon)\}.
\end{align*}
We view $(P,Q)$ as a directed bipartite graph $\Gg_{(P,Q)}$ on the vertex set $(B_M\sqcup B_N^\epsilon) \sqcup (B_N\sqcup B_M^\epsilon)$.
The graph associated to the $c$-weakening of $(P,Q)$ has the same set of vertices as $\Gg_{(P,Q)}$, and an edge from $u\in B_M\sqcup B_N^\epsilon$ to $v\in B_N\sqcup B_M^\epsilon$ if there is a path from $u$ to $v$ of length at most $c$ in $\Gg_{(P,Q)}$, and vice versa.
Thus, the simple solution of the $c$-weakening of $(P,Q)$ gives a bijection
\begin{align*}
\sigma\colon B_M\sqcup B_N^\epsilon \to B_N\sqcup B_M^\epsilon
\end{align*}
with paths of length at most $c$ between $X$ and $Y$ in $\Gg_{(P,Q)}$ whenever $\sigma(X) = Y$.

We will show that the following defines a $c\epsilon$-matching $\sigma'$ between $B(M)$ and $B(N)$.
If $\sigma(I_i) =J_j$, then we let $\sigma'(I_i) = J_j$.
Any $I_i$ such that $\sigma(I_i)\in B_M^\epsilon$ is left unmatched, and so is any $J_j$ such that $\sigma^{-1}(J_j)\in B_M^\epsilon$.

We make the following claims:
\begin{itemize}
\item[(i)] If $\Gg_{(P,Q)}$ has an edge from $X$ to $Y$, then $X\leq Y(\epsilon)$.
\item[(ii)] If $X\in B_M\sqcup B_N$ and $Y\in B_M^\epsilon \sqcup B_N^\epsilon$, and $\Gg_{(P,Q)}$ has an edge from $X$ to $Y$, then $U(X)\sse E(Y(\epsilon))$.
\end{itemize}
Suppose these claims are true.
If $\sigma(I_i) = J_j$, then there is a path $I_i = X_0\to X_1 \to \dots \to X_r = J_j$ for some $r\leq c$, so by claim (i), $I_i\leq X_1(\epsilon)\leq \dots \leq  J_j(r\epsilon)\leq J_j(c\epsilon)$.
We also have $J_j\leq I_i(c\epsilon)$, so by \cref{lem_benign_poset} (iv), $I_i$ and $J_j$ are $c\epsilon$-interleaved.
Thus, matching $I_i$ with $J_j$ is allowed in a $c\epsilon$-matching.

Now suppose $\sigma(I_i) = J_j(\epsilon)$ for some $j$, in which case $I_i$ is not matched by $\sigma'$.
In this case, we have paths
\begin{align*}
I_i &= X_0\to X_1 \to \dots \to X_r = J_j,\\
J_j &= X'_0\to X'_1 \to \dots \to X'_{r'} = I_i
\end{align*}
with $r,r'\leq c$.
By claims (i) and (ii), we get
\begin{align}
\label{eq_path}
U(I_i) &=U(X_0)\sse \dots \sse U(X_\ell(\ell\epsilon)) \sse E(X_{\ell+1}((\ell+1)\epsilon))\sse \dots \sse E(X_r(r\epsilon)) = E(J_j((r+1)\epsilon)),\\
E(J_j(\epsilon)) &=E(X'_0)\sse \dots \sse E(X'_r(r'\epsilon)) = E(I_i(r'\epsilon))
\end{align}
and thus, $U(I_i)\sse E(I_i((r+r')\epsilon))\sse E(I_i(2c\epsilon))$.
Thus, $I_i\cap I_i(2c\epsilon)$ is empty, so $I_i$ is $c\epsilon$-interleaved with the zero module, which means that we are allowed to leave it unmatched in a $c\epsilon$-matching.
By symmetry, any $J_j$ that is unmatched by $\sigma'$ is also allowed to be unmatched in a $c\epsilon$-matching.
We conclude that $\sigma'$ is a valid $c\epsilon$-matching.

We now prove the claims.
We will assume $X\in B_M\sqcup B_M^\epsilon$; the cases $X\in B_N\sqcup B_N^\epsilon$ are entirely analogous.

If there is an edge in $\Gg_{(P,Q)}$ from $I_i$ to $J_j$, that means that $A_{i,j}\neq 0$, which means that $\phi_{i,j}\neq 0$.
By definition of benignness, we get $I_i\leq J_j(\epsilon)$.
Similarly, the existence of an edge from $I_i(\epsilon)$ to $J_j(\epsilon)$ implies $-A_{i,j}\neq 0$, which gives $I_i(\epsilon)\leq J_j(2\epsilon)$.
Moreover, we only have an edge from $I_i(\epsilon)$ to $I_{i'}$ if $i=i'$, in which case $I_i(\epsilon)\leq I_{i'}(\epsilon)$ is immediate.

This leaves us with the case of an edge going from $I_i$ to $I_{i'}(\epsilon)$, when we have $(\id_m-BA)_{i,i'}\neq 0$.
We want to show that $U(I_i)$ and $E(I_i)$ are both contained in $E(I_{i'})(2\epsilon)$.
Since $E(I_i)\sse U(I_i)$, it is enough to show this for $U(I_i)$.
Suppose $U(I_i)$ is not contained in $E(I_{i'})(2\epsilon)$.
By \cref{lem_benign_poset} (iii), $w(\psi_{j,i'}(\epsilon)\circ \phi_{i,j}) = w(\psi_{j,i'}) w(\phi_{i,j})$ for all $j$.
This gives $(DC)_{i,i'} = w(\sum_{j=1}^m \psi_{j,i'}(\epsilon)\circ \phi_{i,j})$, which is zero if $i\neq i'$, and $1$ if $i=i'$, since $\phi$ and $\psi$ form an interleaving.
Thus, $(BA)_{i,i'} = (\id_m)_{i,i'}$, which contradicts $(\id_m-BA)_{i,i'}\neq 0$.
This means that our assumption that $U(I_i)$ is not contained in $E(I_{i'})(2\epsilon)$ is false.
\end{proof}

\section{Conclusion}

The goal of this paper is to start building a stability theory for decomposition of multiparameter modules.
The centerpiece of the paper is \cref{thm_main}, which shows that it is possible to obtain positive decomposition stability results despite the difficulties described in \cref{sec_motivation}.
To even be able to state this theorem, we had to introduce $\epsilon$-refinements, which is itself a simple, but powerful contribution to the theory.
Previously, except for very special cases like certain types of interval decomposable modules, it was only known that decomposition of modules is wildly unstable in a naive sense, and no positive stability results for general modules existed to our knowledge.
The stability result \cref{thm_main}, the instability example in \cref{thm_counterex}, \cref{conj_main} and the idea of $\epsilon$-prunings together tell a story about the (in)stability properties of decompositions of general modules that is much more nuanced and enlightening than what was previously known.
Given the importance of decompositions in theoretical mathematics and the potential of approximate decompositions as a tool to study data, this is in our opinion a big step forward for multiparameter persistence theory.

\cref{thm_main} serves as a motivation for a new multiparameter pipeline using prunings, which can act as stabilized barcodes.
We define the pruning distance, a version of which could be a long missing stable multiparameter bottleneck distance, connect our work on decompositions of general modules with stability questions in simpler settings relevant for computational complexity and applications of homological algebra to multipersistence, and identify an elegant conjecture (\cref{conj_CI}) that seems to sit at the center of the theory of multiparameter stability.
Thus, our work connects practical applications, stability, computational complexity and distances in a way that we believe can give us a deeper understanding of all these facets of multipersistence.

We are hopeful that prunings can turn out to be a very useful tool for data analysis.
With their ability to split apart almost-decomposable modules, they may allow us to systematically separate parts of a module expressing different features of a data set, which is a significant challenge in multipersistence.
A natural next step is to try to develop efficient algorithms for computing prunings.
Given such an algorithm, we would like to investigate the stability properties of prunings in realistic examples, to find out if the $\epsilon$-pruning of $M$ is usually an $r'\epsilon$-refinement of $M$ for $r'$ much smaller than $2r$, or if we cannot do much better than $2r$ in practice.
We would also like to continue developing prunings from a theoretical point of view, by investigating the structure of the family $\{\Pru_\epsilon(M)\}_{\epsilon\geq 0}$ for modules $M$, and by proving (a version of) \cref{conj_pruning}, showing stability of the pruning distance.

\bibliography{stable-dec}

\end{document}